\newtheorem{cor}[subsubsection]{Corollary}
\newtheorem{lem}[subsubsection]{Lemma}
\newtheorem{prop}[subsubsection]{Proposition}
\newtheorem{thmconstr}[subsubsection]{Theorem-Construction}
\newtheorem{thm}[subsubsection]{Theorem}
\theoremstyle{definition}
\theoremstyle{remark}
\newtheorem{rem}[subsubsection]{Remark}
\newcommand{\thmref}[1]{Theorem~\ref{#1}}
\newcommand{\secref}[1]{Sect.~\ref{#1}}
\newcommand{\lemref}[1]{Lemma~\ref{#1}}
\newcommand{\propref}[1]{Proposition~\ref{#1}}
\newcommand{\corref}[1]{Corollary~\ref{#1}}
\numberwithin{equation}{section}
\newcommand{\nc}{\newcommand}
\nc{\renc}{\renewcommand}
\nc{\ssec}{\subsection}
\nc{\sssec}{\subsubsection}
\nc{\on}{\operatorname}
\nc\ol{\overline}
\nc\wt{\widetilde}
\nc\tboxtimes{\wt{\boxtimes}}
\nc\tstar{\wt{\star}}
\nc{\alp}{a}
\nc{\ZZ}{{\mathbb Z}}
\nc{\NN}{{\mathbb N}}
\nc{\OO}{{\mathbb O}}
\renc{\SS}{{\mathbb S}}
\nc{\DD}{{\mathbb D}}
\nc{\GG}{{\mathbb G}}
\nc{\Fq}{{\mathbb F}_q}
\nc{\Fqb}{\ol{{\mathbb F}_q}}
\nc{\Ql}{\ol{{\mathbb Q}_\ell}}
\nc{\id}{\text{id}}
\nc\X{\mathcal X}
\nc{\Hom}{\on{Hom}}
\nc{\Lie}{\on{Lie}}
\nc{\Loc}{\on{Loc}}
\nc{\Pic}{\on{Pic}}
\nc{\Bun}{\on{Bun}}
\nc{\IC}{\on{IC}}
\nc{\Fls}{\on{Fl}^{\frac{\infty}{2}}}
\nc{\ICs}{\on{IC}^{\frac{\infty}{2}}}
\nc{\ICsl}{\on{IC}^{\lambda+\frac{\infty}{2}}}
\nc{\ICslm}{\on{IC}^{\lambda+\frac{\infty}{2},-}}
\nc{\ICsm}{\on{IC}^{\frac{\infty}{2},-}}
\nc{\Aut}{\on{Aut}}
\nc{\rk}{\on{rk}}
\nc{\Sh}{\on{Sh}}
\nc{\Perv}{\on{Perv}}
\nc{\pos}{{\on{pos}}}
\nc{\Conv}{\on{Conv}}
\nc{\Sph}{\on{Sph}}
\nc{\Sat}{\on{Sat}}
\nc{\Sym}{\on{Sym}}
\nc{\BunBb}{\overline{\Bun}_B}
\nc{\BunNb}{\overline{\Bun}_N}
\nc{\BunTb}{\overline{\Bun}_T}
\nc{\BunBbm}{\overline{\Bun}_{B^-}}
\nc{\BunBbel}{\overline{\Bun}_{B,el}}
\nc{\BunBbmel}{\overline{\Bun}_{B^-,el}}
\nc{\Buno}{\overset{o}{\Bun}}
\nc{\BunPb}{{\overline{\Bun}_P}}
\nc{\BunBM}{\Bun_{B(M)}}
\nc{\BunBMb}{\overline{\Bun}_{B(M)}}
\nc{\BunPbw}{{\widetilde{\Bun}_P}}
\nc{\BunBP}{\widetilde{\Bun}_{B,P}}
\nc{\GUb}{\overline{G/U}}
\nc{\GUPb}{\overline{G/U(P)}}
\nc{\Hhom}{\underline{\on{Hom}}}
\nc\syminfty{\on{Sym}^{\infty}}
\nc\lal{\ol{\kappa_x}}
\nc\xl{\ol{x}}
\nc\thl{\ol{\theta}}
\nc\nul{\ol{\nu}}
\nc\mul{\ol{\mu}}
\nc{\oX}{\overset{o}{X}{}}
\nc{\hl}{\overset{\leftarrow}h{}}
\nc{\hr}{\overset{\rightarrow}h{}}
\nc{\M}{{\mathcal M}}
\nc{\N}{{\mathcal N}}
\nc{\F}{{\mathcal F}}
\nc{\D}{{\mathcal D}}
\nc{\Q}{{\mathcal Q}}
\nc{\Y}{{\mathcal Y}}
\nc{\G}{{\mathcal G}}
\nc{\E}{{\mathcal E}}
\nc{\CalC}{{\mathcal C}}
\nc\Dh{\widehat{\D}}
\nc{\C}{{\mathcal C}}
\nc{\K}{{\mathcal K}}
\renewcommand{\H}{{\mathcal H}}
\nc{\T}{{\mathcal T}}
\nc{\V}{{\mathcal V}}
\renc{\P}{{\mathcal P}}
\nc{\A}{{\mathcal A}}
\nc{\B}{{\mathcal B}}
\nc{\U}{{\mathcal U}}
\nc{\Gr}{{\on{Gr}}}
\nc{\frn}{{\check{\mathfrak u}(P)}}
\nc{\fC}{\mathfrak C}
\nc{\p}{\mathfrak p}
\nc{\q}{\mathfrak q}
\nc\f{{\mathfrak f}}
\nc{\qo}{{\mathfrak q}}
\nc{\po}{{\mathfrak p}}
\nc{\s}{{\mathfrak s}}
\nc\w{\text{w}}
\renewcommand{\mod}{{\on{-mod}}}
\nc\Spec{\on{Spec}}
\nc\Mod{\on{Mod}}
\nc{\tw}{\widetilde{\mathfrak t}}
\nc{\pw}{\widetilde{\mathfrak p}}
\nc{\qw}{\widetilde{\mathfrak q}}
\nc{\jw}{\widetilde j}
\nc{\grb}{\overline{\Gr}}
\nc{\kappach}{{\check\kappa_x}}
\nc{\Lambdach}{{\check\Lambda}{}}
\nc{\much}{{\check\mu}}
\nc{\omegach}{{\check\omega}}
\nc{\nuch}{{\check\nu}}
\nc{\etach}{{\check\eta}}
\nc{\alphach}{{\checka}}
\nc{\oblvtach}{{\check\oblvta}}
\nc{\pich}{{\check\pi}}
\nc{\ch}{{\check h}}
\nc{\Hb}{\overline{\H}}
\nc{\BA}{{\mathbb{A}}}
\nc{\BC}{{\mathbb{C}}}
\nc{\BE}{{\mathbb{E}}}
\nc{\BF}{{\mathbb{F}}}
\nc{\BG}{{\mathbb{G}}}
\nc{\BM}{{\mathbb{M}}}
\nc{\BO}{{\mathbb{O}}}
\nc{\BD}{{\mathbb{D}}}
\nc{\BN}{{\mathbb{N}}}
\nc{\BP}{{\mathbb{P}}}
\nc{\BQ}{{\mathbb{Q}}}
\nc{\BR}{{\mathbb{R}}}
\nc{\BZ}{{\mathbb{Z}}}
\nc{\BS}{{\mathbb{S}}}
\nc{\CA}{{\mathcal{A}}}
\nc{\CB}{{\mathcal{B}}}
\nc{\CE}{{\mathcal{E}}}
\nc{\CF}{{\mathcal{F}}}
\nc{\CG}{{\mathcal{G}}}
\nc{\CH}{{\mathcal{H}}}
\nc{\CL}{{\mathcal{L}}}
\nc{\CC}{{\mathcal{C}}}
\nc{\CM}{{\mathcal{M}}}
\nc{\CN}{{\mathcal{N}}}
\nc{\cCN}{\check{{\mathcal{N}}}}
\nc{\CK}{{\mathcal{K}}}
\nc{\CO}{{\mathcal{O}}}
\nc{\CP}{{\mathcal{P}}}
\nc{\CQ}{{\mathcal{Q}}}
\nc{\CR}{{\mathcal{R}}}
\nc{\CS}{{\mathcal{S}}}
\nc{\CT}{{\mathcal{T}}}
\nc{\CU}{{\mathcal{U}}}
\nc{\CV}{{\mathcal{V}}}
\nc{\CW}{{\mathcal{W}}}
\nc{\CX}{{\mathcal{X}}}
\nc{\CY}{{\mathcal{Y}}}
\nc{\CZ}{{\mathcal{Z}}}
\nc{\CI}{{\mathcal{I}}}
\nc{\CJ}{{\mathcal{J}}}
\nc{\csM}{{\check{\mathcal A}}{}}
\nc{\oM}{{\overset{\circ}{\mathcal M}}{}}
\nc{\obM}{{\overset{\circ}{\mathbf M}}{}}
\nc{\oCA}{{\overset{\circ}{\mathcal A}}{}}
\nc{\obA}{{\overset{\circ}{\mathbf A}}{}}
\nc{\ooM}{{\overset{\circ}{M}}{}}
\nc{\osM}{{\overset{\circ}{\mathsf M}}{}}
\nc{\vM}{{\overset{\bullet}{\mathcal M}}{}}
\nc{\nM}{{\underset{\bullet}{\mathcal M}}{}}
\nc{\oD}{{\overset{\circ}{\mathcal D}}{}}
\nc{\obD}{{\overset{\circ}{\mathbf D}}{}}
\nc{\oA}{{\overset{\circ}{\mathbb A}}{}}
\nc{\op}{{\overset{\bullet}{\mathbf p}}{}}
\nc{\cp}{{\overset{\circ}{\mathbf p}}{}}
\nc{\oU}{{\overset{\bullet}{\mathcal U}}{}}
\nc{\oZ}{{\overset{\circ}{\mathcal Z}}{}}
\nc{\ofZ}{{\overset{\circ}{\mathfrak Z}}{}}
\nc{\oF}{{\overset{\circ}{\fF}}}
\nc{\fa}{{\mathfrak{a}}}
\nc{\fb}{{\mathfrak{b}}}
\nc{\fd}{{\mathfrak{d}}}
\nc{\ff}{{\mathfrak{f}}}
\nc{\fg}{{\mathfrak{g}}}
\nc{\fgl}{{\mathfrak{gl}}}
\nc{\fh}{{\mathfrak{h}}}
\nc{\fj}{{\mathfrak{j}}}
\nc{\fl}{{\mathfrak{l}}}
\nc{\fm}{{\mathfrak{m}}}
\nc{\fn}{{\mathfrak{n}}}
\nc{\fu}{{\mathfrak{u}}}
\nc{\fp}{{\mathfrak{p}}}
\nc{\fr}{{\mathfrak{r}}}
\nc{\fs}{{\mathfrak{s}}}
\nc{\ft}{{\mathfrak{t}}}
\nc{\fz}{{\mathfrak{z}}}
\nc{\fsl}{{\mathfrak{sl}}}
\nc{\hsl}{{\widehat{\mathfrak{sl}}}}
\nc{\hgl}{{\widehat{\mathfrak{gl}}}}
\nc{\hg}{{\widehat{\mathfrak{g}}}}
\nc{\chg}{{\widehat{\mathfrak{g}}}{}^\vee}
\nc{\hn}{{\widehat{\mathfrak{n}}}}
\nc{\chn}{{\widehat{\mathfrak{n}}}{}^\vee}
\nc{\fA}{{\mathfrak{A}}}
\nc{\fB}{{\mathfrak{B}}}
\nc{\fD}{{\mathfrak{D}}}
\nc{\fE}{{\mathfrak{E}}}
\nc{\fF}{{\mathfrak{F}}}
\nc{\fG}{{\mathfrak{G}}}
\nc{\fK}{{\mathfrak{K}}}
\nc{\fL}{{\mathfrak{L}}}
\nc{\fM}{{\mathfrak{M}}}
\nc{\fN}{{\mathfrak{N}}}
\nc{\fP}{{\mathfrak{P}}}
\nc{\fU}{{\mathfrak{U}}}
\nc{\fV}{{\mathfrak{V}}}
\nc{\fZ}{{\mathfrak{Z}}}
\nc{\ba}{{\mathbf{a}}}
\nc{\bb}{{\mathbf{b}}}
\nc{\bc}{{\mathbf{c}}}
\nc{\bd}{{\mathbf{d}}}
\nc{\bbf}{{\mathbf{f}}}
\nc{\be}{{\mathbf{e}}}
\nc{\bi}{{\mathbf{i}}}
\nc{\bj}{{\mathbf{j}}}
\nc{\bn}{{\mathbf{n}}}
\nc{\bo}{{\mathbf{o}}}
\nc{\bp}{{\mathbf{p}}}
\nc{\bq}{{\mathbf{q}}}
\nc{\bu}{{\mathbf{u}}}
\nc{\bv}{{\mathbf{v}}}
\nc{\bx}{{\mathbf{x}}}
\nc{\bs}{{\mathbf{s}}}
\nc{\by}{{\mathbf{y}}}
\nc{\bw}{{\mathbf{w}}}
\nc{\bA}{{\mathbf{A}}}
\nc{\bK}{{\mathbf{K}}}
\nc{\bB}{{\mathbf{B}}}
\nc{\bF}{{\mathbf{F}}}
\nc{\bC}{{\mathbf{C}}}
\nc{\bG}{{\mathbf{G}}}
\nc{\bD}{{\mathbf{D}}}
\nc{\bE}{{\mathbf{E}}}
\nc{\bH}{{\mathbf{H}}}
\nc{\bI}{{\mathbf{I}}}
\nc{\bM}{{\mathbf{M}}}
\nc{\bN}{{\mathbf{N}}}
\nc{\bO}{{\mathbf{O}}}
\nc{\bV}{{\mathbf{V}}}
\nc{\bW}{{\mathbf{W}}}
\nc{\bX}{{\mathbf{X}}}
\nc{\bZ}{{\mathbf{Z}}}
\nc{\bS}{{\mathbf{S}}}
\nc{\sA}{{\mathsf{A}}}
\nc{\sB}{{\mathsf{B}}}
\nc{\sC}{{\mathsf{C}}}
\nc{\sD}{{\mathsf{D}}}
\nc{\sF}{{\mathsf{F}}}
\nc{\sK}{{\mathsf{K}}}
\nc{\sM}{{\mathsf{M}}}
\nc{\sO}{{\mathsf{O}}}
\nc{\sW}{{\mathsf{W}}}
\nc{\sQ}{{\mathsf{Q}}}
\nc{\sP}{{\mathsf{P}}}
\nc{\sZ}{{\mathsf{Z}}}
\nc{\sr}{{\mathsf{r}}}
\nc{\bk}{{\mathsf{k}}}
\nc{\sg}{{\mathsf{g}}}
\nc{\sff}{{\mathsf{f}}}
\nc{\sfe}{{\mathsf{e}}}
\nc{\sfj}{{\mathsf{j}}}
\nc{\sfb}{{\mathsf{b}}}
\nc{\sfc}{{\mathsf{c}}}
\nc{\sd}{{\mathsf{d}}}
\nc{\sv}{{\mathsf{v}}}
\nc{\BK}{{\bar{K}}}
\nc{\tA}{{\widetilde{\mathbf{A}}}}
\nc{\tB}{{\widetilde{\mathcal{B}}}}
\nc{\tg}{{\widetilde{\mathfrak{g}}}}
\nc{\tG}{{\widetilde{G}}}
\nc{\TM}{{\widetilde{\mathbb{M}}}{}}
\nc{\tO}{{\widetilde{\mathsf{O}}}{}}
\nc{\tU}{{\widetilde{\mathfrak{U}}}{}}
\nc{\TZ}{{\tilde{Z}}}
\nc{\tx}{{\tilde{x}}}
\nc{\tbv}{{\tilde{\bv}}}
\nc{\tfP}{{\widetilde{\mathfrak{P}}}{}}
\nc{\tz}{{\tilde{\zeta}}}
\nc{\tmu}{{\tilde{\mu}}}
\nc{\urho}{\underline{\pi}}
\nc{\uB}{\underline{B}}
\nc{\uC}{{\underline{\mathbb{C}}}}
\nc{\ui}{\underline{i}}
\nc{\uj}{\underline{j}}
\nc{\ofP}{{\overline{\mathfrak{P}}}}
\nc{\oB}{{\overline{\mathcal{B}}}}
\nc{\og}{{\overline{\mathfrak{g}}}}
\nc{\oI}{{\overline{I}}}
\nc{\eps}{\varepsilon}
\nc{\hrho}{{\hat{\pi}}}
\nc{\one}{{\mathbf{1}}}
\nc{\two}{{\mathbf{t}}}
\nc{\Rep}{{\mathop{\operatorname{\rm Rep}}}}
\nc{\Tot}{{\mathop{\operatorname{\rm Tot}}}}
\nc{\Ker}{{\mathop{\operatorname{\rm Ker}}}}
\nc{\Hilb}{{\mathop{\operatorname{\rm Hilb}}}}
\nc{\End}{{\mathop{\operatorname{\rm End}}}}
\nc{\Ext}{{\mathop{\operatorname{\rm Ext}}}}
\nc{\CHom}{{\mathop{\operatorname{{\mathcal{H}}\it om}}}}
\nc{\GL}{{\mathop{\operatorname{\rm GL}}}}
\nc{\gr}{{\mathop{\operatorname{\rm gr}}}}
\nc{\Ld}{{\mathop{\operatorname{\rm Id}}}}
\nc{\de}{{\mathop{\operatorname{\rm def}}}}
\nc{\length}{{\mathop{\operatorname{\rm length}}}}
\nc{\supp}{{\mathop{\operatorname{\rm supp}}}}
\nc{\Cliff}{{\mathsf{Cliff}}}
\nc{\Fl}{\on{Fl}}
\nc{\Fib}{{\mathsf{Fib}}}
\nc{\Coh}{{\mathsf{Coh}}}
\nc{\QCoh}{{\on{QCoh}}}
\nc{\LndCoh}{{\on{IndCoh}}}
\nc{\FCoh}{{\mathsf{FCoh}}}
\nc{\reg}{{\text{\rm reg}}}
\nc{\cplus}{{\mathbf{C}_+}}
\nc{\cminus}{{\mathbf{C}_-}}
\nc{\cthree}{{\mathbf{C}_*}}
\nc{\Qbar}{{\bar{Q}}}
\nc\Eis{\on{Eis}}
\nc\Eisb{\ol\Eis{}}
\nc\Eisr{\on{Eis}^{rat}{}}
\nc\wh{\widehat}
\nc{\Def}{\on{Def_{\check{\fb}}(E)}}
\nc{\barZ}{\overline{Z}{}}
\nc{\barbarZ}{\overline{\barZ}{}}
\nc{\barpi}{\overline\iota}
\nc{\barbarpi}{\overline\barpi}
\nc{\barpip}{\overline\iota{}^+}
\nc{\barpim}{\overline\iota{}^-}
\nc{\fq}{\mathfrak q}
\nc{\fqb}{\ol{\fq}{}}
\nc{\fpb}{\ol{\fp}{}}
\nc{\fpr}{{\fp^{rat}}{}}
\nc{\fqr}{{\fq^{rat}}{}}
\nc{\hattimes}{\wh\otimes}
\nc{\bh}{{\bar{h}}}
\nc{\bOmega}{{\overline{\Omega(\check \fn)}}}
\nc{\seq}[1]{\stackrel{#1}{\sim}}
\nc{\cT}{{\check{T}}}
\nc{\cG}{{\check{G}}}
\nc{\cM}{{\check{M}}}
\nc{\cB}{{\check{B}}}
\nc{\cN}{{\check{N}}}
\nc{\ct}{{\check{\mathfrak t}}}
\nc{\cg}{{\check{\fg}}}
\nc{\cb}{{\check{\fb}}}
\nc{\cn}{{\check{\fn}}}
\nc{\cLambda}{{\check\Lambda}}
\nc{\cla}{{\check\kappa_x}}
\nc{\cmu}{{\check\mu}}
\nc{\clambda}{{\check\lambda}}
\nc{\cnu}{{\check\nu}}
\nc{\ceta}{{\check\eta}}
\nc{\DefbE}{{\on{Def}_{\cB}(E_\cT)}}
\nc{\imathb}{{\ol{\imath}}}
\nc{\rlr}{\overset{\longrightarrow}{\underset{\longrightarrow}\longleftarrow}}
\nc{\KG}{K\backslash G}
\nc{\comult}{{co\text{-}mult}}
\nc{\counit}{{co\text{-}unit}}
\nc{\uHom}{{\underline{\Maps}}}
\nc{\dgSch}{\on{Sch}}
\nc{\Sch}{\on{Sch}}
\nc{\affdgSch}{\on{Sch}^{\on{aff}}}
\nc{\affSch}{\on{Sch}^{\on{aff}}}
\nc{\Groupoids}{\on{Grpd}}
\nc{\inftygroup}{\on{Spc}}
\nc{\inftyCat}{\infty\on{-Cat}}
\nc{\StinftyCat}{\inftyCat^{\on{St}}}
\nc{\MoninftyCat}{\infty\on{-Cat}^{\on{Mon}}}
\nc{\SymMoninftyCat}{\infty\on{-Cat}^{\on{SymMon}}}
\nc{\SymMonStinftyCat}{\on{DGCat}^{\on{SymMon}}}
\nc{\MonStinftyCat}{\on{DGCat}^{\on{Mon}}}
\nc{\inftystack}{\on{Stk}}
\nc{\inftystackalg}{St\sfe^{1\text{-}alg}}
\nc{\inftyprestack}{\on{PreStk}}
\nc{\inftydgnearstack}{\on{NearStk}}
\nc{\inftydgstack}{\on{Stk}}
\nc{\inftydgstackalg}{DGSt\sfe^{1\text{-}alg}}
\nc{\inftydgprestack}{\on{PreStk}}
\nc{\dgindSch}{\on{indSch}}
\nc{\indSch}{{}^{\on{cl}}\!\on{indSch}}
\nc{\infSch}{\on{infSch}}
\nc{\dr}{{\on{dR}}}
\nc{\mmod}{{\on{-}\!{\mathbf{mod}}}}
\nc{\starr}{\text{\dh}}
\nc{\Spectra}{\on{Spectra}}
\nc{\Crys}{\on{Crys}}
\nc{\oblv}{{\mathbf{oblv}}}
\nc{\ind}{{\mathbf{ind}}}
\nc{\coind}{{\mathbf{coind}}}
\nc{\inv}{{\mathbf{inv}}}
\nc{\triv}{{\mathbf{triv}}}
\nc{\CMaps}{{\mathcal Maps}}
\nc{\Maps}{\on{Maps}}
\nc{\bMaps}{\mathbf{Maps}}
\nc{\BMaps}{\ul{\on{Maps}}}
\nc{\Grid}{\on{Grid}}
\nc{\hGrid}{\on{Grid}^{\geq\,\on{dgnl}}}
\nc{\Diag}{\on{Diag}}
\nc{\bDelta}{\mathbf{\Delta}}
\nc{\tCateg}{(\infty\on{-2)-Cat}}
\nc{\ul}{\underline}
\nc{\Seg}{\on{Seq}}
\nc{\biSeg}{\on{bi-Seq}}
\nc{\triSeg}{\on{tri-Seq}}
\nc{\quadSeg}{\on{quad-Seq}}
\nc{\nSeg}{\on{n-Seq}}
\nc{\Segm}{\on{Seg}^{\on{mkd}}}
\nc{\fLm}{\fL^{\on{mkd}}}
\nc{\inftyCatm}{\inftyCat^{\on{mkd}}}
\nc{\Blocks}{\mathbf{Blocks}}
\nc{\Snakes}{\mathbf{Snakes}}
\nc{\bifL}{\on{bi-}\!\fL}
\nc{\Sets}{\on{Sets}}
\nc{\Ran}{{\on{Ran}}}
\nc{\Vect}{\on{Vect}}
\nc{\Shv}{\on{Shv}}
\nc{\unn}{\mathbf{union}}
\nc{\Spc}{\on{Spc}}
\nc{\ppart}{(\!(t)\!)}
\nc{\qqart}{[\![t]\!]}
\nc{\Dmod}{\on{D-mod}}
\nc{\cD}{\mathcal D}
\nc{\ocD}{\overset{\circ}{\cD}}
\nc{\sfo}{\mathsf{o}}
\nc{\sfob}{\mathsf{ob}}
\nc{\sfp}{\mathsf{p}}
\nc{\sfq}{\mathsf{q}}
\nc{\DGCat}{\on{DGCat}}
\renc{\det}{\on{det}}
\nc{\Conf}{\on{Conf}}
\nc{\Whit}{\on{Whit}}
\nc{\Reg}{\on{Reg}}
\nc{\Res}{\on{Res}}
\nc{\BunNbox}{(\overline\Bun_N^{\omega^\rho})_{\infty\cdot x}} 
\nc{\BunNmbox}{(\overline\Bun_{N^-}^{\omega^\rho})_{\infty\cdot x}}
\nc{\bHecke}{\on{Hecke}_{\cG,\cT}}
\nc{\Hecke}{\on{Hecke}}
\nc{\bCZ}{\ol\CZ}
\nc{\oCZ}{\overset{\circ}\CZ} 
\nc{\boCZ}{\ol{\oCZ}}
\nc{\sotimes}{\overset{!}\otimes}
\nc{\semiinf}{{\frac{\infty}{2}}}
\nc{\coInd}{\on{coInd}}
\nc{\Ind}{\on{Ind}}
\nc{\bCM}{\overset{\bullet}\CM{}}
\nc{\bCF}{\overset{\bullet}\CF{}}
\nc{\SI}{\on{SI}}
\nc{\KL}{\on{KL}}
\begin{document}

\title[The semi-infinite IC sheaf-II]{The semi-infinite intersection cohomology sheaf-II: \\ The Ran space version}

\dedicatory{To Sasha Beilinson and Vitya Ginzburg} 

\author{Dennis Gaitsgory}

\date{\today}

\begin{abstract}
This paper is a sequel to \cite{Ga1}. We define the semi-infinite category on the Ran version of the
affine Grassmannian, and study a particular object in it, denoted $\ICs_\Ran$, which we call 
the \emph{semi-infinite intersection cohomology sheaf}.

\medskip

Unlike the situation of \cite{Ga1}, this $\ICs_\Ran$ is defined as the middle of extension of the constant 
(more precisely, dualizing) sheaf on the basic stratum, in a certain t-structure.
We give several explicit descriptions and characterizations of $\ICs_\Ran$: we describe its !- and *- stalks;
we present it explicitly as a colimit; we relate it to the IC sheaf of Drinfeld's relative compactification $\BunNb$; we describe
$\ICs_\Ran$ via the Drinfeld-Plucker formalism.
\end{abstract} 

\maketitle

\tableofcontents

\section*{Introduction}

\ssec{What are trying to do?}

\sssec{}

This paper is a sequel of \cite{Ga1}. In {\it loc. cit.} an attempt was made to construct a certain object, denoted $\ICs$, 
in the (derived) category $\Shv(\Gr_G)$ of sheaves on the affine Grassmannian, whose existence had been predicted by G.~Lusztig. 

\medskip

Notionally, $\ICs$ is supposed to be the intersection cohomology complex on the closure $\ol{S}{}^0$
of the unit $N\ppart$-orbit $S^0\subset \Gr_G$. 
Its stalks are supposed to be given by periodic Kazhdan-Lusztig polynomials. Ideally, one would want the construction of 
$\ICs$ to have the following properties:

\medskip

\begin{itemize}

\item It should be local, i.e., only depend on the formal disc, where we are thinking of $\Gr_G$ as $G\ppart/G\qqart$;

\smallskip

\item When our formal disc is the formal neighborhood of a point $x$ in a global curve $X$, then $\ICs$ should be the
pullback along the map $\ol{S}{}^0\to \BunNb$ of the intersection cohomology sheaf of $\BunNb$, where the latter
is Drinfeld's relative compactification of the stack of $G$-bundles equipped with a reduction to $N$ (which is an 
algebraic stack locally of finite type, so $\IC_{\BunNb}$ is well defined). 

\end{itemize}  

\medskip

The construction in \cite{Ga1} indeed produced such an object, but with the following substantial drawback: in {\it loc. cit.},
$\ICs$ was given by an \emph{ad hoc} procedure; namely, it was written as a certain explicit direct limit. In particular,
$\ICs$ was \emph{not} the middle extension of the constant\footnote{Technically, not constant but rather dualizing.} 
sheaf on $S^0$ with respect to the natural t-structure on $\Shv(\Gr_G)$ (however, $\ICs$ does belong to the heart
of this t-structure). 

\sssec{}

In the present paper we will construct a variant of $\ICs$, denoted $\ICs_\Ran$, closely related to $\ICs$,
that is actually given by the procedure of middle extension in a certain t-structure. 

\medskip

Namely, instead of the single copy of the affine Grassmannian $\Gr_G$, we will consider its Ran space version,
denoted $\Gr_{G,\Ran}$. We will equip the corresponding category $\Shv(\Gr_{G,\Ran})$ with a t-structure,
and we will define $\ICs_\Ran$ as the middle extension of the dualizing sheaf on $S^0_\Ran\subset \Gr_{G,\Ran}$. 

\begin{rem}
Technically, the Ran space is attached to a smooth (but not necessarily complete) curve $X$, and one may think
that this somewhat compromises the locality property of the construction of $\ICs_\Ran$. However, if one day a formalism becomes
available for working with the Ran space of a formal disc, the construction of $\ICs_\Ran$ will become purely local.
\end{rem} 

\sssec{}

For a fixed point $x\in X$, we have the embedding 
$$\Gr_G\simeq \{x\}\underset{\Ran}\times \Gr_{G,\Ran}\hookrightarrow \Gr_{G,\Ran},$$
and we will show that the restriction of $\ICs_\Ran$ along this map recovers $\ICs$ from \cite{Ga1}.

\sssec{}

Our $\ICs_\Ran$ retains the relation to $\IC_{\BunNb}$. Namely, we have a natural map
$$\ol{S}^0_\Ran\to \BunNb$$
and we will prove that $\ICs_\Ran$ identifies with the pullback of $\IC_{\BunNb}$ along this map.

\medskip

In particular, this implies the isomorphism
$$\ICs\simeq \IC_{\BunNb}|_{\ol{S}^0},$$ 
which had been established in \cite{Ga1} by a different method. 

\sssec{}

To summarize, we can say that we still do not know how to intrinsically characterize $\ICs$ on an individual $\Gr_G$
as an intersection cohomology sheaf, but we can do it, once we allow the point of the curve to move along the
Ran space. 

\medskip

But \emph{ce n'est pas grave}: as was argued in \cite[Sect. 0.4]{Ga1}, our $\ICs_\Ran$, equipped with
its factorization structure, is perhaps a more fundamental object than the original $\ICs$.  

\ssec{What is done in this paper?}

The main constructions and results of this paper can be summarized as follows: 

\sssec{}

We define the \emph{semi-infinite} category on the Ran version of the affine Grassmannian, denoted $\SI_\Ran$, 
and equip it with a t-structure. This is largely parallel to \cite{Ga1}. 

\medskip

We define $\ICs_\Ran\in \SI_\Ran$ as the middle extension of the dualizing sheaf on the stratum $S^0_\Ran\subset \Gr_{G,\Ran}$. 
(We will also show that the corresponding !- and *- extensions both belong to the heart of the t-structure, see \propref{p:! and * perv};
this contrasts with the situation for $\ICs$, see \cite[Theorem 1.5.5]{Ga1}). 

\medskip

We describe explicitly the !- and *-restrictions of $\ICs_\Ran$ to the strata $S^\lambda_\Ran\subset \ol{S}{}^0_\Ran\subset \Gr_{G,\Ran}$
(here $\lambda$ is an element of $\Lambda^{\on{neg}}$, the negative span of positive simple coroots), see \thmref{t:descr of restr}.  
These descriptions are given in terms of the combinatorics of the Langlands dual Lie algebra; more precisely, in terms, of the
\emph{factorization algebras} attached to $\CO(\cN)$ and $U(\cn^-)$. 

\medskip

We give an explicit presentation of $\ICs_\Ran$ as a colimit (parallel to the \emph{definition} of $\ICs$ in \cite{Ga1}),
see \thmref{t:descr as colimit}. This implies the identification $\ICs_\Ran|_{\Gr_G}\simeq \ICs$, where $\ICs\in \Shv(\Gr_G)$
is the object constructed in \cite{Ga1}. 

\sssec{}

We show that $\ICs_\Ran$ identifies canonically (up to a cohomological shift by $[d]$, $d=\dim(\Bun_N)$) 
with the pullback of $\IC_{\BunNb}$ along the map
\begin{equation} \label{e:pi Ran}
\ol{S}{}^0\to \BunNb,
\end{equation} 
see \thmref{t:IC loc glob}. 

\medskip

In fact, we show that the above pullback functor is t-exact (up to the shift by $[d]$), when restricted to the subcategory 
$\SI^{\leq 0}_{\on{glob}}\subset \Shv(\BunNb)$ 
that consists of objects equivariant with respect to the action of the adelic $N$, see \corref{c:log glob exact}. 

\medskip 

The proof of this t-exactness property is based on applying Braden's theorem to $\Gr_{G,\Ran}$ and the
\emph{Zastava space}. 

\medskip

We note that,
unlike \cite{Ga1}, the resulting proof of the isomorphism 
\begin{equation} \label{e:IC glob vs loc Intro}
\IC_{\BunNb}|_{\ol{S}{}^0_\Ran}[d]\simeq \ICs_\Ran
\end{equation} 
\emph{does not} use the computation of $\IC_{\BunNb}$ from \cite{BFGM}, but rather reproves it. 

\medskip

As an aside we prove an important geometric fact that the map \eqref{e:pi Ran} is \emph{universally homologically contractible}
(=the pullback functor along any base change of this map is fully faithful), see \thmref{t:contr}. 

\sssec{}

We show that $\ICs$ has a \emph{unitality} property: it stays invariant under the operation of ``throwing in" more points in $\Ran$
without altering the $G$-bundle.

\medskip

We use the unitality property of $\ICs$ to equip it with a \emph{factorization structure}.

\sssec{}

We show that $\ICs_\Ran$ has an eigen-property with respect to the action of the Hecke functors for $G$ and $T$,
see \thmref{t:Hecke Ran}. 

\medskip

In the course of the proof of this theorem, we give yet another characterization of $\ICs_{\Ran}$ (which works for
$\ICs$ as well):

\medskip

We show that the $\delta$-function $\delta_{1_{\Gr,\Ran}}\in \Shv(\Gr_{G,\Ran})$ on the unit section $\Ran\to \Gr_{G,\Ran}$ 
possesses a natural \emph{Drinfeld-Plucker} structure with respect to the Hecke actions of $G$ and $T$ (see \secref{ss:DrPl} for what this means),
and that $\ICs_{\Ran}$ can be obtained from $\delta_{1_{\Gr,\Ran}}$ by applying the functor from the Drinfeld-Pl\"ucker category
to the graded Hecke category, left adjoint to the tautological forgetful functor (see \secref{ss:DrPl on IC}).

\medskip

Finally, we establish the compatibility of the isomorphism \eqref{e:IC glob vs loc Intro} with the Hecke eigen-structures on 
$\ICs_{\Ran}$ and $\IC_{\BunNb}$, respectively (see \thmref{t:Hecke compat}).

\ssec{Organization}

\sssec{}

In \secref{s:semiinf} we recall the definition of the Ran space $\Ran$, the Ran version of the affine Grassmannian $\Gr_{G,\Ran}$,
and the stratification of the closure $\ol{S}{}^0_\Ran$  of the adelic $N$-orbit $S^0_\Ran$ by locally closed substacks $S^\lambda_\Ran$. 

\medskip

We define the semi-infinite category $\SI_{\Ran}$ and study the standard functors that link it to the corresponding categories
on the strata. 

\sssec{}

In \secref{s:t} we define the t-structure on $\SI^{\leq 0}_{\Ran}$ and our main object of study, $\ICs_\Ran$. 

\medskip

We state \thmref{t:descr of restr} that describes the *- and !- restrictions of $\ICs_\Ran$ to the strata $S^\lambda_\Ran$.
The proof of the statement concerning *-restrictions will be given in this same section (it will result from \thmref{t:descr as colimit} 
mentioned below). The proof of the statement concerning !-restrictions will be given in \secref{s:glob}. 

\medskip

We state and prove \thmref{t:descr as colimit} that gives a presentation of $\ICs_\Ran$ as a colimit. 

\sssec{}

In \secref{s:glob}, we recall the definition of Drinfeld's relative compactification $\BunNb$. 

\medskip

We define the global semi-infinite category $\SI^{\leq 0}_{\on{glob}}\subset \Shv(\BunNb)$. We prove that the pullback functor
along \eqref{e:pi Ran}, viewed as a functor
$$\SI^{\leq 0}_{\on{glob}}\to \SI^{\leq 0}_\Ran,$$
is t-exact (up to the shift by $[d]$). From here we deduce the identification \eqref{e:IC glob vs loc Intro},
which is \thmref{t:IC loc glob}. 

\medskip

We also state \thmref{t:contr}, whose proof is given in \secref{s:app}.  

\sssec{}

In \secref{s:unital} we introduce the notion of \emph{unital} subcategory inside $\Shv(\Gr_{G,\Ran})$, $\Shv(\ol{S}^0_\Ran)$ and
$\SI^{\leq 0}_\Ran$, and we show that $\ICs$ belongs to $\SI^{\leq 0}_\Ran$. 

\medskip

We use this property of $\ICs$ to equip it with a factorization structure. 

\sssec{}

In \secref{s:H} we establish the Hecke eigen-property of $\ICs_\Ran$. In the process of doing so we discuss the
formalism of \emph{lax central objects} and \emph{Drinfeld-Pl\"ucker} structures, and their relation to the
Hecke eigen-structures.

\medskip

In \secref{s:H g-l} we prove the compatibility between the eigen-property of $\ICs_\Ran$ and that of $\IC_{\BunNb}$. 

\ssec{Background, conventions and notation}

The notations and conventions in this follow closely those of \cite{Ga1}. Here is a summary: 

\sssec{}

This paper uses higher category theory. It appears already in the definition of our
basic object of study, the \emph{category of sheaves} on the Ran Grassmannian, $\Gr_{G,\Ran}$. 

\medskip

Thus, we will assume that the reader is familiar with the basics of higher categories and higher algebra. The fundamental
reference is \cite{Lu1,Lu2}, but shorter expositions (or user guides) exist as well, for example, the first
chapter of \cite{GR}. 

\sssec{}

Our algebraic geometry happens over an arbitrary algebraically closed ground field $k$. Our
geometric objects are classical (i.e., this paper \emph{does not} need derived algebraic geometry). 

\medskip

We let $\affSch_{\on{ft}}$ denote the category of (classical) affine schemes
of finite type over $k$. 

\medskip

By a prestack (locally of finite type) we mean an arbitrary functor
\begin{equation} \label{e:prestack}
(\affSch_{\on{ft}})^{\on{op}}\to \on{Groupoids}
\end{equation}
(we de not need to consider higher groupoids). 

\medskip

We let $\on{PreSk}_{\on{lft}}$ denote the category of such prestacks.
It contains $\affSch_{\on{ft}}$ via the Yoneda embedding. All other types of geometric objects (schemes, algebraic stacks,
ind-schemes) are prestacks with some specific \emph{properties} (but \emph{not additional pieces of structure}). 

\sssec{}

We let $G$ be a connected reductive group over $k$. We fix a Borel subgroup $B\subset G$ and the opposite Borel $B^-\subset G$.
Let $N\subset B$ and $N^-\subset B^-$ denote their respective unipotent radicals.

\medskip

Set $T=B\cap B^-$; this is a Cartan subgroup of $G$. We use it to identify the quotients
$$B/N \simeq T \simeq B^-/N^-.$$

\medskip

We let $\Lambda$ denote the coweight lattice of $G$, i.e., the lattice of cocharacters of $T$.
We let $\Lambda^{\on{pos}}\subset \Lambda$ denote the sub-monoid consisting of 
linear combinations of positive simple roots with non-negative integral coefficients. 
We let $\Lambda^+\subset \Lambda$ denote the sub-monoid of \emph{dominant coweights}.

\sssec{}

While our geometry happens over a field $k$, the representation-theoretic categories
that we study are \emph{DG categories} over another field, denoted $\sfe$ (assumed algebraically closed
and of characteristic $0$). For a crash course on DG categories, the reader is referred to \cite[Chapter 1, Sect. 10]{GR}.  

\medskip

All our DG categories are assumed presentable. When considering functors,
we will only consider functors that preserve colimits. We denote the $\infty$-category of DG categories
by $\DGCat$.  It carries a symmetric monoidal structure (i.e., one can consider tensor products
of DG categories). The unit object is the DG category of complexes of $\sfe$-vector spaces,
denoted $\Vect$. 

\medskip

We will use the notion of t-structure on a DG category. Given a t-structure on $\CC$, we will denote by
$\CC^{\leq 0}$ the corresponding subcategory of cohomologically connective objects, and by $\CC^{>0}$
its right orthogonal. We let $\CC^\heartsuit$ denote the heart $\CC^{\leq 0}\cap \CC^{\geq 0}$. 

\sssec{}  \label{sss:sheaf theory}

The source of DG categories will be a \emph{sheaf theory}, which is a functor
$$\Shv:(\affSch_{\on{ft}})^{\on{op}}\to \DGCat, \quad Y\mapsto \Shv(Y).$$

For a morphism of affine schemes $f:Y_0\to Y_1$, the corresponding functor
$$\Shv(Y_1)\to \Shv(Y_0)$$
is the !-pullback $f^!$.  

\medskip

We will work with the following particular examples sheaf theories are:

\medskip

\noindent{(i)} We take $\sfe=\ol\BQ_\ell$ and we take $\Shv(Y)$ to be the ind-completion of the
(small) DG category of constructible $\ol\BQ_\ell$-sheaves.

\medskip

\noindent{(ii)} When $k=\BC$ and $\sfe$ arbitrary, we take $\Shv(Y)$ to be the ind-completion of the
(small) DG category of constructible $\sfe$-sheaves on $Y(\BC)$ in the analytic topology. 

\medskip

\noindent{(iii)} If $k$ has characteristic $0$, we take $\sfe=k$ and we take $\Shv(Y)$ to be the DG category
of holonomic D-modules on $S$;

\medskip

\noindent{(iv)} If $k$ has characteristic $0$, we take $\sfe=k$ and we take $\Shv(Y)$ to be the DG category
of D-modules on $Y$.

\medskip

We will refer to examples (i), (ii) and (iii) as \emph{a constructible} sheaf theories. 

\medskip

In the constructible case, the functor $f^!$ always has a left adjoint, denoted $f_!$. In example (iv) this is not
the case. However, the partially defined left adjoint $f_!$ is defined on holonomic objects.  It is 
defined on the entire category if $f$ is proper. 

\sssec{Sheaves on prestacks}   \label{sss:sheaves intro} 

We apply the procedure of right Kan extension along the embedding
$$(\affSch_{\on{ft}})^{\on{op}}\hookrightarrow (\on{PreStk}_{\on{lft}})^{\on{op}}$$
to the functor $\Shv$, and thus obtain a functor (denoted by the same symbol)
$$\Shv:(\on{PreStk}_{\on{lft}})^{\on{op}}\to \DGCat.$$

By definition, for $\CY\in \on{PreStk}_{\on{lft}}$ we have
\begin{equation} \label{e:shv on prestack}
\Shv(\CY)=\underset{S\in \affSch_{\on{ft}},y:S\to \CY}{\on{lim}}\, \Shv(S),
\end{equation} 
where the transition functors in the formation of the limit are the 
!-pullbacks\footnote{Note that even though the index category (i.e., $(\affSch_{\on{ft}})_{/\CY}$) is ordinary, the above limit
is formed in the $\infty$-category $\DGCat$. This is how $\infty$-categories appear in this paper.}.

\medskip

For a map of prestacks $f:\CY_0\to \CY_1$ we thus have a well-defined pullback functor
$$f^!:\Shv(\CY_1)\to \Shv(\CY_0).$$

\medskip

We denote by $\omega_\CY$ the dualizing sheaf on $\CY$, i.e., the pullback of 
$$\sfe\in \Vect\simeq \Shv(\on{pt})$$
along the tautological map $\CY\to \on{pt}$. 

\sssec{}

We let $X$ be a smooth, connected (but not necessarily proper) curve over $k$. Whenever we need $X$ to be proper,
we will explicitly say so. 

\sssec{}  \label{sss:dual geom}

This paper is closely related to the geometric Langlands theory, and the geometry of the Langlands dual group
$\cG$ makes it appearance. 

\medskip

By definition, $\cG$ is a reductive group over $\sfe$ and geometric objects constructed out of $\cG$ give
rise to $\sfe$-linear DG categories by considering quasi-coherent (resp., ind-coherent) sheaves on them.

\medskip

The most basic example of such a category is
$$\QCoh(\on{pt}/\cG)=:\Rep(\cG).$$

\ssec{Acknowledgements} The author would like to thank S.~Raskin for his suggestion to consider the formalism
of Drinfeld-Pl\"ucker structures, as well as numerous stimulating discussions. 

\medskip

The author is grateful to M.~Finkelberg
for igniting his interest in the semi-infinite IC sheaf, which has been on author's mind for some 20 years now.  

\medskip 

The author is grateful to J.~Campbell and L.~Chen for pointing out important mistakes in an earlier version of the
paper. 

\medskip

The author is supported by NSF grant DMS-1063470. He has also received support from ERC grant 669655.

\section{The Ran version of the semi-infinite category}  \label{s:semiinf}

In this section we extend the definition of the semi-infinite category given in \cite{Ga1} from the affine Grassmannian
$\Gr_{G,x}$ corresponding to a fixed point $x\in X$ to the Ran version, denoted $\Gr_{G,\Ran}$. 

\ssec{The Ran Grassmannian} 

\sssec{}

We recall that the Ran space of $X$, denoted $\Ran$, is the prestack that assigns to an affine test-scheme $Y$ the set of finite non-empty
subsets
$$\CI\subset \Hom(Y,X).$$

\medskip

One can explicitly exhibit $\Ran$ as a colimit (in $\on{PreStk}$) of schemes:
$$\Ran\simeq \underset{I}{\on{colim}}\, X^I,$$
where the colimit is taken over the category opposite to the category $\on{Fin}^{\on{surj}}$ of finite non-empty sets and surjective maps,
where to a map $\phi:I\twoheadrightarrow J$ we assign the corresponding diagonal embedding 
$$X^J \overset{\Delta_\phi}\hookrightarrow X^I.$$

This description implies, in particular, that if $X$ is proper, then $\Ran$ is pseudo-proper as a prestack
(see \secref{sss:pseudo-proper} for what it means). 

\medskip

Another key feature of $\Ran$ is that it is homologically contractible (see \secref{sss:UHC} for what this means). 

\sssec{}

We will consider the Ran version of the affine Grassmannian, denoted $\Gr_{G,\Ran}$, defined as follows.

\medskip

It assigns to an affine test-scheme $Y$, the set of triples $(\CI,\CP_G,\alpha)$, where $\CI$ is a $Y$-point of $\Ran$,
$\CP_G$ is a $G$-bundle on $Y\times X$, and $\alpha$ is a trivialization of $\CP_G$ on the open subset of $Y\times X$
equal to the complement of the union $\Gamma_\CI$ of the graphs of the maps $Y\to X$ that comprise $\CI$. 

\medskip

The projection $\Gr_{G,\Ran}\to \Ran$ is pseudo-proper. 

\medskip

We will also consider the Ran versions of the loop and arc groups (ind)-schemes, denoted $$\fL^+(G)_{\Ran}\subset \fL(G)_{\Ran}.$$ 
The Ran Grassmannian $\Gr_{G,\Ran}$ identifies with the \'etale (equivalently, fppf) sheafification of the prestack 
quotient $\fL(G)_{\Ran}/\fL^+(G)_{\Ran}$.

\sssec{}

For a fixed finite non-empty set $I$, we denote
$$\Gr_{G,I}:=X^I\underset{\Ran}\times \Gr_{G,\Ran},\quad 
\fL(G)_I:=X^I\underset{\Ran}\times \fL(G)_{\Ran},\quad \fL^+(G)_I:=X^I\underset{\Ran}\times \fL^+(G)_{\Ran}.$$

\medskip

For a map of finite sets $\phi:I\twoheadrightarrow J$, we will denote by $\Delta_\phi$ the corresponding map
$\Gr_{G,J}\to \Gr_{G,I}$, so that we have the Cartesian square:
$$
\CD
\Gr_{G,J}   @>{\Delta_\phi}>>   \Gr_{G,I}  \\
@VVV   @VVV   \\
X^J @>{\Delta_\phi}>>  X^I.
\endCD
$$

\sssec{}

We introduce also the following closed (resp., locally closed) subfunctors 
$$S^0_\Ran\subset \ol{S}{}^0_\Ran\subset \Gr_{G,\Ran}.$$

\medskip

Namely, for an affine test-scheme $Y$, a $Y$-point $(\CI,\CP_G,\alpha)$ belongs to $\ol{S}{}^0_\Ran$ if for every dominant weight $\check\lambda$, the composite 
meromorphic map of vector bundles on $Y\times X$
\begin{equation} \label{e:Plucker map}
\CO\to \CV^{\check\lambda}_{\CP^0_G}\overset{\alpha}\longrightarrow \CV^{\check\lambda}_{\CP_G}
\end{equation} 
is regular. In the above formula the notations are as follows:

\begin{itemize}

\item $\CV^{\check\lambda}$ denotes the Weyl module over $G$ with highest weight $\check\lambda$;

\smallskip 

\item $\CV^{\check\lambda}_{\CP_G}$ (resp., $\CV^{\check\lambda}_{\CP^0_G}$) denotes the vector bundles
associated with $\CV^{\check\lambda}$ and the $G$-bundle $\CP_G$ (resp., the trivial $G$-bundle $\CP^0_G$);

\smallskip 

\item $\CO\to \CV^{\check\lambda}_{\CP^0_G}$ is the map coming from the highest weight vector in $\CV^{\check\lambda}$. 

\end{itemize} 

\medskip

A point as above belongs to $S^0_\Ran$ if the above composite map is an injective bundle map (i.e., the cokernel is
flat over $Y\times X$). 

\medskip

%We denote by $p_\Ran$ and $\ol{p}_\Ran$ the projections
%$$S^0_\Ran\to \Ran \text{ and } \ol{S}{}^0_\Ran\to \Ran,$$
%respectively. 

\medskip

%\sssec{}

%For a fixed finite non-empty set $I$, we denote
%$$S^0_I:=X^I\underset{\Ran}\times S^0_\Ran \text{ and } \ol{S}{}^0_I:=\underset{\Ran}\times \ol{S}{}^0_\Ran.$$

%We denote by $p_I$ and $\ol{p}_I$ the corresponding projections
%$$S^0_I\to X^I  \text{ and } \ol{S}{}^0_I\to X^I.$$

\ssec{The semi-infinite category}  \label{ss:SI}

\sssec{}

Since $\Gr_{G,\Ran}$ a prestack locally of finite type, we have a well-defined category
$$\Shv(\Gr_{G,\Ran}).$$

We have
$$\Shv(\Gr_{G,\Ran}):=\underset{I}{\on{lim}}\, \Shv(\Gr_{G,I}),$$
where the limit is formed using the !-pullback functors. 

\sssec{}

Although the group ind-scheme $\fL(N)_{\Ran}$ is \emph{not} locally of finite type, we have a well-defined full 
subcategory 
$$\SI_{\Ran}:=\Shv(\Gr_{G,\Ran})^{\fL(N)_{\Ran}}\subset \Shv(\Gr_{G,\Ran}).$$ 

\medskip

Namely, for every fixed finite non-empty set $I$, we consider the full subcategory
$$\SI_I:=\Shv(\Gr_{G,I})^{\fL(N)_I}\subset \Shv(\Gr_{G,I}),$$
defined as in \cite[Sect. 1.2]{Ga1}. 

\medskip

We say that the object of $\Shv(\Gr_{G,\Ran})$ belongs to $\Shv(\Gr_{G,\Ran})^{\fL(N)_{\Ran}}$
if its restriction to any $\Gr_{G,I}$ yields an object of $\Shv(\Gr_{G,I})^{\fL(N)_I}$. By construction,
we have an equivalence
$$\SI_{\Ran}:=\underset{I}{\on{lim}}\, \SI_I.$$

\sssec{}

Let $\SI_{\Ran}^{\leq 0}\subset \SI_{\Ran}$ be the full subcategory consisting of objects supported on 
$\ol{S}{}^0_\Ran$. I.e., 
$$\SI_{\Ran}^{\leq 0}=\SI_{\Ran}\cap \Shv(\ol{S}{}^0_\Ran),$$
while
$$\Shv(\ol{S}{}^0_\Ran)\simeq \underset{I}{\on{lim}}\, \Shv(\ol{S}{}^0_I).$$

\ssec{Stratification}

In order to study the structure of $\SI_{\Ran}^{\leq 0}$, we will now describe a certain natural 
stratification of $\ol{S}{}^0_\Ran$, whose open stratum will be $S^0_\Ran$.

\sssec{}
 
For $\lambda\in \Lambda^{\on{neg}}$, let $X^\lambda$ denote the corresponding partially symmetrized power of $X$.
I.e., if
$$\lambda=\underset{i}\Sigma\, (-n_i)\cdot \alpha_i,\quad n_i\geq 0$$
where $\alpha_i$ are simple positive coroots, then
$$X^\lambda=\underset{i}\Pi\, X^{(n_i)}.$$

\medskip

In other words, $Y$-points of $X^\lambda$ are effective $\Lambda^{\on{neg}}$-valued divisors on $X$. 

\medskip

For $\lambda=0$ we by definition have $X^0=\on{pt}$.

\sssec{}

Let 
$$(X^\lambda \times \Ran)^{\subset} \subset \Ran\times X^\lambda$$
be the ind-closed subfunctor, whose $S$-points are pairs $(\CI,D)$ for which the support of the divisor $D$
is \emph{set-theoretically} supported on the union of the graphs of the maps $S\to X$ that comprise
$\CI$. 

\medskip

In other words,
$$(X^\lambda \times \Ran)^{\subset}=\underset{I}{\on{colim}}\, (X^\lambda\times X^I)^{\subset},$$
where 
$$(X^\lambda\times X^I)^{\subset} \subset X^I\times X^\lambda$$ is 
the formal completion of the corresponding incidence subvariety. 

\medskip

For future use we note:

\begin{lem} \label{l:pr contr}
The map
$$\on{pr}^\lambda_\Ran:(X^\lambda \times \Ran)^{\subset}\to X^\lambda$$
is universally homologically contractible\footnote{The notion of universal
local contractibility is recalled in \secref{sss:UHC}}.
\end{lem} 

The proof in the case when $X$ is proper will be given in \secref{sss:proof of pr contr}.
For the proof in the general case see Remark \ref{r:UHC}.

\begin{cor} \label{c:pr contr}
The pullback functor 
$$(\on{pr}^\lambda_\Ran)^!:\Shv(X^\lambda)\to \Shv((X^\lambda \times \Ran)^{\subset})$$
is fully faithful.
\end{cor}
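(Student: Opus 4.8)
The plan is to read off \corref{c:pr contr} from \lemref{l:pr contr} with no further geometric input. One of the equivalent formulations of universal homological contractibility of a morphism $f\colon\CY_1\to\CY_2$ of prestacks (see \cite[Sect.~2.5]{Ga2}) is precisely that for every base change $\CY_2'\to\CY_2$, with $f'\colon\CY_1':=\CY_1\times_{\CY_2}\CY_2'\to\CY_2'$, the $!$-pullback $(f')^!\colon\Shv(\CY_2')\to\Shv(\CY_1')$ is fully faithful; specializing to the identity base change $\CY_2'=\CY_2$ gives that $f^!$ itself is fully faithful. Applying this to $f=\on{pr}^\lambda_\Ran$ --- universally homologically contractible by \lemref{l:pr contr} --- yields the Corollary. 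Thus I would present the proof of \corref{c:pr contr} as essentially a one-line invocation of \lemref{l:pr contr}.

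It is worth pointing out what must be avoided, since it indicates where the genuine work lies. One cannot prove the statement by passing to the individual schemes $(X^I\times X^\lambda)^{\supset}$: the map $\on{pr}^\lambda_I\colon(X^I\times X^\lambda)^{\supset}\to X^\lambda$ is not homologically contractible --- already for $|I|$ smaller than the number of points occurring in the support of a generic divisor of $X^\lambda$, its fibre over such a divisor is \emph{empty}, so $(\on{pr}^\lambda_I)^!$ kills sheaves supported there and cannot be fully faithful. Contractibility here is intrinsically a feature of the Ran/colimit structure: only after the colimit over $\on{Fin}^{\on{surj},\on{op}}$ --- when ``enough marked points become available'' --- do the fibres of $\on{pr}^\lambda_\Ran$ become the spaces of finite subsets of $X$ containing a prescribed finite subset, which are homologically contractible by the usual contractibility argument for $\Ran(X)$.

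Consequently the Corollary itself presents no obstacle; the whole content is \lemref{l:pr contr}, whose proof I would give (as the paper does) in \secref{sss:proof of pr contr}. The strategy there is: identify the fibre of $\on{pr}^\lambda_\Ran$ over a geometric point $D\in X^\lambda$ with the Ran-type space parametrizing finite subsets of $X$ that contain $\supp D$; show this space is homologically contractible via the standard contracting map $\CI\mapsto\CI\cup\supp D$; and --- the delicate point --- upgrade this fibrewise contractibility to the \emph{universal}, relative-over-$X^\lambda$ assertion, uniformly as $D$ varies, using the formalism of universal local contractibility of Ran-type maps from \cite{Ga2}. Once \lemref{l:pr contr} is established, \corref{c:pr contr} follows immediately as above.
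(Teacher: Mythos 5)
Your proof is correct and is exactly the paper's route: \corref{c:pr contr} is an immediate consequence of \lemref{l:pr contr}, since universal homological contractibility means precisely that $!$-pullback along any base change (in particular the identity one) is fully faithful, and the real content lives in the lemma, proved in \secref{sss:proof of pr contr} by identifying the fibers with $\Ran(X)^{\supset \CI_0}$ and invoking its contractibility (with pseudo-properness of $\on{pr}^\lambda_\Ran$ reducing the universal statement to a fiberwise check). Your side remarks, including the failure of contractibility for the individual maps $\on{pr}^\lambda_I$, are accurate and consistent with the paper.
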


\sssec{}

We let
$$\ol{S}{}^\lambda_{\Ran}\subset (X^\lambda \times \Ran)^{\subset}\underset{\Ran}\times \Gr_{G,\Ran}$$
be the closed subfunctor defined by the following condition:

\medskip

An $S$-point $(\CI,D,\CP_G,\alpha)$ of the fiber product $(X^\lambda \times \Ran)^{\subset}\underset{\Ran}\times \Gr_{G,\Ran}$
belongs to $\ol{S}{}^\lambda_{\Ran}$ if for every dominant weight $\check\lambda$ the map \eqref{e:Plucker map} extends to a regular map
\begin{equation} \label{e:Plucker div}
\CO(-\check\lambda(D))\to \CV^{\check\lambda}_{\CP_G}.
\end{equation} 

We denote by $\ol\bi{}^\lambda$ the composite map
$$\ol{S}{}^\lambda_{\Ran}\to (X^\lambda \times \Ran)^{\subset}\underset{\Ran}\times \Gr_{G,\Ran}\to \Gr_{G,\Ran}.$$

This map is proper, and its image is contained in $\ol{S}{}^0_\Ran$. 

\medskip

Note that for $\lambda=0$, the map $\ol\bi{}^0$ is the identity map on $\ol{S}{}^0_\Ran$.

\medskip 

Let $\ol{p}^\lambda_\Ran$ denote the projection
$$\ol{S}{}^\lambda_{\Ran}\to (X^\lambda \times \Ran)^{\subset}.$$

\sssec{}

We define the open subfunctor
$$S^\lambda_{\Ran}\subset \ol{S}{}^\lambda_{\Ran}$$
to correspond to those quadruples $(\CI,D,\CP_G,\alpha)$ for which the map \eqref{e:Plucker div} is an injective bundle map
(i.e., the cokernel is flat over $Y\times X$). 

\medskip

The projection
\begin{equation} \label{e:stratum to config}
p^\lambda_{\Ran}:=\ol{p}{}^\lambda_{\Ran}|_{S^\lambda_{\Ran}}:S^\lambda_{\Ran}\to (X^\lambda \times \Ran)^{\subset}
\end{equation}
admits a canonically defined section
\begin{equation} \label{e:config to stratum}
s^\lambda_{\Ran}:(X^\lambda \times \Ran)^{\subset}\to S^\lambda_{\Ran}.
\end{equation}

Namely, it sends $(\CI,D)$ to the quadruple $(\CI,D,\CP_G,\alpha)$, where $\CP_G$ is the 
$G$-bundle induced from the $T$-bundle $\CP_T:=\CP^0_T(D)$, and $\alpha$ is the trivialization of
$\CP_G$ induced by the tautological trivialization of $\CP_T$ away from the support of $D$. 

\sssec{}

We let
$$\bj^\lambda:S^\lambda_{\Ran}\hookrightarrow \ol{S}{}^\lambda_{\Ran}, \quad 
\bi^\lambda=\ol\bi{}^\lambda\circ \bj^\lambda:S^\lambda_\Ran\to \Gr_{G,\Ran}$$
denote the resulting maps.

\medskip

For a fixed finite non-empty set $I$, we obtain the corresponding subfunctors
$$\ol{S}{}^\lambda_I \subset (X^\lambda\times X^I)^{\subset}\underset{X^I}\times \Gr_{G,I}$$
and
$$S^\lambda_I\subset \ol{S}{}^\lambda_I,$$
and maps, denoted by the same symbols $\bj^\lambda$, $\ol\bi{}^\lambda$, $\bi^\lambda$. Let
$p^\lambda_I$ (resp., $\ol{p}{}^\lambda_I$) denote the resulting map from $S^\lambda_I$
(resp., $\ol{S}{}^\lambda_I$) to $(X^\lambda\times X^I)^{\subset}$. 

\medskip

Let $s^\lambda_I$ denote the resulting section
$$s^\lambda_I:(X^\lambda\times X^I)^{\subset}\to S^\lambda_I.$$

\sssec{}

The following results easily from the definitions:

\begin{lem}
The maps
$$\bi^\lambda:S^\lambda_\Ran\to \ol{S}{}^0_\Ran \text{ and } S^\lambda_I\to \ol{S}{}^0_I$$
are locally closed embeddings. Every field-valued point of $\ol{S}{}^0_\Ran$ (resp., $\ol{S}{}^0_I$)
belongs to the image of exactly one such map.
\end{lem}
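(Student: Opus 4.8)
The plan is to deduce both assertions from the corresponding, essentially classical, picture for a single affine Grassmannian $\Gr_{G,x}$: from the Plücker data one reads off, at each marked point, a relative position in $\Lambda^{\on{neg}}$, and the stratum $S^\lambda$ is then governed by the sum of these positions. First I would reduce to the finite-level statement, i.e. to the claim for $\bi^\lambda:S^\lambda_I\to\ol{S}{}^0_I$. For a surjection $\phi:I\twoheadrightarrow J$ the square (with vertical maps $\bi^\lambda$)
$$
\CD
S^\lambda_J @>{\Delta_\phi}>> S^\lambda_I \\
@VVV @VVV \\
\ol{S}{}^0_J @>{\Delta_\phi}>> \ol{S}{}^0_I
\endCD
$$
is Cartesian, since $S^\lambda_{(-)}$ and $\ol{S}{}^0_{(-)}$ are each obtained by base change of their Ran-versions along $X^{(-)}\to\Ran(X)$, so $S^\lambda_J\simeq\ol{S}{}^0_J\underset{\ol{S}{}^0_I}\times S^\lambda_I$; and $\Delta_\phi:\ol{S}{}^0_J\to\ol{S}{}^0_I$ is a closed embedding, being the base change of the diagonal $X^J\hookrightarrow X^I$. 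As $\ol{S}{}^0_\Ran=\underset{I}{\on{colim}}\,\ol{S}{}^0_I$ and $S^\lambda_\Ran=\underset{I}{\on{colim}}\,S^\lambda_I$ compatibly with these Cartesian squares, the subfunctor $S^\lambda_\Ran\subset\ol{S}{}^0_\Ran$ is locally closed as soon as each $S^\lambda_I\subset\ol{S}{}^0_I$ is; and any field-valued point of $\ol{S}{}^0_\Ran$ factors through $\ol{S}{}^0_I$ for a suitable $I$, its image in $\Ran(X)$ being a finite set of points. So it suffices to work at level $I$.

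For the partition I would proceed as follows. Given a field-valued point $(\CI,\CP_G,\alpha)$ of $\ol{S}{}^0_I$ with $\CI=\{x_1,\dots,x_n\}$, each Plücker map \eqref{e:Plucker map} is a nonzero section of $\CV^{\check\lambda}_{\CP_G}$ which is regular (as the point lies in $\ol{S}{}^0_I$) and nonvanishing off $\Gamma_\CI$; multiplicativity of the highest-weight vectors in $\check\lambda$ shows that its divisor of zeros is linear in $\check\lambda$, hence records a well-defined relative position $\mu(x_k)\in\Lambda$ of $\CP_G$ at each $x_k$ (the Iwasawa position). The hypothesis that the point lies in $\ol{S}{}^0_I$ is, at each $x_k$, precisely the statement that the localization of $(\CP_G,\alpha)$ lies in $\ol{S}{}^0_{x_k}\subset\Gr_{G,x_k}$, which by the single-point version of the lemma (this is classical; see also \cite{Ga1}) forces $\mu(x_k)\in\Lambda^{\on{neg}}$. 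Setting $\lambda:=\underset{k}{\Sigma}\,\mu(x_k)\in\Lambda^{\on{neg}}$ and letting $D$ be the divisor with coefficient $\mu(x_k)$ at $x_k$, one gets a point $(\CI,D,\CP_G,\alpha)$ of $S^\lambda_I$ — the bundle-map condition of \eqref{e:Plucker div} holding exactly because $D$ was built from the relative positions — mapping to $(\CI,\CP_G,\alpha)$. This $D$, and hence $\lambda$, is the only one that works: on $S^\lambda_I$ the bundle-map condition in \eqref{e:Plucker div} says precisely that the coefficient of $D$ at each point of $\CI$ equals the relative position of $\CP_G$ there, so $D$ is determined by $(\CI,\CP_G,\alpha)$. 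Thus the point lies in the image of exactly one $\bi^\lambda$, and with a single preimage.

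The same computation, run over an arbitrary test scheme, shows that $\bi^\lambda:S^\lambda_I\to\ol{S}{}^0_I$ is a monomorphism. To see it is a locally closed embedding I would factor $\bi^\lambda=\ol\bi{}^\lambda\circ\bj^\lambda$, with $\bj^\lambda:S^\lambda_I\hookrightarrow\ol{S}{}^\lambda_I$ an open embedding and $\ol\bi{}^\lambda$ proper, so that $C:=\ol\bi{}^\lambda(\ol{S}{}^\lambda_I\setminus S^\lambda_I)$ is closed in $\ol{S}{}^0_I$, and then check $(\ol\bi{}^\lambda)^{-1}(\ol{S}{}^0_I\setminus C)=S^\lambda_I$. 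The inclusion $\subseteq$ is immediate; for $\supseteq$ one needs $\ol\bi{}^\lambda(S^\lambda_I)\cap C=\varnothing$, i.e. that a single $(\CI,\CP_G,\alpha)$ cannot carry two divisors $D\neq D'$ of total degree $\lambda$ with \eqref{e:Plucker div} a bundle map for $D$ and merely regular for $D'$ — but regularity of \eqref{e:Plucker div} for a divisor of total degree $\lambda$ forces, by comparison with the actual relative positions of $\CP_G$ and since the total degree is fixed (the relevant order on $\Lambda$ being a genuine partial order), that divisor to be the relative-position divisor, so $D=D'$, a contradiction. Hence $\bi^\lambda:S^\lambda_I\to\ol{S}{}^0_I\setminus C$ is the base change of the proper map $\ol\bi{}^\lambda$ along an open embedding, hence proper; being also a monomorphism it is a closed embedding, and composing with $\ol{S}{}^0_I\setminus C\hookrightarrow\ol{S}{}^0_I$ exhibits $\bi^\lambda$ as a locally closed embedding. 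The only point that needs genuine care is the translation between the Plücker conditions defining $S^\lambda$ and $\ol{S}{}^\lambda$ and the relative-position dictionary on $\Gr_{G,x}$ — in particular that on $S^\lambda$ the divisor $D$ is pinned down, in families, by the saturations of the Plücker maps; granting that, everything else is either formal or a direct transcription of the single-point statement.
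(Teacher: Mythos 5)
Your argument is correct and essentially fills in what the paper leaves implicit: the paper gives no proof beyond the assertion that the lemma "results easily from the definitions," and your route — reduce to a fixed $X^I$, read off at each point of $\CI$ the relative-position coweight from the (additive) vanishing orders of the Plücker maps, and use uniqueness of the saturating divisor $D$ both for the partition of field-valued points and for the monomorphism/proper-over-an-open argument giving the locally closed embedding — is exactly the intended content. The one step carrying real weight is the appeal to the one-point statement that Plücker-regularity forces the local position into $\Lambda^{\on{neg}}$ rather than merely into the rational cone spanned by the negative simple coroots (this is where the standard hypothesis that $\pi_1(G)$ is torsion-free, i.e. $[G,G]$ simply connected, silently enters — for, say, $PGL_2$ the Plücker-defined $\ol{S}{}^0$ is strictly larger than the closure of $S^0$), but this is the same input the paper itself takes for granted in defining $\ol{S}{}^0_\Ran$ by Plücker conditions.
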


\sssec{}

In what follows we will denote by
\begin{equation} \label{e:semiinf strata}
\SI^{\leq \lambda}_\Ran\subset \Shv(\ol{S}{}^\lambda_{\Ran}) \text{ and } \SI^{=\lambda}_\Ran\subset \Shv(S^\lambda_{\Ran}),
\end{equation} 
and also
$$\SI^{\leq \lambda}_I\subset \Shv(\ol{S}{}^\lambda_I) \text{ and } \SI^{=\lambda}_I\subset \Shv(S^\lambda_I),$$
the corresponding full subcategories. 

\ssec{The category on a single stratum}

\sssec{}

We have the following explicit description of the category on each stratum separately:

\begin{prop} \label{p:on stratum}
Pullback along the map $p^\lambda_{\Ran}$ of \eqref{e:stratum to config}
defines an equivalence
$$\Shv((X^\lambda \times \Ran)^{\subset})\to \SI^{=\lambda}_\Ran.$$
The inverse equivalence is given by restriction to the section $s^\lambda_{\Ran}$ of \eqref{e:config to stratum}, 
and similarly for $\Ran$ replaced by $X^I$ for an individual $I$.
\end{prop}

\sssec{Proof of \propref{p:on stratum}}
Follows from the fact that the action of the group ind-scheme
$$(X^\lambda \times \Ran)^{\subset}\underset{\Ran}\times \fL(N)_{\Ran}$$
on $S^\lambda_\Ran$ is transitive along the fibers of the map \eqref{e:stratum to config},
with the stabilizer of the section $s^\lambda_{\Ran}$ being a pro-unipotent 
group-scheme over $(X^\lambda \times \Ran)^{\subset}$.

\qed 

\ssec{Interaction between the strata}

\sssec{}

Consider the subcategories \eqref{e:semiinf strata}. 
%As in the case of $\Gr_{G,\Ran}$, we can consider the full subcategories
%$$\SI^{\leq \lambda}_\Ran\subset \Shv(\ol{S}{}^\lambda_\Ran) \text{ and } \SI^{=\lambda}_\Ran\subset \Shv(S^\lambda_\Ran),$$
%obtained by imposing the equivariance condition with respect to the corresponding group ind-scheme. 
%
%\medskip
The maps $\bj^\lambda$, $\ol\bi{}^\lambda$ and $\bi^\lambda$ induce functors
$$(\ol\bi{}^\lambda)_!:=(\ol\bi{}^\lambda)_*:\SI^{\leq \lambda}_\Ran\to \SI^{\leq 0}_\Ran,$$
$$(\ol\bi{}^\lambda)^!:\SI^{\leq 0}_\Ran\to \SI^{\leq \lambda}_\Ran;$$
$$(\bj^\lambda)^*:=(\bj^\lambda)^!:\SI^{\leq \lambda}_\Ran\to \SI^{=\lambda}_\Ran;$$
$$(\bj^\lambda)_*:\SI^{=\lambda}_\Ran\to \SI^{\leq \lambda}_\Ran;$$
$$(\bi^\lambda)^!\simeq (\bj^\lambda)^!\circ (\ol\bi{}^\lambda)^!:\SI^{\leq 0}_\Ran\to \SI^{=\lambda}_\Ran;$$
$$(\bi^\lambda)_*\simeq (\ol\bi{}^\lambda)_*\circ (\bj^\lambda)_*:\SI^{=\lambda}_\Ran\to \SI^{\leq 0}_\Ran.$$

\medskip

The same applies to  $\Ran$ replaced by $X^I$ for a fixed finite non-empty set $I$.

\sssec{}

In \secref{ss:Braden} we will prove:

\begin{prop} \label{p:i* well defined} \hfill

\smallskip

\noindent{\em(a)} For a fixed finite set $I$, the left adjoint of
$$(\bi^\lambda)_*:\SI^{=\lambda}_I\to \SI^{\leq 0}_I$$
is defined as a functor
$$\SI^{\leq 0}_I\to \SI^{=\lambda}_I;$$
to be denoted by $(\bi^\lambda)^*$. 

\smallskip

\noindent{\em(b)} For $\CF\in \SI^{\leq 0}_I$ and $\CF'\in \Shv(X^I)$, the map\footnote{In the formula below $-|_{(X^\lambda\times X^I)^{\subset}}$
denotes the !-pullback along the projection $(X^\lambda\times X^I)^{\subset}\to X^I$.} 
$$(\bi^\lambda)^*((\ol{p}{}^0_I)^!(\CF')\sotimes \CF)\to (p_I^\lambda)^!(\CF'|_{(X^\lambda\times X^I)^{\subset}}
\sotimes (\bi^\lambda)^*(\CF))$$ is an isomorphism. 

\smallskip

\noindent{\em(c)} For a map of finite sets $\phi:I\twoheadrightarrow J$, the natural transformation
$$(\bi^\lambda)^*\circ (\Delta_\phi)^!\to (\Delta_\phi)^!\circ (\bi^\lambda)^*, \quad \SI^{\leq 0}_I \rightrightarrows \SI^{=\lambda}_J$$
is an isomorphism.

\end{prop}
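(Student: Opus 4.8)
The plan is to prove the three assertions of \propref{p:i* well defined} by reducing all of them to a single geometric input: the hyperbolic localization (Braden's) theorem applied to a suitable torus action on $\Gr_{G,I}$, together with the contraction principle for the Zastava-type spaces that govern the strata $S^\lambda_I$.

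\textbf{Step 1: set up the geometry for Braden's theorem.} First I would fix $I$ and exhibit $S^\lambda_I \hookrightarrow \ol{S}{}^0_I$ as the attracting (or repelling) locus for a $\GG_m$-action coming from a generic dominant cocharacter of $T$ acting on $\Gr_{G,I}$ through the loop rotation / adjoint action, exactly as in the local ($I = \{*\}$) case treated in \cite{Ga1}. Concretely, the $\fL(N)_I$-orbit structure on $\ol{S}{}^0_I$ is the family (over $X^I$) of the usual semi-infinite stratification, and the closed substack $S^\lambda_I$ together with the projection $\ol{p}{}^\lambda_I$ to $(X^I \times X^\lambda)^{\supset}$ is precisely the attractor picture. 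The key point is that, after this identification, $(\bi^\lambda)^*$ on $\fL(N)_I$-equivariant objects coincides with the hyperbolic-localization functor $p_! \, i^*$ for the contracting torus, and Braden's theorem tells us it agrees with $p_* \, i^!$; in particular it is \emph{defined} on the equivariant subcategory and lands in $\Shv(S^\lambda_I)$. The equivariance of the output, i.e.\ that it lies in $\SI^{=\lambda}_I$, then follows because hyperbolic localization carries equivariant objects to equivariant objects for the residual group (here the Levi-type loop group acting on the stratum), or more directly from the factorization/contraction structure of $S^\lambda_I$. This proves part (a).

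\textbf{Step 2: the projection formula, part (b).} This is a formal consequence of Step 1 once one has part (a): hyperbolic localization $p_! \, i^*$ commutes with $\sotimes$-tensoring by a sheaf pulled back from the base $X^I$, because both $p$ and $i$ live over $X^I$ and $!$-pullback along the projections $(X^I\times X^\lambda)^{\supset}\to X^I$ and $\ol{S}{}^0_I \to X^I$ commute with the relevant $!$- and $*$-functors. More precisely, $(\ol{p}{}^0_I)^!(\CF')\sotimes(-)$ is an action of $\Shv(X^I)$ on $\Shv(\ol{S}{}^0_I)$, the functor $(\bi^\lambda)^*$ is (on the equivariant subcategory) a morphism of $\Shv(X^I)$-module categories because it is built from $!$- and $*$-pullback/pushforward along $X^I$-maps, and so it commutes with this action; unwinding what "morphism of module categories" means gives exactly the asserted isomorphism, with the shift $\CF'|_{(X^I\times X^\lambda)^{\supset}}$ accounting for the fact that $p^\lambda_I$ factors through $(X^I\times X^\lambda)^{\supset}$ rather than $X^I$. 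The hypothesis $\CF \in \SI^{\leq 0}_I$ is needed only to guarantee, via (a), that $(\bi^\lambda)^*$ is defined on all the objects appearing.

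\textbf{Step 3: compatibility with diagonal restriction, part (c).} For $\phi : I \twoheadrightarrow J$, the map $\Delta_\phi : \ol{S}{}^0_J \to \ol{S}{}^0_I$ is a base change of $\Delta_\phi : X^J \hookrightarrow X^I$, and likewise $S^\lambda_J$ is the base change of $S^\lambda_I$ along $\Delta_\phi$ (this is part of the compatibility of the stratification with the Ran structure, already recorded in the stratification subsection). Since hyperbolic localization is compatible with base change in the base (the torus action and the attracting/repelling loci are defined over $X^I$ and pull back correctly), the Beck--Chevalley / base-change map $(\bi^\lambda)^* \circ (\Delta_\phi)^! \to (\Delta_\phi)^! \circ (\bi^\lambda)^*$ is an isomorphism on the equivariant subcategory. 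The one subtlety is that $\Delta_\phi$ is a closed embedding, not smooth, so one cannot invoke smooth base change naively; instead I would use that $p$ is proper (so $p_! = p_*$ commutes with $(\Delta_\phi)^!$) and that $i^* (\Delta_\phi)^! \simeq (\Delta_\phi')^! i^*$ holds for the relevant Cartesian square of locally closed embeddings, which is where Braden's theorem is again used to trade $i^*$ for $i^!$ and make the base change manifest.

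\textbf{Main obstacle.} The principal difficulty is Step 1: one must verify carefully that the semi-infinite stratum $S^\lambda_I$, in its Ran-parametrized incarnation over $(X^I\times X^\lambda)^{\supset}$, really is the attractor locus of an honest $\GG_m$-action on (a suitable locally closed piece of) the ind-scheme $\Gr_{G,I}$, and that Braden's theorem applies in this ind-finite-type setting to objects that are $\fL(N)_I$-equivariant — in particular that the partially defined left adjoint $(\bi^\lambda)^*$ genuinely exists on the equivariant subcategory rather than merely on holonomic objects (an issue in the D-module context, cf.\ \secref{sss:sheaf theory}). Everything in Steps 2 and 3 is then bookkeeping with base change, but getting the hyperbolic-localization input precisely right, uniformly in $I$ and compatibly with the factorization maps $\Delta_\phi$, is the substantive part, and it is exactly the content promised for \secref{ss:Braden}.
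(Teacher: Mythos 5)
Your proposal takes essentially the same route as the paper: Braden's theorem for the $\GG_m$-action coming from a regular dominant coweight (attracting loci the $S^\lambda_I$, repelling loci the corresponding $N^-$-strata) gives the well-definedness in (a), the resulting identification $(s^\lambda_I)^!\circ(\bi^\lambda)^*\simeq (p^{-,\lambda}_I)_*\circ(\bi^{-,\lambda})^!$ together with base change gives (b), and (c) then follows (the paper deduces it formally from (b), while you re-run the base-change argument directly). One small caveat: your appeal in Step 3 to properness of the attractor/repeller projections is both false (these are fibrations with pro-unipotent, non-proper fibers) and unneeded, since the base change of $*$-pushforward against $!$-pullback along $\Delta_\phi$ holds without it; likewise in (b) the equivariance hypothesis is used not merely for definedness but to reduce the isomorphism to a check after applying $(s^\lambda_I)^!$.
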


\begin{rem}  \label{r:unambiguous}

Let $\CF\in \SI^{\leq 0}_I$, be such that the \emph{partially defined} left adjoint $(\bi^\lambda)^*$ of 
\begin{equation} \label{e:i lambda *}
(\bi^\lambda)_*:\Shv(S^\lambda_I)\to \Shv(\ol{S}{}^0_I)
\end{equation}
is defined on $\CF$, viewed as an object of $\Shv(\ol{S}{}^0_I)$. 

\medskip

(Note that the condition of point (a') of \propref{p:i* well defined} is always satisfied in the context of constructible sheaves. 
In the context of D-modules, it is satisfied if, for example, $\CF$ is ind-holonomic.)

\medskip

Then it follows formally that the resulting object of $\Shv(S^\lambda_I)$ equals 
$$(\bi^\lambda)^*(\CF)\in \SI^{=\lambda}_I\subset \Shv(S^\lambda_I),$$
where $(\bi^\lambda)^*$ is understood in the sense of point point (a) of \propref{p:i* well defined}. 

\medskip

In other words, for such $\CF$, the notation $(\bi^\lambda)^*(\CF)$ is unambiguous. 

\medskip

A similar remark applies to the functor $(\bi^\lambda)_!$ studied in \corref{c:i! well defined} below.

\end{rem}

\sssec{}

From \propref{p:i* well defined}, by a formal Cousin argument, we obtain:

\begin{cor} \label{c:i! well defined} \hfill

\smallskip

\noindent{\em(a)} For a fixed finite set $I$, the left adjoint of
$$(\bi^\lambda)^!:\SI^{\leq 0}_I \to \SI^{=\lambda}_I$$
is defined as a functor
$$\SI^{=\lambda}_I\to \SI^{\leq 0}_I;$$
to be denoted $(\bi^\lambda)_!$.

\smallskip

\noindent{\em(b)} For $\CF\in \SI^{=\lambda}_I$ and $\CF'\in \Shv(X^I)$, the map
$$(\bi^\lambda)_!((p^\lambda_I)^!(\CF'|_{(X^\lambda\times X^I)^{\subset}}))\sotimes \CF)\to (\ol{p}{}^0_I)^!(\CF')\sotimes (\bi^\lambda)_!(\CF)$$
is an isomorphism.

\smallskip

\noindent{\em(c)} For a map of finite sets $\phi:I\twoheadrightarrow J$, the natural transformation
$$(\bi^\lambda)_!\circ (\Delta_\phi)^!\to (\Delta_\phi)^!\circ (\bi^\lambda)_!, \quad \SI^{=\lambda}_I  \rightrightarrows \SI^{\leq 0}_J.$$

\end{cor}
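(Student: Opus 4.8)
The plan is to obtain \corref{c:i! well defined} as the \emph{Verdier dual} of \propref{p:i* well defined}: assertions (a), (b), (c) of the corollary are, respectively, the $\BD$-transforms of assertions (a), (b), (c) of the proposition, so once the duality is in place the deduction is formal.

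First I would recall from \cite{Ga1} the Verdier duality contravariant equivalence $\BD$ on the semi-infinite category and the relevant structure: each of $\SI^{\leq 0}_I$, $\SI^{\leq\lambda}_I$, $\SI^{=\lambda}_I$ carries a canonical such $\BD$, which preserves the subcategory in question. On $\SI^{=\lambda}_I$ it is transported from the usual Verdier duality on $\Shv((X^I\times X^\lambda)^{\supset})$ under the equivalence $\SI^{=\lambda}_I\simeq\Shv((X^I\times X^\lambda)^{\supset})$; on $\SI^{\leq\lambda}_I$ and $\SI^{\leq 0}_I$ it is assembled from the stratum-wise dualities by recollement along the (finite) stratification $\ol{S}{}^0_I=\bigsqcup_\lambda S^\lambda_I$. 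I would then record the compatibilities I need: up to a cohomological shift by twice a relative dimension, $\BD$ exchanges the functors attached to the locally closed embedding $\bi^\lambda$ in pairs, $(\bi^\lambda)_*\leftrightarrow(\bi^\lambda)_!$ and $(\bi^\lambda)^!\leftrightarrow(\bi^\lambda)^*$, and likewise for $\ol\bi{}^\lambda$ and $\bj^\lambda$; it intertwines $(\Delta_\phi)^!$ with itself up to the shift and Tate twist of the conormal of the regular embedding $\Delta_\phi$; and it is compatible with the $!$-pullbacks $(\ol{p}{}^0_I)^!$, $(p^\lambda_I)^!$, with restriction along $(X^I\times X^\lambda)^{\supset}\to X^I$, and with $\sotimes$. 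Since existence of adjoints and the property of a map being an isomorphism are insensitive to cohomological shifts and to applying an equivalence, these compatibilities suffice.

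Granting this, part (a) is immediate: the assertion ``$(\bi^\lambda)_!$ is defined on $\SI^{=\lambda}_I$, lands in $\SI^{\leq 0}_I$, and is left adjoint to $(\bi^\lambda)^!$'' says that $\CF\mapsto\Hom_{\SI^{=\lambda}_I}(\CF,(\bi^\lambda)^!\CG)$ is corepresentable for every $\CG\in\SI^{\leq 0}_I$; conjugating by $\BD$ and using $\BD\circ(\bi^\lambda)^!\simeq(\bi^\lambda)^*\circ\BD$ together with the self-duality of $\SI^{\leq 0}_I$ and $\SI^{=\lambda}_I$, this becomes the assertion that $(\bi^\lambda)^*$ is defined on $\SI^{\leq 0}_I$ with values in $\SI^{=\lambda}_I$, i.e. \propref{p:i* well defined}(a). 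For (b), I would apply $\BD$ to both sides of the isomorphism of \propref{p:i* well defined}(b); the compatibilities above rewrite it as the isomorphism of (b) after the harmless substitutions $\BD\CF'\rightsquigarrow\CF'$ (legitimate since $\CF'$ ranges over all of $\Shv(X^I)$) and $\BD\CF\rightsquigarrow\CF$ (over $\SI^{=\lambda}_I$). For (c), applying $\BD$ to the natural transformation of \propref{p:i* well defined}(c) and using the $\BD$-compatibility of $(\Delta_\phi)^!$ identifies it with the natural transformation of (c); as $\BD$ is an equivalence, it detects the isomorphism property on $\SI^{=\lambda}_I$.

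The hard part is the construction and bookkeeping of $\BD$: making the Verdier duality of the non-finite-type categories $\SI^{\leq 0}_I$, $\SI^{\leq\lambda}_I$ precise and verifying the exchange relations for $(\bi^\lambda)_*$, $(\bi^\lambda)^!$, $(\bj^\lambda)_*$, etc., and the $\Delta_\phi$-compatibility. This is exactly where one leans on \cite{Ga1}: the finiteness that makes these dualities exist is supplied by the stratum-wise equivalences $\SI^{=\lambda}_I\simeq\Shv((X^I\times X^\lambda)^{\supset})$ together with finiteness of the stratification poset, so that the duality on the big categories is built by a finite recollement induction; the remainder is tracking the (Tate-twisted) cohomological shifts, which affect none of the statements of the corollary.
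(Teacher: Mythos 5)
Your instinct to deduce the corollary from \propref{p:i* well defined} is right (the paper itself obtains it from the proposition "by a formal argument"), but the Verdier-duality mechanism you propose does not exist in this setting, and setting it up would beg the question. The strata $S^\lambda_I$ are $\fL(N)_I$-orbits of infinite type, so objects of $\SI^{=\lambda}_I$ (e.g. $\omega_{S^\lambda_I}$, or anything of the form $(p^\lambda_I)^!(-)$) are not constructible/compact, and there is no Verdier self-duality of $\Shv(\ol{S}{}^0_I)$ or $\Shv(S^\lambda_I)$ to conjugate with. Your substitute -- defining $\BD$ stratum-wise via $\SI^{=\lambda}_I\simeq \Shv((X^I\times X^\lambda)^{\supset})$ and gluing by "finite recollement induction" -- fails already because the stratification poset is $\Lambda^{\on{neg}}$, which is infinite; and, more seriously, the compatibilities you then invoke (that the glued $\BD$ exchanges $(\bi^\lambda)_!\leftrightarrow(\bi^\lambda)_*$ and $(\bi^\lambda)^!\leftrightarrow(\bi^\lambda)^*$, and commutes suitably with $(\Delta_\phi)^!$) presuppose that these partially defined functors exist on, and preserve, the equivariant subcategories -- i.e. they presuppose the corollary and the proposition rather than follow formally. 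In this paper, exchanges of $!$- and $*$-functors of this kind are only ever accessed through Braden's theorem or through the finite-dimensional global model $\BunNb$ (where $\IC^\semiinf_{\on{glob}}$ is genuinely Verdier self-dual); there is no a priori duality on the Ran-space categories doing this work. Finally, even granting a self-duality of the equivariant subcategories, conjugation would only corepresent $\CG\mapsto \Maps(\CF,(\bi^\lambda)^!(\CG))$ for $\CG\in \SI^{\leq 0}_I$, whereas part (a) asserts the existence of the left adjoint of $(\bi^\lambda)^!$ computed against all of $\Shv(\ol{S}{}^0_I)$; a duality internal to the subcategories cannot see that ambient universal property.

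A genuinely formal deduction avoids duality altogether. Write $\bi^\lambda=\ol\bi{}^\lambda\circ \bj^\lambda$; since $\ol\bi{}^\lambda$ is proper, $(\ol\bi{}^\lambda)_*=(\ol\bi{}^\lambda)_!$ is already left adjoint to $(\ol\bi{}^\lambda)^!$, so everything reduces to producing $(\bj^\lambda)_!$ on $\SI^{=\lambda}_I$. For $\CF\in \SI^{=\lambda}_I$, the object $\on{Fib}\bigl((\bj^\lambda)_*(\CF)\to \imath_*\,\imath^*(\bj^\lambda)_*(\CF)\bigr)$, where $\imath$ denotes the embedding of the closed complement of $S^\lambda_I$ in $\ol{S}{}^\lambda_I$, has the universal property of $(\bj^\lambda)_!(\CF)$ against the whole ambient category, and the $*$-restriction appearing here is defined on equivariant objects precisely by \propref{p:i* well defined}(a) together with the Remark extending it to the $\ol\bi$-type maps; equivariance of the output and parts (b), (c) then follow from parts (b), (c) of the proposition by chasing this fiber sequence. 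This is the kind of argument the paper's "formal argument" refers to, and it uses only the proposition, not any duality.
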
 

\sssec{}

Passing to the limit over $I\in \on{Fin}^{\on{surj}}$, we obtain: 

\begin{cor} \label{c:well defined Ran} \hfill 

\smallskip

\noindent{\em(a)} The left adjoint of 
$$(\bi^\lambda)_*:\SI^{=\lambda}_\Ran\to \SI^{\leq 0}_\Ran$$
is defined as a functor
$$\SI^{\leq 0}_\Ran\to \SI^{=\lambda}_\Ran;$$
to be denoted by $(\bi^\lambda)^*$. 

\smallskip

\noindent{\em(b)} The left adjoint of
$$(\bi^\lambda)^!:\SI^{\leq 0}_\Ran \to \SI^{=\lambda}_\Ran$$
is defined as a functor
$$\SI^{=\lambda}_\Ran\to \SI^{\leq 0}_\Ran,$$
to be denoted $(\bi^\lambda)_!$.

\smallskip

\noindent{\em(c)} For $\CF\in \SI^{\leq 0}_\Ran$ and $\CF'\in \Shv(\Ran)$, the map
$$(\bi^\lambda)^*((\ol{p}{}^0_\Ran)^!(\CF')\sotimes \CF)\to (p_\Ran^\lambda)^!(\CF'|_{(X^\lambda \times \Ran)^{\subset}}))
\sotimes (\bi^\lambda)^*(\CF)$$
is an isomorphism. 

\smallskip

\noindent{\em(d)} For $\CF\in \SI^{=\lambda}_\Ran$ and $\CF'\in \Shv(\Ran)$, the map
$$(\bi^\lambda)_!((p^\lambda_\Ran)^!(\CF'|_{(X^\lambda \times \Ran)^{\subset}}))\sotimes \CF)\to 
(\ol{p}{}^0_\Ran)^!(\CF')\sotimes (\bi^\lambda)_!(\CF)$$
is an isomorphism.

\end{cor}

\begin{rem}
A slight variation of the proof of \propref{p:i* well defined} shows that the assertions of \corref{c:well defined Ran}
remain valid for $\bi^\lambda$ replaced
by $\ol\bi{}^\lambda$. Similarly, the assertion of \corref{c:i! well defined} remains valid for $\bi^\lambda$ replaced by $\bj^\lambda$,
and the same is true for their Ran variants.
\end{rem}

\ssec{An aside: the ULA property}

Consider the object
$$(\bj^0)_!(\omega_{S^0_I})\in \SI^{\leq 0}_I \subset \Shv(\ol{S}{}^0_I).$$

Here $(\bj^0)_!$ is understood as the (partially defined) left adjoint of
$$(\bj^0)^!:\Shv(\ol{S}{}^0_I)\to (\bj^0)_!(\omega_{S^0_I});$$
it is always defined in constructible contexts; in the context of D-modules, it is defined
since $\omega_{S^0_I}$ is ind-holonomic.

\medskip

We will now formulate a certain strong acyclicity property of the above object that it enjoys with respect to the projection 
$$\ol{p}^0_I:\ol{S}{}^0_I\to X^I.$$

\sssec{}

Let $Y$ be a scheme, and let $\CC$ be a DG category equipped with an action of the $\Shv(Y)$, viewed as a monoidal
category with respect to $\sotimes$.

\medskip

We shall say that an object $c\in \CC$ is ULA with respect to $Y$ if for any compact $\CF\in \Shv(Y)^c$, and any $c'\in \CC$, the map
$$\Hom_\CC(\CF\sotimes c,c')\to \Hom(\BD(\CF)\sotimes \CF\sotimes c,\BD(\CF)\sotimes c')\to 
\Hom(\sfe_Y\sotimes c,\BD(\CF)\sotimes c')$$
is an isomorphism.  

\medskip

In the above formula, $\BD(-)$ denotes the Verdier duality anti-equivalence of $\Shv(Y)^c$, 
$$(\Shv(Y)^c)^{\on{op}}\to \Shv(Y)^c,$$
and $\sfe_Y$ is the ``constant sheaf"
on $Y$, i.e., $\sfe_Y:=\BD(\omega_Y)$. 
Note that when $Y$ is smooth of dimension $d$, we have $\sfe_Y\simeq \omega_Y[-2d]$. 
 
\sssec{}

We regard $\Shv(\ol{S}{}^0_I)$ as tensored over $\Shv(X^I)$ via 
$$\CF\in \Shv(X^I),\,\,\CF'\in \Shv(\ol{S}{}^0_I)\mapsto (\ol{p}{}^0_I)^!(\CF)\sotimes \CF'.$$

\medskip

We claim:

\begin{prop} \label{p:j ULA}
The object $(\bj^0)_!(\omega_{S^0_I})\in \Shv(\ol{S}{}^0_I)$ is ULA with respect to $X^I$.
\end{prop}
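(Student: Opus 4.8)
The plan is to reduce the ULA property over $X^I$ to a factorization/local statement that can be checked after restricting to strata, and then to use the explicit description of the strata (Lemma \ref{l:on stratum}) together with the contractibility of the projections $\on{pr}^\lambda_\Ran$ (Lemma \ref{l:pr contr}, Corollary \ref{c:pr contr}) to verify it. First I would recall the standard criterion: an object $c$ in a category tensored over $\Shv(X^I)$ is ULA with respect to $X^I$ if and only if, locally on $X^I$, it is ``constructible with respect to a stratification transverse to the diagonals'' — more precisely, it suffices to exhibit a stratification of $\ol{S}{}^0_I$ over a stratification of $X^I$ such that $(\bj^0)_!(\omega_{S^0_I})$ restricted to each stratum is pulled back (up to shift) from the base, because $\omega$ of a smooth map is ULA and ULA-ness is stable under the six operations along the relevant maps. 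The natural stratification to use is of course the one by the $S^\lambda_I$.

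The key steps, in order, would be: (1) Reduce to checking the ULA property stratum-by-stratum. Concretely, using the recollement triangle associated to the locally closed decomposition $\ol{S}{}^0_I = \bigsqcup_\lambda S^\lambda_I$, and the fact that ULA objects form a triangulated (in fact thick) subcategory stable under $(\bi^\lambda)_*$ and $(\bi^\lambda)_!$ of ULA objects downstairs, it is enough to show that $(\bi^\lambda)^*(\bj^0)_!(\omega_{S^0_I})$ is ULA over $X^I$ for every $\lambda\in\Lambda^{\on{neg}}$. (2) Compute this $*$-restriction. By base change along the incidence correspondence and Corollary \ref{c:i! well defined}(b) (the projection formula for $(\bi^\lambda)^*$ against $(\ol p^0_I)^!$-twists), together with the fact that $(\bj^0)_!(\omega_{S^0_I})$ is the $!$-extension of the dualizing sheaf, one identifies $(\bi^\lambda)^*(\bj^0)_!(\omega_{S^0_I})$ with $(p^\lambda_I)^!$ of an explicit object on $(X^I\times X^\lambda)^{\supset}$ — essentially $\omega$ of that space twisted by a combinatorial factor coming from $X^\lambda$, i.e., the restriction to the incidence variety of $\omega_{X^\lambda}$-type data (this is exactly the shape of the computation feeding into \thmref{t:descr of restr} for $*$-restrictions). (3) Via Lemma \ref{l:on stratum}, transport the question to $(X^I\times X^\lambda)^{\supset}$: we must show that the relevant object there is ULA with respect to $X^I$ via the projection $(X^I\times X^\lambda)^{\supset}\to X^I$. (4) Factor this projection as $(X^I\times X^\lambda)^{\supset}\to X^I\times X^\lambda \to X^I$ — wait, more carefully, use that $(X^I\times X^\lambda)^{\supset}$ is, Zariski-locally on $X^I$ away from deeper diagonals, a product of the base with a power of the configuration space, so the object is literally an external product; ULA-ness over the base is then immediate since $\omega$ of a smooth morphism, and more generally the $!$-pullback along a smooth morphism of any object, is ULA relative to the target. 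The contractibility statement of Lemma \ref{l:pr contr} is what guarantees that passing between $(\Ran\times X^\lambda)^{\supset}$ and $X^\lambda$ does not destroy the needed acyclicity in the individual-$I$ incarnation.

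The main obstacle I anticipate is step (1)–(2): making precise the claim that the $!$-extension $(\bj^0)_!(\omega_{S^0_I})$ has $*$-restrictions to the strata $S^\lambda_I$ that are, up to the combinatorial twist, pulled back from $(X^I\times X^\lambda)^{\supset}$ — i.e., that there are no extra ``local system'' contributions in the $X^I$-direction. This is really the geometric input: one needs that the map $\ol p^0_I : \ol S{}^0_I \to X^I$ is, stratum-wise, a fibration in an appropriate sense, so that $!$-extension commutes with restriction to fibers of $\ol p^0_I$ in families. I would handle this by observing that everything in sight is built by base change from the corresponding objects over $\Ran(X)$, which carry a factorization structure, and factorization algebras are automatically ULA over the relevant powers of the curve — this is the cleanest route and avoids an ad hoc transversality argument. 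A secondary technical point is checking that the partially-defined functors $(\bi^\lambda)^*$ and $(\bi^\lambda)_!$ indeed preserve ULA-ness, but this follows formally from \propref{p:i* well defined}(b) and \corref{c:i! well defined}(b), which say exactly that these functors are compatible with the $\Shv(X^I)$-module structures.
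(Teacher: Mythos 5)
Your main reduction (step (1)) does not work, and for this particular object it is circular. Since $\bj^0$ is an open embedding, the recollement identities give $(\bi^\lambda)^*(\bj^0)_!(\omega_{S^0_I})=0$ for $\lambda\neq 0$ and $\omega_{S^0_I}$ for $\lambda=0$; so ``checking ULA on the $*$-restrictions to strata'' is vacuous, and your recollement argument only closes if one already knows that $(\bj^0)_!$ carries ULA objects on $S^0_I$ to ULA objects on $\ol{S}{}^0_I$ --- which is exactly the proposition. Moreover, the general principle you invoke (that $!$-extension from a locally closed stratum preserves ULA, equivalently that ULA can be tested on stratum restrictions) is false: for the projection $\BA^2\to \BA^1$ with $U=\BA^2\setminus\{(0,0)\}$, every stratum restriction of $\bj_!(\omega_U)$ is a pullback from the base (resp.\ zero), yet $\bj_!(\omega_U)$ is not ULA over $\BA^1$, since $\omega_{\BA^2}$ is ULA and the skyscraper at the origin is not. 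Relatedly, your steps (2)--(4) compute the wrong thing: the $*$-restrictions of $(\bj^0)_!(\omega_{S^0_I})$ to deeper strata vanish, and the ``combinatorial factors'' you describe pertain to $\ICs_\Ran$ (the content of \thmref{t:descr of restr}), not to the $!$-extension. The appeal to ``factorization algebras are automatically ULA'' is not an argument.

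The correct proof is essentially the observation you relegate to a ``secondary technical point,'' run at $\lambda=0$. The $!$-tensor projection formula of \corref{c:i! well defined}(b) for $\bj^0$ is a genuinely non-trivial input (it fails for a general open embedding, as the $\BA^2$ example shows; here it holds on the equivariant subcategory because it is deduced from \propref{p:i* well defined}, i.e.\ from Braden's theorem). Granting it, the $((\bj^0)_!,(\bj^0)^!)$-adjunction, together with the automatic compatibility of $(\bj^0)^!$ with the $\Shv(X^I)$-actions, transports the ULA test for $(\bj^0)_!(\omega_{S^0_I})$ to a statement on $S^0_I$: namely that $\omega_{S^0_I}$ is ULA over $X^I$. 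That last point is the second geometric input, which your proposal never isolates (your smoothness discussion in step (4) concerns $(X^I\times X^\lambda)^{\supset}$, not $S^0_I\to X^I$): it holds because $S^0_I$ is a union of closed subschemes each smooth over $X^I$. This two-ingredient argument --- one commutative diagram of $\Hom$'s --- is the paper's proof.
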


\begin{proof} 

For $\CF\in \Shv(X^I)$ and $\CF'\in \Shv(\ol{S}{}^0_I)$, we have a commutative square
$$
\CD
\Hom((p^0_I)^!(\CF),(\bj^0)^!(\CF'))    @>>>  \Hom((p^0_I)^!(\sfe_{X^I}),(p^0_I)^!(\BD(\CF))\sotimes (\bj^0)^!(\CF'))  \\
@V{\sim}VV      @VV{\sim}V  \\ 
\Hom((\bj^0)_!\circ (p^0_I)^!(\CF),\CF') & & 
\Hom((\bj^0)_!\circ (p^0_I)^!(\sfe_{X^I}),(\ol{p}{}^0_I)^!(\BD(\CF))\sotimes \CF')   \\
@AAA   @AAA  \\
\Hom((\ol{p}{}^0_I)^!(\CF)\sotimes (\bj^0)_!(\omega_{S^0_I}),\CF') @>>>  
\Hom((\ol{p}{}^0_I)^!(\sfe_{X^I})\sotimes (\bj^0)_!(\omega_{S^0_I}),(\ol{p}{}^0_I)^!(\BD(\CF))\sotimes \CF') .
\endCD
$$

In this diagram the lower vertical arrows are isomorphisms by \corref{c:i! well defined}(b). 
The top horizontal arrow is an isomorphism because $S^0_I$ can be exhibited as a union of closed subschemes, each being smooth 
over $X^I$. (Indeed, write $\fL(N)_I$ as a union of group sub-schemes $N^\alpha_I$ pro-smooth over $X^I$; then $S^0_I$
is a union of the quotients $N^\alpha_I/\fL^+(N)_I$.) 

\medskip

Hence, the bottom horozontal arrow is an isomorphism, as required.

\end{proof} 

\ssec{An application of Braden's theorem}  \label{ss:Braden}

In this subsection we will prove \propref{p:i* well defined}. 

\sssec{}  \label{sss:setting for Braden}

Let
$$S^{-,\lambda}_I \overset{\bj^{-,\lambda}}\hookrightarrow \ol{S}{}^{-,\lambda}_I \overset{\ol\bi{}^{-\lambda}}\to \Gr_{G,I}$$
be the objects defined in the same way as their counterparts 
$$S^{\lambda}_I \overset{\bj^{\lambda}}\hookrightarrow \ol{S}{}^{\lambda}_I \overset{\ol\bi{}^{\lambda}}\to \Gr_{G,I},$$
but where we replace $N$ by $N^-$.

\medskip

Choose a regular dominant coweight $\BG_m\to T$. It gives rise to an action of $\BG_m$ on $\ol{S}{}^0_I$
along the fibers of the projection $\ol{p}{}^0_I$. We have:

\begin{lem}  \label{l:attr locus}
The attracting (resp., repelling) locus of the above $\BG_m$ action identifies with
$$\underset{\lambda\in \Lambda^{\on{neg}}}\sqcup\, S^\lambda_I \text{ and }
\underset{\lambda\in \Lambda^{\on{neg}}}\sqcup\, S^{-,\lambda}_I,$$ respectively. The fixed point locus 
identifies with
$$\underset{\lambda\in \Lambda^{\on{neg}}}\sqcup\, s^\lambda_I((X^\lambda\times X^I)^{\subset}).$$
\end{lem}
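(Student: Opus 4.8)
The plan is to reduce the assertion, fiberwise over $X^I$, to the classical computation of the Bialynicki-Birula decomposition on the affine Grassmannian $\Gr_G$ at a point. First I would unwind the $\mathbb{G}_m$-action: the chosen regular dominant cocharacter $\gamma:\mathbb{G}_m\to T$ acts on each $\Gr_{G,I}$ through the loop group $\mathfrak{L}(T)_I$, hence preserves the map $\overline{p}{}^0_I$ and acts along its fibers. So it suffices to identify, over a geometric point $\underline{x}=(x_i)_{i\in I}\in X^I$, the attracting, repelling, and fixed loci of $\gamma$ acting on the fiber $(\overline{S}{}^0_I)_{\underline{x}}$, which is a closed subfunctor of the appropriate product of affine Grassmannians $\prod \Gr_{G,x_i}$ (with points colliding giving the Beilinson-Drinfeld degeneration). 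By the factorization structure of $\Gr_{G,\Ran}$, $S^\lambda_\Ran$, and $X^\lambda$, it is enough to treat the case where all the $x_i$ are distinct, and then further reduce to a single point $x$, i.e.\ to the statement for $\Gr_{G,x}$, $S^\lambda_x$, $S^{-,\lambda}_x$ — and this last statement is standard (it is implicit in \cite{Ga1} and goes back to the Mirković-Vilonen description of the semi-infinite orbits $S^\lambda$, $S^{-,\lambda}$ in $\Gr_G$ as the attracting/repelling cells of the $\gamma$-action, with fixed points the $T$-fixed points $t^\lambda\in\Gr_G$).

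The key steps, in order: (1) Check that $\gamma$ acts fiberwise over $X^I$ and identify the $\mathbb{G}_m$-fixed ind-subscheme of $\Gr_{G,I}$ with $\mathfrak{L}(T)_I^{\mathbb{G}_m}$-orbits, i.e.\ with the $T$-coweight lattice worth of sections; intersecting with $\overline{S}{}^0_I$ picks out exactly the $\lambda\in\Lambda^{\on{neg}}$ (the Plücker/regularity condition \eqref{e:Plucker map} forces $\lambda\in -\Lambda^{\on{pos}}=\Lambda^{\on{neg}}$), and the fixed section is by construction $s^\lambda_I(X^I)$ (the $G$-bundle induced from $\CP^0_T(D)$ with $D$ the divisor $\lambda\cdot\underline{x}$ appropriately spread out — but here $D$ is forced to be set-theoretically supported on $\Gamma_{\underline x}$, which is why we land in $(X^I\times X^\lambda)^{\supset}$ and the section over $X^I$ is literally $s^\lambda_I$). (2) Identify the attracting locus: a point of $\overline{S}{}^0_I$ flows under $\gamma(t)$, $t\to 0$, into $s^\lambda_I(X^I)$ iff the corresponding quasi-map / generalized $N$-reduction degenerates to the one given by $D$, which is precisely the defining condition of $S^\lambda_I$ (the map \eqref{e:Plucker div} being a bundle map on the stratum). (3) Symmetrically for the repelling locus with $N^-$ in place of $N$, giving $S^{-,\lambda}_I$. (4) Assemble over all $\lambda$: since $\overline{S}{}^0_I$ is the union of the locally closed $S^\lambda_I$ (by the stratification Lemma) and the attracting cells of a $\mathbb{G}_m$-action on an ind-scheme of ind-finite type over $X^I$ with the listed fixed locus are disjoint and cover it, the decomposition follows.

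I expect the main obstacle to be step (1)–(2) carried out \emph{in families over $X^I$ and compatibly with collisions of points}, rather than pointwise: one must know that the $\mathbb{G}_m$-attracting functor (in the sense of Drinfeld-Gaitsgory / the ind-scheme Bialynicki-Birula theory used for Braden's theorem) commutes with the relevant base changes $X^J\to X^I$ and with the passage to the Ran limit, so that $S^\lambda_I$, defined by its Plücker condition, coincides \emph{on the nose} with the attracting ind-scheme, not merely on geometric points. The clean way to handle this is to use that both sides are ind-schemes flat (indeed smooth, after Lemma~\ref{l:on stratum}) over $(X^I\times X^\lambda)^{\supset}$, so a fiberwise isomorphism of the maps to $\Gr_{G,I}$ suffices; and the fiberwise statement is the classical one. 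The remaining bookkeeping — that the attracting locus is a disjoint union indexed exactly by $\Lambda^{\on{neg}}$ and not a larger lattice — is exactly the content of the regularity of the Plücker maps \eqref{e:Plucker map}, i.e.\ of membership in $\overline{S}{}^0_\Ran$, so there is nothing further to prove there.
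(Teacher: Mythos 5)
The paper offers no proof of Lemma \ref{l:attr locus} at all -- it is stated as the standard Mirkovi\'c--Vilonen/Drinfeld--Gaitsgory description of the fixed, attracting and repelling loci for a regular dominant cocharacter, transported fiberwise over $X^I$ -- and your reduction to the single-point case (together with the Pl\"ucker bookkeeping that forces the fixed divisors to be $\Lambda^{\on{neg}}$-valued, and the contraction of $\fL(N)_I$ by conjugation to produce the attraction to $s^\lambda_I$) is exactly the justification the author has in mind, so your proposal is essentially correct and matches the intended argument; note only that the fiber of $\Gr_{G,I}$ over \emph{any} point of $X^I$ is already a product of single-point affine Grassmannians over the support, so no separate reduction to the locus of distinct points is needed, and that the fiberwise-to-functorial upgrade via the EGA fiberwise-isomorphism criterion needs flatness only of the source of the comparison map $S^\lambda_I\to(\ol{S}{}^0_I)^{\on{att}}$, which the orbit description gives you, not of the attractor itself. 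One caveat, which is a looseness already present in the paper's own formulation rather than a defect of your argument: the repelling locus of the action on $\ol{S}{}^0_I$ is literally $\underset{\lambda\in \Lambda^{\on{neg}}}\sqcup\,\bigl(\ol{S}{}^0_I\cap S^{-,\lambda}_I\bigr)$ (the intersections later identified with Zastava-type spaces), not the full $S^{-,\lambda}_I$; this is what your symmetric step for $N^-$ actually yields, and it is all that the application of Braden's theorem in \secref{ss:Braden} requires.
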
 

\sssec{}

Let us now prove point (a) of \propref{p:i* well defined}\footnote{We are grateful to Lin Chen for pointing out a mistake in the
statement of \propref{p:i* well defined} in the previous version of the paper. The corrected argument is due to him.}.

\medskip

By \propref{p:on stratum}, it suffices to show that the functor
$$\Shv((X^\lambda\times X^I)^{\subset}) \overset{(p^\lambda_I)^!}\longrightarrow \SI^{=\lambda}_I  \overset{(\bi^\lambda)_*}\longrightarrow
\SI^{\leq 0}_I,$$
admits a left adjoint. 

\medskip

For this, it suffices to show that the \emph{partially defined} left adjoint to 
$$\Shv((X^\lambda\times X^I)^{\subset}) \overset{(p^\lambda_I)^!}\longrightarrow \Shv(S^\lambda_I)  \overset{(\bi^\lambda)_*}\longrightarrow
\Shv(\ol{S}{}^0_I),$$
is defined on objects that belong to $\SI^{\leq 0}_I\subset \Shv(\ol{S}{}^0_I)$.

\medskip

It is easy to see that every object in $\Shv(\ol{S}{}^0_I)$ is $\BG_m$-monodromic. Now, the result follows from 
Braden's theorem\footnote{Braden's theorem extends from schemes to ind-schemes by an easy colimit argument.} 
(see \cite[Theorem 3.3.4]{DrGa}), combined with \lemref{l:attr locus}. 

\sssec{}

Note that Braden's theorem also implies the existence of a canonical isomorphism
\begin{equation} \label{e:Braden semiinf}
(s^\lambda_I)^!\circ (\bi^\lambda)^*\simeq (p^{-,\lambda}_I)_*\circ (\bi^{-,\lambda})^!.
\end{equation}

This implies point (b) of \propref{p:i* well defined} by base change.

\medskip

Point (c) is a formal corollary of point (b). 

\qed

\begin{rem} \label{r:*-restr Ran}
For future use, we note that \eqref{e:Braden semiinf} and \propref{p:i* well defined}(c) imply that an analogous formula 
holds over the Ran space:
\begin{equation} \label{e:Braden semiinf Ran}
(s^\lambda_\Ran)^!\circ (\bi^\lambda)^*(\CF)\simeq (p^{-,\lambda}_\Ran)_*\circ (\bi^{-,\lambda})^!(\CF),\quad \CF\in \SI_\Ran^{\leq 0}. 
\end{equation}
\end{rem} 

\section{The t-structure and the semi-infinite IC sheaf}  \label{s:t}

In this section we define a t-structure on $\SI^{\leq 0}_\Ran$, and define the main object of study in this paper--
the Ran version of the semi-infinite intersection cohomology sheaf, denoted $\ICs_\Ran$. 

\medskip

We will also give a presentation of $\ICs_\Ran$ as a colimit, and describe explicitly its *- and !-restrictions to the strata 
$S^\lambda_\Ran$. 

\ssec{The t-structure on the semi-infinite category}

\sssec{}

We introduce a t-structure on the category $\Shv((X^\lambda \times \Ran)^{\subset})$ as follows. 

%\medskip
%
%Note that the projection
%\begin{equation} \label{e:Ran config to config}
%\on{pr}^\lambda_\Ran:(X^\lambda \times \Ran)^{\subset}\to X^\lambda
%\end{equation}
%is pseudo-proper (see \cite[Sect. 1.5]{Ga2} for what this means). In particular, the functor
%$$(\on{pr}_\Ran^\lambda)_!:\Shv((X^\lambda \times \Ran)^{\subset}\to \Shv(X^\lambda),$$
%left adjoint to $(\on{pr}_\Ran^\lambda)^!$, is well-defined. 

\medskip

We declare an object $\CF\in \Shv((X^\lambda \times \Ran)^{\subset})$ to be \emph{connective} if 
$$\Hom(\CF,(\on{pr}_\Ran^\lambda)^!(\CF'))=0$$
for all $\CF'\in \Shv(X^\lambda)$ that are \emph{strictly coconnective}
(in the perverse t-structure). 

\begin{rem} \label{r:non-local 1}
The above t-structure on $\Shv((X^\lambda \times \Ran)^{\subset})$ is quite pathological
in that it is \emph{non-local}, see also Remark \ref{r:non-local}.
\end{rem} 

\sssec{}

By construction, the functor 
$$(\on{pr}_\Ran^\lambda)^!:\Shv(X^\lambda)\to \Shv((X^\lambda \times \Ran)^{\subset})$$
is left t-exact. 

\medskip

However, we claim:

\begin{prop} \label{p:pr is exact}
The functor $(\on{pr}_\Ran^\lambda)^!:\Shv(X^\lambda)\to \Shv((X^\lambda \times \Ran)^{\subset})$ is t-exact.
\end{prop}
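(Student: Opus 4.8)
The plan is to show that $(\on{pr}^\lambda_\Ran)^!$ sends a connective object $\CF'\in \Shv(X^\lambda)^{\leq 0}$ to a connective object; left t-exactness is already noted, so this is the only thing to prove. By the definition of connectivity on $(\Ran(X)\times X^\lambda)^{\supset}$, I need to show that $\CHom\big((\on{pr}^\lambda_\Ran)^!(\CF'),(\on{pr}^\lambda_\Ran)^!(\CF'')\big)=0$ whenever $\CF''\in \Shv(X^\lambda)$ is strictly coconnective. Adjunction rewrites this as $\CHom\big((\on{pr}^\lambda_\Ran)_!\,(\on{pr}^\lambda_\Ran)^!(\CF'),\CF''\big)$, so it suffices to show that $(\on{pr}^\lambda_\Ran)_!\,(\on{pr}^\lambda_\Ran)^!(\CF')$ is connective in the perverse t-structure on $\Shv(X^\lambda)$.

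The key input is \lemref{l:pr contr}: the map $\on{pr}^\lambda_\Ran$ is universally homologically contractible. By the projection formula for a pseudo-proper map together with the definition of universal homological contractibility, $(\on{pr}^\lambda_\Ran)_!\,(\on{pr}^\lambda_\Ran)^!(\CF')\simeq \CF'\sotimes (\on{pr}^\lambda_\Ran)_!\,(\omega')$, where $\omega'$ is the relative dualizing object; universal contractibility says precisely that the counit $(\on{pr}^\lambda_\Ran)_!\,(\on{pr}^\lambda_\Ran)^!\to \id$ is an isomorphism (equivalently $(\on{pr}^\lambda_\Ran)^!$ is fully faithful, which is \corref{c:pr contr}), hence $(\on{pr}^\lambda_\Ran)_!\,(\on{pr}^\lambda_\Ran)^!(\CF')\simeq \CF'$. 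Since $\CF'$ was assumed connective, we are done.

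First I would spell out that $(\on{pr}^\lambda_\Ran)^!$ being fully faithful with a left adjoint $(\on{pr}^\lambda_\Ran)_!$ means the composite $(\on{pr}^\lambda_\Ran)_!\circ (\on{pr}^\lambda_\Ran)^!$ is canonically the identity functor on $\Shv(X^\lambda)$; this is a formal consequence of \corref{c:pr contr}. Then I would invoke adjunction: for $\CF'$ connective and $\CF''$ strictly coconnective,
\[
\CHom\big((\on{pr}^\lambda_\Ran)^!(\CF'),(\on{pr}^\lambda_\Ran)^!(\CF'')\big)\simeq \CHom\big(\CF',\CF''\big)=0,
\]
the last equality because $\CF'\in \Shv(X^\lambda)^{\leq 0}$ and $\CF''\in \Shv(X^\lambda)^{>0}$. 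This shows $(\on{pr}^\lambda_\Ran)^!(\CF')$ is connective, which is exactly right t-exactness, and combined with the already-noted left t-exactness gives t-exactness.

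The main obstacle is purely bookkeeping: making sure the Hom-vanishing characterization of ``connective'' on $(\Ran(X)\times X^\lambda)^{\supset}$ is correctly dualized through the $(\!(\on{pr}^\lambda_\Ran)_!,(\on{pr}^\lambda_\Ran)^!)$ adjunction — in particular that $(\on{pr}^\lambda_\Ran)_!$ is genuinely left adjoint here (this uses pseudo-properness of $\on{pr}^\lambda_\Ran$, already recorded) and that $\CHom$, not just $\Hom$, is handled, so that the vanishing is internal. Once the identity $(\on{pr}^\lambda_\Ran)_!\circ (\on{pr}^\lambda_\Ran)^!\simeq \id$ from \corref{c:pr contr} is in hand, there is no real content left; the substance of the proposition is entirely concentrated in \lemref{l:pr contr}, whose proof is deferred to \secref{s:app}.
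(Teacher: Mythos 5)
Your proof is correct and follows essentially the same route as the paper: both reduce right t-exactness to the fact that $(\on{pr}^\lambda_\Ran)_!\circ(\on{pr}^\lambda_\Ran)^!\simeq\on{id}$, which is exactly the full faithfulness statement of \corref{c:pr contr} (the aside about the projection formula and the relative dualizing object is superfluous, but harmless). Nothing is missing.
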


\begin{proof} 
Follows from \corref{c:pr contr}. 
\end{proof}

\sssec{}

We define a t-structure on $\SI^{=\lambda}_\Ran$ as follows. We declare an object
$\CF\in \SI^{=\lambda}_\Ran$ 
to be connective/coconnective if
$$(s^\lambda_\Ran)^!(\CF)[\langle \lambda,2\check\rho\rangle]\in \Shv((X^\lambda \times \Ran)^{\subset})$$
is connective/coconnective.

\medskip

In other words, this t-structure is transferred from $\Shv((X^\lambda \times \Ran)^{\subset})$ via the equivalences
$$(s^\lambda_\Ran)^!:\SI^{=\lambda}_\Ran\to \Shv((X^\lambda \times \Ran)^{\subset}):(p^\lambda_\Ran)^!$$
of \propref{p:on stratum}, up to the cohomological shift $[\langle \lambda,2\check\rho\rangle]$.

\sssec{}

We define a t-structure on $\SI^{\leq 0}_\Ran$ by declaring that an object $\CF$ is 
coconnective if 
$$(\bi^\lambda)^!(\CF)\in \SI^{=\lambda}_\Ran$$
is coconnective in the above t-structure. 

\begin{rem}  \label{r:non-local}
The above t-structure on $\SI_\Ran^{\leq 0}$ is a somewhat artificial construct, since the t-structure on the 
individual strata
$$\SI^{=\lambda}_\Ran\simeq \Shv((X^\lambda \times \Ran)^{\subset})$$
was transferred from a pathological t-structure on $\Shv(X^\lambda \times \Ran)^{\subset}$,
see Remark \ref{r:non-local 1}. 

\medskip

This drawback will be cured in \secref{ss:t on unital}: we will single out a (naturally defined) full subcategory
$$\SI^{\leq 0}_{\Ran,\on{unital}}\subset \SI^{\leq 0}_\Ran,$$
such that for each stratum $S^\lambda_\Ran$, the functor 
$(\on{pr}_\Ran^\lambda\circ p^\lambda_\Ran)^!$ defines an \emph{equivalence}
$$\Shv(X^\lambda)\to \SI^{=\lambda}_{\Ran,\on{unital}}.$$
\end{rem}

\sssec{}

By construction, the subcategory of connective objects in $\SI^{\leq 0}_\Ran$ is generated under colimits by objects
of the form
\begin{equation} \label{e:gen conn}
(\bi^\lambda)_! \circ (p^\lambda_\Ran)^! (\CF)[-\langle \lambda,2\check\rho\rangle], \quad \lambda\in \Lambda^{\on{neg}}
\end{equation} 
where $\CF$ is a connective object of $\Shv((X^\lambda \times \Ran)^{\subset})$. 

\medskip

We claim:

\begin{lem}  \label{l:char of conn}
An object $\CF\in \SI^{\leq 0}_\Ran$ is connective if and only if $(\bi^\lambda)^*(\CF)\in \SI^{\leq \lambda}_\Ran$
is connective for every $\lambda\in \Lambda^{\on{neg}}$.
\end{lem}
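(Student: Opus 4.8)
The plan is to prove the two implications separately, using that connectivity in $\SI^{\leq 0}_\Ran$ was \emph{defined} via the $!$-restrictions $(\bi^\lambda)^!$, while the statement concerns the $*$-restrictions $(\bi^\lambda)^*$. The bridge between the two will be the recursive/stratified structure of $\ol{S}{}^0_\Ran$: the strata $S^\lambda_\Ran$ are indexed by $\lambda\in\Lambda^{\on{neg}}$, and this poset is such that the closure of $S^\lambda_\Ran$ inside $\ol{S}{}^0_\Ran$ meets only strata $S^\mu_\Ran$ with $\mu\leq\lambda$ (smaller, i.e.\ more negative). So I will set up an induction on $\lambda$ (or rather on the length of the interval $[\lambda,0]$), and at each stage compare the fiber sequence coming from $(\bi^\lambda)^!$ with the one coming from $(\bi^\lambda)^*$ via the open-closed recollement on $\ol{S}{}^\lambda_\Ran$, using that $(\bj^\lambda)^*=(\bj^\lambda)^!$ (an open embedding) so the discrepancy between $(\bi^\lambda)^!\CF$ and $(\bi^\lambda)^*\CF$ is controlled by what happens on the \emph{boundary} $\ol{S}{}^\lambda_\Ran\setminus S^\lambda_\Ran$, which lives over strictly smaller strata.

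For the ``only if'' direction: suppose $\CF$ is connective. By construction the connective subcategory is generated under colimits by the objects \eqref{e:gen conn}, so since $(\bi^\lambda)^*$ preserves colimits it is enough to check that $(\bi^\mu)^*$ applied to a generator $(\bi^\lambda)_!\circ (p^\lambda_\Ran)^!(\CG)[-\langle\lambda,2\check\rho\rangle]$ is connective for each $\mu$. When $\mu=\lambda$ this is immediate since $(\bi^\lambda)^*\circ(\bi^\lambda)_!=\id$. When $\mu\not\leq\lambda$ the restriction vanishes. When $\mu<\lambda$, I would compute $(\bi^\mu)^*\circ(\bi^\lambda)_!\circ(p^\lambda_\Ran)^!(\CG)$ by base change along the relevant correspondence of configuration-type spaces, reducing it to a $*$-pushforward from (a piece of) the closure of $S^\lambda$ into $S^\mu$; the key input is \corref{c:well defined Ran}(b'), which tells us that pullback of a sheaf from $\Ran(X)$ via $(\ol p{}^0_\Ran)^!$ interacts well with $(\bi^\lambda)_!$, together with the computation of the $*$-stalks of the ``standard'' sheaves along the stratification (the stalks of $(\bj^\lambda)_!\omega_{S^\lambda}$ on smaller strata, which by \propref{p:j ULA} and Braden's theorem are expressed via $!$-pushforwards along pseudo-proper, hence t-right-exact, maps). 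The shift $[-\langle\lambda,2\check\rho\rangle]$ versus $[-\langle\mu,2\check\rho\rangle]$ and the codimension of $S^\mu$ in the closure of $S^\lambda$ must conspire (as they always do in the semi-infinite setting) so that the resulting object is still connective; this is the place where I expect to actually carry out a small cohomological-amplitude bookkeeping.

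For the ``if'' direction: suppose $(\bi^\lambda)^*\CF$ is connective for all $\lambda$; I want $\CF$ connective, i.e.\ $\Hom(\CF,\CF')=0$ for all strictly coconnective $\CF'$ in $\SI^{\leq 0}_\Ran$. I would devissage $\CF$ along the stratification: because the poset $\Lambda^{\on{neg}}\cap[\lambda_0,0]$ is finite for each relevant range and the $\bi^\lambda$ are locally closed embeddings partitioning $\ol{S}{}^0_\Ran$ (the Lemma after \eqref{e:config to stratum}), $\CF$ is glued from the $(\bi^\lambda)_!(\bi^\lambda)^*\CF$ (more precisely $\CF$ lies in the subcategory generated by these). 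Each $(\bi^\lambda)^*\CF$ is connective in $\SI^{=\lambda}_\Ran$ by hypothesis, hence $(\bi^\lambda)_!(\bi^\lambda)^*\CF$, being of the form \eqref{e:gen conn} up to the colimit presentation provided by \lemref{l:on stratum}, is connective in $\SI^{\leq 0}_\Ran$; since connective objects are closed under colimits and extensions, $\CF$ is connective. The one subtlety is justifying that $\CF$ genuinely lies in the subcategory \emph{generated} by the $(\bi^\lambda)_!(\bi^\lambda)^*\CF$ — this is a standard recollement argument but one must be careful that the stratification is (locally on each $\Gr_{G,I}$, and compatibly over $\Ran$) by finitely many strata so that the filtration is finite; this finiteness, plus \corref{c:well defined Ran}(c)/(c'), handles the Ran-limit compatibility.

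The main obstacle I anticipate is the ``only if'' direction, specifically controlling $(\bi^\mu)^*$ of the generators on deeper strata: one must show no positive-degree cohomology is created when passing from $S^\lambda$ to a smaller stratum $S^\mu$, and this requires simultaneously the t-exactness of $(\on{pr}^\lambda_\Ran)^!$ (the Proposition just proved, via universal homological contractibility, \lemref{l:pr contr}/\corref{c:pr contr}) and the right t-exactness of the pseudo-proper pushforwards $(\on{pr}_\Ran^\lambda)_!$ appearing in the stalk computation — i.e.\ the interplay of the two t-structures on $\Shv((\Ran(X)\times X^\lambda)^{\supset})$ and $\Shv(X^\lambda)$ with the shift $\langle\lambda,2\check\rho\rangle$. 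Once that amplitude estimate is in hand, both directions follow formally.
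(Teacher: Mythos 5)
Your ``only if'' direction is where the proposal fails, and for an instructive reason: the difficulty you single out as ``the main obstacle'' is not there. The generators \eqref{e:gen conn} are $!$-extensions along the locally closed embeddings $\bi^\lambda$, so for $\mu\neq\lambda$ one has
$(\bi^\mu)^*\bigl((\bi^\lambda)_!\circ(p^\lambda_\Ran)^!(\CG)\bigr)=0$:
for $\mu\not\leq\lambda$ by support, and for $\mu<\lambda$ because the $*$-restriction of the extension-by-zero along the open embedding $\bj^\lambda$ to the boundary of $\ol{S}{}^\lambda_\Ran$ vanishes; for $\mu=\lambda$ one recovers $(p^\lambda_\Ran)^!(\CG)[-\langle\lambda,2\check\rho\rangle]$, which is connective by assumption. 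Thus no base-change computation on deeper strata, no ULA input, no Braden theorem and no ``cohomological-amplitude bookkeeping'' enter at all; the computation you sketch (via $(s^\mu_I)^!\circ(\bi^\mu)^*\simeq(p^{-,\mu}_I)_*\circ(\bi^{-,\mu})^!$) is the one relevant for the \emph{!}-restrictions of such objects (cf.\ \thmref{t:descr of restr}), and you have conflated the two kinds of restriction. Since you explicitly defer this ``estimate'', the only-if direction is not actually proved in your text, whereas it is exactly the paper's one-line argument: the claim is clear for the generators, hence for all connective objects since $(\bi^\lambda)^*$ preserves colimits.

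Your ``if'' direction is a genuinely different route from the paper's, but as written it also has a gap: a general object of $\SI^{\leq 0}_\Ran$ is supported on infinitely many strata ($\ICs_\Ran$ itself is), so there is no finite recollement filtration; ``finitely many strata locally on each $\Gr_{G,I}$'' holds only on finite-type closed subschemes, not on $\ol{S}{}^0_I$. To make your devissage work you would need to exhaust $\ol{S}{}^0_\Ran$ by the open subfunctors given by unions of strata $S^\mu_\Ran$ with $\mu$ in a finite interval, identify $\CF$ with the colimit of the $!$-extensions of its restrictions to these opens, and only then run the finite devissage on each; none of this is in your write-up. The paper avoids it entirely: one reduces (by truncating and using the already-proved only-if direction) to showing that a \emph{strictly coconnective} $\CF$ with all $(\bi^\lambda)^*(\CF)$ connective is zero, then picks $\lambda$ maximal with $(\bi^\lambda)^!(\CF)\neq 0$ (possible since intervals $[\lambda,0]$ in $\Lambda^{\on{neg}}$ are finite); by maximality $(\bi^\lambda)^!(\CF)\simeq(\bi^\lambda)^*(\CF)$, which is then both strictly coconnective and connective, hence zero, a contradiction. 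Your approach, once repaired, would give an explicit presentation of connective objects by extensions and colimits of standards, but the maximal-stratum orthogonality argument is shorter and is what the paper does.
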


\begin{proof}
It is clear that for objects of the form \eqref{e:gen conn}, their *-pullback to any $S^\lambda_\Ran$ is connective. Hence,
the same is true for any connective object of $\SI^{\leq 0}_\Ran$.

\medskip

Vice versa, let $0\neq \CF$ be a strictly coconnective object of $\SI^{\leq 0}_\Ran$. We need to show that if
all $(\bi^\lambda)^*(\CF)$ are connective, then $\CF=0$. Let $\lambda$ be the largest element such that
$(\bi^\lambda)^!(\CF)\neq  0$. On the one hand, since $\CF$ is strictly coconnective, $(\bi^\lambda)^!(\CF)$
is strictly coconnective. On the other hand, by the maximality of $\lambda$, we have
$$(\bi^\lambda)^!(\CF)\simeq (\bi^\lambda)^*(\CF),$$
and the assertion follows.

\end{proof}

\ssec{Definition of the semi-infinite IC sheaf}

When considering the semi-infinite IC sheaf, we will assume that $G$ is semi-simple and simply connected. 

\sssec{}

By construction, the object $(\bi^\lambda)_!(\omega_{S^\lambda_\Ran})[-\langle \lambda,2\check\rho\rangle]$
(resp., $(\bi^\lambda)_*(\omega_{S^\lambda_\Ran})[-\langle \lambda,2\check\rho\rangle]$) of $\SI^{\leq 0}_\Ran$ is connective (resp., coconnective). 

\medskip

However, in \secref{sss:loc to glob exact bis} we will prove:

\begin{prop}  \label{p:! and * perv}
The objects
$$(\bi^\lambda)_!(\omega_{S^\lambda_\Ran})[-\langle \lambda,2\check\rho\rangle] \text{ and }
(\bi^\lambda)_*(\omega_{S^\lambda_\Ran})[-\langle \lambda,2\check\rho\rangle]$$
both belong to $(\SI^{\leq 0}_\Ran)^\heartsuit$.
\end{prop}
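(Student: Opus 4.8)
The plan is to reduce the statement to a computation on a single $X^I$, and there use the factorization/product structure of $S^\lambda_I$ over $X^\lambda$ together with \propref{p:j ULA}. First I would record that the $*$- and $!$-extension questions are ``$\Ran$-local'': by \corref{c:well defined Ran} and the definition of the t-structure on $\SI^{\leq 0}_\Ran$ via the strata, an object lies in $(\SI^{\leq 0}_\Ran)^\heartsuit$ iff its $!$-restriction to each $\Gr_{G,I}$ does; so it suffices to prove that
$$(\bi^\lambda)_!(\omega_{S^\lambda_I})[-\langle\lambda,2\check\rho\rangle]\quad\text{and}\quad (\bi^\lambda)_*(\omega_{S^\lambda_I})[-\langle\lambda,2\check\rho\rangle]$$
lie in $(\SI^{\leq 0}_I)^\heartsuit$ for every finite nonempty $I$. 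Connectivity of the first and coconnectivity of the second are already known, so the content is the two ``opposite'' inequalities: the $!$-extension is coconnective and the $*$-extension is connective.

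Next I would handle the $!$-extension. By definition of the t-structure on $\SI^{\leq 0}_I$, coconnectivity of $(\bi^\lambda)_!(\omega_{S^\lambda_I})[-\langle\lambda,2\check\rho\rangle]$ amounts to showing that for every $\mu\in\Lambda^{\on{neg}}$ the object $(\bi^\mu)^!\circ(\bi^\lambda)_!(\omega_{S^\lambda_I})$, shifted by $[-\langle\mu,2\check\rho\rangle]$, is coconnective on $S^\mu_I$, i.e. (via \lemref{l:on stratum}) that $(s^\mu_I)^!(\bi^\mu)^!(\bi^\lambda)_!(\omega_{S^\lambda_I})$ is coconnective on $X^\mu$ after the shift by $[\langle\lambda,2\check\rho\rangle-\langle\mu,2\check\rho\rangle]$. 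For $\mu=\lambda$ this is an equality with $\omega_{X^\lambda}$-type object and is immediate. For $\mu<\lambda$ one is computing a nearby-stratum contribution of a $!$-extension; here I would use \propref{p:i* well defined}(b)/\corref{c:i! well defined}(b) (the compatibility of $(\bi^\bullet)_!$ with tensoring up sheaves from $X^I$) to strip off the $X^\lambda$-directions, reducing to the case $\lambda$ a simple coroot multiple, and then to the computation on the Zastava-type space / the ULA property from \propref{p:j ULA}, which controls exactly the cohomological amplitude of $j_!\,\omega$ of the open stratum along $X^I$. Concretely, ULA-ness of $(\bj^0)_!(\omega_{S^0_I})$ over $X^I$ says the relevant $\Ext$'s against coconnective sheaves from the base vanish in the wrong range, which is what forces the $!$-extension to be perverse. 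The $*$-extension statement is then obtained by Verdier duality on $\ol{S}{}^0_I$ (which is compatible with the $N$-equivariance condition and swaps $(\bi^\lambda)_!\leftrightarrow(\bi^\lambda)_*$ and $\omega\leftrightarrow$ constant sheaf, up to the shift $[\langle\lambda,2\check\rho\rangle]$ built into the definitions), so connectivity of the $*$-extension is the dual of coconnectivity of the $!$-extension.

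An alternative, and probably cleaner, route for the $*$-extension — which is presumably what \secref{sss:loc to glob exact bis} does — is to deduce it directly from the t-exactness (up to $[d]$) of pullback along $\ol{S}{}^0_\Ran\to\BunNb$ on $\SI^{\leq 0}_{\on{glob}}$ (\corref{c:log glob exact}): the objects $(\bi^\lambda)_*(\omega_{S^\lambda_\Ran})[\cdots]$ and $(\bi^\lambda)_!(\omega_{S^\lambda_\Ran})[\cdots]$ are pullbacks of the corresponding standard/costandard objects on the strata $\Bun_N^\lambda\hookrightarrow\BunNb$, which are perverse on the algebraic stack $\BunNb$ by the usual finite-type theory (e.g. \cite{BFGM}); t-exactness of the pullback then delivers perversity upstairs. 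I would present both arguments but lean on the Braden-theorem/ULA one as self-contained, since the global input is developed only later.

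\textbf{Main obstacle.} The delicate point is the bookkeeping of cohomological shifts: the defining shift $[\langle\lambda,2\check\rho\rangle]$ on each stratum, the $[-\langle\mu,2\check\rho\rangle]$ in the t-structure on $\SI^{\leq 0}$, and the codimension shifts coming out of $(\bi^\mu)^!(\bi^\lambda)_!$ must all conspire so that the off-diagonal contributions land on the correct side of the heart; getting the inequality $\langle\lambda-\mu,2\check\rho\rangle$ versus the actual cohomological degree of the transition maps to match is exactly the combinatorial heart of the argument, and it is where the ULA property (equivalently, the smoothness of the pieces of $S^0_I$ over $X^I$) is indispensable rather than decorative.
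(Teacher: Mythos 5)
Your fallback route is essentially the paper's proof, but the local argument you lean on has genuine gaps, and even the fallback omits its key input. Concretely: (1) the reduction ``perverse over $\Ran$ iff its restriction to each $\Gr_{G,I}$ is perverse'' is not available as stated — the t-structure on $\SI^{\leq 0}_\Ran$ is defined through the pseudo-proper projections to the configuration spaces $X^\lambda$ (via $(\on{pr}^\lambda_\Ran)_!$), and neither \corref{c:well defined Ran} nor anything else in the paper identifies it with a limit of t-structures on the $\SI^{\leq 0}_I$; the paper never performs such a reduction. (2) The core of your argument — that ULA-ness of $(\bj^0)_!(\omega_{S^0_I})$ over $X^I$ (\propref{p:j ULA}) ``forces'' coconnectivity of the $!$-extension — does not work: ULA over $X^I$ controls Homs against sheaves pulled back from the curve directions, whereas what is needed is an Artin-vanishing-type estimate transverse to the strata $S^\mu\subset \ol{S}{}^0$, and no local statement of that kind is proved (or even formulated) in the paper; this is exactly the nontrivial content, and it is imported from the global model. (3) The Verdier-duality step for the $*$-extension is likewise unavailable: duality sends $\omega_{S^\lambda}$ to the constant sheaf, which on the infinite-dimensional $S^\lambda_\Ran$ is not a shift of $\omega$, and no (anti-)self-duality of the semi-infinite t-structure is established — recall that in the single-point setting of \cite{Ga1} the $!$- and $*$-extensions genuinely behave asymmetrically.

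The paper's actual proof is your alternative route: by \corref{c:log glob exact} (together with the compatibilities \corref{c:restr isom}, \corref{c:ext isom} identifying the Ran objects with $\pi_\Ran^![d]$ of the global standard/costandard objects), it suffices to show that $(\bi^\lambda_{\on{glob}})_!(\IC_{\BunNb^{=\lambda}})$ and $(\bi^\lambda_{\on{glob}})_*(\IC_{\BunNb^{=\lambda}})$ are perverse on $\BunNb$. But this is not ``the usual finite-type theory'' for free, and it is not what \cite{BFGM} hands you: for a general locally closed embedding, $j_!$ and $j_*$ of a perverse sheaf need not be perverse. The paper gets it because $\bi^\lambda_{\on{glob}}$ is an \emph{affine} morphism — the finite map $\ol\bi{}^\lambda_{\on{glob}}$ composed with the affine open embedding $\bj^\lambda_{\on{glob}}$ (\cite[Proposition 3.3.1]{FGV}) — so both pushforwards are t-exact. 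If you run the global route and supply this affineness argument, you recover the paper's proof; the local route, as written, does not close.
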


\sssec{}

Consider the canonical map
$$(\bi^0)_!(\omega_{S^0_\Ran}) \to 
(\bi^0)_*(\omega_{S^0_\Ran}).$$

According to \propref{p:! and * perv} both sides belong to $(\SI^{\leq 0}_\Ran)^\heartsuit$. We let
$$\IC^\semiinf_\Ran\in (\SI^{\leq 0}_\Ran)^\heartsuit$$
denote the image of this map.

\medskip

The above object is the main object of study of this paper. 

\sssec{}

Our goal in this section and the next is to describe $\IC^\semiinf_\Ran$ as explicitly as possible. Specifically, we will do the following:

\begin{itemize}

\item We will describe the !- and *- restrictions of $\IC^\semiinf_\Ran$ to the strata $S^\lambda_\Ran$
(see \thmref{t:descr of restr}); 

\item We will exhibit the values of $\IC^\semiinf_\Ran$ in $\Shv(\Gr_{G,I})$ explicitly as colimits
(see \thmref{t:descr as colimit}); 

\item We will relate $\IC^\semiinf_\Ran$ to the intersection cohomology sheaf of Drinfeld's compactification $\BunNb$
(see \thmref{t:IC loc glob}). 

\end{itemize}

\ssec{Digression: from commutative algebras to factorization algebras}  \label{ss:factor alg}

Let $A$ be a commutative algebra, graded by $\Lambda^{\on{neg}}$ with $A(0)\simeq \sfe$. To $A$
we can attach an object $$\on{Fact}^{\on{alg}}(A)_{X^\lambda}\in \Shv(X^\lambda),$$
characterized by the property that its !-fiber at a divisor
$$\underset{k}\Sigma\, \lambda_k\cdot x_k\in X^\lambda, \quad 0\neq \lambda_k\in \Lambda^{\on{neg}}, \quad \underset{k}\Sigma\, \lambda_k=\lambda,
\quad k'\neq k''\,\Rightarrow\, x_{k'}\neq x_{k''}$$
equals $\underset{k}\bigotimes\, A(\lambda_k)$. 

\medskip

In the present subsection we recall this construction. 

\sssec{}  \label{sss:tw arrows lambda}

Consider the category $\on{TwArr}_\lambda$ whose objects are diagrams
\begin{equation} \label{e:obj tw arrow}
\Lambda^{\on{neg}}-0 \overset{\ul\lambda}\longleftarrow J \overset{\phi}\twoheadrightarrow K, \quad \underset{j\in J}\Sigma\, \ul\lambda(j)=\lambda,
\end{equation}
where $I$ and $J$ are finite non-empty sets. A morphism between two such objects 
is a diagram
$$
\CD
\Lambda^{\on{neg}}-0 @<{\ul\lambda_1}<< J_1 @>{\phi_1}>>  K_1 \\
@V{\on{id}}VV  @V{\psi_J}VV  @AA{\psi_K}A \\
\Lambda^{\on{neg}}-0 @<{\ul\lambda_2}<< J _2 @>{\phi_2}>>  K_2,
\endCD
$$
where:

\begin{itemize}

\item The right square commutes; 

\item The maps $\psi_J$ and $\psi_K$ are surjective;

\item $\ul\lambda_2(j_2)=\underset{j_1\in \psi_J^{-1}(j_2)}\Sigma\, \ul\lambda_1(j_1)$. 

\end{itemize}

\sssec{}

The algebra $A$ defines a functor 
$$\on{TwArr}(A):\on{TwArr}_\lambda\to \Shv(X^\lambda),$$ constructed as follows. 

\medskip

For an object \eqref{e:obj tw arrow}, let
$\Delta_{K,\lambda}$ denote the map $X^K\to X^\lambda$ that sends a point
$\{x_k,k\in K\} \in X^K$ to the divisor
$$\underset{k\in K}\Sigma\, (\underset{j\in \phi^{-1}(k)}\Sigma\, \ul\lambda(j))\cdot x_k\in X^\lambda.$$

\medskip

Then the value of $\on{TwArr}(A)$ on \eqref{e:obj tw arrow} is
$$(\Delta_{K,\lambda})_*(\omega_{X^K})\bigotimes \left(\underset{j\in J}\otimes\, A(\lambda_j)\right),$$
where $\lambda_j=\ul\lambda(j)$. 

\medskip

The structure of functor on $\on{TwArr}(A)$ is provided by the commutative algebra structure on $A$. 

\sssec{}

We set 
$$\on{Fact}^{\on{alg}}(A)_{X^\lambda}:=\underset{\on{TwArr}_\lambda}{\on{colim}}\, \on{TwArr}(A)\in \Shv(X^\lambda).$$

\sssec{An example} \label{sss:ex fact}

Let $V$ be a $\Lambda^{\on{neg}}$-graded vector space with $V(0)=0$. Let us take $A=\Sym(V)$. In this
case $\on{Fact}^{\on{alg}}(A)_{X^\lambda}$, can be explicitly described as follows:

\medskip

It is the direct sum over all ways to write $\lambda$ as a sum 
$$\lambda=\underset{k}\Sigma\, n_k\cdot \lambda_k, \quad n_k>0,\,\,\lambda_k\in \Lambda^{\on{neg}}-0$$
of the direct images of
$$\underset{k}\bigotimes\,  (\omega_X\otimes V(\lambda_k))^{(n_k)}$$
along the maps
$$\underset{k}\Pi\, X^{(n_k)}\to X^\lambda,$$
where $(-)^{(n)}$ denotes the $n$-th symmetric power of a given local system. 

\sssec{} \label{sss:coalg}

Dually, if $A$ is a co-commutative co-algebra, graded by $\Lambda^{\on{neg}}$ with $A(0)\simeq \sfe$, then to $A$ we
attach an object $\on{Fact}^{\on{coalg}}(A)_{X^\lambda}\in \Shv(X^\lambda)$
characterized by the property that its *-fiber at a divisor
$$\underset{k}\Sigma\, \lambda_k\cdot x_k\in X^\lambda, \quad 0\neq \lambda_k\in \Lambda^{\on{neg}}, \quad \underset{k}\Sigma\, \lambda_k=\lambda,
\quad k'\neq k''\,\Rightarrow\, x_{k'}\neq x_{k''}$$
equals $\underset{k}\bigotimes\, A(\lambda_k)$. 

\medskip

If all the graded components of $A$ are finite-dimensional, we can view the dual $A^\vee$ of $A$ as a $\Lambda^{\on{neg}}$-graded
algebra, and we have
\begin{equation} \label{e:alg coalg Verdier}
\BD(\on{Fact}^{\on{coalg}}(A)_{X^\lambda}) \simeq \on{Fact}^{\on{alg}}(A^\vee)_{X^\lambda},
\end{equation} 
where we remind that $\BD$ stands for the Verdier duality functor. 

\ssec{Restriction of $\IC^\semiinf_\Ran$ to strata}

\sssec{}

We apply the construction of \secref{ss:factor alg} to $A$ being the (classical) algebra $\CO(\cN)$ (resp., co-algebra 
$U(\cn^-)$). 

\medskip

Thus, we obtain the objects
$$\on{Fact}^{\on{alg}}(\CO(\cN))_{X^\lambda} \text{ and } \on{Fact}^{\on{coalg}}(U(\cn^-))_{X^\lambda} $$ in $\Shv(X^\lambda)$. 

\medskip

Note also that $U(\cn^-)$ is the graded dual of $\CO(\cN)$, and so the objects $\on{Fact}^{\on{alg}}(\CO(\cN))_{X^\lambda}$ and
$\on{Fact}^{\on{coalg}}(U(\cn^-))_{X^\lambda}$ are Verdier dual to each other, see \eqref{e:alg coalg Verdier}. 

\sssec{}

From the construction it follows that for $\lambda\neq 0$, 
$$\on{Fact}^{\on{alg}}(\CO(\cN))_{X^\lambda}\in \Shv(X^\lambda)^{<0},$$
and hence 
$$\on{Fact}^{\on{coalg}}(U(\cn^-))_{X^\lambda} \in \Shv(X^\lambda)^{>0}.$$

\begin{rem}
Note that by the PBW theorem, when viewed
as a co-commuatative co-algebra, $U(\cn^-)$ is canonically identified with $\Sym(\cn^-)$; in this paper we will not use the
algebra structure on $U(\cn^-)$, which allows to distinguish it from $\Sym(\cn^-)$. 

\medskip

Dually, when viewed just as a commutative 
algebra (i.e., ignoring the Hopf algebra structure), $\CO(\cN)$ is canonically identified with $\Sym(\cn^*)$.  
So $\on{Fact}^{\on{alg}}(\CO(\cN))_{X^\lambda}$ falls into the paradigm of Example \ref{sss:ex fact}. 
\end{rem} 

\sssec{}

In \secref{ss:proof calc IC} we will prove:

\begin{thm}  \label{t:descr of restr}
The objects
$$(\bi^\lambda)^!(\IC^\semiinf_\Ran)  \text{ and } (\bi^\lambda)^*(\IC^\semiinf_\Ran)$$
of $\Shv(S^\lambda_\Ran)$ identify with the !-pullback along
$$S^\lambda_\Ran \overset{p^\lambda_\Ran} \longrightarrow 
(X^\lambda \times \Ran)^{\subset}\overset{\on{pr}_\Ran^\lambda}\longrightarrow X^\lambda$$
of $\on{Fact}^{\on{coalg}}(U(\cn^-))_{X^\lambda} [-\langle \lambda,2\check\rho\rangle]$ and 
$\on{Fact}^{\on{alg}}(\CO(\cN))_{X^\lambda}[-\langle \lambda,2\check\rho\rangle]$, respectively,
\end{thm}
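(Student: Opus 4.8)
The plan is to reduce the statement about the $!$- and $*$-restrictions of $\ICs_\Ran$ to the strata to the presentation of $\ICs_\Ran$ as a colimit (\thmref{t:descr as colimit}), which in turn will be used to identify the $*$-restriction, and then to derive the $!$-restriction by a Verdier-duality argument. Concretely, I would first invoke \thmref{t:descr as colimit} to write $\ICs_\Ran$ (on each $\Gr_{G,I}$) as a colimit of standard objects of the form $(\bi^\mu)_*(\omega_{S^\mu})$-type pieces twisted by the factorization pattern coming from $\CO(\cN)$. Applying $(\bi^\lambda)^*$ to this colimit and using that $(\bi^\lambda)^*$ commutes with colimits, the problem becomes a computation of $(\bi^\lambda)^* \circ (\bi^\mu)_*$ for the various $\mu$. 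By the combinatorics of the stratification (the strata $S^\lambda_\Ran$ are indexed by $\Lambda^{\on{neg}}$, with closure order given by $\mu \geq \lambda$ iff $\mu - \lambda \in \Lambda^{\on{neg}}$), together with the projection-formula/base-change statements of \corref{c:well defined Ran}(b),(b'), each such composition contributes a factorization-type term; the colimit over the twisted-arrows category $\on{TwArr}_\lambda$ then reassembles exactly into $\on{Fact}^{\on{alg}}(\CO(\cN))_{X^\lambda}$ pulled back along $\on{pr}^\lambda_\Ran \circ p^\lambda_\Ran$, up to the shift $[-\langle\lambda,2\check\rho\rangle]$. This is the content of the $*$-restriction claim, and I would carry it out in this same section, as the excerpt itself indicates.

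For the $!$-restriction, the clean route is Verdier duality. Since $\ICs_\Ran$ lies in the heart of the t-structure and — being a middle extension — is self-dual up to the appropriate normalization (Verdier duality swaps the $!$- and $\ast$-extensions from $S^0_\Ran$, hence fixes their image $\ICs_\Ran$, modulo shifts and the subtlety that $\Gr_{G,\Ran}$ is a Ran space so one works $I$ by $I$), the $!$-restriction $(\bi^\lambda)^!(\ICs_\Ran)$ is obtained by applying $\BD$ to $(\bi^\lambda)^*(\ICs_\Ran)$. Now $(\bi^\lambda)^*(\ICs_\Ran)$ is, by the previous paragraph, the $!$-pullback of $\on{Fact}^{\on{alg}}(\CO(\cN))_{X^\lambda}[-\langle\lambda,2\check\rho\rangle]$; since $\on{Fact}^{\on{coalg}}(U(\cn^-))_{X^\lambda}$ is by construction Verdier dual to $\on{Fact}^{\on{alg}}(\CO(\cN))_{X^\lambda}$ (recall $U(\cn^-)$ is the graded dual of $\CO(\cN)$, as recorded in \secref{ss:factor alg}), and $p^\lambda_\Ran$, $\on{pr}^\lambda_\Ran$ interact with duality in the expected pseudo-proper/smooth way, we get that $(\bi^\lambda)^!(\ICs_\Ran)$ is the $!$-pullback of $\on{Fact}^{\on{coalg}}(U(\cn^-))_{X^\lambda}[-\langle\lambda,2\check\rho\rangle]$. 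One must be a little careful with the shift bookkeeping: the section $s^\lambda_\Ran$ enters with a shift $[\langle\lambda,2\check\rho\rangle]$ in the definition of the t-structure on $\SI^{=\lambda}_\Ran$, and the stratum $S^\lambda_\Ran$ has the relevant relative dimension built in; tracking these carefully is exactly what produces the normalization $[-\langle\lambda,2\check\rho\rangle]$ on both sides and confirms $\ICs_\Ran$ sits in the heart.

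The main obstacle I anticipate is the colimit bookkeeping in the $*$-restriction computation: one has to match the combinatorial index category $\on{TwArr}_\lambda$ governing $\on{Fact}^{\on{alg}}(\CO(\cN))_{X^\lambda}$ against the colimit presentation of $\ICs_\Ran$ from \thmref{t:descr as colimit}, and verify that $(\bi^\lambda)^*$ applied to the standard generators produces precisely the terms $(\Delta_{K,\lambda})_*(\omega_{X^K}) \otimes \bigotimes_j \CO(\cN)(\lambda_j)$ with the correct transition maps encoding the multiplication on $\CO(\cN)$. This requires (i) the semismallness-type input that $(\bi^\lambda)^* \circ (\bi^\mu)_*$ is computed fiberwise over the diagonal strata of $X^\lambda$, and (ii) the factorization structure on $\Gr_{G,\Ran}$ and on $\ICs_\Ran$ to localize the computation to the situation of distinct points, where the answer is the plain tensor product $\bigotimes_k \CO(\cN)(\lambda_k)$. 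The rest — base change along $\Delta_\phi$, projection formulas, t-exactness of $(\on{pr}^\lambda_\Ran)^!$ (already established) — is formal. Once the $*$-side is pinned down, the $!$-side is a duality formality as above, modulo the (routine but fussy) self-duality statement for $\ICs_\Ran$ on each $\Gr_{G,I}$.
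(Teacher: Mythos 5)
There is a genuine gap, and it is concentrated in your treatment of the $!$-restriction. You propose to get $(\bi^\lambda)^!(\ICs_\Ran)$ from $(\bi^\lambda)^*(\ICs_\Ran)$ by Verdier duality, invoking self-duality of $\ICs_\Ran$ "since it is a middle extension." But no such self-duality is available (or established) on $\Gr_{G,\Ran}$, and it is not a formality: the strata $S^\lambda_\Ran$ are orbits of $\fL(N)_\Ran$ and hence infinite-dimensional, the t-structure is normalized by the shifts $[\langle\lambda,2\check\rho\rangle]$ rather than by any perversity compatible with $\BD$, and already $\BD(\omega_{S^0})$ is the constant sheaf, which differs from $\omega_{S^0}$ by an "infinite shift" -- so "middle extension of the dualizing sheaf" carries no self-duality statement you can quote. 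This is precisely why the paper does \emph{not} argue this way: the $!$-restriction is proved in \secref{ss:proof calc IC} by routing through the global model. One first proves \thmref{t:descr of restr glob} for $\IC^\semiinf_{\on{glob}}$ on $\BunNb$ (there Verdier self-duality is legitimate, since $\IC_{\BunNb}$ is an honest IC sheaf on a stack locally of finite type, and $p^\lambda_{\on{glob}}$ is smooth), and then transports the answer back using \thmref{t:IC loc glob} ($(\pi_\Ran)^!(\IC^\semiinf_{\on{glob}})[d]\simeq\ICs_\Ran$), the t-exactness statement \corref{c:log glob exact}, and full faithfulness of $(p^\lambda_\Ran)^!\circ(\on{pr}^\lambda_\Ran)^!$. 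Your proposal, as written, has no substitute for this global input, so the $!$-half does not go through.

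On the $*$-restriction your overall strategy (reduce to \thmref{t:descr as colimit}) matches the paper, but the mechanics are off. \thmref{t:descr as colimit} presents $\ICs_\Ran$ as a colimit over $\ul\lambda\in\Maps(I,\Lambda^+)$ of the convolution terms $\delta_{\ul\lambda}\star\Sat_{G,I}(V^{\ul\lambda})[\langle\lambda,2\check\rho\rangle]$, not as a colimit of $(\bi^\mu)_*$-extensions from strata, so the compositions $(\bi^\lambda)^*\circ(\bi^\mu)_*$ you propose to compute never arise. The actual computation (\propref{p:* retsr IC'}) applies Braden's theorem as in \secref{ss:Braden} to replace $(p^\mu_I)_!\circ(\bi^\mu)^*$ by $(p^{-,\mu}_I)_*\circ(\bi^{-,\mu})^!$ along the repelling strata, and then uses the standard description of $!$-restrictions of Satake sheaves to the opposite semi-infinite orbits (weight spaces $V^{\ul\lambda}(\ul\lambda+\mu)$), after which the colimit over $\ul\lambda$ produces $\CO(\cN)(\mu)$ and reassembles into $\on{Fact}^{\on{alg}}(\CO(\cN))_{X^\lambda}$. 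If you repair the $*$-side along these lines, you still need the global detour (or some genuinely new argument) for the $!$-side.
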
 

\ssec{Digression: categories over the Ran space}  \label{ss:spread over Ran}

We will now discuss a variant of the construction in \secref{ss:factor alg} that attaches to a symmetric monoidal
category $\CA$ a category spread over the Ran space, denoted $\on{Fact}^{\on{alg}}(\CA)_{\Ran}$. 

\sssec{}

Consider the category $\on{TwArr}$ whose objects are 
\begin{equation}  \label{e:object usual tw arrow}
I \overset{\phi}\twoheadrightarrow J,
\end{equation} 
where $I$ and $J$ are finite non-empty sets. A morphism between two such objects is 
is a commutative diagram
\begin{equation}  \label{e:morphism usual tw arrow}
\CD
J_1 @>{\phi_1}>>  K_1 \\
@V{\psi_J}VV @AA{\psi_K}A \\
J_2 @>{\phi_2}>>  K_2,
\endCD
\end{equation} 
where the maps $\psi_J$ and $\psi_K$ are surjective. 

\sssec{}

To $\CA$ we attach a functor
$$\on{TwArr}(\CA):\on{TwArr}\to \DGCat$$
by sending an object \eqref{e:object usual tw arrow} to $\Shv(X^K)\otimes \CA^{\otimes J}$,
and a morphism \eqref{e:morphism usual tw arrow} to a functor
comprised of
$$\Shv(X^{K_1})\overset{(\Delta_{\psi_J})_*}\longrightarrow \Shv(X^{K_2})$$
and the functor 
$$\CA^{\otimes J_1}\to \CA^{\otimes J_2},$$
given by the symmetric monoidal structure on $\CA$. 

\sssec{}

We set
\begin{equation} \label{e:categ over Ran}
\on{Fact}^{\on{alg}}(\CA)_{\Ran}:=\underset{\on{TwArr}}{\on{colim}}\, \on{TwArr}(\CA)\in \DGCat.
\end{equation}

\sssec{}  \label{sss:ex fact categ}

Let us consider some examples. 

\medskip

\noindent(i) Let $\CA=\Vect$. Then $\on{Fact}^{\on{alg}}(\CA)_{\Ran}\simeq \Shv(\Ran)$. 

\medskip

\noindent(ii) Let $\CA$ be the (non-unital) symmetric monoidal category consisting of vector spaces
graded by the semi-group $\Lambda^{\on{neg}}-0$.  We have a canonical functor
\begin{equation} \label{e:fact graded}
\on{Fact}^{\on{alg}}(\CA)_{\Ran}\to \Shv(\underset{\lambda\in \Lambda^{\on{neg}}-0}\sqcup\, X^\lambda),
\end{equation}
and it follows from \cite[Lemma 7.4.11(d)]{Ga2} that this functor is an equivalence. 

\sssec{} \label{sss:comon over Ran}

Similarly, if $\CA$ is a symmetric co-monoidal category, we can form the limit of the corresponding functor
$$\on{TwArr}(\CA):\on{TwArr}^{\on{op}}\to \DGCat,$$
and obtain a category that we denote by $\on{Fact}^{\on{coalg}}(\CA)_{\Ran}$. 

\sssec{}  \label{sss:limit and colimit}

Recall that whenever we have a diagram of categories  
$$t\mapsto \CC_t$$
indexed by some category $T$, then 
$$\underset{t\in T}{\on{colim}}\, \CC_t$$
is canonically equivalent to
$$\underset{t\in T^{\on{op}}}{\on{lim}}\, \CC_t,$$
where the transition functors are given by the right adjoints of those in the original family. 

\sssec{}  \label{sss:comult cont}

Let $\CA$ be again a symmetric monoidal category. Applying the observation of \secref{sss:limit and colimit} to  
the colimit \eqref{e:categ over Ran}, we obtain that $\on{Fact}^{\on{alg}}(\CA)_{\Ran}$ can also be written as a limit. 

\medskip

Assume now that $\CA$ is such that the functor
$$\CA\to \CA\otimes \CA,$$
right adjoint to the tensor product operation, is continuous. In this case, the above tensor co-product operation 
makes $\CA$ into a symmetric co-monoidal category, and we can form $\on{Fact}^{\on{coalg}}(\CA)_{\Ran}$.

\medskip

We note however, that the \emph{limit} presentation of $\on{Fact}^{\on{alg}}(\CA)_{\Ran}$ tautologically coincides with the
limit defining $\on{Fact}^{\on{coalg}}(\CA)_{\Ran}$. I.e., we have a canonical equivalence: 
$$\on{Fact}^{\on{alg}}(\CA)_{\Ran}\simeq \on{Fact}^{\on{coalg}}(\CA)_{\Ran}.$$

\medskip

Hence, in what follows we will sometimes write simply $\on{Fact}(\CA)_{\Ran}$, having both of the above realizations in mind.  

\sssec{}

Let $I$ be a fixed finite non-empty set. The above constructions have a variant, where instead of $\on{TwArr}$
we use its variant, denoted $\on{TwArr}_{I/}$, whose objects are commutative diagrams
$$I\twoheadrightarrow J \overset{\phi}\twoheadrightarrow K,$$
and whose morphisms are commutative diagrams
$$
\CD
I @>>> J_1 @>{\phi_1}>>  K_1 \\
@V{\on{id}}VV  @V{\psi_J}VV @AA{\psi_K}A \\
I @>>> J_2 @>{\phi_2}>>  K_2,
\endCD
$$

We denote the resulting category by $\on{Fact}^{\on{alg}}(\CA)_I$, i.e.,
$$\on{Fact}^{\on{alg}}(\CA)_I:=\underset{\on{TwArr}_{I/}}{\on{colim}}\, \on{TwArr}(\CA)|_{\on{TwArr}_{I/}}.$$

\sssec{}

For a surjective morphism $\phi:I_1\twoheadrightarrow I_2$, we have the corresponding functor
$$\on{TwArr}_{I_2/}\to \on{TwArr}_{I_1/},$$
which induces a functor
\begin{equation} \label{e:Delta phi *}
(\Delta_\phi)_*:\on{Fact}^{\on{alg}}(\CA)_{I_2}\to \on{Fact}^{\on{alg}}(\CA)_{I_1}.
\end{equation}

\medskip

An easy cofinality argument shows that the resulting functor
\begin{equation} \label{e:I Ran}
\underset{I}{\on{colim}}\, \on{Fact}^{\on{alg}}(\CA)_I\to \on{Fact}^{\on{alg}}(\CA)_\Ran
\end{equation} 
is an equivalence. 

\sssec{}  \label{sss:Ran restr}

Note also that push-out defines a functor
$$\on{TwArr}_{I_1/}\to \on{TwArr}_{I_2/},$$
and we have a natural transformation from the composite
$$\on{TwArr}_{I_1/} \to \on{TwArr} \overset{\on{TwArr}(\CA)}\longrightarrow \on{DGCat}$$
to the composite
$$\on{TwArr}_{I_1/}\to \on{TwArr}_{I_2/} \to \on{TwArr} \overset{\on{TwArr}(\CA)}\longrightarrow \on{DGCat},$$
inducing a functor
$$\on{Fact}^{\on{alg}}(\CA)_{I_1}:=\on{Fact}^{\on{alg}}(\CA)_{I_1} \to \on{Fact}^{\on{alg}}(\CA)_{I_2}=:\on{Fact}^{\on{alg}}(\CA)_{I_2},$$
to be denoted $(\Delta_\phi)^!$.

\medskip

By unwinding the constructions, it follows that the above functor $(\Delta_\phi)^!$ is the right adjoint of the functor
$(\Delta_\phi)_*$ of \eqref{e:Delta phi *}. 

\medskip

In particular, by \secref{sss:limit and colimit}, we can also write
\begin{equation} \label{e:I Ran lim}
\on{Fact}^{\on{alg}}(\CA)_\Ran \simeq \underset{I}{\on{lim}}\, \on{Fact}^{\on{alg}}(\CA)_I,
\end{equation}
where the limit is taken with respect to the functors $(\Delta_\phi)^!$.

%\sssec{}
%
%Finally, let us be in the situation of \secref{sss:comult cont}. 
%
%
%
%When we think of $\on{Fact}(\CA)_I$ as a \emph{limit},   
%the functor $\on{TwArr}_{I_2/}\to \on{TwArr}_{I_1/}$ defines a functor 
%$$\on{Fact}(\CA)_{I_1} \to \on{Fact}(\CA)_{I_2}.$$
%
%\medskip
%
%The corresponding functor
%$$\on{Fact}(\CA)_\Ran\to \underset{I}{\on{lim}}\, \on{Fact}(\CA)_I$$
%is an equivalence obtained from \eqref{e:I Ran} by the procedure of \secref{sss:limit and colimit}. 

\ssec{Presentation of $\ICs$ as a colimit} \label{ss:pres as colimit}

Consider the symmetric monoial category $\Rep(\cG)$.

\sssec{}  \label{sss:V lambda I}

For a fixed finite non-empty set $I$ and
a map $\ul\lambda:I\to \Lambda^+$, we consider the following object of $\on{Fact}(\Rep(\cG))_I$, denoted $V^{\ul\lambda}$. 

\medskip

Informally, $V^{\ul\lambda}$ is designed so its !-fiber at a point 
$$I\to X, \quad I=\underset{k}\sqcup\, I_k,\,\, I_k\mapsto x_k,\,\,\quad x_{k'}\neq x_{k''}$$
is
$$\underset{k}\bigotimes\, V^{\lambda_k}\in \Rep(\cG)^{\otimes k},\quad \lambda_k=\underset{i\in I_k}\Sigma\, \ul\lambda(i),$$
where for $\lambda\in \Lambda^+$, we denote by $V^\lambda$ the corresponding irreducible highest weight 
representation of $\cG$, normalized so that its highest weight line is identified with $\sfe$.

\sssec{}

It terms of the presentation of $\on{Fact}(\Rep(\cG))_I$ as a colimit 
$$\on{Fact}(\Rep(\cG))_I=\underset{\on{TwArr}_{I/}}{\on{colim}}\, \on{TwArr}(\Rep(\cG)),$$
the object $V^{\ul\lambda}$ corresponds to the colimit over $\on{TwArr}_{I/}$ of the functor 
$$\on{TwArr}_{I/} \to \on{Fact}(\Rep(\cG))_I$$
that sends 
\begin{equation} \label{e:highest weight}
(I\twoheadrightarrow J \twoheadrightarrow K)\in \on{TwArr}_{I/} \, \rightsquigarrow 
V^{\ul\lambda}_{I\twoheadrightarrow J \twoheadrightarrow K}\in \Shv(X^K)\otimes \Rep(\cG)^{\otimes J}\to \on{Fact}(\Rep(\cG))_I,
\end{equation} 
where 
$$V^{\ul\lambda}_{I\twoheadrightarrow J \twoheadrightarrow K}=\omega_{X^K}\bigotimes \left(\underset{j\in J}\otimes\, V^{\lambda_j}\right), \quad 
\lambda_j=\underset{i\in I,i\mapsto j}\Sigma\, \ul\lambda(i).$$

The structure of a functor $\on{TwArr}_{I/}\to \on{Fact}(\Rep(\cG))_I$ on \eqref{e:highest weight}
is given by the Pl\"ucker maps
$$\underset{i}\otimes \, V^{\lambda_i}\to V^\lambda, \quad \lambda=\underset{i}\Sigma\, \lambda_i.$$

\sssec{}

Denote
$$\Sph_{G,I}:=\Shv(\fL^+(G)_I\backslash \Gr_{G,I}) \text{ and } \Sph_{G,\Ran}:=\Shv(\fL^+(G)_\Ran\backslash \Gr_{G,\Ran}).$$

\medskip

Consider the symmetric monoial category $\Rep(\cG)$. Geometric Satake defines functors
$$\Sat_{G,I}:\on{Fact}(\Rep(\cG))_I\to \Sph_{G,I}$$
that glue to a functor
$$\Sat_{G,\Ran}:\on{Fact}(\Rep(\cG))_{\Ran}\to \Sph_{G,\Ran}.$$

\sssec{}

Consider the object 
$$\Sat_{G,I}(V^{\ul\lambda}) \in \Sph_{G,I}.$$

\medskip

The element $\ul\lambda$ gives rise to a section
$$s^{-,\ul\lambda}_I:X^I \to \Gr_{T,I}\subset \Gr_{G,I}.$$

Denote 
$$\delta_{-\ul\lambda}:=(s^{-,\ul\lambda}_I)_!(\omega_{X^I})\in \Shv(\Gr_{G,I}).$$

Consider the object 
$$\delta_{-\ul\lambda}\star \Sat_{G,I}(V^{\ul\lambda})[\langle \lambda,2\check\rho\rangle]\in \Shv(\Gr_{G,I}).$$
In the above formula, $\lambda=\underset{i\in I}\Sigma\, \ul\lambda(i)$, and $\star$ denotes the (right) convolution
action of $\Sph_{G,I}$ on $\Shv(\Gr_{G,I})$.

\sssec{}  \label{sss:semiinf I}

Consider now the set $\Maps(I,\Lambda^+)$ of maps
$$\ul\lambda:I\to \Lambda^+.$$

We equip it with a partial order by declaring
$$\ul\lambda_1\leq \ul\lambda_2\, \Leftrightarrow\, \ul\lambda_2(i)-\ul\lambda_1(i)\in \Lambda^+,\, \forall i\in I.$$

The assignment
$$\ul\lambda\mapsto \delta_{-\ul\lambda}\star \Sat_{G,I}(V^{\ul\lambda})[\langle \lambda,2\check\rho\rangle]\in \Shv(\Gr_{G,I})$$
has a structure of a functor 
$$\Maps(I,\Lambda^+)\to \Shv(\Gr_{G,I}),$$
see Sects. \ref{sss:extend to functor} and \ref{sss:DrPl on IC}.

\medskip

Set
$$\IC^\semiinf_I:=\underset{\ul\lambda\in \Maps(I,\Lambda^+)}{\on{colim}}\, 
\delta_{-\ul\lambda}\star \Sat_{G,I}(V^{\ul\lambda})[\langle \lambda,2\check\rho\rangle]\in \Shv(\Gr_{G,I}).$$

\medskip

As in \cite[Proposition 2.3.7(a,b,c)]{Ga1} one shows:

\begin{lem} \label{l:elem ppties IC}
The object $\IC^\semiinf_I$ has the following properties:

\smallskip

\noindent{\em(a)} It is supported on $\ol{S}{}^0_I$;

\smallskip

\noindent{\em(b)} It belongs to $\SI^{\leq 0}_I=\Shv(\ol{S}{}^0_I)^{\fL(N)_I}\subset \Shv(\ol{S}{}^0_I)$;

\smallskip

\noindent{\em(c)} Its restriction to $S^0_I$ is identified with $\omega_{S^0_I}$. 

\end{lem}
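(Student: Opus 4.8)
The plan is to deduce all three properties from corresponding statements about the individual terms $\delta_{\ul\lambda}\star \Sat_{G,I}(V^{\ul\lambda})[\langle\lambda,2\check\rho\rangle]$ of the defining colimit, following \cite[Proposition 2.3.7]{Ga1}. First I would record that the relevant subcategories are closed under colimits: $\Shv(\ol{S}{}^0_I)\subset \Shv(\Gr_{G,I})$ (objects supported on the closed subfunctor $\ol{S}{}^0_I$) is, and so is $\SI^{\leq 0}_I=\Shv(\ol{S}{}^0_I)^{\fL(N)_I}$ inside it --- equivariance against the ind-pro-unipotent ind-group-scheme $\fL(N)_I$ being a property, with the forgetful functor fully faithful and colimit-preserving --- and restriction along the open embedding $S^0_I\hookrightarrow \ol{S}{}^0_I$ preserves colimits as well. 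Since $\Maps(I,\Lambda^+)$ is a directed poset the colimit is filtered, so cofinality arguments are available, and it suffices to analyze the terms.

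For (a) I would write $\delta_{\ul\lambda}\star(-)=(\tau_{\ul\lambda})_*(-)$, where $\tau_{\ul\lambda}$ is the automorphism of $\Gr_{G,I}$ over $X^I$ implementing left convolution by $\delta_{\ul\lambda}$ (the Hecke modification attached to the section $s^{\ul\lambda}_I$), and recall that by geometric Satake $\Sat_{G,I}(V^{\ul\lambda})$ is supported on the closure $\ol\Gr{}^{\ul\lambda}_{G,I}$ of an open $\fL^+(G)_I$-orbit $\Gr^{\ul\lambda}_{G,I}$, smooth over $X^I$ of relative dimension $\langle\lambda,2\check\rho\rangle$. The computation to carry out is that conjugation by $s^{\ul\lambda}_I$ on $\fL(G)_I$ --- here one uses that $\ul\lambda$ is $\Lambda^+$-valued --- carries $\fL^+(N)_I$ into $\fL(N)_I$ while keeping $\fL^+(T)_I$ and $\fL^+(N^-)_I$ inside $\fL^+(G)_I$; since the big cell $\fL^+(N)_I\fL^+(T)_I\fL^+(N^-)_I$ is dense in $\fL^+(G)_I$, this forces $\tau_{\ul\lambda}(\Gr^{\ul\lambda}_{G,I})\subset S^0_I$, and passing to closures inside $\Gr_{G,I}$ (using that $\ol{S}{}^0_I$ is closed) gives $\tau_{\ul\lambda}(\ol\Gr{}^{\ul\lambda}_{G,I})\subset \ol{S}{}^0_I$, which is (a). Equivalently one may verify the Plücker regularity conditions \eqref{e:Plucker map} directly.

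For (b) it then remains to produce the $\fL(N)_I$-equivariant structure. Exhaust $\fL(N)_I$ by group sub-schemes $\fL(N)_I^{(m)}$ (pole bounded by $m$), so that $\fL(N)_I$-equivariance amounts to compatible $\fL(N)_I^{(m)}$-equivariance for all $m$. Each term carries the $\fL^+(G)_I$-equivariant structure of $\Sat_{G,I}(V^{\ul\lambda})$ transported through $\tau_{\ul\lambda}$, and the conjugation computation of (a) shows that, for fixed $m$, one has $(s^{\ul\lambda}_I)^{-1}\fL(N)_I^{(m)}s^{\ul\lambda}_I\subset \fL^+(N)_I\subset \fL^+(G)_I$ whenever $\langle\alpha_i,\ul\lambda(i')\rangle\geq m$ for all $i'\in I$ and all simple roots $\alpha_i$ --- a cofinal condition on $\ul\lambda$. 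Hence $\IC^\semiinf_I$ is $\fL(N)_I^{(m)}$-equivariant for every $m$, compatibly (forgetful functors being fully faithful), so $\IC^\semiinf_I\in \SI^{\leq 0}_I$. I expect this to be the main obstacle: upgrading the easy fact that each fixed element of $\fL(N)_I$ acts trivially on $\IC^\semiinf_I$ in the colimit to a genuinely coherent equivariant structure --- compatibly with the exhaustion of $\fL(N)_I$ and with the maps $\Delta_\phi$ of the Ran space --- is what forces one to redo the bookkeeping of \cite[Sect. 2.3]{Ga1} in the present relative setting.

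Finally, for (c), restriction to the open $S^0_I$ commutes with the filtered colimit and, by \lemref{l:on stratum} in the case $\lambda=0$, is computed as the $!$-restriction along the unit section $s^0_I:X^I\to \Gr_{G,I}$; this equals the $!$-restriction of $\Sat_{G,I}(V^{\ul\lambda})$ along $\tau_{\ul\lambda}^{-1}\circ s^0_I$, which factors through the \emph{open} orbit $\Gr^{\ul\lambda}_{G,I}\subset \ol\Gr{}^{\ul\lambda}_{G,I}$. There $\Sat_{G,I}(V^{\ul\lambda})$ is the relative (over $X^I$) dualizing complex up to the shift $[-\langle\lambda,2\check\rho\rangle]$, so each shifted term restricts canonically to $\omega_{X^I}$, with no contribution from deeper strata of $\ol\Gr{}^{\ul\lambda}_{G,I}$; and the transition maps restrict to $\mathrm{id}_{\omega_{X^I}}$, being induced by the Plücker maps $V^{\ul\lambda_1}\otimes(\text{h.w.})\to V^{\ul\lambda_2}$, which are isomorphisms on highest-weight lines. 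Therefore the colimit is $\omega_{X^I}$, i.e. the restriction of $\IC^\semiinf_I$ to $S^0_I$ is $(p^0_I)^!(\omega_{X^I})=\omega_{S^0_I}$.
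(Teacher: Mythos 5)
The paper offers no argument for this lemma beyond the phrase ``As in \cite[Proposition 2.3.7(a,b,c)]{Ga1} one shows'', so your proposal is in effect a reconstruction of that argument in the relative setting over $X^I$, which is exactly what the citation intends: support via translating $\Sat_{G,I}(V^{\ul\lambda})$ by the (canonically lifted) section attached to $\ul\lambda$; equivariance of each term against the conjugate of $\fL^+(G)_I$, together with the observations that $\fL(N)_I$-equivariance is a property checked on an exhausting family of subgroups and that for each fixed level $m$ the terms with $\ul\lambda$ deep enough in the dominant cone form a cofinal subdiagram lying in the $\fL(N)^{(m)}_I$-equivariant full subcategory (closed under colimits); and the computation of the restriction to $S^0_I$ term by term, using \lemref{l:on stratum} (legitimate once (b) is in place), with the transition maps becoming isomorphisms because the Pl\"ucker maps are isomorphisms on highest-weight lines. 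This is the right route and the overall structure is sound.

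One intermediate claim in (a) is false as stated: $\tau_{\ul\lambda}(\Gr^{\ul\lambda}_{G,I})$ is \emph{not} contained in $S^0_I$. The translated \emph{open} orbit already meets the lower strata $S^\mu_I$ (by the standard Mirkovi\'c--Vilonen fact that the open orbit, not just its closure, meets $S^\nu$ for every weight $\nu$ of the corresponding Weyl module; compare the paper's computation of $(\bi^\mu)^*(\delta_{\ul\lambda}\star\Sat_{G,I}(V^{\ul\lambda}))$ in Sect.~3.6, which is nonzero for all such $\mu$). What the density of the big cell actually gives is that the locus $\fL^+(N)_I\,\fL^+(T)_I\,\fL^+(N^-)_I\cdot s^{\ul\lambda}_I$, which is fiberwise dense in $\ol\Gr{}^{\ul\lambda}_{G,I}$, is carried into $S^0_I$; this weaker statement is all you need, since passing to closures then yields $\tau_{\ul\lambda}(\ol\Gr{}^{\ul\lambda}_{G,I})\subset\ol{S}{}^0_I$, and neither (b) nor (c) uses the stronger containment ((c) only needs that $\tau_{\ul\lambda}^{-1}\circ s^0_I$ factors through the open stratum, which is true). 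Two cosmetic points: in the conjugation estimate the coweights $\ul\lambda(i')$ should be paired with the simple \emph{roots} (in the paper's notation the $\alpha_i$ are simple coroots), and the direction of the translation (by the section attached to $\ul\lambda$ or its inverse) must be fixed by the convention of \cite{Ga1}, which the present paper does not respell; your statements are consistent with the choice that makes the support land in $\ol{S}{}^0_I$.
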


\sssec{}  \label{sss:semiinf prime}

For a surjective map
$$\phi:I_2\twoheadrightarrow I_1$$
and the corresponding map
$$\Delta_\phi: \Gr_{G,I_1}\to \Gr_{G,I_2},$$
we have a canonical identification
$$(\Delta_\phi)^!(\IC^\semiinf_{I_2})\simeq \IC^\semiinf_{I_1}.$$

One endows this system of isomorphisms with a homotopy-coherent system of compatibilities,
thus making the assignment
$$I\mapsto \IC^\semiinf_I$$
into an object of $\SI^{\leq 0}_\Ran$, see \secref{sss:V lambda Ran}.

\medskip

We denote this object by $'\!\IC^\semiinf_\Ran$. By \lemref{l:elem ppties IC}(c), we have a canonical identification
\begin{equation} \label{e:prime open}
'\!\IC^\semiinf_\Ran|_{S^0_\Ran}\simeq \omega_{S^0_\Ran},
\end{equation}

\sssec{}  \label{sss:at a point}

Fix a point $x\in X$, and consider the restriction of $'\!\IC^\semiinf_\Ran$ along the map
$$\Gr_{G,x}\simeq \{x\}\underset{\Ran}\times \Gr_{G,\Ran}\to \Gr_{G,\Ran}.$$

It follows from the construction, that this restriction identifies canonically with the object
$$\IC^\semiinf_x\in \Shv(\Gr_{G,x}),$$
constructed in \cite[Sect. 2.3]{Ga1}.

\ssec{Presentation of $\ICs_\Ran$ as a colimit}

\sssec{}

The rest of this section will be devoted to the proof of the following result:

\begin{thm}  \label{t:descr as colimit}
There exists a unique isomorphism $'\!\IC^\semiinf_\Ran\simeq \IC^\semiinf_\Ran$,
extending the identification 
$$'\!\IC^\semiinf_\Ran|_{S^0_\Ran}\overset{\text{\eqref{e:prime open}}}\simeq \omega_{S^0_\Ran} \simeq
\IC^\semiinf_\Ran|_{S^0_\Ran}.$$
\end{thm} 

The proof of \thmref{t:descr as colimit} will amount to the combination of
the following two assertions: 

\begin{prop} \label{p:* retsr IC'}
For $\mu\in \Lambda^{\on{neg}}$, the object
$$(\bi^\mu)^*({}'\!\IC^\semiinf_\Ran)\in \SI^{=\mu}_\Ran$$
identifies canonically with the !-pullback along
$$S^\mu_\Ran \overset{p^\mu_\Ran} \longrightarrow (X^\mu \times \Ran)^{\subset}\overset{\on{pr}_\Ran^\mu}\longrightarrow X^\mu$$
of $\on{Fact}^{\on{alg}}(\CO(\cN))_{X^\mu}[-\langle \mu,2\check\rho\rangle]$.
\end{prop}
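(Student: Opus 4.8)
### Proof proposal for Proposition \ref{p:* retsr IC'}

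The plan is to compute $(\bi^\mu)^*({}'\!\IC^\semiinf_\Ran)$ by commuting the $*$-restriction functor past the colimit that defines ${}'\!\IC^\semiinf_\Ran$ (stratum by stratum over each $X^I$), and then to recognize the resulting colimit as the factorization-algebra object $\on{Fact}^{\on{alg}}(\CO(\cN))_{X^\mu}$ via geometric Satake. First I would reduce to a fixed finite set $I$: by \corref{c:well defined Ran}(c) the functor $(\bi^\mu)^*$ on $\SI^{\leq 0}_\Ran$ is computed $I$-by-$I$ compatibly with the $(\Delta_\phi)^!$, and the colimit presentation $\IC^\semiinf_I = \on{colim}_{\ul\lambda}\, \delta_{\ul\lambda}\star \Sat_{G,I}(V^{\ul\lambda})[\langle\lambda,2\check\rho\rangle]$ is compatible with these same transition functors (Sect. \ref{sss:semiinf prime}). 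So it suffices to identify $(\bi^\mu)^*(\IC^\semiinf_I)$ with $(\on{pr}^\mu_I)^!\circ(p^\mu_I)^!$ applied to $\on{Fact}^{\on{alg}}(\CO(\cN))_{X^\mu}[-\langle\mu,2\check\rho\rangle]$, compatibly in $I$.

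Next, since $(\bi^\mu)^*$ is a left adjoint it commutes with the colimit over $\Maps(I,\Lambda^+)$, so I must compute, for each $\ul\lambda$, the object
\[
(\bi^\mu)^*\bigl(\delta_{\ul\lambda}\star \Sat_{G,I}(V^{\ul\lambda})\bigr)[\langle\lambda,2\check\rho\rangle] \in \SI^{=\mu}_I,
\]
and then take the colimit. This is the local analogue of the computation carried out in \cite{Ga1} on a single $\Gr_{G,x}$ (cf. \cite[Sect. 2.3--2.4]{Ga1} and the proof of the stalk description there): the $*$-restriction of $\delta_{\ul\lambda}\star\Sat_{G,I}(V^{\ul\lambda})$ to $S^\mu_I$, pulled back along $s^\mu_I$, is governed by the weight-$\mu$ part of the corresponding tensor product of $\cG$-modules, with the arrows in the diagram over $\Maps(I,\Lambda^+)$ being the Pl\"ucker maps. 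Using the factorization structure over $X^I$ and the decomposition into diagonal strata (points of $X^I$ with prescribed coincidence pattern), the colimit over $\ul\lambda$ of these weight spaces computes, at a point of $X^\mu$ of the form $\Sigma_k\,\mu_k\cdot x_k$ with the $x_k$ distinct, the tensor product $\bigotimes_k \CO(\cN)(\mu_k)$; here one uses that $\CO(\cN)=\on{colim}_{\clambda}\, \bigl(V^{-\clambda}\bigr)^{*}\otimes(\text{wt }{-\clambda})$-type presentation, i.e. $\CO(\cN) = \on{colim}_{\clambda\in\Lambda^+}\, \clambda\text{-component of }\bigl(\oplus_V V\otimes V^*\bigr)$ realized via the Pl\"ucker system — this is exactly the mechanism that in \cite{Ga1} produced $\CO(\cN)$ as a stalk. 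The $[\langle\lambda,2\check\rho\rangle]$ shift combines with the $[\langle\mu,2\check\rho\rangle]$ built into the definition of the t-structure on $\SI^{=\mu}$ to give the net shift $[-\langle\mu,2\check\rho\rangle]$. Finally I would invoke \lemref{l:on stratum} to pass from $\SI^{=\mu}_I$ to $\Shv((X^I\times X^\mu)^{\supset})$ and \corref{c:pr contr} / \lemref{l:pr contr} to express everything as a $!$-pullback from $X^\mu$, identifying the output with $(\on{pr}^\mu_I)^!\circ(p^\mu_I)^!\bigl(\on{Fact}^{\on{alg}}(\CO(\cN))_{X^\mu}[-\langle\mu,2\check\rho\rangle]\bigr)$ in a way compatible with the $\on{TwArr}$-diagram, hence matching the colimit presentation of $\on{Fact}^{\on{alg}}$ from \secref{ss:factor alg}.

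The main obstacle I anticipate is organizing the homotopy-coherent bookkeeping: one must match the indexing category $\Maps(I,\Lambda^+)$ (with its Pl\"ucker transition maps) against the category $\on{TwArr}$ (or $\on{TwArr}_{I/}$, $\on{TwArr}_\lambda$) that defines $\on{Fact}^{\on{alg}}(\CO(\cN))_{X^\mu}$, and verify that the equivalence is not merely pointwise on strata but functorial in $I$ under the $(\Delta_\phi)^!$'s. The honest content — that the weight-$\mu$ colimit of tensor products of Weyl modules along Pl\"ucker maps computes $\CO(\cN)$ — is exactly the local statement of \cite{Ga1}; the new work is the Ran/factorization enhancement, for which I would lean on the formalism of \secref{ss:spread over Ran} (especially \eqref{e:I Ran} and \secref{sss:limit and colimit}) and on \propref{p:i* well defined}(b,c), which ensures $(\bi^\mu)^*$ interacts well with the factorization (external tensor) structure over the configuration spaces. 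The remaining step — that taking the colimit over $\ul\lambda$ genuinely produces $\CO(\cN)$ rather than some completion — requires the cofinality argument already present in \cite[Sect. 2.3]{Ga1}, which goes through verbatim since $S^\mu_I\to X^\mu$ is ind-finite-type with the relevant convolution varieties appearing as in {\it loc. cit.}
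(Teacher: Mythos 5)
Your proposal follows essentially the same route as the paper: reduce to a fixed $I$, commute the left adjoint $(\bi^\mu)^*$ past the colimit over $\Maps(I,\Lambda^+)$, compute each term $(p^\mu_I)_!\circ(\bi^\mu)^*(\delta_{\ul\lambda}\star\Sat_{G,I}(V^{\ul\lambda}))$ via the hyperbolic-localization/Satake weight-space mechanism (the paper does this by explicitly passing to $(p^{-,\mu}_I)_*\circ(\bi^{-,\mu})^!$ through \secref{ss:Braden}, which underlies the \propref{p:i* well defined} you cite), and identify the colimit with $\on{Fact}^{\on{alg}}(\CO(\cN))_{X^\mu}$ using $\on{colim}_{\lambda}V^\lambda(\lambda+\mu)\simeq\CO(\cN)(\mu)$. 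This matches the paper's proof, including the $\on{TwArr}$-style bookkeeping you flag as the remaining technical point.
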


\begin{prop} \label{p:! retsr IC'}
For $0\neq \mu\in \Lambda^{\on{neg}}$, the object
$$(\bi^\mu)^!({}'\!\IC^\semiinf_\Ran)[\langle \mu,2\check\rho\rangle]\in \SI^{=\mu}_\Ran$$
is a pullback along $\on{pr}_\Ran^\mu\circ p^\mu_\Ran$ of an object of $\Shv(X^\mu)$ that is strictly coconnective.
\end{prop}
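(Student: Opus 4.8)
### Proof proposal for \propref{p:! retsr IC'}

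\textbf{Overall strategy.} The plan is to deduce the !-restriction statement from the *-restriction statement of \propref{p:* retsr IC'} by a Braden/hyperbolic-localization argument, exactly in the spirit of \secref{ss:Braden}. Since $'\!\IC^\semiinf_\Ran$ is an object of $\SI^{\leq 0}_\Ran$, by \corref{c:well defined Ran} the partially defined functors $(\bi^\mu)^*$ and $(\bi^\mu)_!$ are defined on it, and Braden's theorem (applied to the $\BG_m$-action coming from a regular dominant coweight, as in \lemref{l:attr locus}) identifies $(s^\mu_I)^!\circ(\bi^\mu)^!$ with a !-pushforward from the \emph{repelling} stratum $S^{-,\mu}_I$, in the same way that $(s^\mu_I)^!\circ(\bi^\mu)^*$ is identified with a pushforward from the attracting stratum. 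Concretely, working over a fixed $X^I$ and then passing to the limit over $I$, I would use the self-Verdier-duality properties: the object $'\!\IC^\semiinf_\Ran$ is (up to the shift) Verdier self-dual because it is a colimit of objects of the form $\delta_{\ul\lambda}\star\Sat_{G,I}(V^{\ul\lambda})[\langle\lambda,2\check\rho\rangle]$ and geometric Satake together with the convolution action interchanges $*$ and $!$ appropriately — this converts the known *-restriction computation into the desired !-restriction computation.

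\textbf{Key steps, in order.} First, I would record that the *-restriction $(\bi^\mu)^*({}'\!\IC^\semiinf_\Ran)$, by \propref{p:* retsr IC'}, is the !-pullback of $\on{Fact}^{\on{alg}}(\CO(\cN))_{X^\mu}[-\langle\mu,2\check\rho\rangle]$, which by the remark after \secref{sss:ex fact} lies in $\Shv(X^\mu)^{<0}$ for $\mu\neq 0$ (strictly coconnective after the shift by $[\langle\mu,2\check\rho\rangle]$ — wait, I mean: $\on{Fact}^{\on{alg}}(\CO(\cN))_{X^\mu}$ itself is strictly coconnective, being in $\Shv(X^\mu)^{<0}$, hence so is its pullback since $\on{pr}^\mu_\Ran$ is pseudo-proper and $(\on{pr}^\mu_\Ran)^!$ is t-exact). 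Second, I would invoke Verdier self-duality: on each $S^\lambda_\Ran$ (via \lemref{l:on stratum}) Verdier duality interchanges $(\bi^\mu)^!$ and $(\bi^\mu)^*$ up to the shift, and the relevant factorization objects $\on{Fact}^{\on{alg}}(\CO(\cN))_{X^\mu}$ and $\on{Fact}^{\on{coalg}}(U(\cn^-))_{X^\mu}$ are Verdier dual to one another (as noted in \secref{ss:factor alg}, since $U(\cn^-)$ is the graded dual of $\CO(\cN)$). Thus $(\bi^\mu)^!({}'\!\IC^\semiinf_\Ran)[\langle\mu,2\check\rho\rangle]$ is the pullback along $\on{pr}^\mu_\Ran\circ p^\mu_\Ran$ of $\on{Fact}^{\on{coalg}}(U(\cn^-))_{X^\mu}$. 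Third, I would observe that $\on{Fact}^{\on{coalg}}(U(\cn^-))_{X^\mu}\in\Shv(X^\mu)^{>0}$ for $\mu\neq 0$, again by the construction in \secref{sss:coalg} combined with the *-fiber description; hence it is in particular strictly coconnective, which is precisely the claim. Since $(\on{pr}^\mu_\Ran)^!$ is t-exact by the Proposition in \secref{ss:t-structure} (the one proved via \corref{c:pr contr}), and $(p^\mu_\Ran)^!$ is an equivalence up to a shift preserving strict coconnectivity appropriately, the pulled-back object is strictly coconnective as an object of $\SI^{=\mu}_\Ran$.

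\textbf{The main obstacle.} The delicate point is justifying that $'\!\IC^\semiinf_\Ran$ is Verdier self-dual (up to the correct shift) and that this self-duality is \emph{compatible} with the stratawise restriction functors and with the Braden isomorphism — i.e., that the duality exchanging attracting and repelling loci matches the algebraic duality exchanging $\CO(\cN)$ and $U(\cn^-)$. For this I would argue that the colimit presentation $\IC^\semiinf_I=\underset{\ul\lambda}{\on{colim}}\,\delta_{\ul\lambda}\star\Sat_{G,I}(V^{\ul\lambda})[\langle\lambda,2\check\rho\rangle]$ dualizes to a \emph{limit} over the same poset of the Verdier duals, and that each $\BD(\delta_{\ul\lambda}\star\Sat_{G,I}(V^{\ul\lambda}))$ is computed by replacing $V^{\ul\lambda}$ with its dual and $\delta$ with itself (the sections $s^{\ul\lambda}_I$ are into a smooth-over-$X^I$ locus, so $\delta_{\ul\lambda}$ is, up to shift, Verdier self-dual over $X^I$); one then needs the colimit over $\Maps(I,\Lambda^+)$ to be filtered enough that it commutes with the relevant truncation functors — this is where I expect to lean on the already-established \lemref{l:elem ppties IC} and on a cofinality argument as in \cite[Proposition 2.3.7]{Ga1}. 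An alternative, possibly cleaner route that avoids the self-duality bookkeeping entirely: prove \propref{p:! retsr IC'} directly by the same Braden argument as in \secref{ss:Braden}, identifying $(s^\mu_I)^!\circ(\bi^\mu)^!({}'\!\IC^\semiinf_\Ran)$ with $(\ol{p}{}^{-,\mu}_I)_!\circ(\bj^{-,\mu})^*\circ(\ol\bi{}^{-,\mu})^!({}'\!\IC^\semiinf_\Ran)$ and then computing the right-hand side using the factorization structure of $'\!\IC^\semiinf_\Ran$ together with the known stalks of $\IC^\semiinf_x$ on the affine Grassmannian of a point (from \cite{Ga1}), which give the $U(\cn^-)$-factor fiberwise. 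Either way, the strict coconnectivity in the end is automatic once the object is identified with the pullback of $\on{Fact}^{\on{coalg}}(U(\cn^-))_{X^\mu}$, so the real content is the identification, and the obstacle is the compatibility of duality with the stratification and with passing to the Ran limit.
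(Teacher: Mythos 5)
Your primary route has a genuine gap: it rests on Verdier self-duality of $'\!\IC^\semiinf_\Ran$, which is neither available at this point nor easy to make sense of locally. The strata $S^\mu_\Ran$ are infinite-dimensional, Verdier duality does not interact simply with the $\fL(N)_\Ran$-equivariant category (it would naturally pair it with the $N^-$-side), and dualizing the colimit presentation of \secref{ss:pres as colimit} produces a \emph{limit} of duals with no mechanism for identifying it back with $'\!\IC^\semiinf_\Ran$; the filteredness/cofinality considerations you mention do not address this. In fact, the identification of $(\bi^\mu)^!$ of the semi-infinite IC sheaf with the pullback of $\on{Fact}^{\on{coalg}}(U(\cn^-))_{X^\mu}$ is precisely the !-half of \thmref{t:descr of restr}, which the paper proves only in \secref{s:glob} by transporting the Verdier self-duality of the genuinely finite-type object $\IC_{\BunNb}$ through \thmref{t:IC loc glob}; and invoking that description here would be circular, since \thmref{t:descr of restr glob}(a) uses the *-restriction description of $\IC^\semiinf_\Ran$, which relies on \thmref{t:descr as colimit}, which is exactly what \propref{p:! retsr IC'} is needed for. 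This is why the proposition asserts only strict coconnectivity rather than the full $U(\cn^-)$-description.

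Your fallback route is much closer to the paper's actual argument, but note that no Braden-type statement is needed (and the identity you display is not the correct form of Braden's theorem): $(s^\mu_I)^!\circ(\bi^\mu)^!$ is simply !-restriction along the section, and !-pullback commutes with the colimit defining $\IC^\semiinf_I$. The paper first uses the factorization isomorphism \eqref{e:factor 'IC} and the diagonal stratification of $X^\mu$ to reduce to the main diagonal $X\to X^\mu$, and then computes term by term: $(s^\mu_{(X^I\times X)^{\supset}})^!(\delta_{\ul\lambda}\star \Sat_{G,I}(V^{\ul\lambda}))[\langle \lambda,2\check\rho\rangle]$ is $(\on{pr}^\mu_{(X^I\times X)^{\supset}})^!(\omega_X)\otimes W_{\lambda,\mu}[-\langle\mu,2\check\rho\rangle]$, where $W_{\lambda,\mu}$ records the !-restriction of $\Sat(V^\lambda)$ to $\Gr_G^{\lambda+\mu}$ and sits in cohomological degrees $\geq 2$; passing to the colimit gives $\Sym(\cn^-[-2])(\mu)$, still in degrees $\geq 2$, whence the strict coconnectivity. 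Two points your sketch leaves out and would need: the reduction over the diagonal stratification via factorization, and the verification (for every $I$) that the answer is constant along the Ran direction, i.e.\ pulled back along $\on{pr}^\mu_\Ran\circ p^\mu_\Ran$, which the explicit formula above supplies. So the correct skeleton is present in your alternative, but the duality argument you put forward as the main proof would not go through.
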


\begin{proof}[Proof of \thmref{t:descr as colimit} modulo the propositions]

By the definition of the t-structure on $\SI^{\leq 0}_\Ran$ and \lemref{l:char of conn}, it suffices to show that for $\mu\in \Lambda^{\on{neg}}-0$,
the !-restriction (resp., *-restriction) of $'\!\IC^\semiinf_\Ran$ to $S^\mu_\Ran$ is cohomologically $>0$ (resp., $<0$). 

\medskip

Now, \propref{p:! retsr IC'} (resp., \propref{p:* retsr IC'}) implies the required cohomological estimate by \propref{p:pr is exact}.

\end{proof}

\begin{rem} \label{r:*-fibers proved}
Note that \thmref{t:descr as colimit} and \propref{p:* retsr IC'} imply the assertion of \thmref{t:descr of restr} about the *-fibers. 

\medskip

We will use this observation in the sequel, for the proof of the assertion of \thmref{t:descr of restr} about the !-fibers. 
\end{rem}

\sssec{} 

Let us assume \thmref{t:descr as colimit} for a moment. As a corollary, and taking into account \secref{sss:at a point}, we obtain:

\begin{cor}
The restriction of $\IC^\semiinf_\Ran$ along the map
$$\Gr_{G,x}\simeq \{x\}\underset{\Ran}\times \Gr_{G,\Ran}\to \Gr_{G,\Ran}$$
identifies canonically with the object 
$\IC^\semiinf_x\in \Shv(\Gr_{G,x})$
of \cite[Sect. 2.3]{Ga1}.
\end{cor}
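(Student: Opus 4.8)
The plan is to deduce the statement formally from \thmref{t:descr as colimit} together with the discussion in \secref{sss:at a point}; no new content is needed beyond those two inputs.

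First I would fix notation: write $\iota_x$ for the map
$$\Gr_{G,x}\simeq \{x\}\underset{\Ran(X)}\times \Gr_{G,\Ran}\to \Gr_{G,\Ran},$$
so that ``the restriction along $\iota_x$'' is by definition the $!$-pullback functor $\iota_x^!:\Shv(\Gr_{G,\Ran})\to \Shv(\Gr_{G,x})$. Next I would recall from \thmref{t:descr as colimit} the canonical isomorphism ${}'\!\IC^\semiinf_\Ran\simeq \IC^\semiinf_\Ran$ in $\SI^{\leq 0}_\Ran\subset \Shv(\Gr_{G,\Ran})$ and apply $\iota_x^!$ to it; being a functor, $\iota_x^!$ carries this isomorphism to a canonical isomorphism $\iota_x^!({}'\!\IC^\semiinf_\Ran)\simeq \iota_x^!(\IC^\semiinf_\Ran)$ in $\Shv(\Gr_{G,x})$. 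Finally I would invoke \secref{sss:at a point}, where $\iota_x^!({}'\!\IC^\semiinf_\Ran)$ is identified canonically with the object $\IC^\semiinf_x\in \Shv(\Gr_{G,x})$ of \cite[Sect.~2.3]{Ga1}; composing the two identifications produces the asserted canonical isomorphism $\iota_x^!(\IC^\semiinf_\Ran)\simeq \IC^\semiinf_x$.

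I do not expect any genuine obstacle: all the substance lives in \thmref{t:descr as colimit}, whose proof occupies the remainder of Section~\ref{s:t}, and in the unwinding of definitions carried out in \secref{sss:at a point}. The one thing I would double-check is that the two identifications above are literally compatible, i.e. that $\iota_x^!$ is computed exactly as in \secref{sss:at a point}: since ${}'\!\IC^\semiinf_\Ran$ is, by its construction in \secref{sss:semiinf prime}, the compatible family $\{\IC^\semiinf_I\}_I$, restricting it along $\iota_x$ amounts to taking the component $\IC^\semiinf_{I_0}\in \Shv(\Gr_{G,I_0})$ for a one-element set $I_0$ (so that $X^{I_0}=X$ and $\Gr_{G,I_0}$ is the Beilinson--Drinfeld Grassmannian over $X$) and then $!$-restricting along the fibre inclusion $\Gr_{G,x}\hookrightarrow \Gr_{G,I_0}$ over the point $x\in X$ --- which is precisely the composite used in \secref{sss:at a point} to identify it with $\IC^\semiinf_x$. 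Hence no separate verification is required, and the corollary follows.
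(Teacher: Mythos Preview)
Your proposal is correct and follows essentially the same approach as the paper: the paper simply states that the corollary follows from \thmref{t:descr as colimit} and \secref{sss:at a point}, without further elaboration. Your additional remark unwinding how $\iota_x^!$ is computed on ${}'\!\IC^\semiinf_\Ran$ via the family $\{\IC^\semiinf_I\}_I$ is a helpful sanity check, but not something the paper itself spells out.
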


\sssec{}  \label{sss:factor prime}

Before we proceed with the proof of Propositions \ref{p:* retsr IC'} and \ref{p:! retsr IC'}, let us make the following
observation concerning the object $'\!\IC^\semiinf_\Ran$ (it will be used in the proof of \ref{p:! retsr IC'}): 

\medskip

By construction, $$'\!\IC^\semiinf_\Ran\in \Shv(\ol{S}{}^0_\Ran)$$ has the following \emph{factorization property} with respect to $\Ran$:

\medskip

Let $(\Ran\times \Ran)_{\on{disj}}$ denote the \emph{disjoint locus}. I.e., for an affine test-scheme $Y$, 
$$\Hom(Y,(\Ran\times \Ran)_{\on{disj}})\subset \Hom(Y,\Ran)\times \Hom(Y,\Ran)$$
consists of those pairs $\CI_1,\CI_2\in \Hom(Y,X)$, for which for every $i_1\in I_1$ and $i_2\in I_2$, the corresponding two maps
$Y\rightrightarrows X$ have non-intersecting images. 

\medskip

It is well-known that we have a canonical isimorphism
\begin{equation} \label{e:factor Gr}
(\Gr_{G,\Ran}\times \Gr_{G,\Ran})\underset{\Ran\times \Ran}\times (\Ran\times \Ran)_{\on{disj}}
\simeq \Gr_{G,\Ran}\underset{\Ran}\times (\Ran\times \Ran)_{\on{disj}},
\end{equation}
where 
$$(\Ran\times \Ran)_{\on{disj}}\to \Ran\times \Ran\to \Ran$$ is the map
$$\CI_1,\CI_2\mapsto \CI_1\cup \CI_2.$$

Then, in terms of the identification \eqref{e:factor Gr}, we have a canonical isomorphism
\begin{multline} \label{e:factor 'IC}
({}'\!\IC^\semiinf_\Ran\boxtimes {}'\!\IC^\semiinf_\Ran)
|_{(\Gr_{G,\Ran}\times \Gr_{G,\Ran})\underset{\Ran\times \Ran}\times (\Ran\times \Ran)_{\on{disj}}}\simeq \\
\simeq {}'\!\IC^\semiinf_\Ran|_{} \Gr_{G,\Ran}\underset{\Ran}\times (\Ran\times \Ran)_{\on{disj}}.
\end{multline} 

\ssec{Description of the *-restriction to strata}

The goal of this subsection is to prove \propref{p:* retsr IC'}.

\sssec{}

We will compute 
$$(\bi^\mu)^*({}'\!\IC^\semiinf_I)\in \SI^{\leq 0}_I$$
for each individual finite non-empty set $I$, and obtain the !-pullback of 
$$(\on{pr}^\mu_\Ran\circ p^\mu_\Ran)^!(\on{Fact}^{\on{alg}}(\CO(\cN))_{X^\mu})[-\langle \mu,2\check\rho\rangle]$$
along $S^\mu_I\to S^\mu_\Ran$. 

\medskip

Thus, by \propref{p:on stratum}, we need to construct an identification
\begin{equation} \label{e:to compute mu * 1}
(p^\mu_I)_!\circ (\bi^\mu)^*({}'\!\IC^\semiinf_I)[\langle \mu,2\check\rho\rangle]
\simeq (\on{pr}^\mu_I)^!(\on{Fact}^{\on{alg}}(\CO(\cN))_{X^\mu}),
\end{equation} 
where $\on{pr}^\mu_I$ denotes the map 
$$(X^\mu\times X^I)^{\subset}\to X^\mu.$$

\sssec{}

We will compute
\begin{equation} \label{e:to compute mu * 2}
(p^\mu_I)_!\circ (\bi^\mu)^*(\delta_{-\ul\lambda}\star \Sat_{G,I}(V^{\ul\lambda}))[\langle \lambda+\mu,2\check\rho\rangle]\in \Shv((X^\mu\times X^I)^{\subset})
\end{equation} 
for each individual $\ul\lambda:I\to \Lambda^+$ with $\lambda=\underset{i\in I}\Sigma\, \ul\lambda(i)$. 

\medskip

Namely, we will show that \eqref{e:to compute mu * 2} identifies with the following object, denoted
$$V^{\ul\lambda}(\ul\lambda+\mu)\in  \Shv((X^\mu\times X^I)^{\subset}),$$
described below. 

\sssec{}

Before we give the definition of $V^{\ul\lambda}(\ul\lambda+\mu)$, let us describe what its !-fibers are.
Fix a point of $(X^\mu\times X^I)^{\subset}$. By definition, a datum of such a point consists of:

\begin{itemize}

\item A partition $I=\underset{k}\sqcup\, I_k$; 

\item A collection of distinct points $x_k$ of $x$;

\item An assignment $x_k\mapsto \mu_k\in \Lambda^{\on{neg}}-0$, so that $\underset{k}\Sigma\, \mu_k=\mu$.

\end{itemize} 

Then the !-fiber of $V^{\ul\lambda}(\ul\lambda+\mu)$ at a such a point is
$$\underset{k}\otimes\, V^{\lambda_k}(\lambda_k+\mu_k),$$
where $\lambda_k=\underset{i\in I_k}\Sigma\, \ul\lambda(i)$, and where 
$V(\nu)$ denotes the $\nu$-weight space in a $\cG$-representation $V$. 

\sssec{}

Consider the category, denoted $\on{TwArr}_{\mu,I/}$, whose objects are commutative diagrams
$$
\CD
I @>{\upsilon}>> J @>{\psi}>> K \\
& & @V{\phi_J}VV @VV{\phi_K}V  \\
& & \wt{J} @>{\wt\psi}>> \wt{K}  \\
& & @A{\phi'_J}AA @AA{\phi'_K}A \\
\Lambda^{\on{neg}}  @<{\ul\mu}<< \wt{J}'   @>{\wt\psi'}>> \wt{K}',
\endCD
$$
where the maps $\upsilon,\psi,\wt\psi,\wt\psi',\phi_J,\phi_K$ are surjective (but $\phi'_J$ and $\phi'_K$ are not necessarily so),
and 
$$\underset{\wt{j}'\in  \wt{J}'}\Sigma\, \ul\mu(\wt{j}')=\mu.$$

Morphisms in this category are defined by the same principle as in $\on{TwArr}_{\mu}$ and $\on{TwArr}_{I/}$ 
introduced earlier, i.e., the sets $J$, $\wt{J}$, $\wt{J}'$ map forward and the sets $K$, $\wt{K}$, $\wt{K}'$ map
backwards. 

\medskip

Let $\Delta_{\wt{K},I,\lambda}$ denote the map
$$X^{\wt{K}}\to X^\mu\times X^I,$$
comprised of  
$$\Delta_{\phi_K\circ \psi\circ \upsilon}:X^{\wt{K}}\to X^I$$
and 
$$X^{\wt{K}} \overset{\Delta_{\phi'_K}}\longrightarrow X^{\wt{K}'} \overset{\Delta_{\wt{K}',\ul\mu}}\longrightarrow X^\mu.$$

\medskip

We let $V^{\ul\lambda}(\ul\lambda+\mu)$ be the colimit over  $\on{TwArr}_{\mu,I/}$ of the objects
$$(\Delta_{\wt{K},I,\lambda})_*(\omega_{X^{\wt{K}}})\bigotimes \left(\underset{\wt{j}\in \wt{J}}\otimes \, V^{\lambda_{\wt{j}}}(\lambda_{\wt{j}}+\mu_{\wt{j}})\right),$$
where 
$$\lambda_{\wt{j}}=\underset{i\in I,i\mapsto \wt{j}}\Sigma\, \ul\lambda(i) \text{ and }
\mu_{\wt{j}}=\underset{\wt{j}'\in \wt{J}',\wt{j}'\mapsto \wt{j}}\Sigma\, \ul\mu(\wt{j}').$$

\sssec{}

Applying Braden's theorem (see \secref{sss:setting for Braden}), we obtain a canonical isomorphism
$$(p^\mu_I)_!\circ (\bi^\mu)^*(\delta_{-\ul\lambda}\star \Sat_{G,I}(V^{\ul\lambda}))
\simeq (p^{-,\mu}_I)_*\circ (\bi^{-,\mu})^!(\delta_{-\ul\lambda}\star \Sat_{G,I}(V^{\ul\lambda})).$$

\medskip

The key property of the geometric Satake functor $\Sat_{G,I}$ for $I=\{1\}$ is that for $V'\in \Rep(\cG)$ and
$\mu'\in \Lambda$
$$(p_{\{1\}}^{-,\mu'})_*\circ (\bi^{-,\mu'})^!(\Sat_{G,\{1\}}(V'))[\langle \mu,2\check\rho\rangle]\simeq
\omega_X\otimes V'(\mu').$$

Unwinding the construction of its multi-point version $\Sat_{G,I}$, we obtain a canonical isomorphism
$$(p^{-,\mu}_I)_*\circ (\bi^{-,\mu})^!(\delta_{-\ul\lambda}\star \Sat_{G,I}(V^{\ul\lambda}))
[\langle \lambda+\mu,2\check\rho\rangle] \simeq V^{\ul\lambda}(\ul\lambda+\mu),$$
giving rise to the desired expression for \eqref{e:to compute mu * 2}. 

\sssec{}

To finish the proof of \propref{p:* retsr IC'}, we have to show that 
$$\underset{\ul\lambda\in \Maps(I,\Lambda^+)}{\on{colim}}\, V^{\ul\lambda}(\ul\lambda+\mu)$$
identifies canonically with $(\on{pr}^\mu_I)^!(\on{Fact}^{\on{alg}}(\CO(\cN))_{X^\lambda})$. 

\medskip

Indeed, this follows from the fact that we have a canonical identification
$$\underset{\lambda\in \Lambda^+}{\on{colim}}\, V^\lambda(\lambda+\mu)\simeq \CO(\cN)(\mu),$$
where $\Lambda^+$ is endowed with the order relation
$$\lambda_1\leq\lambda_2\, \Leftrightarrow\, \lambda_2-\lambda_1\in \Lambda^+.$$

\ssec{Proof of coconnectivity}

In this subsection we will prove \propref{p:! retsr IC'}, thereby completing the proof
of \thmref{t:descr as colimit}. 

\sssec{}

Consider the diagonal stratification of $X^\mu$. For each parameter $\beta$ of the stratification, let 
$X^\mu_\beta$ let denote the corresponding stratum, and denote by 
$$(X^\mu_\beta\times \Ran)^{\subset}:=X^\mu_\beta\underset{X^\mu}\times (X^\mu \times \Ran)^{\subset}) \overset{\iota_\beta}\hookrightarrow  
(X^\mu \times \Ran)^{\subset}$$
and 
$$(X^\mu_\beta\times \Ran)^{\subset} 
\overset{\on{pr}_{\Ran,\beta}^\mu}\longrightarrow X^\mu_\beta$$
the resulting maps.

\medskip

Let $\CF^\mu\in \Shv((X^\mu \times \Ran)^{\subset})$ be such that
$$(\bi^\mu)^!({}'\!\IC^\semiinf_\Ran)\simeq (p^\mu_\Ran)^!(\CF^\mu).$$

\medskip

Since the functor $(\on{pr}_{\Ran}^\mu)^!$ is left t-exact and fully faithful
(the latter by \corref{c:pr contr}), in order to prove \propref{p:! retsr IC'}, it suffices to show that each
$$(\iota_\beta)^!\circ \CF^\mu \in \Shv((X^\mu_\beta\times \Ran)^{\subset})$$
is of the form
$$(\on{pr}_{\Ran,\beta}^\mu)^!(\CF^\mu_\beta),$$ 
where $\CF^\mu_\beta\in \Shv(X^\mu_\beta)$ is such that $\CF^\mu_\beta[\langle \mu,2\check\rho\rangle]$
is strictly coconnective.

\sssec{}

By the factorization property of $'\!\IC^\semiinf_\Ran$ (see \eqref{e:factor 'IC}),
it suffices to prove the above assertion for $\beta$ corresponding to the main diagonal
$X \to X^\mu$. Denote the corresponding stratum in $(X^\mu \times \Ran)^{\subset}$ by $$(X\times \Ran)^{\subset}.$$ Denote the
corresponding map $\on{pr}_{\Ran,\beta}^\mu$ by 
$$\on{pr}_{(X\times \Ran)^{\subset}}^\mu:(X\times \Ran)^{\subset}\to X.$$
Denote the restriction of the section 
$$s^\mu_\Ran:(X^\mu \times \Ran)^{\subset}\to S^\mu_\Ran$$
to this stratum by $s^\mu_{(X\times \Ran)^{\subset}}$. 

\medskip

We claim that 
$$(s^\mu_{(X\times \Ran)^{\subset}})^!({}'\!\IC^\semiinf_\Ran)\simeq (\on{pr}_{(X\times \Ran)^{\subset}}^\mu)^!
(\omega_X)\otimes \sW_\mu[-\langle \mu,2\check\rho\rangle],$$
where $\sW_\mu\in \Vect$ lives in cohomological degrees $\geq 2$.

\begin{rem} 
One can show that there is a canonical identification
$$\sW_\mu\simeq \Sym(\cn^-[-2])(\mu),$$
where $\cn$ is the unipotent radical of the Langlands dual Lie algebra. In fact, such an identification would
follow once we prove \thmref{t:descr of restr} for the !-restrictions.
\end{rem} 

\sssec{}

In fact, we claim that for every $I$, we have:
$$(s^\mu_{(X\times X^I)^{\subset}})^!({}'\!\IC^\semiinf_I)\simeq (\on{pr}_{(X\times X^I)^{\subset}}^\mu)^!
(\omega_X)\otimes \sW_\mu[-\langle \mu,2\check\rho\rangle],$$
where 
$$\on{pr}_{(X\times X^I)^{\subset}}^\mu:=\on{pr}_{(X\times \Ran)^{\subset}}^\mu|_{(X\times X^I)^{\subset}}.$$

\sssec{}

Indeed, it follows from the definitions that for any $\ul\lambda:I\to \Lambda^+$, 
$$(s^\mu_{(X\times X^I)^{\subset}})^!(\delta_{-\ul\lambda}\star \Sat_{G,I}(V^{\ul\lambda}))[\langle \lambda,2\check\rho\rangle]
\simeq (\on{pr}_{(X\times X^I)^{\subset}}^\mu)^!
(\omega_X)\otimes W_{\lambda,\mu}[-\langle \mu,2\check\rho\rangle],$$
where $W_{\lambda,\mu}$ is the cohomogically graded vector space such that
$$\Sat(V^\lambda)|_{\Gr_G^{\lambda+\mu}}\simeq \IC_{\Gr_G^{\lambda+\mu}}\otimes W_{\lambda,\mu},\quad \sW_{\lambda,\mu}\in \Vect,$$
where $-|_{-}$ means !-restriction. By parity vanishing, $\sW_{\lambda,\mu}$ lives in cohomological degrees $\geq 2$.

\medskip

Finally, 
$$\sW_\mu=\underset{\lambda\in \Lambda^+}{\on{colim}}\, W_{\lambda,\mu},$$
and the cohomological estimate holds for $\sW_\mu$ because the poset $\Lambda^+$ is filtered (here we use the
assumption that $G$ is semi-simple and simply connected). 

\section{The semi-infinite IC sheaf and Drinfeld's compactification}  \label{s:glob}

In this section we will express $\IC^\semiinf_\Ran$ in terms of an \emph{actual intersection cohomology sheaf}, i.e., one 
arising in finite-dimensional algebraic geometry (technically, on an algebraic stack locally of finite type). 

\medskip

Throughout this section, the curve $X$ is assumed to be proper. 

\ssec{Drinfeld's compactification}

\sssec{}

Let $\BunBb$ Drinfeld's relative compactification of the stack $\Bun_B$ along the fibers of the map
$\Bun_B\to \Bun_G$.

\medskip

I.e., $\BunBb$ is the algebraic stack that classifies triples $(\CP_G,\CP_T,\kappa)$, where:

\medskip

\noindent(i) $\CP_G$ is a $G$-bundle on $X$;

\medskip

\noindent(ii) $\CP_T$ is a $T$-bundle on $X$;

\medskip

\noindent(iii) $\kappa$ is a \emph{Pl\"ucker} data, i.e., a system of non-zero maps 
$$\kappa^{\check\lambda}:\check\lambda(\CP_T)\to \CV^{\check\lambda}_{\CP_G},$$
(here $\CV^{\check\lambda}$ denotes the Weyl module with highest weight $\check\lambda\in \check\Lambda^+$)
that satisfy Pl\"ucker relations, i.e., for $\check\lambda_1$ and $\check\lambda_2$ the diagram
$$
\CD
\check\lambda_1(\CP_T)\otimes \check\lambda_2(\CP_T)  @>{\kappa^{\check\lambda_1}\otimes \kappa^{\check\lambda_2}}>> 
\CV^{\check\lambda_1}_{\CP_G}\otimes \CV^{\check\lambda_2}_{\CP_G}  \\
@A{\sim}AA   @AAA  \\
(\check\lambda_1+\check\lambda_2)(\CP_T)  @>{\kappa^{\check\lambda_1+\check\lambda_2}}>> \CV^{\check\lambda_1+\check\lambda_2}_{\CP_G} 
\endCD
$$
must commute. 

\medskip

The open substack
\begin{equation} \label{e:BunB}
\Bun_B \overset{\bj_{\on{glob}}}\hookrightarrow \BunBb
\end{equation}
corresponds to the condition that the maps $\kappa^{\check\lambda}$ be injective bundle maps. 

\medskip

We denote by $\ol\sfp$ (resp., $\ol\sfq$)
resulting map from $\BunBb$ to $\Bun_G$ (resp., $\Bun_T$), which sends $(\CP_G,\CP_T,\kappa)$ to
$\CP_G$ (resp., $\CP_T$). 

\medskip

Its restriction to $\Bun_B\subset \BunBb$ is the usual map
$\sfq:\Bun_B\to \Bun_G$ (resp., $\sfq:\Bun_B\to \Bun_T$) 
induced by the map of groups $B\to G$ (resp., $B\to T$).

\sssec{}

For $\lambda\in \Lambda^{\on{neg}}$ we let $\ol\bi{}^\lambda_{\on{glob}}$ denote the map 
$$\BunBb^{\leq \lambda}:=\BunBb\times X^\lambda \to \BunBb,$$
given by 
$$(\CP_G,\CP_T,\kappa,D)\mapsto (\CP'_G,\CP'_T,\kappa')$$ with
$\CP'_G=\CP_G$, $\CP'_T=\CP_T(D)$ and $\kappa'$ given by precomposing $\kappa$ with the natural maps
$$\check\lambda(\CP'_T)=\check\lambda(\CP_T)(\check\lambda(D))\hookrightarrow \check\lambda(\CP_T).$$

\medskip

It is known that $\ol\bi{}^\lambda_{\on{glob}}$ is a finite morphism. 

\sssec{}

Let $\bj^\lambda_{\on{glob}}$ denote the open embedding
$$\BunBb^{=\lambda}:=\Bun_B\times X^\lambda \hookrightarrow \BunBb\times X^\lambda=:\BunBb^{\leq \lambda}.$$

Denote 
$$\bi^\lambda_{\on{glob}}=\ol\bi{}^\lambda_{\on{glob}}\circ \bj^\lambda_{\on{glob}}.$$

Note that by definition $\bi^0_{\on{glob}}=\bj^0_{\on{glob}}=\bj_{\on{glob}}$ is the embedding \eqref{e:BunB}. 

\medskip

The following is known:

\begin{lem}
The maps $\bi^\lambda_{\on{glob}}$ are locally closed embeddings. Every field-valued point of $\BunBb$ belongs to 
the image of exactly one such map.
\end{lem}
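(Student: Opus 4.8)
\medskip

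\noindent The plan is to reconstruct the standard defect stratification of Drinfeld's compactification (see \cite{BFGM}). For $\lambda=0$ the map $\bi^0_{\on{glob}}=\bj^0_{\on{glob}}$ is, as already noted, the open embedding $\Bun_B\hookrightarrow\BunBb$ (by construction $\Bun_B$ is the locus where every $\kappa^{\check\lambda}$ is a bundle injection, an open condition), and for every $\lambda$ the map $\bj^\lambda_{\on{glob}}$ is an open embedding. So it is enough to identify the image of $\bi^\lambda_{\on{glob}}$ with a locally closed substack of $\BunBb$ onto which $\bi^\lambda_{\on{glob}}$ restricts to an isomorphism, and to check that these images cover every field-valued point of $\BunBb$ exactly once.

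The heart of the matter is the \emph{defect} construction. Given a field-valued point $(\CP_G,\CP_T,\kappa)$, each $\kappa^{\check\lambda}\colon\check\lambda(\CP_T)\to\CV^{\check\lambda}_{\CP_G}$ is a nonzero map out of a line bundle on the curve $X$, hence injective onto its saturation $\L^{\check\lambda}\subseteq\CV^{\check\lambda}_{\CP_G}$ (the sub-bundle generically spanned by the image), with $\L^{\check\lambda}\otimes\check\lambda(\CP_T)^{-1}\simeq\CO(D^{\check\lambda})$ for a uniquely determined effective divisor $D^{\check\lambda}$. First I would verify that the Pl\"ucker relations on $\kappa$ pass to the family $\{\L^{\check\lambda}\}$, so that this family consists of the highest-weight lines of an \emph{honest} point $(\CP_G,\CP_T^\circ,\kappa^\circ)\in\Bun_B$ with $\check\lambda(\CP_T^\circ)\simeq\L^{\check\lambda}$; then the divisors $D^{\check\lambda}$, being additive in $\check\lambda$, assemble into a single $\Lambda$-valued divisor $D$ with $\CP_T\simeq\CP_T^\circ(D)$ and $\kappa$ the restriction of $\kappa^\circ$. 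The one genuinely non-formal input — and what I expect to be the main obstacle — is that $D$ is automatically valued in $\Lambda^{\on{neg}}$, and not merely antidominant: this is equivalent to the local defect at each point of $X$ lying in $\Lambda^{\on{pos}}$, and follows from the stratification of the affine closure $\GUb$ by $\Lambda^{\on{pos}}$ (equivalently, the classification of degenerations of $B$-structures), for which I would cite \cite{BFGM}. Taking $\lambda\in\Lambda^{\on{neg}}$ to be the total class of $D$, we obtain $\bi^\lambda_{\on{glob}}\bigl((\CP_G,\CP_T^\circ,\kappa^\circ),D\bigr)\simeq(\CP_G,\CP_T,\kappa)$; and since saturation is canonical, the data $(\CP_T^\circ,\kappa^\circ)$, $D$, and hence $\lambda$, are uniquely recovered from $(\CP_G,\CP_T,\kappa)$. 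This gives the ``exactly one'' assertion on field-valued points.

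Finally I would promote this to families. The image $\BunBb^{\leq\lambda}:=\on{Im}(\ol\bi{}^\lambda_{\on{glob}})$ is closed (a finite morphism is closed), and these closed substacks are nested along the order on $\Lambda^{\on{neg}}$, so $\BunBb^{=\lambda}:=\BunBb^{\leq\lambda}\setminus\bigcup_{\lambda'\lneq\lambda}\BunBb^{\leq\lambda'}$ is locally closed (the union is locally finite, since the defect degree is bounded on each quasi-compact open), and by the previous step it coincides on field-valued points with the image of $\bi^\lambda_{\on{glob}}$. Over $\BunBb^{=\lambda}$ the total degree of the defect is constant, so in families the $\L^{\check\lambda}$ remain sub-bundles and $D$ becomes a relative effective $\Lambda^{\on{neg}}$-valued Cartier divisor; hence the saturation construction spreads out to a morphism $\BunBb^{=\lambda}\to\Bun_B\times X^\lambda$ inverse to $\bi^\lambda_{\on{glob}}$. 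Thus $\bi^\lambda_{\on{glob}}$ is an isomorphism onto the locally closed substack $\BunBb^{=\lambda}$, in particular a locally closed embedding, which completes the argument. Besides the $\Lambda^{\on{neg}}$-valuedness of the defect, the only point requiring care is that the saturation construction be well behaved in families — which is exactly why one restricts it to the constant-defect locus $\BunBb^{=\lambda}$; the rest is formal.
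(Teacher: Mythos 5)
The paper offers no proof of this lemma: it is introduced by ``The following is known,'' the implicit references being the standard stratification results for Drinfeld compactifications (\cite{BG1}, \cite{BFGM}, \cite{FGV}). Your proposal reconstructs exactly that standard argument, and it is essentially correct: saturating the Plücker maps at a field-valued point produces the honest $B$-structure together with the defect divisor, the ``exactly one stratum'' claim follows because saturation is canonical, and the passage to families is handled in the usual way --- first carving out the constant-defect locus as a locally closed substack inside the nested closed images of the finite maps $\ol\bi{}^{\lambda}_{\on{glob}}$ (local finiteness of the union being the standard boundedness of degrees of $B$-reductions over a quasi-compact family), and then using constancy of the defect degree to make the saturations line subbundles and the defect a relative divisor, which yields the inverse morphism and hence the locally closed embedding. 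You also correctly isolate the one genuinely non-formal input: that the pointwise defect lies in $\Lambda^{\on{pos}}$ and not merely in the rational cone spanned by the simple coroots.

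One caveat about that input is worth recording. With the Plücker-data definition of $\BunBb$ used in this paper, the $\Lambda^{\on{pos}}$-integrality (and hence the lemma as literally stated) requires the standing assumption that the derived group of $G$ is simply connected, an assumption also in force in the reference you cite. For $G=\on{PGL}_2$ the only Weyl modules are the $\CV^{n\check\alpha}$, and a Plücker datum whose $\kappa^{\check\alpha}$ is a nowhere-vanishing nilpotent section multiplied by a function vanishing to order $1$ at a point $x$ satisfies all the relations, yet its defect is $\omega^\vee\cdot x$ with $\omega^\vee\notin\Lambda^{\on{pos}}=\BZ_{\geq 0}\cdot 2\omega^\vee$, so it lies in no stratum. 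This is an issue with the setup rather than with your argument: when $[G,G]$ is simply connected, your appeal to the structure of $\GUb$ --- equivalently, pairing the defect against the fundamental weights and against the central characters --- does give $\Lambda^{\on{pos}}$-valuedness, and that is precisely where the hypothesis enters.
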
 

\ssec{The global semi-infinite category}

\sssec{}

Denote
$$\BunNb:=\BunBb\underset{\Bun_T}\times \on{pt},\,\, \BunNb^{\leq \lambda}:=\BunBb^{\leq \lambda}\underset{\Bun_T}\times \on{pt},\,\,
\BunNb^{=\lambda}:=\BunBb^{=\lambda}\underset{\Bun_T}\times \on{pt},$$
where $\on{pt}\to \Bun_T$ corresponds to the trivial bundle and the map $\BunBb^{\leq \lambda}\to \Bun_T$ is
$$\BunBb^{\leq \lambda}=\BunBb\times X^\lambda \overset{\ol\sfq\times \on{id}}\longrightarrow \Bun_T\times X^\lambda
\overset{\on{id}\times \on{AJ}}\longrightarrow \Bun_T\times \Bun_T \overset{\on{mult}}\longrightarrow \Bun_T,$$
where $\on{AJ}$ is the Abel-Jacobi map,
$$D\mapsto \CO(D).$$ 

\medskip

In particular,
$$\BunNb^{=\lambda}\simeq \Bun_B\underset{\Bun_T}\times X^\lambda,$$
where $X^\lambda\to \Bun_T$ is the composition of the map $\on{AJ}$ and inversion
on $\Bun_T$. 

\medskip

We will denote by the same symbols the corresponding maps
$$\ol\bi{}^\lambda_{\on{glob}}:\BunNb^{\leq \lambda}\to \BunNb,\,\,
\bj^\lambda_{\on{glob}}:\BunNb^{=\lambda}\to \BunNb^{\leq \lambda},\,\,
\bi^\lambda_{\on{glob}}:\BunNb^{=\lambda}\to \BunNb.$$

\medskip

Denote by $p^\lambda_{\on{glob}}$ (resp., $\ol{p}^\lambda_{\on{glob}}$) 
the projection from $\BunNb^{=\lambda}$ (resp., $\BunNb^{\leq\lambda}$) to 
$X^\lambda$.

\sssec{}

We define 
\begin{equation} \label{e:global SI}
\SI^{\leq 0}_{\on{glob}}\subset \Shv(\BunNb)
\end{equation}
to be the full subcategory defined by the following condition: 

\medskip

An object $\CF\in \Shv(\BunNb)$ belongs to $\SI^{\leq 0}_{\on{glob}}$ if and only if for every $\lambda\in \Lambda^{\on{neg}}$, the object 
$$(\bi^\lambda_{\on{glob}})^!(\CF)\in \Shv(\BunNb^{=\lambda})$$ 
belongs to the full subcategory
\begin{equation} \label{e:global SI lambda}
\SI^{=\lambda}_{\on{glob}} \subset \Shv(\BunNb^{=\lambda}),
\end{equation}
equal by definition to the essential image of the pullback functor
$$(p^\lambda_{\on{glob}})^!:\Shv(X^\lambda)\to \Shv(\BunNb^{=\lambda}).$$

We note that the above pullback functor is fully faithful, since the map
$p^\lambda_{\on{glob}}$, being a base change of $\Bun_B\to \Bun_T$, is smooth with homologically contractible fibers. 

\sssec{}  \label{sss:si Av}

Proceeding as in \cite[Sects. 4.5-4.7]{Ga5}, one shows that the full subcategory \eqref{e:global SI} 
(resp., \eqref{e:global SI lambda}) can also be defined by an 
equivariance condition with respect to a certain pro-unipotent groupoid. 

\medskip

In particular, the embedding  \eqref{e:global SI} (resp., \eqref{e:global SI lambda}) admits
a \emph{right} adjoint\footnote{The corresponding assertion 
would be \emph{false} for the corresponding embedding $\SI^{\leq 0}_\Ran\subset \Shv(\ol{S}{}^0_\Ran)$;
this is a geometric counterpart of the fact that the local field is not compact, while the quotient of adeles 
by principal adeles is compact.},
to be denoted $\on{Av}_*^{\on{SI}}$. 

\sssec{}

The functors
\begin{equation} \label{e:p^! glob}
(\bi^\lambda_{\on{glob}})^!: \Shv(\BunNb) \to  \Shv(\BunNb^{=\lambda})
\end{equation}
and
\begin{equation} \label{e:p_* glob}
(\bi^\lambda_{\on{glob}})_*: \Shv(\BunNb^{=\lambda})\to \Shv(\BunNb)
\end{equation}
induce (same-named) functors 
\begin{equation} \label{e:p^! glob si}
(\bi^\lambda_{\on{glob}})^!: \SI^{\leq 0}_{\on{glob}}\to \SI^{=\lambda}_{\on{glob}}
\end{equation}
and
\begin{equation} \label{e:p_* glob si}
(\bi^\lambda_{\on{glob}})_*: \SI^{=\lambda}_{\on{glob}}\to \SI^{\leq 0}_{\on{glob}}.
\end{equation}

Moreover, the diagram
\begin{equation}  \label{e:Av diag}
\CD
\SI^{\leq 0}_{\on{glob}}   @<{\on{Av}_*^{\on{SI}}}<<   \Shv(\BunNb)  \\
@V{(\bi^\lambda_{\on{glob}})^!}VV  @VV{(\bi^\lambda_{\on{glob}})^!}V   \\
\SI^{=\lambda}_{\on{glob}}  @<{\on{Av}_*^{\on{SI}}}<<  \Shv(\BunNb^{=\lambda}),
\endCD
\end{equation}
and similarly for $(\bi^\lambda_{\on{glob}})_*$.

\begin{prop} \label{p:i_! glob}  \hfill

\smallskip

\noindent{\em(a)}
The partially defined left adjoint $(\bi^\lambda_{\on{glob}})_!$ of \eqref{e:p^! glob}
$$(\bi^\lambda_{\on{glob}})^!: \Shv(\BunNb) \to  \Shv(\BunNb^{=\lambda})$$
is defined on 
$$\SI^{=\lambda}_{\on{glob}}\subset \Shv(\BunNb^{=\lambda}).$$

\smallskip

\noindent{\em(b)}
The resulting functor 
$$\SI^{=\lambda}_{\on{glob}}\to \Shv(\BunNb)$$ 
takes values in
$$\SI^{\leq 0}_{\on{glob}}\subset \Shv(\BunNb),$$ 
and thus provides a left adjoint to \eqref{e:p^! glob si}.

\end{prop}

\begin{proof}

To prove point (a), it suffices to do so for the map $\bj^\lambda_{\on{glob}}$. We claim that the objects
$$(\bj^\lambda_{\on{glob}})_!(\omega_{\BunNb^{=\lambda}})\in \Shv(\BunNb^{\leq \lambda}))$$
and 
$$\omega_{\BunNb^{=\lambda}}\in \Shv(\BunNb^{=\lambda}))$$
are ULA with respect to the maps $\ol{p}^\lambda_{\on{glob}}$ and $p^\lambda_{\on{glob}}$,
respectively. This would imply that for $\CF\in \Shv(X^\lambda)$, we have
$$(\bj^\lambda_{\on{glob}})_!\circ (p^\lambda_{\on{glob}})^!(\CF)\simeq 
(\ol{p}^\lambda_{\on{glob}})^!(\CF),$$
in particular, giving a formula for the left-hand side as an object of $\Shv(\BunNb^{\leq \lambda}))$.

\medskip

To prove the required ULA property, it suffices to do so for the embedding
$$\bj_{\on{glob}}:\Bun_B\hookrightarrow \BunBb,$$
in which case, this is the assertion of \cite[Theorem 5.1.5]{BG1}. 

\medskip

Point (b) follows from the commutativity of the diagram \eqref{e:Av diag} by passing to left adjoints. 

\end{proof}

By a Cousin argument, it follows formally from \propref{p:i_! glob}
that the partially defined functor $(\bi^\lambda_{\on{glob}})^*$, left adjoint to \eqref{e:p_* glob}, is defined on
$\SI^{\leq 0}_{\on{glob}}\subset \Shv(\BunNb)$ and takes values in $\SI^{=\lambda}_{\on{glob}} \subset \Shv(\BunNb^{=\lambda})$,
thus providing a left adjoint to \eqref{e:p_* glob si}.

\sssec{}

The embeddings
$$\SI^{=\lambda}_{\on{glob}} \hookrightarrow \Shv(\BunNb^{=\lambda}) \text{ and } 
\SI^{\leq 0}_{\on{glob}}\hookrightarrow \Shv(\BunNb)$$
are compatible with the t-structure on the target categories. This follows from the fact that the right adjoints $\on{Av}^{\on{SI}}_*$ 
(see \secref{sss:si Av}) are right t-exact. 

\medskip

Hence, the categories $\SI^{=\lambda}_{\on{glob}}$ and $\SI^{\leq 0}_{\on{glob}}$ acquire t-structures. By construction, an object
$\CF\in \SI^{\leq 0}_{\on{glob}}$ is connective (resp., coconnective) if and only if $(\bi^\lambda_{\on{glob}})^*(\CF)$
(resp., $(\bi^\lambda_{\on{glob}})^!(\CF)$ is connective (resp., coconnective) for every $\lambda\in \Lambda^{\on{neg}}$. 

\sssec{}

We will denote by 
$$\IC^\semiinf_{\on{glob}}\in (\SI^{\leq 0}_{\on{glob}})^\heartsuit$$
the minimal extension of $\IC_{\Bun_N}\in (\SI^{=0}_{\on{glob}})^\heartsuit$ along $\bj^0_{\on{glob}}$. 

\ssec{Local vs global compatibility for the semi-infinite IC sheaf}

\sssec{}

For every finite set $I$ we have a canonically defined map
$$\pi_I:\ol{S}{}^0_I\to \BunNb.$$

Together these maps combine to a map
$$\pi_\Ran:\ol{S}{}^0_\Ran\to \BunNb.$$

\sssec{}

Let $d=\dim(\Bun_N)=(g-1)\cdot \dim(N)$. The main result of this section is:

\begin{thm} \label{t:IC loc glob}
There exists a (unique) isomorphism
$$(\pi_\Ran)^!(\IC^\semiinf_{\on{glob}})[d]=\IC^\semiinf_\Ran,$$
extending the tautological identification over $\Bun_N$. 
\end{thm}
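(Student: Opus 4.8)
The plan is to deduce the theorem from three facts, each established (or to be established) elsewhere in the paper: (i) $(\pi_\Ran)^!$ is fully faithful; (ii) the shifted functor $(\pi_\Ran)^![d]\colon \SI^{\leq 0}_{\on{glob}}\to \SI^{\leq 0}_\Ran$ is t-exact; and (iii) the two objects in question agree over the open stratum $S^0_\Ran$. Fact (i) is a formal consequence of the universal homological contractibility of $\pi_\Ran$ (\thmref{t:contr}). In (ii), the fact that $(\pi_\Ran)^![d]$ indeed carries $\SI^{\leq 0}_{\on{glob}}$ into $\SI^{\leq 0}_\Ran$ is immediate, since $\pi_\Ran$ is compatible with the actions of $N$ and maps $\ol{S}{}^0_\Ran$ into $\BunNb$; the t-exactness is \corref{c:log glob exact}, whose proof I outline below. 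For (iii): $(\pi_\Ran)^!$ carries the dualizing sheaf to the dualizing sheaf, hence $\omega_{\Bun_N}$ to $\omega_{S^0_\Ran}$, and since $\Bun_N$ is smooth of dimension $d$ we have $\IC_{\Bun_N}\simeq \omega_{\Bun_N}[-d]$, so $(\pi_\Ran)^!(\IC_{\Bun_N})[d]\simeq \omega_{S^0_\Ran}$, which is exactly the restriction of $\IC^\semiinf_\Ran$ to $S^0_\Ran$.

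Granting these, the argument concludes as follows. By (ii), $(\pi_\Ran)^!(\IC^\semiinf_{\on{glob}})[d]$ lies in $(\SI^{\leq 0}_\Ran)^\heartsuit$, and by (iii) it restricts to $\omega_{S^0_\Ran}$ over $S^0_\Ran$. The square relating $\bi^\lambda$ to $\bi^\lambda_{\on{glob}}$ is Cartesian (the stratification of $\ol{S}{}^0_\Ran$ being pulled back from that of $\BunNb$), so $(\bi^\lambda)^!(\pi_\Ran)^!(\IC^\semiinf_{\on{glob}})[d]\simeq (\pi^\lambda_\Ran)^!(\bi^\lambda_{\on{glob}})^!(\IC^\semiinf_{\on{glob}})[d]$, where $\pi^\lambda_\Ran\colon S^\lambda_\Ran\to \BunNb^{=\lambda}$ is the induced map, and similarly for the $*$-restrictions, using the base change discussed below. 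Since $(\bi^\lambda_{\on{glob}})^!(\IC^\semiinf_{\on{glob}})$ and $(\bi^\lambda_{\on{glob}})^*(\IC^\semiinf_{\on{glob}})$ are strictly coconnective, resp.\ strictly connective, by the defining support properties of the minimal extension on the finite-type stack $\BunNb$, and since $(\pi^\lambda_\Ran)^![d]$ is t-exact as a functor to $\SI^{=\lambda}_\Ran$, we conclude that $(\pi_\Ran)^!(\IC^\semiinf_{\on{glob}})[d]$ has strictly coconnective (resp.\ connective) $!$- (resp.\ $*$-) restrictions to all boundary strata. Thus it satisfies the characterization of the minimal extension of $\omega_{S^0_\Ran}$, i.e.\ it is $\IC^\semiinf_\Ran$; the isomorphism is unique since a map between objects characterized as minimal extensions is determined by its restriction to the open stratum.

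The essential content is therefore the t-exactness of $(\pi_\Ran)^![d]$. By construction of the t-structures one must show, for $\CF\in \SI^{\leq 0}_{\on{glob}}$ and each $\lambda\in \Lambda^{\on{neg}}$: coconnectivity of $(\bi^\lambda)^!(\pi_\Ran)^!\CF[d]$ when $\CF$ is coconnective, and connectivity of $(\bi^\lambda)^*(\pi_\Ran)^!\CF[d]$ when $\CF$ is (the latter via \lemref{l:char of conn} and its global counterpart). For the $!$-restriction the Cartesian square above reduces this to t-exactness of $(\pi^\lambda_\Ran)^![d]$; but under the identifications of $\SI^{=\lambda}_{\on{glob}}$ and $\SI^{=\lambda}_\Ran$ with categories of sheaves on $X^\lambda$ (via $p^\lambda_{\on{glob}}$ on one side and \lemref{l:on stratum}, \corref{c:pr contr} on the other), $(\pi^\lambda_\Ran)^!$ becomes, up to the shift built into the definitions of the respective stratum t-structures (which the $[d]$ absorbs), the functor $(\on{pr}^\lambda_\Ran)^!$, whose t-exactness is established in \secref{s:t}. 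For the $*$-restriction one invokes Braden's theorem, applied as in \secref{ss:Braden} to the contracting $\BG_m$-action on $\Gr_{G,\Ran}$ \emph{and} to the corresponding action on $\BunNb$: this produces a hyperbolic-localization identity $(\bi^\lambda)^*\simeq (p^{-,\lambda})_*\circ(\bi^{-,\lambda})^!$ in both the local and the global settings. Base change along the analogous Cartesian squares for the $N^-$-strata then reduces the claim to comparing the $*$-pushforwards along $p^{-,\lambda}_\Ran$ and $p^{-,\lambda}_{\on{glob}}$ after $\pi$-pullback; this comparison takes place on the \emph{Zastava space}, i.e.\ the relevant fiber product, which is of finite type and proper over $X^\lambda$, and the point is that the local-to-global morphism is an isomorphism there. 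A standard base change then identifies $(\bi^\lambda)^*(\pi_\Ran)^!\CF[d]$ with $(\pi^\lambda_\Ran)^!(\bi^\lambda_{\on{glob}})^*\CF$, which is connective when $\CF$ is.

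The main obstacle is precisely this last step: setting up Braden's theorem compatibly on the two sides and verifying that the local-to-global map restricts to an isomorphism on the Zastava space. As noted in the introduction, this argument reproves the computation of $\IC_{\BunNb}$ of \cite{BFGM} rather than using it; and \propref{p:! and * perv} (that the shifted $!$- and $*$-extensions of $\omega_{S^\lambda_\Ran}$ lie in the heart) then follows by transporting through the now-established t-exact pullback the analogous --- and routine, on the finite-type stack $\BunNb$ --- statement for $\IC_{\BunNb^{=\lambda}}$.
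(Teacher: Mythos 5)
Your overall route coincides with the paper's: the theorem is deduced from the t-exactness of $(\pi_\Ran)^![d]:\SI^{\leq 0}_{\on{glob}}\to \SI^{\leq 0}_\Ran$ (\corref{c:log glob exact}) together with the behaviour on the open stratum and the stratum-wise characterization of the middle extension; this is exactly how the paper concludes (it states that \corref{c:log glob exact} immediately implies \thmref{t:IC loc glob}), and your derivation of \propref{p:! and * perv} afterwards, from affineness of the global locally closed embeddings, also matches. One cosmetic remark: full faithfulness of $(\pi_\Ran)^!$ (\thmref{t:contr}) is never needed, and the paper explicitly notes it is proved only for completeness.

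The genuine problem is in your outline of the essential step, the connectivity of the $*$-restrictions (\propref{p:restr isom}). First, Braden's theorem is applied in the paper only on the local side, i.e.\ on $\Gr_{G,I}$ as in \secref{ss:Braden}; there is no hyperbolic-localization statement for $\BunNb$ itself, and it is unclear what your ``global'' application would mean — the global player is the Zastava space $\CZ^\lambda$. Second, and more seriously, $\CZ^\lambda$ is \emph{not} proper over $X^\lambda$: the fibers of $\fp$ are affine (the central fiber is $\ol{S}{}^0\cap S^{-,\lambda}$), so the proper base change you invoke to compare the pushforwards is unavailable. The paper instead identifies $(X^I\times X^\lambda)^{\supset}\underset{X^\lambda}\times \CZ^\lambda$ with $\ol{S}{}^0_I\cap S^{-,\lambda}_I$, and then uses the contraction principle of \cite{DrGa} for the $\BG_m$-action along the fibers of $\fp$ to replace $\fp_*\circ \fq^!$ by $\fs^*\circ \fq^!$. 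The remaining content — which your phrase ``the local-to-global morphism is an isomorphism there'' does not capture — is that the exchange map $\fs^*\circ \fq^!\to (\fq^\lambda)^!\circ (\bi^\lambda_{\on{glob}})^*$ is an isomorphism on $\SI^{\leq 0}_{\on{glob}}$ even though $\fq$ is not smooth in general; this is proved by reducing, via the factorization property of $\CZ^\lambda$, to the case of $-\lambda$ sufficiently dominant, where $\fq$ is smooth. Without these two ingredients the $*$-side of your t-exactness argument does not close, so as written the proposal has a gap precisely at the step you correctly identify as the main obstacle.
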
 

\sssec{}

The next few subsections are devoted to the proof of this theorem. Modulo auxiliary assertions,
the proof will be given in \secref{sss:loc to glob exact}. 

\ssec{The local vs global compatibility for the semi-infinite category}

This subsection contains some preparatory material for the proof of \thmref{t:IC loc glob}. 

\sssec{}

First, we observe:

\begin{lem}
For every $\lambda$, we have a commutative diagram
$$
\CD
\ol{S}{}^{\leq \lambda}_\Ran  @>{\ol\bi{}^\lambda}>>  \ol{S}{}^0_\Ran  \\
@VVV   @VV{\pi_\Ran}V  \\
\BunNb^{\leq \lambda} @>{\ol\bi{}^\lambda_{\on{glob}}}>>   \BunNb.
\endCD
$$
The corresponding diagram
\begin{equation} \label{e:stratum cart}
\CD
S^{=\lambda}_\Ran  @>{\bi^\lambda}>>  \ol{S}{}^0_\Ran  \\
@V{\pi^\lambda_\Ran}VV   @VV{\pi_\Ran}V  \\
\BunNb^{=\lambda} @>{\bi^\lambda_{\on{glob}}}>>   \BunNb.
\endCD
\end{equation} 
is Cartesian, and we have a commutative diagram
$$
\CD
S^{=\lambda}_\Ran   @>{p^\lambda_\Ran}>>  (X^\lambda \times \Ran)^{\subset} \\
@V{\pi^\lambda_\Ran}VV   @VV{\on{pr}^\lambda_\Ran}V  \\
\BunNb^{=\lambda} @>{p^\lambda_{\on{glob}}}>>  X^\lambda.
\endCD
$$
\end{lem}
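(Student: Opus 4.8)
The plan is to verify all three assertions directly on $S$-points, $S$ an affine test scheme (equivalently, with $\Ran(X)$ replaced by $X^I$ for each finite non-empty $I$, passing to the colimit at the end), since every object in sight is a prestack. The first step is to spell out the maps involved. Recall that $\pi_\Ran$ sends $(\CI,\CP_G,\alpha)\in\ol{S}{}^0_\Ran$ to the point of $\BunNb$ with underlying $G$-bundle $\CP_G$ and Pl\"ucker maps $\kappa^{\check\lambda}\colon\CO\to\CV^{\check\lambda}_{\CP_G}$ obtained by regularizing the meromorphic maps \eqref{e:Plucker map}; these are non-zero, and satisfy the Pl\"ucker relations because the corresponding maps for the trivial bundle do. The left vertical of the first square sends $(\CI,D,\CP_G,\alpha)\in\ol{S}{}^\lambda_\Ran$ to the point of $\BunNb^{\leq\lambda}$ determined by $\CP_G$, the divisor $D\in X^\lambda$, and the maps \eqref{e:Plucker div}; its restriction to $S^\lambda_\Ran$ is $\pi^\lambda_\Ran$, which lands in $\BunNb^{=\lambda}$ precisely because on $S^\lambda_\Ran$ the maps \eqref{e:Plucker div} are bundle maps, i.e.\ they constitute a genuine reduction of $\CP_G$ to $B$.

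Next I would check commutativity of the first and third squares by tracing an $S$-point through both composites. For the first square, starting from $(\CI,D,\CP_G,\alpha)$: going right and then down forgets $D$ and regularizes \eqref{e:Plucker map}; going down and then right records \eqref{e:Plucker div} and then precomposes with the canonical comparison undoing the twist by $D$ --- which is exactly the passage back to the regularization of \eqref{e:Plucker map} --- so both composites return $\CP_G$ with the regularized Pl\"ucker data. For the third square, both composites return the divisor $D\in X^\lambda$: $p^\lambda_\Ran$ records $(\CI,D)$, $\on{pr}^\lambda_\Ran$ forgets $\CI$, and $p^\lambda_{\on{glob}}$ records the $X^\lambda$-component of $\pi^\lambda_\Ran(\CI,D,\CP_G,\alpha)$, which is $D$. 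Neither square carries content beyond unwinding the definitions.

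The one substantive step is the Cartesian property of \eqref{e:stratum cart}. I would write down the tautological comparison map $S^\lambda_\Ran\to\ol{S}{}^0_\Ran\underset{\BunBb}\times\BunNb^{=\lambda}$ (induced by $\bi^\lambda$, $\pi^\lambda_\Ran$ and the commutativity above) and construct an inverse on $S$-points. An $S$-point of the fiber product consists of $(\CI,\CP_G,\alpha)\in\ol{S}{}^0_\Ran$, a point $(\CP_B,D)\in\BunNb^{=\lambda}$, and an identification of their images; I would recover the quadruple $(\CI,D,\CP_G,\alpha)$, the only non-obvious check being that $D$ is set-theoretically supported on $\Gamma_\CI$. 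This holds because the identification matches the regularized maps \eqref{e:Plucker map} (the Pl\"ucker data determined by $(\CI,\CP_G,\alpha)$) with the tautological bundle maps of $\CP_B$ twisted down by $D$, whose zero divisors encode $D$; but over $\ol{S}{}^0_\Ran$ the regularized maps \eqref{e:Plucker map} are bundle maps away from $\Gamma_\CI$, since there $\alpha$ is an isomorphism and \eqref{e:Plucker map} is the highest-weight-vector inclusion. Hence those zero divisors, and so $D$ itself (letting $\check\lambda$ run over the fundamental weights), are supported on $\Gamma_\CI$, i.e.\ $(\CI,D)\in(\Ran(X)\times X^\lambda)^{\supset}$; the quadruple then lies in $\ol{S}{}^\lambda_\Ran$, and in $S^\lambda_\Ran$ since \eqref{e:Plucker div} is given by the bundle maps underlying $\CP_B$. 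Finally one checks this assignment is inverse to the comparison map.

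I expect this support claim to be the main (if mild) obstacle, it being the only point that is not a direct rewriting of definitions; I would also keep track of the trivial-$T$-bundle bookkeeping in the fiber product over $\BunBb$ in \eqref{e:stratum cart} --- harmless, since $\pi_\Ran$ and $\bi^\lambda_{\on{glob}}$ both factor through $\BunNb\to\BunBb$. Everything else follows by reading off the definitions of $\ol{S}{}^0_\Ran$, $\ol{S}{}^\lambda_\Ran$, $S^\lambda_\Ran$, $\BunNb$, $\BunBb$ and their strata.
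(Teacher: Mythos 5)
Your proof is correct: the paper offers no argument for this lemma (it is stated as an observation following from the definitions), and your $S$-point unwinding — including the one genuine check, that the zero divisors of the matched Pl\"ucker maps force $D$ to be set-theoretically supported on $\Gamma_\CI$, and the harmless passage between fiber products over $\BunBb$ and $\BunNb$ via the monomorphism $\BunNb\to\BunBb$ — is exactly the intended verification.
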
 

The assertions parallel to those in the above lemma hold for $\Ran$ replaced by $X^I$ for an individual finite set $I$. 

\sssec{}

The following assertion is not necessary for the needs of this paper, but we will prove it for the sake of completeness
(see \secref{sss:proof of contr}):

\begin{thm} \label{t:contr}
The functor
$$(\pi_\Ran)^!:\Shv(\BunNb)\to \Shv(\ol{S}{}^0_\Ran)$$
is fully faithful.
\end{thm}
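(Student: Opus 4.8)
The plan is to deduce \thmref{t:contr} from \lemref{l:pr contr} (equivalently \corref{c:pr contr}) by reducing the statement to a purely "configuration-space" contractibility fact, following the standard Beilinson--Drinfeld/Ran-space technique for universal homological contractibility.

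First I would reduce the problem to checking a base-change/contractibility property stratum by stratum, using the presentation of $\ol{S}{}^0_\Ran$ via the locally closed substacks $S^\lambda_\Ran$ and of $\BunNb$ via the $\bi^\lambda_{\on{glob}}$. The key geometric input is the Cartesian diagram \eqref{e:stratum cart}, together with the commutative diagram relating $p^\lambda_\Ran$, $p^\lambda_{\on{glob}}$, $\on{pr}^\lambda_\Ran$ and the Abel--Jacobi map. Since $(\pi_\Ran)^!$ is a functor to a limit, and each $\Shv(\ol{S}{}^0_I)$ can be analyzed by the recollement attached to the stratification $\{S^\lambda_I\}$, full faithfulness of $(\pi_\Ran)^!$ will follow once we show that the unit $\mathrm{id}\to (\pi_\Ran)_*\circ (\pi_\Ran)^!$ is an isomorphism, and this can be checked after applying $(\bi^\lambda_{\on{glob}})^!$ for all $\lambda$, i.e. after restricting to each $\BunNb^{=\lambda}$.

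The heart of the argument is then: on the stratum $\BunNb^{=\lambda}$, the map $\pi^\lambda_\Ran:S^\lambda_\Ran\to \BunNb^{=\lambda}$ is, by \lemref{l:on stratum} together with the compatibility diagram, the base change along $p^\lambda_{\on{glob}}:\BunNb^{=\lambda}\to X^\lambda$ of the map $\on{pr}^\lambda_\Ran:(\Ran(X)\times X^\lambda)^{\supset}\to X^\lambda$. By \corref{c:pr contr} the pullback $(\on{pr}^\lambda_\Ran)^!$ is fully faithful; more precisely \lemref{l:pr contr} says $\on{pr}^\lambda_\Ran$ is universally homologically contractible, so its pullback stays fully faithful after the base change along $p^\lambda_{\on{glob}}$. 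Hence $(\pi^\lambda_\Ran)^!$ is fully faithful for each $\lambda$. Feeding this back into the recollement — using that the strata exhaust $\ol{S}{}^0_\Ran$ (\lemref{l:on stratum}) and that $\pi_\Ran$ is compatible with the closure relations — gives that $(\pi_\Ran)^!$ is fully faithful. One has to be slightly careful that the stratification is not finite and lives over the Ran space, but this is handled exactly as in \cite{Ga2}: the colimit over $I$ and the (co)limit over the poset $\Lambda^{\on{neg}}$ of strata commute appropriately with the relevant $!$-functors, because universal homological contractibility is stable under the base changes and colimits in play.

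I expect the main obstacle to be the bookkeeping needed to pass from the stratumwise statement to the global statement on all of $\ol{S}{}^0_\Ran$: one must argue that full faithfulness of $(\pi_\Ran)^!$ can genuinely be tested on the associated graded of the stratification, which requires knowing that $(\pi_\Ran)^!$ intertwines the gluing functors on the two sides (i.e. that $\pi_\Ran$ is "stratified" with the indicated Cartesian squares, including the behaviour of $\ol\bi{}^\lambda$ versus $\ol\bi{}^\lambda_{\on{glob}}$), and that the relevant (possibly infinite) limits/colimits of fully faithful functors remain fully faithful here. Granting the formalism of universal homological contractibility from \cite[Sect. 2.5]{Ga2} and \cite[Sect. 1.5]{Ga2}, this is routine but is the only place where real care is needed; the geometric content is entirely concentrated in \lemref{l:pr contr}.
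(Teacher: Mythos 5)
There is a genuine gap at the heart of your stratumwise step. You assert that $\pi^\lambda_\Ran:S^\lambda_\Ran\to \BunNb^{=\lambda}$ is ``the base change along $p^\lambda_{\on{glob}}$ of $\on{pr}^\lambda_\Ran$'', so that full faithfulness of $(\pi^\lambda_\Ran)^!$ would follow from \lemref{l:pr contr} alone. This identification is false: the natural map goes $S^\lambda_\Ran\to (\Ran(X)\times X^\lambda)^{\supset}\underset{X^\lambda}\times \BunNb^{=\lambda}$, and it is very far from an isomorphism --- its fibers over a $Y$-point are torsors under the group of maps $\Maps(Y\times X-\Gamma_\CI,N)$ (the datum being forgotten is the generic trivialization of the $B$-bundle). \lemref{l:on stratum} does not help here, since it only identifies the \emph{equivariant} subcategory $\SI^{=\lambda}_\Ran$ with sheaves on the incidence space, whereas full faithfulness of $(\pi^\lambda_\Ran)^!$ is a statement about all of $\Shv(S^\lambda_\Ran)$. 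The missing input is exactly \propref{p:contr}: the homological contractibility of these torsors of generic maps into $N$, proved by writing $N\simeq \BA^m$ and using the $\BA^\infty$-contractibility argument. So your closing claim that ``the geometric content is entirely concentrated in \lemref{l:pr contr}'' is not correct --- the contractibility of spaces of generic maps into a unipotent (more generally connected) group is an independent and essential ingredient, and it is where most of the work lies.

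For comparison, the paper does not argue stratum by stratum at all: it embeds $\BunNb$ into $\Bun_{N\on{-gen}}\underset{\Bun_{G\on{-gen}}}\times \Bun_G$, exhibits $\ol{S}{}^0_\Ran$ as the base change of $\Gr_{G,\Ran}$ along this embedding, and proves (\thmref{t:gen contr}) that $\Gr_{G,\Ran}\to \Bun_{H\on{-gen}}\underset{\Bun_{G\on{-gen}}}\times \Bun_G$ is universally homologically contractible for connected $H$, using contractibility of the relative Ran space, the contractibility of spaces of generic maps into a connected group from \cite{Ga2}, and the Drinfeld--Simpson theorem. This yields full faithfulness on all of $\ol{S}{}^0_\Ran$ in one stroke and avoids your gluing step, which, even granting \propref{p:contr}, would still require justifying the base change $(\bi^\lambda_{\on{glob}})^!\circ(\pi_\Ran)_*\simeq (\pi^\lambda_\Ran)_*\circ(\bi^\lambda)^!$ and the conservativity of restriction to the (infinitely many) strata in the Ran-space setting --- doable, but not automatic, and not a substitute for the missing contractibility statement.
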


When working with an individual stratum, a stronger assertion is true (to be proved in \secref{ss:proof of contr}):
Consider the map
$$(p^\lambda_\Ran\times \pi^\lambda_\Ran):S^\lambda_\Ran\to (X^\lambda \times \Ran)^{\subset}\underset{X^\lambda}\times \BunNb^{=\lambda}.$$

\begin{prop} \label{p:contr}
The functor 
$$(p^\lambda_\Ran\times \pi^\lambda_\Ran)^!:
\Shv((X^\lambda \times \Ran)^{\subset}\underset{X^\lambda}\times \BunNb^{=\lambda})\to \Shv(S^\lambda_\Ran)$$
is fully faithful.
\end{prop}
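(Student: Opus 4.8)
The plan is the following. Write $\bq$ for the map $(p^\lambda_\Ran\times \pi^\lambda_\Ran)$ of the statement and, for a fixed finite non-empty set $I$, write $\bq_I$ for its analogue
$$\bq_I\colon S^\lambda_I\to (X^I\times X^\lambda)^{\supset}\underset{X^\lambda}\times \BunNb^{=\lambda}.$$
Since $(\Ran(X)\times X^\lambda)^{\supset}\simeq \underset{I}{\on{colim}}\,(X^I\times X^\lambda)^{\supset}$ and colimits of prestacks commute with fibre products, the target of $\bq$ is $\underset{I}{\on{colim}}$ of the targets of the $\bq_I$, while $S^\lambda_\Ran\simeq \underset{I}{\on{colim}}\, S^\lambda_I$; passing to sheaf categories turns these colimits into limits and identifies $\bq^!$ with $\underset{I}{\on{lim}}\,\bq_I^!$. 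A limit of fully faithful functors is fully faithful, so it suffices to prove that each $\bq_I^!$ is fully faithful, and for this it is enough to show that $\bq_I$ is universally homologically contractible.

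The heart of the argument is a fibrewise description of $\bq_I$. Let $(\CI,D,\CP_B)$ be a geometric point of the target, where $\CP_B=(\CP_G,\CP_T,\kappa)\in \Bun_B$ with $\CP_T=\CP^0_T(D)$ and $\on{supp}(D)\subseteq \Gamma_\CI$. The tautological trivialization of $\CP^0_T(D)$ on $X\setminus \on{supp}(D)$ turns $\kappa$ into a reduction of $\CP_G$ to $N$ there; let $\CP^N$ denote the resulting $N$-bundle. A point of the fibre of $\bq_I$ over $(\CI,D,\CP_B)$ is a trivialization $\alpha$ of $\CP_G$ on $X\setminus \Gamma_\CI$ such that the associated point of $\Gr_{G,I}$ lies on $S^\lambda_I$ and maps to $\CP_B$ under $\pi^\lambda_I$. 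The second condition forces the reduction of $\CP_G$ determined by $\alpha$ on $X\setminus \Gamma_\CI$ to agree with the one coming from $\kappa$ --- that is, forces $\alpha$ to be induced by a trivialization of $\CP^N|_{X\setminus \Gamma_\CI}$ --- and conversely every such trivialization produces a point of the fibre, the condition cutting out $S^\lambda_I$ holding automatically since $\kappa$ is already a bundle map after the twist by $\check\lambda(D)$. Thus the fibre is canonically the scheme of trivializations of $\CP^N$ over $X\setminus \Gamma_\CI$. As $\CI\ne\emptyset$, the curve $X\setminus \Gamma_\CI$ is affine, and since $N$ is split unipotent $\CP^N$ is trivializable there; hence the fibre is a non-empty torsor under the group ind-scheme $\Gamma\bigl(X\setminus \Gamma_\CI,\,\Aut(\CP^N)\bigr)$, which is built by successive extensions from groups of global sections of vector bundles on $X\setminus \Gamma_\CI$, i.e. from infinite-dimensional affine spaces, and is therefore homologically contractible.

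This picture is uniform in families over the target: $\bq_I$ presents $S^\lambda_I$ as a torsor, relative to $(X^I\times X^\lambda)^{\supset}\underset{X^\lambda}\times \BunNb^{=\lambda}$, under a pro-unipotent group ind-scheme built by successive extensions of pushforwards of line bundles along the relative complement of the universal incidence divisor. By reduction to the case of $\BG_a$-bundles --- where the $!$-pullback is pullback along an affine-space bundle, hence fully faithful, and remains so after any base change --- such a morphism is universally homologically contractible; this yields the universal homological contractibility of $\bq_I$, and with it the Proposition. (Equivalently, one presents $\bq_I$ as a filtered colimit of genuine affine-space bundles over the base, using the filtration of $N$ by normal subgroups with $\BG_a$-quotients together with the exhaustion of the relevant spaces of sections by pole order along the incidence divisor, and invokes the criteria of \cite[Sect. 2.5]{Ga2}.)

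The step I expect to be the main obstacle is this last one: making the ``uniformity in families'' precise and establishing the relative homological contractibility of a torsor under a pro-unipotent group ind-scheme over the non-quasi-compact algebraic stack $(X^I\times X^\lambda)^{\supset}\underset{X^\lambda}\times \BunNb^{=\lambda}$, in particular verifying that the relevant sheaves of sections organize into honest pro-vector-group schemes to which the contractibility criteria of \cite{Ga2} apply. The fibrewise computation itself is routine; it is this bookkeeping that carries the weight.
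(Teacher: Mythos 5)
Your proposal is correct and follows essentially the same route as the paper: the map is identified with forgetting a trivialization of the induced $N$-bundle away from $\Gamma_\CI$, so that for an affine test scheme $Y$ mapping to the target the lifts form a non-empty torsor under $\Maps(Y\times X-\Gamma_\CI,N)$, and unipotence of $N$ (the paper just uses $N\simeq \BA^m$ as a scheme, rather than a filtration with $\BG_a$-quotients) reduces contractibility to the case of maps into $\BA^1$, whose space is an ind-affine space. The ``families bookkeeping'' you flag as the main obstacle is dealt with in the paper simply by testing universal homological contractibility against affine $Y$ over the target, where non-emptiness trivializes the torsor and reduces everything to the statement that $\bMaps_Y(X-\CI,\BA^1)\simeq \BA^\infty\times Y\to Y$ is universally homologically contractible; in particular your preliminary reduction to a fixed finite set $I$ is harmless but unnecessary.
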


Combining with \lemref{l:pr contr}, we obtain: 

\begin{cor} \label{c:contr}
The functor
$$(\pi^\lambda_\Ran)^!:\Shv(\BunNb^{=\lambda})\to \Shv(S^\lambda_\Ran)$$
is fully faithful.
\end{cor}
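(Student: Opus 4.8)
The plan is to factor $(\pi^\lambda_\Ran)^!$ as a composite of two fully faithful functors, one supplied by \propref{p:contr} and the other by \lemref{l:pr contr}.

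Write $Z^\lambda := (\Ran(X)\times X^\lambda)^{\supset}\underset{X^\lambda}\times \BunNb^{=\lambda}$, and let $q^\lambda:Z^\lambda\to \BunNb^{=\lambda}$ be the projection onto the second factor. First I would use the commutative square relating $p^\lambda_\Ran$, $\pi^\lambda_\Ran$, $\on{pr}^\lambda_\Ran$ and $p^\lambda_{\on{glob}}$ to see that the map $(p^\lambda_\Ran\times \pi^\lambda_\Ran):S^\lambda_\Ran\to Z^\lambda$ is well defined and that $\pi^\lambda_\Ran$ factors as
$$S^\lambda_\Ran \xrightarrow{\,p^\lambda_\Ran\times \pi^\lambda_\Ran\,} Z^\lambda \xrightarrow{\,q^\lambda\,} \BunNb^{=\lambda}.$$
Since $!$-pullback reverses composition, this yields $(\pi^\lambda_\Ran)^!\simeq (p^\lambda_\Ran\times \pi^\lambda_\Ran)^!\circ (q^\lambda)^!$. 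By \propref{p:contr} the functor $(p^\lambda_\Ran\times \pi^\lambda_\Ran)^!$ is fully faithful, so I would be reduced to proving that $(q^\lambda)^!$ is fully faithful.

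For that last point I would observe that $q^\lambda$ is, by its very construction, the base change of $\on{pr}^\lambda_\Ran:(\Ran(X)\times X^\lambda)^{\supset}\to X^\lambda$ along the map $p^\lambda_{\on{glob}}:\BunNb^{=\lambda}\to X^\lambda$. By \lemref{l:pr contr} the map $\on{pr}^\lambda_\Ran$ is universally homologically contractible, which by definition means precisely that pullback along any base change of it --- and in particular along $q^\lambda$ --- is fully faithful. A composite of fully faithful functors being fully faithful, this completes the argument.

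I do not anticipate any real obstacle: the corollary is a purely formal consequence of \propref{p:contr} and \lemref{l:pr contr}, all of the genuine content residing in those two statements (Braden's theorem for the former, contractibility of the incidence variety $(\Ran(X)\times X^\lambda)^{\supset}$ over $X^\lambda$ for the latter). The only point I would check with a little care is that the displayed factorization is the intended one, i.e.\ that $(p^\lambda_\Ran\times \pi^\lambda_\Ran)$ indeed lands in the fiber product $Z^\lambda$ and that $q^\lambda\circ(p^\lambda_\Ran\times \pi^\lambda_\Ran)=\pi^\lambda_\Ran$; but this is immediate from the definitions of $S^\lambda_\Ran$, $\BunNb^{=\lambda}$, $Z^\lambda$ and the projection maps.
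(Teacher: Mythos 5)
Your argument is correct and is precisely the paper's intended proof: the corollary is stated there as the combination of \propref{p:contr} with \lemref{l:pr contr}, via exactly the factorization $(\pi^\lambda_\Ran)^!\simeq (p^\lambda_\Ran\times \pi^\lambda_\Ran)^!\circ (q^\lambda)^!$, with $(q^\lambda)^!$ fully faithful because $q^\lambda$ is a base change of the universally homologically contractible map $\on{pr}^\lambda_\Ran$. Nothing further is needed.
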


\sssec{}

Next we claim:

\begin{prop}  \label{p:pullback pres N}
For every finite set $I$, the functor 
$$(\pi_I)^!:\Shv(\BunNb)\to \Shv(\ol{S}{}^0_I)$$
sends $\SI^{\leq 0}_{\on{glob}}$ to $\SI^{\leq 0}_I$. 
\end{prop}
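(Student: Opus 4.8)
The plan is to reduce the assertion to a stratum-by-stratum statement, and then to chase the two commutative squares recorded in the lemma immediately preceding this proposition, in their $X^I$-versions.

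First I would record the following criterion: an object $\CG\in\Shv(\ol{S}{}^0_I)$ lies in $\SI^{\leq 0}_I$ if and only if $(\bi^\lambda)^!(\CG)\in\SI^{=\lambda}_I$ for every $\lambda\in\Lambda^{\on{neg}}$. One direction is just the existence of the functor $(\bi^\lambda)^!\colon\SI^{\leq 0}_I\to\SI^{=\lambda}_I$. For the converse one uses that $\fL(N)_I$-equivariance is a \emph{property} — the forgetful functor off the $\fL(N)_I$-equivariant subcategory is fully faithful, $\fL(N)_I$ being of pro-unipotent type — so that it can be tested along the $\fL(N)_I$-stable stratification $\ol{S}{}^0_I=\bigsqcup_\lambda S^\lambda_I$ by a recollement argument on each finite-type closed piece (where only finitely many strata occur, so one may induct on their number), using that $*$-pushforward from an open or a closed stratum and $!$-pushforward from a closed stratum preserve equivariance. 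This is standard in the present context, cf.\ \cite{Ga1}.

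Granting this, the proof is formal. From the square $p^\lambda_{\on{glob}}\circ\pi^\lambda_I=\on{pr}^\lambda_I\circ p^\lambda_I$ of the lemma we obtain $(\pi^\lambda_I)^!\circ(p^\lambda_{\on{glob}})^!=(p^\lambda_I)^!\circ(\on{pr}^\lambda_I)^!$; since $\SI^{=\lambda}_{\on{glob}}$ is by definition the essential image of $(p^\lambda_{\on{glob}})^!$ and, by \lemref{l:on stratum} in its $X^I$-version, $\SI^{=\lambda}_I$ is the essential image of $(p^\lambda_I)^!$, this shows that $(\pi^\lambda_I)^!$ carries $\SI^{=\lambda}_{\on{glob}}$ into $\SI^{=\lambda}_I$. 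Next, from the square $\pi_I\circ\bi^\lambda=\bi^\lambda_{\on{glob}}\circ\pi^\lambda_I$ we get, by contravariant functoriality of $!$-pullback, $(\bi^\lambda)^!\circ(\pi_I)^!=(\pi^\lambda_I)^!\circ(\bi^\lambda_{\on{glob}})^!$. Hence, for $\CF\in\SI^{\leq 0}_{\on{glob}}$, the object $(\bi^\lambda_{\on{glob}})^!(\CF)$ lies in $\SI^{=\lambda}_{\on{glob}}$ by definition, so $(\bi^\lambda)^!\bigl((\pi_I)^!\CF\bigr)\in\SI^{=\lambda}_I$ for every $\lambda$; since $(\pi_I)^!(\CF)$ is tautologically an object of $\Shv(\ol{S}{}^0_I)$, the criterion yields $(\pi_I)^!(\CF)\in\SI^{\leq 0}_I$, as desired.

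The only step that is not a pure formality is the criterion of the second paragraph — concretely, the claim that $\fL(N)_I$-equivariance is a property that may be glued along the (locally finite, but infinite) stratification of $\ol{S}{}^0_I$. Everything else is the two triangles of maps together with $(g\circ f)^!\simeq f^!\circ g^!$. (One could alternatively argue via the adelic-$\fL(N)_I$-equivariance description of $\SI^{\leq 0}_{\on{glob}}$ and the compatibility of $\pi_I$ with the $\fL(N)_I$-action on $\ol{S}{}^0_I$ and the uniformization action on $\BunNb$, but the stratum-wise route stays within the results already recorded here.)
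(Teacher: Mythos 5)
Your argument is essentially the paper's own proof: the same criterion that membership in $\SI^{\leq 0}_I$ is tested by $(\bi^\lambda)^!$ landing in $\SI^{=\lambda}_I$ for every $\lambda$, combined with the identity $\on{pr}^\lambda_I\circ p^\lambda_I=p^\lambda_{\on{glob}}\circ\pi^\lambda_I$ and the description of $\SI^{=\lambda}_{\on{glob}}$ and $\SI^{=\lambda}_I$ as essential images of the respective pullbacks. The only difference is that you sketch a justification of the stratum-wise criterion (recollement, equivariance being a property for the pro-unipotent $\fL(N)_I$), which the paper simply records as evident, so nothing is missing.
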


\begin{proof}

Note that an object $\CF\in  \Shv(\ol{S}{}^0_I)$ belongs to $\SI^{\leq 0}_I$ if and only if $(\bi^\lambda)^!(\CF)$ belongs to 
$\SI^{=\lambda}_I$ for every $\lambda$. Now the result follows from the identification 
$$\on{pr}^\lambda_I\circ p^\lambda_I=p^\lambda_{\on{glob}}\circ \pi^\lambda_I.$$

\end{proof} 

We will now deduce: 

\begin{cor}  \label{c:global vs global}
An object of $\Shv(\BunNb)$ belongs to $\SI^{\leq 0}_{\on{glob}}$ \emph{if and only if} its pullback 
under $(\pi_\Ran)^!$ belongs to $\SI^{\leq 0}_\Ran\subset \Shv(\ol{S}{}^0_\Ran)$.
\end{cor}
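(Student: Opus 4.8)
The plan is to prove the two implications separately, and to reduce the nontrivial one to a statement on a single stratum. The \emph{only if} direction is immediate from what is already available: by \propref{p:pullback pres N}, for each finite set $I$ the functor $(\pi_I)^!$ carries $\SI^{\leq 0}_{\on{glob}}$ into $\SI^{\leq 0}_I$. Since $\Shv(\ol{S}{}^0_\Ran)\simeq \underset{I}{\on{lim}}\,\Shv(\ol{S}{}^0_I)$, since an object of $\Shv(\ol{S}{}^0_\Ran)$ lies in $\SI^{\leq 0}_\Ran$ exactly when its restriction to every $\ol{S}{}^0_I$ lies in $\SI^{\leq 0}_I$, and since the $I$-component of $(\pi_\Ran)^!(\CF)$ is $(\pi_I)^!(\CF)$, we get $(\pi_\Ran)^!(\SI^{\leq 0}_{\on{glob}})\subset \SI^{\leq 0}_\Ran$.

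For the \emph{if} direction, assume $(\pi_\Ran)^!(\CF)\in \SI^{\leq 0}_\Ran$; we must show $(\bi^\lambda_{\on{glob}})^!(\CF)\in \SI^{=\lambda}_{\on{glob}}$ for every $\lambda\in \Lambda^{\on{neg}}$. Put $\CG:=(\bi^\lambda_{\on{glob}})^!(\CF)$. Using the commutativity of \eqref{e:stratum cart}, i.e. $\bi^\lambda_{\on{glob}}\circ \pi^\lambda_\Ran=\pi_\Ran\circ \bi^\lambda$, and the functoriality of $!$-pullback, we get $(\pi^\lambda_\Ran)^!(\CG)=(\bi^\lambda)^!\,(\pi_\Ran)^!(\CF)$, which lies in $\SI^{=\lambda}_\Ran$ by the hypothesis and the definition of $\SI^{\leq 0}_\Ran$. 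So it suffices to prove the following per-stratum assertion: \emph{if $\CG\in \Shv(\BunNb^{=\lambda})$ satisfies $(\pi^\lambda_\Ran)^!(\CG)\in \SI^{=\lambda}_\Ran$, then $\CG$ lies in the essential image of $(p^\lambda_{\on{glob}})^!$.}

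To prove this, recall from \lemref{l:on stratum} that $\SI^{=\lambda}_\Ran$ is the essential image of $(p^\lambda_\Ran)^!$, so the hypothesis yields an isomorphism $(\pi^\lambda_\Ran)^!(\CG)\simeq (p^\lambda_\Ran)^!(\CH)$ for some $\CH\in \Shv((\Ran(X)\times X^\lambda)^{\supset})$. Apply $(s^\lambda_\Ran)^!$. Since $p^\lambda_\Ran\circ s^\lambda_\Ran=\on{id}$, the right-hand side becomes $\CH$. For the left-hand side, the explicit description of the sections shows that $\pi^\lambda_\Ran\circ s^\lambda_\Ran$ factors as $s^\lambda_{\on{glob}}\circ \on{pr}^\lambda_\Ran$, where $s^\lambda_{\on{glob}}\colon X^\lambda\to \BunNb^{=\lambda}$ is the tautological (trivial $N$-reduction) section of $p^\lambda_{\on{glob}}$ — indeed the $\BunNb^{=\lambda}$-datum produced by $s^\lambda_\Ran(\CI,D)$ depends only on $D$. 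Hence the left-hand side equals $(\on{pr}^\lambda_\Ran)^!\big((s^\lambda_{\on{glob}})^!(\CG)\big)$, so $\CH\simeq (\on{pr}^\lambda_\Ran)^!(\CG_0)$ with $\CG_0:=(s^\lambda_{\on{glob}})^!(\CG)\in \Shv(X^\lambda)$. Plugging back in and using $\on{pr}^\lambda_\Ran\circ p^\lambda_\Ran=p^\lambda_{\on{glob}}\circ \pi^\lambda_\Ran$, we obtain $(\pi^\lambda_\Ran)^!(\CG)\simeq (\pi^\lambda_\Ran)^!\,(p^\lambda_{\on{glob}})^!(\CG_0)$; since $(\pi^\lambda_\Ran)^!$ is fully faithful by \corref{c:contr}, this forces $\CG\simeq (p^\lambda_{\on{glob}})^!(\CG_0)$, i.e. $\CG\in \SI^{=\lambda}_{\on{glob}}$.

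I do not expect a serious obstacle here: once \corref{c:contr} (full faithfulness of $(\pi^\lambda_\Ran)^!$, resting on \propref{p:contr} and \lemref{l:pr contr}) is granted, the argument is a formal diagram chase, and the only point requiring actual — if routine — verification is the factorization $\pi^\lambda_\Ran\circ s^\lambda_\Ran=s^\lambda_{\on{glob}}\circ \on{pr}^\lambda_\Ran$, which is where the specific geometry of the section $s^\lambda_\Ran$ (beyond its being \emph{some} section of $p^\lambda_\Ran$) is used.
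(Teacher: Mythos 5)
Your proof is correct, and its skeleton matches the paper's: the ``only if'' direction is quoted from \propref{p:pullback pres N}, and the ``if'' direction is reduced, exactly as in the paper, to the stratum-wise claim that if $(\pi^\lambda_\Ran)^!(\CG)$ lies in the essential image of $(p^\lambda_\Ran)^!$ then $\CG$ lies in the essential image of $(p^\lambda_{\on{glob}})^!$. Where you diverge is in how the descended object is produced. The paper never uses the section $s^\lambda_\Ran$ here: it writes $\CF\simeq (p^\lambda_{\on{glob}})^!\circ(\on{pr}^\lambda_\Ran)_!(\CF')$ by a chain of counit isomorphisms — full faithfulness of $(p^\lambda_\Ran\times\pi^\lambda_\Ran)^!$ from \propref{p:contr} and of the base-changed $(\on{pr}^\lambda_\Ran\times\on{id})^!$ from \lemref{l:pr contr} — finished by base change along the pseudo-proper map $\on{pr}^\lambda_\Ran$. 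You instead restrict along $s^\lambda_\Ran$, use the factorization $\pi^\lambda_\Ran\circ s^\lambda_\Ran=\fq^\lambda\circ \on{pr}^\lambda_\Ran$ (your $s^\lambda_{\on{glob}}$ is the paper's $\fq^\lambda$; this identification is precisely the one the paper verifies in the proof of \propref{p:restr isom}), identify the descended object as $(\fq^\lambda)^!(\CG)$ rather than $(\on{pr}^\lambda_\Ran)_!(\CF')$, and conclude by full faithfulness of $(\pi^\lambda_\Ran)^!$, i.e.\ \corref{c:contr}. The inputs are the same (\propref{p:contr} plus \lemref{l:pr contr}, packaged through \corref{c:contr}), so there is no gain in generality and no circularity; what your route buys is the avoidance of $!$-pushforwards and base change, at the cost of the extra geometric verification of the factorization through $\fq^\lambda$ — which, as noted, the paper needs anyway elsewhere. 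Both arguments are complete; yours is a clean alternative to the paper's computation.
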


\begin{proof}
The ``only if" direction is the content of \propref{p:pullback pres N}. 

\medskip

For the ``if" direction, we need to show that if an object $\CF\in \Shv(\BunNb^{=\lambda})$ is such that 
$$(\pi^\lambda_\Ran)^!(\CF)\simeq (p^\lambda_\Ran)^!(\CF')$$
for some $\CF'\in \Shv((X^\lambda \times \Ran)^{\subset})$, then $\CF$ is the pullback 
of an object in $\Shv(X^\lambda)$ along $p^\lambda_{\on{glob}}$.

\medskip

By \propref{p:contr}, in the diagram
$$
\CD
S^\lambda_\Ran \\
@V{p^\lambda_\Ran\times \pi^\lambda_\Ran}VV \\
(X^\lambda \times \Ran)^{\subset}\underset{X^\lambda}\times \BunNb^{=\lambda} 
@>{\on{id}_{(X^\lambda \times \Ran)^{\subset}}\times p^\lambda_{\on{glob}}}>>  (X^\lambda \times \Ran)^{\subset} \\
@V{\on{pr}^\lambda_\Ran\times \on{id}_{\BunNb^{=\lambda}}}VV  @VV{\on{pr}^\lambda_\Ran}V  \\
\BunNb^{=\lambda} @>{p^\lambda_{\on{glob}}}>> X^\lambda
\endCD
$$
we have
\begin{multline*} 
\CF\overset{\text{\lemref{l:pr contr}}}\simeq 
(\on{pr}^\lambda_\Ran\times \on{id}_{\BunNb^{=\lambda}})_!\circ (\on{pr}^\lambda_\Ran\times \on{id}_{\BunNb^{=\lambda}})^!(\CF)
\overset{\text{\propref{p:contr}}}\simeq \\
\simeq 
(\on{pr}^\lambda_\Ran\times \on{id}_{\BunNb^{=\lambda}})_!\circ
(p^\lambda_\Ran\times \pi^\lambda_\Ran)_!\circ (p^\lambda_\Ran\times \pi^\lambda_\Ran)^! \circ 
(\on{pr}^\lambda_\Ran\times \on{id}_{\BunNb^{=\lambda}})^!(\CF) \simeq \\
\simeq (\on{pr}^\lambda_\Ran\times \on{id}_{\BunNb^{=\lambda}})_!\circ
(p^\lambda_\Ran\times \pi^\lambda_\Ran)_!\circ (\pi^\lambda_\Ran)^!(\CF) \overset{\text{assumption}}\simeq \\
\simeq (\on{pr}^\lambda_\Ran\times \on{id}_{\BunNb^{=\lambda}})_!\circ
(p^\lambda_\Ran\times \pi^\lambda_\Ran)_!\circ (p^\lambda_\Ran)^!(\CF') \simeq \\
\simeq 
(\on{pr}^\lambda_\Ran\times \on{id}_{\BunNb^{=\lambda}})_!\circ
(p^\lambda_\Ran\times \pi^\lambda_\Ran)_!\circ (p^\lambda_\Ran\times \pi^\lambda_\Ran)^!\circ 
(\on{id}_{(X^\lambda \times \Ran)^{\subset}}\times p^\lambda_{\on{glob}})^!(\CF')\simeq \\
\overset{\text{\propref{p:contr}}}\simeq 
(\on{pr}^\lambda_\Ran\times \on{id}_{\BunNb^{=\lambda}})_!\circ 
(\on{id}_{(X^\lambda \times \Ran)^{\subset}}\times p^\lambda_{\on{glob}})^!(\CF')\simeq 
(p^\lambda_{\on{glob}})^!\circ (\on{pr}^\lambda_\Ran)_!(\CF'),
\end{multline*}
as required (the last isomorphism is base change, which holds due to the fact that the map $\on{pr}^\lambda_\Ran$
is pseudo-proper\footnote{See \secref{sss:pseudo-proper}, where this notion is recalled.}).

\end{proof} 

\ssec{Proof of \propref{p:contr}} \label{ss:proof of contr}

\sssec{}

Consider the morphism 
$$(p^\lambda_\Ran\times \pi^\lambda_\Ran):S^\lambda_\Ran\to 
(X^\lambda \times \Ran)^{\subset}\underset{X^\lambda}\times \BunNb^{=\lambda}.$$

\medskip

A point of $S^\lambda_\Ran$ is the following data:

\medskip

\noindent{(i)} A $B$-bundle $\CP_B$ on $X$ (denote by $\CP_T$ the induced $T$-bundle);

\medskip

\noindent{(ii)} A $\Lambda^{\on{neg}}$-valued divisor $D$ on $X$ (we denote by $\CO(D)$ the corresponding $T$-bundle); 

\medskip

\noindent{(iii)} An identification $\CP_T\simeq \CO(D)$;

\medskip

\noindent{(iv)} A finite non-empty set $\CI$ of points of $X$ that contains the support of $D$;

\medskip

\noindent{(v)} A trivialization $\alpha$ of $\CP_B$ away from $\CI$, such that the induced
trivialization of $\CP_T|_{X-\CI}$ agrees with the tautological trivialization of $\CO(D)|_{X-\CI}$. 

\sssec{}

The map $(p^\lambda_\Ran\times \pi^\lambda_\Ran)$ amounts to forgetting the data of (v) above. It is clear
that for an affine test-scheme $Y$ and a $Y$-point of 
$$(X^\lambda \times \Ran)^{\subset}\underset{X^\lambda}\times \BunNb^{=\lambda},$$
the set of its lifts to a $Y$-point of $S^\lambda_\Ran$ is non-empty and is a torsor for the group
$$\Maps(Y\times X-\Gamma_\CI,N).$$ 

\medskip

For a given $Y$ and $\CI\subset \Maps(Y,X)$, let $\bMaps_Y(X-\CI,N)$ be the prestack over $Y$ that
assigns to $Y'\to Y$ the set of maps
$$\Maps(Y'\times X-(Y'\underset{Y}\times \Gamma_\CI),N).$$

Thus, it suffices to show that the projection $\bMaps_Y(X-\CI,N)\to Y$ is \emph{universally homologically contractible},
see \secref{sss:UHC} for what this means.

\sssec{}

Since $N$ is unipotent, it is isomorphic to $\BA^m$, where $m=\dim(N)$. Hence, it suffices to show that the map
$$\bMaps_Y(X-\CI,\BA^1)\to Y$$
is universally homologically contractible. 

\medskip

However, the latter is clear: the prestack $\bMaps_Y(X-\CI,\BA^1)$ is isomorphic to the ind-scheme 
$\BA^\infty\times Y$, where 
$$\BA^\infty\simeq \underset{n}{\on{colim}}\, \BA^n.$$

\qed

\ssec{The key isomorphism}

\sssec{}

The base change isomorphism
$$(\pi_I)^!\circ (\bi^\lambda_{\on{glob}})_*\simeq (\bi^\lambda)_*\circ (\pi_I)^!$$
in the diagram \eqref{e:stratum cart} gives rise to a natural transformation
\begin{equation} \label{e:restr nat trans}
(\bi^\lambda)^*\circ (\pi_I)^!\to (\pi^\lambda_I)^!\circ (\bi^\lambda_{\on{glob}})^*
\end{equation}
as functors 
$$\SI_{\on{glob}}^{\leq 0} \rightrightarrows \SI^{=\lambda}_I,$$
see \propref{p:i* well defined}(a) for the notation $(\bi^\lambda)^*$. 

\sssec{}

In \secref{ss:proof of key}, we will prove:

\begin{prop} \label{p:restr isom} 
The natural transformation \eqref{e:restr nat trans} is an isomorphism.
\end{prop}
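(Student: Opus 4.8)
The plan is to localize the statement to a single stratum and then play two applications of Braden's hyperbolic-localization theorem off against each other: the ``local'' one on $\ol{S}{}^0_I$ already used in \secref{ss:Braden}, and a second one on a finite-type Zastava space through which $\pi_I$ factors along the repelling locus. First, for $\CF\in\SI^{\leq 0}_{\on{glob}}$ both sides of \eqref{e:restr nat trans} land in $\SI^{=\lambda}_I$: the left-hand side by \propref{p:pullback pres N} followed by \propref{p:i* well defined}(a), and the right-hand side because $(\bi^\lambda_{\on{glob}})^*(\CF)\in\SI^{=\lambda}_{\on{glob}}$, i.e. $(\bi^\lambda_{\on{glob}})^*(\CF)\simeq(p^\lambda_{\on{glob}})^!(\CH)$ for an object $\CH\in\Shv(X^\lambda)$ uniquely determined by full faithfulness of $(p^\lambda_{\on{glob}})^!$, together with the identity $(\pi^\lambda_I)^!\circ(p^\lambda_{\on{glob}})^!\simeq(p^\lambda_I)^!\circ(\on{pr}^\lambda_I)^!$ coming from the commutative square relating $p^\lambda_{\on{glob}},\pi^\lambda_I,\on{pr}^\lambda_I,p^\lambda_I$. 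Since $(s^\lambda_I)^!$ is an equivalence $\SI^{=\lambda}_I\overset{\sim}{\longrightarrow}\Shv((X^I\times X^\lambda)^{\supset})$ by \lemref{l:on stratum}, it suffices to check that \eqref{e:restr nat trans} is an isomorphism after $(s^\lambda_I)^!$; using $p^\lambda_{\on{glob}}\circ\pi^\lambda_I\circ s^\lambda_I=\on{pr}^\lambda_I\circ p^\lambda_I\circ s^\lambda_I=\on{pr}^\lambda_I$, the right-hand side becomes $(\on{pr}^\lambda_I)^!(\CH)$, so what remains is a compatible identification $(s^\lambda_I)^!\circ(\bi^\lambda)^*\circ(\pi_I)^!(\CF)\simeq(\on{pr}^\lambda_I)^!(\CH)$.

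Next I would apply Braden's theorem in the form of \secref{ss:Braden}: $(\pi_I)^!(\CF)$ is $\BG_m$-monodromic on $\ol{S}{}^0_I$, so $(s^\lambda_I)^!\circ(\bi^\lambda)^*\circ(\pi_I)^!(\CF)\simeq(p^{-,\lambda}_I)_*\circ(\bi^{-,\lambda})^!\circ(\pi_I)^!(\CF)=(p^{-,\lambda}_I)_*\bigl((\pi_I\circ\bi^{-,\lambda})^!(\CF)\bigr)$. The gain is that $\pi_I\circ\bi^{-,\lambda}\colon S^{-,\lambda}_I\to\BunNb$ now enters only through $!$-pullback, which commutes with everything, and this composite factors through a finite-type space: writing $\CZ^\lambda_I$ for the $X^I$-version of the Zastava space over $(X^I\times X^\lambda)^{\supset}$ (a scheme of finite type parametrizing a generic $B$-reduction together with a transverse $B^-$-reduction of relative defect $\lambda$), it carries a $\BG_m$-action contracting its $B$-attracting locus onto the fixed locus $\simeq(X^I\times X^\lambda)^{\supset}$, admits a natural map $\CZ^\lambda_I\to\BunNb$, and $\pi_I\circ\bi^{-,\lambda}$ factors as $S^{-,\lambda}_I\to\CZ^{-,\lambda}_I\hookrightarrow\CZ^\lambda_I\to\BunNb$, where $\CZ^{-,\lambda}_I$ is the $\BG_m$-repelling locus and $S^{-,\lambda}_I\to\CZ^{-,\lambda}_I$ is homologically contractible over $(X^I\times X^\lambda)^{\supset}$.

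The crux is then a second application of Braden's theorem, now on $\CZ^\lambda_I$: hyperbolic localization identifies the pushforward to $(X^I\times X^\lambda)^{\supset}$ of the $!$-restriction of the pullback of $\CF$ to $\CZ^{-,\lambda}_I$ with the pushforward of its $*$-restriction to the $B$-attracting locus of $\CZ^\lambda_I$. Combining this with the previous paragraph (and the contractibility inputs \lemref{l:pr contr} and \corref{c:contr}) turns $(s^\lambda_I)^!\circ(\bi^\lambda)^*\circ(\pi_I)^!(\CF)$ into the pushforward along the contraction of the $*$-restriction of the pullback of $\CF$ to the $B$-attracting locus. Since that $B$-attracting locus maps to $\BunNb^{=\lambda}$ compatibly with $\bi^\lambda_{\on{glob}}$, the $*$-restriction in question is the pullback of $(\bi^\lambda_{\on{glob}})^*(\CF)=(p^\lambda_{\on{glob}})^!(\CH)$, hence is pulled back from $X^\lambda$, and its pushforward along the contraction onto $(X^I\times X^\lambda)^{\supset}$ equals $(\on{pr}^\lambda_I)^!(\CH)$ by base change. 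Tracking that each isomorphism in this chain is induced by the base-change and adjunction maps that define \eqref{e:restr nat trans} yields the proposition.

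I expect the main obstacle to be the geometric bookkeeping around the Zastava space: constructing $\CZ^\lambda_I$ with its $\BG_m$-action and the map to $\BunNb$, establishing the attracting/repelling descriptions and the factorization of $\pi_I\circ\bi^{-,\lambda}$ with a homologically contractible projection $S^{-,\lambda}_I\to\CZ^{-,\lambda}_I$, and --- most delicately --- verifying that the two instances of Braden's theorem (on $\ol{S}{}^0_I$ and on $\CZ^\lambda_I$) are intertwined by $(\pi_I)^!$. The last point is a diagram chase through \eqref{e:stratum cart}, the Cartesian squares of \secref{ss:Braden}, and the factorization diagrams for $\CZ^\lambda_I$.
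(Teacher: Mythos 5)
Your first two reductions track the paper's own proof: both sides lie in $\SI^{=\lambda}_I$, so it suffices to compare them after $(s^\lambda_I)^!$; Braden's theorem on $\ol{S}{}^0_I$ converts the left-hand side into $(p^{-,\lambda}_I)_*\circ(\bi^{-,\lambda})^!\circ(\pi_I)^!(\CF)$; and the support of this object is the intersection $\ol{S}{}^0_I\cap S^{-,\lambda}_I$, which is exactly the Ran/$X^I$-version of the Zastava space $\CZ^\lambda$ fibered over $(X^I\times X^\lambda)^{\supset}$, so after base change one is reduced to computing $\fp_*\circ\fq^!(\CF)$ and, by the contraction principle for the $\BG_m$-action along the fibers of $\fp$, to computing $\fs^*\circ\fq^!(\CF)$. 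Up to here your proposal is essentially the paper's argument (the paper uses the contraction principle of Drinfeld--Gaitsgory on $\CZ^\lambda$ rather than a second full-blown Braden statement, but that is a cosmetic difference).

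The genuine gap is the sentence ``Since that $B$-attracting locus maps to $\BunNb^{=\lambda}$ compatibly with $\bi^\lambda_{\on{glob}}$, the $*$-restriction in question is the pullback of $(\bi^\lambda_{\on{glob}})^*(\CF)$.'' This is precisely the non-formal heart of the proposition, not a consequence of the geometry: it asserts that the base-change natural transformation $\fs^*\circ\fq^!\to(\fq^\lambda)^!\circ(\bi^\lambda_{\on{glob}})^*$ attached to the Cartesian square relating $\fs,\fq,\fq^\lambda,\bi^\lambda_{\on{glob}}$ is an isomorphism, i.e.\ that a $*$-restriction commutes with a $!$-pullback. For an arbitrary $\CF\in\Shv(\BunNb)$ this is false (already the toy example of a point included in $\BA^1$ shows $*$- and $!$-fibers differ), and your second application of Braden on $\CZ^\lambda_I$ does not remove the difficulty --- it only trades a $!$-restriction for a $*$-restriction on the Zastava space, after which the same commutation problem reappears. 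The paper proves this isomorphism on $\SI^{\leq 0}_{\on{glob}}$ by a separate argument (repeating \cite[Sect.\ 3.9]{Ga1}): when $-\lambda$ is sufficiently dominant the map $\fq$ is smooth, being a base change of $\Bun_{B^-}^{-\lambda}\to\Bun_G$, so the transformation is an isomorphism by smooth base change; the general $\lambda$ is then reduced to this case using the factorization property of the Zastava spaces, and the equivariance hypothesis $\CF\in\SI^{\leq 0}_{\on{glob}}$ is what makes this reduction work. Without supplying this step (or an equivalent one), your chain of isomorphisms assumes the proposition at its crucial point; by contrast, the ``intertwining of the two Braden applications'' that you flag as the delicate point is indeed only diagram-chasing.
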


We will now deduce some corollaries of \propref{p:restr isom}; these will easily imply \thmref{t:IC loc glob},
see \secref{sss:loc to glob exact}.

\medskip

First, combining \propref{p:restr isom} with \propref{p:i* well defined}(c), we obtain:

\begin{cor}  \label{c:restr isom}
The natural transformation
$$(\bi^\lambda)^*\circ (\pi_\Ran)^!\to (\pi^\lambda_\Ran)^!\circ (\bi^\lambda_{\on{glob}})^*$$
as functors
$$\SI_{\on{glob}}^{\leq 0} \rightrightarrows \SI^{=\lambda}_\Ran$$
is an isomorphism.
\end{cor}

Next, by a Cousin argument, from \propref{p:restr isom} we formally obtain:

\begin{cor} \label{c:ext isom}
The natural transformation
$$(\bi^\lambda)_!\circ (\pi^\lambda_I)^!\to (\pi_I)^!\circ (\bi^\lambda_{\on{glob}})_!,$$
arising by adjunction from
$$(\pi^\lambda_I)^!\circ (\bi^\lambda_{\on{glob}})^!\simeq (\bi^\lambda)^!\circ (\pi_I)^!,$$
is an isomorphism of functors
$$\SI^{=\lambda}_{\on{glob}}  \rightrightarrows \SI_I^{\leq 0}.$$
\end{cor}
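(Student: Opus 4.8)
The plan is to deduce the corollary formally from \propref{p:restr isom} by testing the natural transformation on strata. Write $\theta_M$ for the morphism $(\bi^\lambda)_!(\pi^\lambda_I)^!(M)\to (\pi_I)^!(\bi^\lambda_{\on{glob}})_!(M)$ evaluated on $M\in \SI^{=\lambda}_{\on{glob}}$. First I would record that both sides lie in $\SI^{\leq 0}_I\subset \Shv(\ol{S}{}^0_I)$: for the source this follows from \lemref{l:on stratum} (which guarantees $(\pi^\lambda_I)^!(M)\in \SI^{=\lambda}_I$) together with \corref{c:i! well defined}(a), and for the target from the fact, recalled in the excerpt, that $(\bi^\lambda_{\on{glob}})_!$ takes values in $\SI^{\leq 0}_{\on{glob}}$, combined with \propref{p:pullback pres N}. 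Since the functors $(\bi^\mu)^*$, $\mu\in \Lambda^{\on{neg}}$, are defined on $\SI^{\leq 0}_I$ by \propref{p:i* well defined}(a) and are jointly conservative on $\Shv(\ol{S}{}^0_I)$ (the strata $S^\mu_I$ cover $\ol{S}{}^0_I$, and $\SI^{\leq 0}_I$ is stable under cones), it suffices to show that $(\bi^\mu)^*(\theta_M)$ is an isomorphism for every $\mu\in \Lambda^{\on{neg}}$.

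Next I would split into two cases. For $\mu\neq \lambda$: the functors $(\bi^\lambda)_!$ and $(\bi^\lambda_{\on{glob}})_!$ are extensions by zero from the locally closed substacks $S^\lambda_I$ and $\BunNb^{=\lambda}$, and the strata $S^\mu_I$, $\BunNb^{=\mu}$ are disjoint from them; hence $(\bi^\mu)^*(\bi^\lambda)_!=0$ and $(\bi^\mu_{\on{glob}})^*(\bi^\lambda_{\on{glob}})_!=0$. Thus $(\bi^\mu)^*$ of the source vanishes, and $(\bi^\mu)^*$ of the target vanishes too, since \propref{p:restr isom} applied to $(\bi^\lambda_{\on{glob}})_!(M)\in \SI^{\leq 0}_{\on{glob}}$ identifies it with $(\pi^\mu_I)^!(\bi^\mu_{\on{glob}})^*(\bi^\lambda_{\on{glob}})_!(M)$. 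For $\mu=\lambda$: using $(\bi^\lambda)^*(\bi^\lambda)_!\simeq \id$, $(\bi^\lambda_{\on{glob}})^*(\bi^\lambda_{\on{glob}})_!\simeq \id$, and again \propref{p:restr isom}, both $(\bi^\lambda)^*$ of the source and of the target become identified with $(\pi^\lambda_I)^!(M)$. It then remains to unwind the construction of $\theta_M$: it is the mate, along $(\bi^\lambda)_!\dashv (\bi^\lambda)^!$, of the morphism $(\pi^\lambda_I)^!(M)\to (\bi^\lambda)^!(\pi_I)^!(\bi^\lambda_{\on{glob}})_!(M)$ obtained from the unit $\id\to (\bi^\lambda_{\on{glob}})^!(\bi^\lambda_{\on{glob}})_!$ — an isomorphism, as $\bi^\lambda_{\on{glob}}$ is a locally closed embedding — via the given base-change identification $(\bi^\lambda)^!(\pi_I)^!\simeq (\pi^\lambda_I)^!(\bi^\lambda_{\on{glob}})^!$. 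Consequently $(\bi^\lambda)^*(\theta_M)$ becomes, under the identifications above, the canonical map $(\bi^\lambda)^!\to (\bi^\lambda)^*$ evaluated on $(\pi_I)^!(\bi^\lambda_{\on{glob}})_!(M)$; this object is a $*$-pushforward from the closed substack $\ol{S}{}^0_I\underset{\BunNb}\times \BunNb^{\leq \lambda}$ — here one uses that $\ol\bi{}^\lambda_{\on{glob}}$ is finite, so $(\bi^\lambda_{\on{glob}})_!(M)=(\ol\bi{}^\lambda_{\on{glob}})_*(\bj^\lambda_{\on{glob}})_!(M)$, and base change — inside which $S^\lambda_I$ is open; hence the $(\bi^\lambda)^!\to (\bi^\lambda)^*$ map is an isomorphism on it, and we are done.

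I do not expect a genuine obstacle here: the one substantive input is \propref{p:restr isom}, and granting it this corollary is a diagram chase. The two points that need a line of care are (i) the bookkeeping that every object in sight lies in a subcategory on which the partially defined functors $(\bi^\mu)^*$ and $(\bi^\lambda)_!$ are genuinely defined, and (ii) the final identification that $\theta_M$ restricts to an isomorphism over the open stratum $S^\lambda_I$, which reduces to the standard fact that the canonical natural transformation $i^!\to i^*$ is an isomorphism on sheaves pushed forward from a closed substack in which the stratum is open.
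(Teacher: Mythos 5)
Your argument is correct and is essentially the paper's own: the paper obtains this corollary ``formally'' from \propref{p:restr isom}, and your stratum-by-stratum check with the functors $(\bi^\mu)^*$ (vanishing for $\mu\neq\lambda$ via \propref{p:restr isom} applied to $(\bi^\lambda_{\on{glob}})_!(M)\in \SI^{\leq 0}_{\on{glob}}$, and the unwinding of the mate plus the support argument for $\mu=\lambda$) is exactly that formal deduction. Two small repairs: joint conservativity of the $(\bi^\mu)^*$ on $\SI^{\leq 0}_I$ needs the maximal-stratum argument as in \lemref{l:char of conn} (covering by strata only gives conservativity of the !-restrictions), and $\ol{S}{}^0_I\underset{\BunNb}\times \BunNb^{\leq \lambda}$ is not a closed substack since $\ol\bi{}^\lambda_{\on{glob}}$ is finite but not a monomorphism --- but your appeal to finiteness and proper base change, together with the fact that $S^\lambda_I$ is the open preimage of $\BunNb^{=\lambda}$ in this fiber product (by the Cartesian square \eqref{e:stratum cart}), makes the last step go through.
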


Combining \corref{c:ext isom} with \corref{c:i! well defined}(c), we obtain:

\begin{cor} \label{c:ext isom Ran}
The natural transformation
$$(\bi^\lambda)_!\circ (\pi^\lambda_\Ran)^!\to (\pi_\Ran)^!\circ (\bi^\lambda_{\on{glob}})_!$$
is an isomorphism of functors
$$\SI^{=\lambda}_{\on{glob}}  \rightrightarrows \SI_\Ran^{\leq 0}.$$
\end{cor}

\medskip

Finally, we claim:

\begin{cor}  \label{c:log glob exact}
The functor
$$\pi^![d]:\SI_{\on{glob}}^{\leq 0}\to \SI_{\Ran}^{\leq 0}$$
is t-exact.
\end{cor}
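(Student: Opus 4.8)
The plan is to deduce the t-exactness of the functor $\pi^![d]\colon \SI^{\leq 0}_{\on{glob}}\to \SI^{\leq 0}_\Ran$ of the corollary (which is well-defined by \corref{c:global vs global}; here $\pi=\pi_\Ran$) from a stratum-by-stratum statement, and then to settle the latter by comparing the two projections to $X^\lambda$. I would split the claim into right t-exactness (preservation of connectives) and left t-exactness (preservation of coconnectives), and verify each after restricting to the locally closed strata $S^\lambda_\Ran$.

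For right t-exactness: if $\CF\in \SI^{\leq 0}_{\on{glob}}$ is connective, then $(\bi^\lambda_{\on{glob}})^*(\CF)$ is connective in $\SI^{=\lambda}_{\on{glob}}$ for every $\lambda\in \Lambda^{\on{neg}}$; by \lemref{l:char of conn} it suffices to show each $(\bi^\lambda)^*\big((\pi_\Ran)^!(\CF)[d]\big)$ is connective, and by \corref{c:restr isom} this object is $(\pi^\lambda_\Ran)^!\big((\bi^\lambda_{\on{glob}})^*(\CF)\big)[d]$. For left t-exactness: if $\CF$ is coconnective, then $(\bi^\lambda_{\on{glob}})^!(\CF)$ is coconnective in $\SI^{=\lambda}_{\on{glob}}$, and by the definition of the t-structure on $\SI^{\leq 0}_\Ran$ it suffices to show each $(\bi^\lambda)^!\big((\pi_\Ran)^!(\CF)[d]\big)$ is coconnective; but base change along the Cartesian square \eqref{e:stratum cart} identifies this with $(\pi^\lambda_\Ran)^!\big((\bi^\lambda_{\on{glob}})^!(\CF)\big)[d]$. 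Thus both halves reduce to the single assertion that $(\pi^\lambda_\Ran)^![d]\colon \SI^{=\lambda}_{\on{glob}}\to \SI^{=\lambda}_\Ran$ is t-exact for each $\lambda$.

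To prove this stratum-level statement I would pass through $X^\lambda$. By \lemref{l:on stratum} the functor $(p^\lambda_\Ran)^!$ is an equivalence $\Shv((\Ran(X)\times X^\lambda)^{\supset})\to \SI^{=\lambda}_\Ran$, under which the t-structure on $\SI^{=\lambda}_\Ran$ corresponds to that on $(\Ran(X)\times X^\lambda)^{\supset}$ up to the shift $[\langle\lambda,2\check\rho\rangle]$; similarly $(p^\lambda_{\on{glob}})^!$ is an equivalence $\Shv(X^\lambda)\to \SI^{=\lambda}_{\on{glob}}$, and since $p^\lambda_{\on{glob}}$ is a base change of $\Bun_B\to \Bun_T$ it is smooth with homologically contractible fibers, of relative dimension $\dim(\BunNb^{=\lambda})-\dim(X^\lambda)=d+\langle\lambda,2\check\rho\rangle$; hence $(p^\lambda_{\on{glob}})^!$ is t-exact up to the shift $[d+\langle\lambda,2\check\rho\rangle]$. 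The commutative square $\on{pr}^\lambda_\Ran\circ p^\lambda_\Ran=p^\lambda_{\on{glob}}\circ \pi^\lambda_\Ran$ then identifies $(\pi^\lambda_\Ran)^!$, transported through these two equivalences, with $(\on{pr}^\lambda_\Ran)^!\colon \Shv(X^\lambda)\to \Shv((\Ran(X)\times X^\lambda)^{\supset})$, which is t-exact (this is the proposition proved in \secref{s:t}, resting on \corref{c:pr contr}). Combining the shifts, $(\pi^\lambda_\Ran)^!$ is t-exact up to the shift $[-d]$, so $(\pi^\lambda_\Ran)^![d]$ is t-exact, which completes the argument.

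I expect no genuinely new obstacle here: the real input is \corref{c:restr isom} (equivalently \propref{p:restr isom}), which is where Braden's theorem and the geometry of the Zastava space enter; the rest is formal. The one point that requires care is the cohomological-shift bookkeeping --- in particular confirming that the relative dimension of $p^\lambda_{\on{glob}}$ is exactly $d+\langle\lambda,2\check\rho\rangle$, so that the $\lambda$-dependent shift $[\langle\lambda,2\check\rho\rangle]$ built into the t-structure on $\SI^{=\lambda}_\Ran$ is cancelled and the uniform twist $[d]$ emerges.
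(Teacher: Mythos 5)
Your argument is essentially the paper's own proof: the paper deduces the corollary in one line from \corref{c:restr isom} together with the tautological isomorphism $(\bi^\lambda)^!\circ(\pi_\Ran)^!\simeq(\pi^\lambda_\Ran)^!\circ(\bi^\lambda_{\on{glob}})^!$, exactly the two inputs you use for the connective and coconnective halves. The stratum-level shift bookkeeping you carry out (relative dimension $d+\langle\lambda,2\check\rho\rangle$ of $p^\lambda_{\on{glob}}$ cancelling against the $[\langle\lambda,2\check\rho\rangle]$ built into the t-structure, via $\on{pr}^\lambda_\Ran\circ p^\lambda_\Ran=p^\lambda_{\on{glob}}\circ\pi^\lambda_\Ran$ and the t-exactness of $(\on{pr}^\lambda_\Ran)^!$) is left implicit in the paper but is correct and consistent with its conventions.
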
 

\begin{proof}

This follows from \corref{c:restr isom}, combined with the (tautological) isomorphism
$$(\bi^\lambda)^!\circ (\pi_\Ran)^!\simeq (\pi^\lambda_\Ran)^!\circ (\bi^\lambda_{\on{glob}})^!.$$

\end{proof}

\sssec{}  \label{sss:loc to glob exact}

Note that \corref{c:log glob exact} immediately implies \thmref{t:IC loc glob}. 

\begin{rem}
In \secref{ss:proof of Hecke compat}
we will present another construction of the map in one direction 
$$\IC^\semiinf_\Ran\to \pi^!(\IC^\semiinf_{\on{glob}})[d],$$
where we will realize $\IC^\semiinf_\Ran$ as $'\!\IC^\semiinf_\Ran$.
\end{rem} 

\sssec{}  \label{sss:loc to glob exact bis}

Let us now prove \propref{p:! and * perv}. 

\begin{proof} 
By \corref{c:log glob exact}, it suffices to show that the objects
$$(\bi^\lambda_{\on{glob}})_!(\IC_{\BunNb^{=\lambda}}) \text{ and } (\bi^\lambda_{\on{glob}})_*(\IC_{\BunNb^{=\lambda}})$$
belong to the heart of the t-structure (i.e., are perverse sheaves on $\BunNb$). 

\medskip

We claim that the morphism $\bi^\lambda_{\on{glob}}$ is affine, which would imply that the functors $(\bi^\lambda_{\on{glob}})_!$
and $(\bi^\lambda_{\on{glob}})_*$ are t-exact.

\medskip

Indeed, $\bi^\lambda_{\on{glob}}$ is the base change
of the morphism 
$$\bi^\lambda_{\on{glob}}:\Bun_B\times X^\lambda\to \BunBb,$$
which we claim to be affine. 

\medskip

Indeed, $\bi^\lambda_{\on{glob}}=\ol\bi{}^\lambda_{\on{glob}}\circ \bj^\lambda_{\on{glob}}$,
where $\ol\bi{}^\lambda_{\on{glob}}$ is a finite morphism, and $\bj^\lambda_{\on{glob}}$ 
is known to be an affine open embedding (see \cite[Proposition 3.3.1]{FGV}).
\end{proof}

\ssec{Proof of \propref{p:restr isom}}  \label{ss:proof of key}

\sssec{}

Let $\CF$ be an object of $\SI^{\leq 0}_{\on{glob}}$. We need to show that the map 
\begin{equation} \label{e:LSH & RHS}
(s^\lambda_I)^!\circ (\bi^\lambda)^*\circ (\pi_I)^!(\CF)\simeq (s^\lambda_I)^!\circ (\pi^\lambda_I)^! \circ (\bi^\lambda_{\on{glob}})^*(\CF)
\end{equation} 
is an isomorphism. 

\sssec{}

We first rewrite the left-hand side in \eqref{e:LSH & RHS}. 

\medskip

As a first step, we note that by \eqref{e:Braden semiinf}, we have
\begin{equation} \label{e:LSH 1}
(s^\lambda_I)^!\circ (\bi^\lambda)^*\circ (\pi_I)^!(\CF)\simeq (p^{-,\lambda}_I)_*\circ (\bi^{-,\lambda})^! \circ (\pi_I)^!(\CF).
\end{equation} 

\sssec{}   \label{sss:Zastava}

For $\lambda\in \Lambda^{\on{neg}}$, let $\CZ^\lambda$ be the Zastava space, i.e., this is the open substack of
$$\BunNb\underset{\Bun_G}\times \Bun_{B^-}^{-\lambda},$$
corresponding to the condition that the $B^-$-reduction and the generalized $N$-reduction of
a given $G$-bundle are generically transversal. 

\medskip

Let $\fq$ denote the forgetful map $\CZ^\lambda\to \BunNb$. Let $\fp$ denote the projection
$$\CZ^\lambda\to X^\lambda,$$
and let $\fs$ denote its section
$$X^\lambda\to \CZ^\lambda.$$

\sssec{}

Note that we have a canonical identification
\begin{equation} \label{e:ident Zast}
(X^\lambda\times X^I)^{\subset}\underset{X^\lambda}\times \CZ^\lambda\simeq
\ol{S}{}^0_I\cap S^{-,\lambda}_I,
\end{equation}
so that the projection
$$(\on{id}_{(X^\lambda\times X^I)^{\subset}}\times \fp):(X^\lambda\times X^I)^{\subset}\underset{X^\lambda}\times \CZ^\lambda\to
(X^\lambda\times X^I)^{\subset}$$ 
identifies with
$$\ol{S}{}^0_I\cap S^{-,\lambda}_I\to S^{-,\lambda}_I\overset{p^{-,\lambda}_I}\longrightarrow (X^\lambda\times X^I)^{\subset},$$ 

\sssec{}

Hence, the right-hand side in \eqref{e:LSH 1} can be rewritten as
\begin{equation} \label{e:rewrite RHS}
(\on{id}_{(X^\lambda\times X^I)^{\subset}}\times \fp)_*\circ (\on{pr}^\lambda_I\times \on{id}_{\CZ^\lambda})^! \circ \fq^!(\CF).
\end{equation} 
where the maps are as shown in the diagram
$$
\CD
(X^\lambda\times X^I)^{\subset}\underset{X^\lambda}\times \CZ^\lambda @>{\on{pr}^\lambda_I\times \on{id}_{\CZ^\lambda}}>>  
\CZ^\lambda @>{\fq}>>  \BunNb \\
@V{\on{id}_{(X^\lambda\times X^I)^{\subset}}\times \fp}VV   @VV{\fp}V  \\
(X^\lambda\times X^I)^{\subset} @>{\on{pr}^\lambda_I}>> X^\lambda.
\endCD
$$

By base change, we rewrite \eqref{e:rewrite RHS} as 
\begin{equation} \label{e:rewrite rewrite RHS}
(\on{pr}^\lambda_I)^!\circ \fp_*\circ \fq^!(\CF).
\end{equation} 

\sssec{}

The adjoint action of $T$ on $N$ defines an an action of $T$ on $\BunNb$. It is easy to see that every object
of $\SI^{\leq 0}$ is monodromic for this action. Hence, the same is true for its pullback to $\CZ^\lambda$. 

\medskip

Choose a dominant coweight as in \secref{sss:setting for Braden}. 
Applying the contraction principle for the action of $\BG_m$ along the fibers of $\fp$ (see \cite[Proposition 3.2.2]{DrGa}), 
we rewrite \eqref{e:rewrite rewrite RHS} as 
\begin{equation} \label{e:summary LHS}
(\on{pr}^\lambda_I)^!\circ \fs^* \circ \fq^!(\CF).
\end{equation}

To summarize, we have rewritten the left-hand side in \eqref{e:LSH & RHS} as \eqref{e:summary LHS}. 

\sssec{}

We now rewrite the right-hand side in \eqref{e:LSH & RHS} .

\medskip

Note that we have a Cartesian diagram
\begin{equation} \label{e:stratum on Zast}
\CD
X^\lambda  @>{\fs}>>  \CZ^\lambda \\
@V{\fq^\lambda}VV   @VV{\fq}V  \\
\BunNb^{=\lambda}  @>{\bi^\lambda_{\on{glob}}}>>  \BunNb,
\endCD
\end{equation} 
where the map $\fq^\lambda$ is given by
$$X^\lambda \simeq X^\lambda\underset{\Bun_T}\times \Bun_T \to
X^\lambda\underset{\Bun_T}\times \Bun_B\simeq \BunNb^{=\lambda}.$$

\medskip

Note also that the map
$$(X^\lambda\times X^I)^{\subset} \overset{s^\lambda_I}\longrightarrow S^\lambda_I\overset{\pi^\lambda_I}\longrightarrow \BunNb^{=\lambda}$$
identifies with
$$(X^\lambda\times X^I)^{\subset} \overset{\on{pr}^\lambda_I}\longrightarrow X^\lambda \overset{\fq^\lambda}\longrightarrow \BunNb^{=\lambda}.$$

Hence, the right-hand side in \eqref{e:LSH & RHS} identifies with
\begin{equation} \label{e:summary RHS}
(\on{pr}^\lambda_I)^!\circ (\fq^\lambda)^!\circ (\bi^\lambda_{\on{glob}})^*(\CF).
\end{equation} 

\sssec{}

Unwinding the identifications, we obtain that the map in \eqref{e:LSH & RHS} is induced by the natural transformation
\begin{equation} \label{e:!* pullbacks}
\fs^* \circ \fq^!\to (\fq^\lambda)^!\circ (\bi^\lambda_{\on{glob}})^*,
\end{equation} 
coming from the Cartesian square \eqref{e:stratum on Zast}.

\medskip

Thus, it suffices to show that the natural transformation \eqref{e:!* pullbacks}
is an isomorphism, when evaluated on objects from
$\SI^{\leq 0}_{\on{glob}}$. 

\medskip

However, the latter is done by repeating the argument of \cite[Sect. 3.9]{Ga1}: 

\medskip

We first consider the case when
$-\lambda$ is \emph{sufficiently dominant}, in which case the morphism $\fq$ is smooth, being the base change
of $\Bun^{-\lambda}_{B^-}\to \Bun_G$. In this case, the fact that \eqref{e:!* pullbacks} is an isomorphism follows
by smoothness. 

\medskip

Then we reduce the case of a general $\lambda$ to one above using the factorization property of $\CZ^\lambda$. 

\qed

\sssec{}

Thus, we have completed the proof of \propref{p:restr isom} and hence also of \thmref{t:IC loc glob}. 

\ssec{Relation to the IC sheaf on Zastava spaces}

\sssec{}

Recall the Zastava spaces 
$$\CZ^\lambda\subset \BunNb\underset{\Bun_G}\times \Bun_{B^-}^{-\lambda},$$
introduced in \secref{sss:Zastava}.

\medskip

Let $\oCZ{}^\lambda\subset \CZ^\lambda$ denote the open subscheme equal to
$$\Bun_N\underset{\BunNb}\times \CZ^\lambda.$$

\sssec{}

Note now that the identification \eqref{e:ident Zast} gives rise to a map
\begin{equation} \label{e:q'}
\fq':(X^\lambda\times \Ran)^{\subset}\underset{X^\lambda}\times \CZ^\lambda \to \ol{S}^0_{\Ran}.
\end{equation}

\medskip

Let $\on{pr}_\Ran^\lambda \times \on{id}_{\CZ^\lambda}$ denote the projection
$$(X^\lambda\times \Ran)^{\subset}\underset{X^\lambda}\times \CZ^\lambda\to \CZ^\lambda.$$

We claim:

\begin{prop}  \label{p:compare with IC Zast}
There exists a canonical isomorphism
$$(\on{pr}_\Ran^\lambda \times \on{id}_{\CZ^\lambda})^!(\IC_{Z^\lambda})\simeq 
(\fq')^!(\ICs)[\langle \lambda,2\check\rho\rangle],$$
extending the tautological identification of the restriction of either side to 
$$(X^\lambda\times \Ran)^{\subset}\underset{X^\lambda}\times \oCZ{}^\lambda$$
with $\omega_{(X^\lambda\times \Ran)^{\subset}\underset{X^\lambda}\times \oCZ{}^\lambda}[\langle \lambda,2\check\rho\rangle]$.
\end{prop} 

\sssec{Proof of \propref{p:compare with IC Zast}}

We have a commutative diagram
$$
\CD
(X^\lambda\times \Ran)^{\subset}\underset{X^\lambda}\times \CZ^\lambda    @>{\fq'}>>   \ol{S}^0_{\Ran}  \\
@V{\on{pr}_\Ran^\lambda \times \on{id}_{\CZ^\lambda}}VV    @VV{\pi_\Ran}V  \\
\CZ^\lambda  @>{\fq}>>  \BunNb.
\endCD
$$

According to \cite[Prop. 3.6.5(a)]{Ga1}, we have a canonical isomorphism
$$\fq^!(\ICs_{\on{glob}})[(g-1)\cdot \dim(N)+\langle \lambda,2\check\rho\rangle]\simeq \IC_{\CZ^\lambda}.$$

Now the assertion follows from \thmref{t:IC loc glob}.

\qed

\ssec{Computation of fibers}  \label{ss:proof calc IC}

In this subsection we will prove \thmref{t:descr of restr}. One possible proof follows from the description of
the objects
$$(\bi^\lambda_{\on{glob}})^!(\IC^\semiinf_{\on{glob}})  \text{ and } (\bi^\lambda_{\on{glob}})^*(\IC^\semiinf_{\on{glob}})$$
in \cite[Proposition 4.4]{BG2}, combined with \thmref{t:IC loc glob}. 

\medskip

Instead, we will actually reprove \cite[Proposition 4.4]{BG2}, see \thmref{t:descr of restr glob} below, using
our \thmref{t:IC loc glob}. 

\begin{rem}

Let us add a clarification on the order of the argument proving Theorems \ref{t:descr of restr} and \ref{t:descr of restr glob}:

\begin{enumerate}

\item In \secref{ss:pres as colimit}, we defined the object $'\!\IC^\semiinf_\Ran$;

\item In \propref{p:! retsr IC'} we showed that the !-restrictions of $'\!\IC^\semiinf_\Ran$ to the stata $S^\lambda_\Ran$ are strictly coconnective;

\item In \propref{p:* retsr IC'} we calculated the *-restrictions of $'\!\IC^\semiinf_\Ran$ to the stata $S^\lambda_\Ran$ and showed that they
are isomorphic to (the pullbacks) of $\on{Fact}^{\on{alg}}(\CO(\cN))_{X^\lambda}[-\langle \lambda,2\check\rho\rangle]$;
in particular, they are strictly connective;

\item Points (2) and (3) imply that $'\!\IC^\semiinf_\Ran$ is isomorphic to $\IC^\semiinf_\Ran$;

\item Points (3) and (4) imply that the *-restrictions of $\IC^\semiinf_\Ran$ to the stata $S^\lambda_\Ran$ 
are isomorphic to (the pullbacks) of $\on{Fact}^{\on{alg}}(\CO(\cN))_{X^\lambda}[-\langle \lambda,2\check\rho\rangle]$,
thus proving the part of \thmref{t:descr of restr} about *-restrictions;

\item Point (5) above, combined with \thmref{t:IC loc glob} and \corref{c:restr isom}, will imply \thmref{t:descr of restr glob}(a) (see below);

\smallskip

\item Point (a) of \thmref{t:descr of restr glob} will imply point (b) by a duality argument (see below);

\smallskip

\item Point (b) of \thmref{t:descr of restr glob} will imply the assertion of \thmref{t:descr of restr} about !-restrictions (see below).

\end{enumerate}

\end{rem} 

\sssec{}

We first prove:

\begin{thm} \label{t:descr of restr glob} \hfill 

\smallskip

\noindent{\em(a)} $(\bi^\lambda_{\on{glob}})^*(\IC^\semiinf_{\on{glob}})\simeq 
(p^\lambda_{\on{glob}})^!(\on{Fact}^{\on{alg}}(\CO(\cN))_{X^\lambda})[-d-\langle \lambda,2\check\rho\rangle]$.

\smallskip

\noindent{\em(b)} $(\bi^\lambda_{\on{glob}})^!(\IC^\semiinf_{\on{glob}})\simeq
(p^\lambda_{\on{glob}})^!(\on{Fact}^{\on{coalg}}(U(\cn^-))_{X^\lambda})[-d-\langle \lambda,2\check\rho\rangle]$. 

\end{thm}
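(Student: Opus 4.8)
The two assertions are exchanged by Verdier duality, so the plan is to establish (a) first and then deduce (b) from it. The idea for (a) is that the $*$-restrictions of $\IC^\semiinf_\Ran$ are \emph{already} known — by combining \thmref{t:descr as colimit} with \propref{p:* retsr IC'} (cf. \remref{r:*-fibers proved}) — so it suffices to transport this information to $\BunNb$ through the local-to-global comparison. Concretely: by \thmref{t:IC loc glob} we have $\IC^\semiinf_\Ran\simeq(\pi_\Ran)^!(\IC^\semiinf_{\on{glob}})[d]$; applying $(\bi^\lambda)^*$ and using \corref{c:restr isom} gives
\[
(\pi^\lambda_\Ran)^!\bigl((\bi^\lambda_{\on{glob}})^*(\IC^\semiinf_{\on{glob}})\bigr)[d]\;\simeq\;(\bi^\lambda)^*(\IC^\semiinf_\Ran)\;\simeq\;(p^\lambda_\Ran)^!(\on{pr}^\lambda_\Ran)^!\bigl(\on{Fact}^{\on{alg}}(\CO(\cN))_{X^\lambda}\bigr)[-\langle\lambda,2\check\rho\rangle].
\]
Invoking the commutative square $p^\lambda_{\on{glob}}\circ\pi^\lambda_\Ran=\on{pr}^\lambda_\Ran\circ p^\lambda_\Ran$ rewrites the right-hand side as $(\pi^\lambda_\Ran)^!\bigl((p^\lambda_{\on{glob}})^!(\on{Fact}^{\on{alg}}(\CO(\cN))_{X^\lambda})[-\langle\lambda,2\check\rho\rangle]\bigr)$. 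Since both arguments of $(\pi^\lambda_\Ran)^!$ lie in $\SI^{=\lambda}_{\on{glob}}\subset\Shv(\BunNb^{=\lambda})$ and $(\pi^\lambda_\Ran)^!$ is fully faithful by \corref{c:contr}, the isomorphism descends to the asserted one on $\BunNb^{=\lambda}$, proving (a). (Alternatively one avoids \corref{c:contr} by combining \corref{c:global vs global} with the identity $(\on{pr}^\lambda_\Ran)_!\circ(\on{pr}^\lambda_\Ran)^!\simeq\on{id}$ of \corref{c:pr contr}.) I emphasize that this is not circular: \propref{p:restr isom}, \thmref{t:descr as colimit}, \propref{p:* retsr IC'} and the contractibility statements (\lemref{l:pr contr}, \corref{c:contr}) are all proved with no prior input about $\IC_{\BunNb}$, so this genuinely reproves \cite[Proposition 4.4]{BG2}.

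For (b) I would apply Verdier duality to (a). One first checks that $\IC^\semiinf_{\on{glob}}$ is Verdier self-dual: $\IC_{\Bun_N}$ is self-dual ($\Bun_N$ being smooth), the minimal-extension functor commutes with $\BD$, and $\BD$ preserves $\SI^{\leq0}_{\on{glob}}$ together with its t-structure — the last point because membership in $\SI^{\leq0}_{\on{glob}}$ can equivalently be tested on the $*$-restrictions $(\bi^\lambda_{\on{glob}})^*$ (which take values in $\SI^{=\lambda}_{\on{glob}}$), and $\BD$ exchanges $(\bi^\lambda_{\on{glob}})^*$ with $(\bi^\lambda_{\on{glob}})^!$ and connectivity with coconnectivity. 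Then applying $\BD$ to the isomorphism of (a), using $\BD\circ(\bi^\lambda_{\on{glob}})^*\simeq(\bi^\lambda_{\on{glob}})^!\circ\BD$, $\BD\circ(p^\lambda_{\on{glob}})^!\simeq(p^\lambda_{\on{glob}})^*\circ\BD$ (here $p^\lambda_{\on{glob}}$ is smooth, so $(p^\lambda_{\on{glob}})^*$ differs from $(p^\lambda_{\on{glob}})^!$ by a shift), and the Verdier duality $\BD(\on{Fact}^{\on{alg}}(\CO(\cN))_{X^\lambda})\simeq\on{Fact}^{\on{coalg}}(U(\cn^-))_{X^\lambda}$ of \secref{ss:factor alg}, one obtains (b) after collecting the cohomological shifts. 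Note that (b) cannot be read off directly from the colimit presentation, since \propref{p:! retsr IC'} only yields strict coconnectivity, not the precise formula.

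All the substantive content is contained in the results quoted above; given them the argument is formal. The step I expect to demand the most care is the Verdier-duality bookkeeping in the proof of (b): verifying that $\IC^\semiinf_{\on{glob}}$ is genuinely self-dual \emph{inside} the non-standard category $\SI^{\leq0}_{\on{glob}}$, and tracking the shift contributed by the smoothness of $p^\lambda_{\on{glob}}$ so that the exponents $-d\mp\langle\lambda,2\check\rho\rangle$ in (a) and (b) come out exactly as stated.
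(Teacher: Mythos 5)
Your proposal is correct and follows essentially the paper's own argument: for (a) you combine \thmref{t:IC loc glob}, \corref{c:restr isom} and Remark \ref{r:*-fibers proved} and cancel a fully faithful pullback (the paper cancels $(p^\lambda_\Ran)^!\circ(\on{pr}^\lambda_\Ran)^!$, you cancel $(\pi^\lambda_\Ran)^!$ via \corref{c:contr} — the same mechanism), and for (b) you use Verdier self-duality of $\ICs_{\on{glob}}$ together with $\BD(\on{Fact}^{\on{coalg}}(U(\cn^-))_{X^\lambda})\simeq\on{Fact}^{\on{alg}}(\CO(\cN))_{X^\lambda}$ and the smoothness of $p^\lambda_{\on{glob}}$, exactly as in the paper.
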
 

\begin{proof}

We first prove point (a). Let $\CF^\lambda\in \Shv(X^\lambda)$ be such that
\begin{equation} \label{e:eq0}
(\bi^\lambda_{\on{glob}})^*(\IC^\semiinf_{\on{glob}})\simeq (p^\lambda_{\on{glob}})^!(\CF^\lambda)[-d-\langle \lambda,2\check\rho\rangle].
\end{equation} 

We will show that 
$$\CF^\lambda\simeq \on{Fact}^{\on{alg}}(\CO(\cN))_{X^\lambda}.$$

\medskip

Applying $(\pi^\lambda_\Ran)^!$ to both sides in \eqref{e:eq0}, we obtain:

\begin{multline} \label{e:eq2}
(\pi^\lambda_\Ran)^!\circ (\bi^\lambda_{\on{glob}})^*(\IC^\semiinf_{\on{glob}})\simeq \\
\simeq (\pi^\lambda_\Ran)^!\circ (p^\lambda_{\on{glob}})^!(\CF^\lambda)[-d-\langle \lambda,2\check\rho\rangle]\simeq 
(p^\lambda_\Ran)^!\circ (\on{pr}^\lambda_\Ran)^!(\CF^\lambda)[-d-\langle \lambda,2\check\rho\rangle]
\end{multline} 

By \corref{c:restr isom} and \thmref{t:IC loc glob}, we have:
%\begin{equation} \label{e:eq1}
$$(\pi^\lambda_\Ran)^!\circ (\bi^\lambda_{\on{glob}})^*(\IC^\semiinf_{\on{glob}})\simeq
(\bi^\lambda)^*(\IC^\semiinf_\Ran)[-d].$$
%\end{equation} 

Further, by Remark \ref{r:*-fibers proved}, we have: 
%\begin{equation} \label{e:eq3}
$$(\bi^\lambda)^*(\IC^\semiinf_\Ran)\simeq (p^\lambda_\Ran)^!\circ 
(\on{pr}^\lambda_\Ran)^!(\on{Fact}^{\on{alg}}(\CO(\cN))_{X^\lambda})[-\langle \lambda,2\check\rho\rangle].$$
%\end{equation} 

Combining with \eqref{e:eq2}, we obtain
$$(p^\lambda_\Ran)^!\circ (\on{pr}^\lambda_\Ran)^!(\CF^\lambda)\simeq 
(p^\lambda_\Ran)^!\circ 
(\on{pr}^\lambda_\Ran)^!(\on{Fact}^{\on{alg}}(\CO(\cN))_{X^\lambda}).$$

Since the functor $(p^\lambda_\Ran)^!\circ (\on{pr}^\lambda_\Ran)^!$ is fully faithful, 
we obtain the desired
$$\CF^\lambda \simeq \on{Fact}^{\on{alg}}(\CO(\cN))_{X^\lambda},$$
proving point (a). 

\medskip

Since $\IC^\semiinf_{\on{glob}}$ is Verdier self-dual, and using the fact that
$$\BD(\on{Fact}^{\on{coalg}}(U(\cn^-))_{X^\lambda})\simeq \on{Fact}^{\on{alg}}(\CO(\cN))_{X^\lambda},$$
from the isomorphism of point (a), we obtain
\begin{multline*} 
(\bi^\lambda_{\on{glob}})^!(\IC^\semiinf_{\on{glob}})\simeq 
(p^\lambda_{\on{glob}})^*(\on{Fact}^{\on{coalg}}(U(\cn^-))_{X^\lambda})[d+\langle \lambda,2\check\rho\rangle]\simeq \\
\simeq (p^\lambda_{\on{glob}})^!(\on{Fact}^{\on{coalg}}(U(\cn^-))_{X^\lambda})[-d-\langle \lambda,2\check\rho\rangle],
\end{multline*} 
the latter isomorphism because $p^\lambda_{\on{glob}}$ is smooth of relative dimension $d+\langle \lambda,2\check\rho\rangle$. 
This proves point (b). 

\end{proof}

\sssec{}

Let us now prove \thmref{t:descr of restr}. 

\begin{proof}

By Remark \ref{r:*-fibers proved}, it remains to prove the assertion regarding $(\bi^\lambda)^!(\IC^\semiinf_\Ran)$.

\medskip

Let $\CG^\lambda\in \Shv((X^\lambda \times \Ran)^{\subset})$ be such that
$$(\bi^\lambda)^!(\IC^\semiinf_\Ran)\simeq (p^\lambda_\Ran)^!(\CG^\lambda)[-\langle \lambda,2\check\rho\rangle].$$

Let us show that
$$\CG^\lambda\simeq (\on{pr}^\lambda_\Ran)^!(\on{Fact}^{\on{coalg}}(U(\cn^-))_{X^\lambda}).$$

\medskip

Indeed, by \thmref{t:IC loc glob} and \thmref{t:descr of restr glob}(b), we have:
\begin{multline*}
(p^\lambda_\Ran)^!(\CG^\lambda)[-\langle \lambda,2\check\rho\rangle]=(\bi^\lambda)^!(\IC^\semiinf_\Ran)\simeq
(\bi^\lambda)^!\circ (\pi_\Ran)^!(\IC^\semiinf_{\on{glob}})[d]\simeq \\
\simeq  (\pi^\lambda_\Ran)^! \circ (\bi^\lambda_{\on{glob}})^!(\IC^\semiinf_{\on{glob}})[d]\simeq
(\pi^\lambda_\Ran)^! \circ (p^\lambda_{\on{glob}})^!(\on{Fact}^{\on{coalg}}(U(\cn^-))_{X^\lambda})[-\langle \lambda,2\check\rho\rangle]\simeq \\
\simeq (p^\lambda_\Ran)^!\circ (\on{pr}^\lambda_\Ran)^!(\on{Fact}^{\on{coalg}}(U(\cn^-))_{X^\lambda})[-\langle \lambda,2\check\rho\rangle].
\end{multline*}

Since $(p^\lambda_\Ran)^!$ is fully faithful, this gives the desired isomorphism. 

\end{proof}

\section{Unital structure and factorization}  \label{s:unital}

The goal of this section is to explore an additional property of $\ICs$, which we will refer to as 
\emph{unitality}. It has to do with the following additional structure on $\Gr_{G,\Ran}$: 
one can ``throw in" more points in $\Ran$ without altering the $G$-bundle.  

\medskip

The unital property of $\ICs$ will allow us to construct on it a \emph{factorization structure}. 

\ssec{Unital structure on the affine Grassmannian}  \label{ss:unital on Gr}

In this subsection we introduce the geometric structure on $\Gr_{G,\Ran}$ that would allow us to 
talk about \emph{unitality}. 

\sssec{}

Let $(\Ran\times \Ran)^{\subset}$ be the following sufunctor of $\Ran\times \Ran$: for an affine test-scheme $Y$, the set
$\Hom(Y,(\Ran\times \Ran)^{\subset})$ consists of those
$$\CI,\CI'\subset \Hom(Y,X)$$
for which 
\begin{equation} \label{e:Ran subset}
\CI\subseteq \CI' \subset \Hom(Y,X).
\end{equation} 

\medskip

The diagonal map 
$$\Delta_{\Ran}:\Ran\to \Ran\times \Ran$$
factors through a map $\Ran\to (\Ran\times \Ran)^{\subset}$, which, by a slight abuse of notation, we denote by the
same symbol $\Delta_{\Ran}$. 

\medskip

There are two obvious projections
$$\sfob_{\on{small}},\sfob_{\on{big}}:(\Ran\times \Ran)^{\subset}\to \Ran$$
that send a point \eqref{e:Ran subset} to 
$$\CI\subset \Hom(Y,X) \text{ and } \CI'\subset \Hom(Y,X),$$
respectively.

\medskip

We have
$$\sfob_{\on{small}}\circ \Delta_{\Ran}=\on{id} \text{ and } \sfob_{\on{big}}\circ \Delta_{\Ran}=\on{id}.$$

\medskip

For future use we note:

\begin{lem}  \label{l:q UHC}
The map $\sfob_{\on{small}}$ is universally homologically contractible.
\end{lem} 

\begin{rem}  \label{r:UHC}
One proof of \lemref{l:q UHC} can be obtained by mimicking the argument in \secref{sss:proof of pr contr}.
We will now give a different argument, which does \emph{not} use the properness of $X$
(we note that the argument below can also be used to give an alternative proof of \lemref{l:pr contr},
see \propref{p:unital on config} below). 
\end{rem} 

\begin{proof}

Let $Y$ be an affine scheme and let is be given a $Y$-point $\CJ\subset \Hom(Y,X)$ of $\Ran$.
We need to show that the pullback functor
$$\Shv(Y)\to \Shv(Y\underset{\Ran}\times (\Ran\times \Ran)^{\subset})$$
is fully faithful, where the map $(\Ran\times \Ran)^{\subset}\to \Ran$ is $\sfob_{\on{small}}$.

\medskip

To show this, it suffices to show that the map $\sfob_{\on{small}}$ can be obtained as a \emph{retract} 
of a map which is universally homologically contractible. We let this other map be the projection
$$\Ran\times \Ran \to \Ran, \quad (\CI_1,\CI_2)\mapsto \CI_1.$$
It is universally homologically contractible because the Ran space is homologically contractible
(i.e., universally homologically contractible over $\on{pt}$).

\medskip

We realize $(\Ran\times \Ran)^{\subset}\to \Ran$ as a retract of $\Ran\times \Ran\to \Ran$ as follows. 
The map $$(\Ran\times \Ran)^{\subset}\to \Ran\times \Ran$$ sends 
$$(\CI\subset \CI')\mapsto (\CI,\CI').$$
The retraction $\Ran\times \Ran\to (\Ran\times \Ran)^{\subset}$ sends
$$(\CI_1,\CI_2)\mapsto (\CI_1\subseteq \CI_1\cup \CI_2).$$

\end{proof} 

\sssec{}

Consider the fiber product
$$\Gr_{G,(\Ran\times \Ran)^{\subset}}:=\Gr_{G,\Ran}\underset{\Ran}\times (\Ran\times \Ran)^{\subset},$$
where the map $(\Ran\times \Ran)^{\subset}\to \Ran$ is $\sfob_{\on{small}}$. By a slight abuse of notation, 
we will denote by the same symbol $\sfob_{\on{small}}$ the projection 
$$\Gr_{G,(\Ran\times \Ran)^{\subset}}\to \Gr_{G,\Ran}.$$

\medskip

Note, however, that we have another map, denoted
$$\sfob_{\on{big}}: \Gr_{G,(\Ran\times \Ran)^{\subset}}\to  \Gr_{G,\Ran}$$
that makes the following diagram commute:
\begin{equation}   \label{e:Gr Ran Ran}
\CD
\Gr_{G,(\Ran\times \Ran)^{\subset}}   @>{\sfob_{\on{big}}}>>  \Gr_{G,\Ran}  \\
@VVV   @VVV   \\
(\Ran\times \Ran)^{\subset} @>{\sfob_{\on{big}}}>>  \Ran.
\endCD
\end{equation} 

Namely, it sends a quadruple $(\CI\subseteq \CI',\CP_G,\alpha)$ to $(\CI',\CP_G,\alpha')$, where 
$\alpha$ is a trivialization of $\CP_G$ on the complement of $\Gamma_\CI$ and $\alpha'$ is the restriction of $\alpha$ to the
complement of $\Gamma_{\CI'}$. 

\medskip

\noindent{\it Warning:} Note, however, that the diagram \eqref{e:Gr Ran Ran} is \emph{not} Cartesian.

\medskip

Denote by $\Delta_{\Ran}$ the natural map
$$\Gr_{G,\Ran}\to \Gr_{G,(\Ran\times \Ran)^{\subset} }.$$

We have
$$\sfob_{\on{small}}\circ \Delta_{\Ran}\simeq \on{id} \text{ and }
\sfob_{\on{big}}\circ \Delta_{\Ran}\simeq \on{id}.$$

\sssec{}

We shall say that an object 
$$\CF\in \Shv(\Gr_{G,\Ran})$$
is \emph{unital} if \emph{there exists} an isomorphism
$$\sfob_{\on{small}}^!(\CF)\simeq \sfob_{\on{big}}^!(\CF)$$
for which the composition
$$\CF\simeq \Delta_{\Ran}^! \circ \sfob_{\on{small}}^!(\CF)\simeq 
\Delta_{\Ran}^! \circ \sfob_{\on{big}}^!(\CF)\simeq \CF$$
is the identity map. 

\medskip

Note that it follows from \lemref{l:q UHC} that if such an isomorphism exists, then it is
unique. 

\sssec{}

Let 
$$\Shv(\Gr_{G,\Ran})_{\on{unital}}\subset \Shv(\Gr_{G,\Ran})$$
be the full subcategory formed by unital objects. 

\medskip

From \lemref{l:q UHC} we obtain:

\begin{cor} The subcategory $\Shv(\Gr_{G,\Ran})_{\on{unital}}\subset \Shv(\Gr_{G,\Ran})$ is closed
under colimits.
\end{cor}

In particular, we obtain that $\Shv(\Gr_{G,\Ran})_{\on{unital}}$ is a (cocomplete) DG subcategory 
of $\Shv(\Gr_{G,\Ran})$.

\sssec{}

Set:
$$\SI_{\Ran,\on{unital}}:=\SI_{\Ran}\cap \Shv(\Gr_{G,\Ran})_{\on{unital}}\subset \Shv(\Gr_{G,\Ran}).$$

\medskip

Our next goal is to characterize $\SI_{\Ran,\on{unital}}$ more explicitly as a full subcategory of $\SI_{\Ran}$.

\ssec{Unital structure on the strata}  \label{ss:strata unital}

In this subsection we will extend the discussion of \secref{ss:unital on Gr} from $\Gr_\Ran$ to the prestacks 
$S^\lambda_\Ran$ and $(X^\lambda\times \Ran)^{\subset}$.

\medskip

We will see that the unital subcategory of $\Shv((X^\lambda\times \Ran)^{\subset})$ is actually equivalent to 
$\Shv(X^\lambda)$. 

\sssec{}

For a fixed $\lambda$, consider the functors 
$$S^\lambda_\Ran \hookrightarrow \ol{S}^\lambda_\Ran\to \Gr_{G,\Ran},$$
and consider the corresponding diagram of prestacks 
$$
\CD
S^\lambda_{(\Ran\times \Ran)^{\subset}}  @>{\bj^\lambda}>>  \ol{S}^\lambda_{(\Ran\times \Ran)^{\subset}}   
@>{\ol\bi^\lambda}>>  
\Gr_{G,(\Ran\times \Ran)^{\subset} }  \\
@V{\sfob_{\on{big}}}VV   @V{\sfob_{\on{big}}}VV  @VV{\sfob_{\on{big}}}V  \\
S^\lambda_\Ran @>{\bj^\lambda}>>  \ol{S}^\lambda_\Ran
@>{\ol\bi^\lambda}>>  
\Gr_{G,\Ran}. 
\endCD
$$

The discussion in \secref{ss:unital on Gr} applies to the present situation as well. In particular, we obtain the \emph{full} subcategories
$$\Shv(\ol{S}^\lambda_\Ran)_{\on{unital}}\subset \Shv(\ol{S}^\lambda_\Ran) \text{ and }
\Shv(S^\lambda_\Ran)_{\on{unital}}\subset \Shv(S^\lambda_\Ran)$$
as well as 
$$\SI^{\leq \lambda}_{\Ran,\on{unital}}\subset \SI^{\leq \lambda} \text{ and }
\SI^{=\lambda}_{\Ran,\on{unital}}\subset \SI^{=\lambda}.$$

\medskip

It is clear that the functors $(\ol\bi^\lambda)^!$, $(\bj^\lambda)^!$ and $(\ol\bi^\lambda)_*$, $(\bj^\lambda)_*$
send the corresponding unital subcategories to one another. In particular, from \lemref{l:q UHC}, we obtain: 

\begin{cor}   % \hfill
%
%\smallskip
%
%\noindent{\em(a)} 
An object $\CF\in \SI^{\leq 0}_\Ran$ belongs to $\SI^{\leq 0}_{\Ran,\on{unital}}$ if and only if
$(\bi^\lambda)^!(\CF)$ belongs to $\SI^{=\lambda}_{\Ran,\on{unital}}$ for all $\lambda$. 
%
%\smallskip
%
%\noindent{\em(b)} 
%An object $\CF\in \SI^{\leq 0}_\Ran$ belongs to $\SI^{\leq 0}_{\Ran,\on{unital}}$ if and only if
%$(\bi^\lambda)^*(\CF)$ belongs to $\SI^{=\lambda}_{\Ran,\on{unital}}$ for all $\lambda$. 
\end{cor} 

Finally, from \eqref{e:Braden semiinf Ran} one obtains: 

\begin{cor} \label{c:well defined Ran unital}  \hfill

\smallskip

\noindent{\em(a)} 
The functor $(\bi^\lambda)^*:\SI^{\leq 0}_\Ran\to \SI^{=\lambda}_\Ran$ sends
$\SI^{\leq 0}_{\Ran,\on{unital}}$ to $\SI^{=\lambda}_{\Ran,\on{unital}}$. 

\smallskip

\noindent{\em(b)} 
The functor $(\bi^\lambda)_!:\SI^{=\lambda}_\Ran\to \SI^{\leq 0}_\Ran$ sends
$\SI^{=\lambda}_{\Ran,\on{unital}}$ to $\SI^{\leq 0}_{\Ran,\on{unital}}$. 

\end{cor} 

\sssec{}

For a fixed $\lambda$ consider the prestack 
$$(X^\lambda\times \Ran\times \Ran)^{\subset}:=
(X^\lambda\times \Ran)^{\subset}\underset{\Ran}\times (\Ran\times \Ran)^{\subset},$$
where the map $(\Ran\times \Ran)^{\subset}\to \Ran$ is $\sfob_{\on{small}}$. 
By a slight abuse of notation, 
we will denote by the same symbol $\sfob_{\on{small}}$ the projection 
$$(X^\lambda \times \Ran\times \Ran)^{\subset}\to (X^\lambda \times \Ran)^{\subset}, \quad
(D,\CJ,\CJ')\mapsto (D,\CJ).$$

\medskip

Let us denote by $\sfob_{\on{big}}$ the map
$$(X^\lambda\times \Ran\times \Ran)^{\subset}\to (X^\lambda\times \Ran)^{\subset}, \quad
(D,\CJ,\CJ')\mapsto (D,\CJ').$$

\medskip

Using this map, we define a full subcategory
$$\Shv((X^\lambda\times \Ran)^{\subset})_{\on{unital}}\subset \Shv((X^\lambda\times \Ran)^{\subset}).$$

From \propref{p:on stratum}, we obtain:

\begin{cor} \label{c:on stratum unital}
The equivalence 
$$(p^\lambda_\Ran)^!:\Shv((X^\lambda\times \Ran)^{\subset})\to \SI_\Ran^{=\lambda}$$
restricts to an equivalence
$$\Shv((X^\lambda\times \Ran)^{\subset})_{\on{unital}}\to \SI_{\Ran,\on{unital}}^{=\lambda}.$$
\end{cor} 

\sssec{}

We now claim:

\begin{prop}  \label{p:unital on config}
The pullback functor
$$(\on{pr}^\lambda_\Ran)^!:\Shv(X^\lambda)\to \Shv((X^\lambda\times \Ran)^{\subset})$$
defines an equivalence
$$\Shv(X^\lambda)\overset{\sim}\to \Shv((X^\lambda\times \Ran)^{\subset})_{\on{unital}}.$$
\end{prop}

\begin{proof}
The fact that the functor $(\on{pr}^\lambda_\Ran)^!$ sends $\Shv(X^\lambda)$ to $\Shv((X^\lambda\times \Ran)^{\subset})_{\on{unital}}$
is immediate from the definition\footnote{Note also that the fully faithfulness of $(\on{pr}^\lambda_\Ran)^!$ has been already stated in 
\lemref{l:pr contr}; however, the argument given below will give an alternative proof of this fact.}. 

\medskip

Choose a finite set $I$ so that we have a surjective symmetrization map $\on{sym}^{I\to \lambda}:X^I\to X^\lambda$. Since the map
$\on{sym}^{I\to \lambda}$ is finite and surjective, it satisfies descent for $\Shv(-)$. So it is sufficient to prove the assertion of the
proposition when the original map 
$$\on{pr}^\lambda_\Ran: (X^\lambda\times \Ran)^{\subset}\to X^\lambda$$
is base-changed by the \v{C}ech nerve of the map $X^I\to X^\lambda$.

\medskip

We will prove that the pullback functor 
$$(\on{pr}^I_\Ran)^!:\Shv(X^I) \to \Shv((X^I\times \Ran)^{\subset})$$
defines an equivalence onto $\Shv((X^I\times \Ran)^{\subset})_{\on{unital}}$. 
I.e., we will prove the assertion for the $0$-simplicies of the \v{C}ech nerve; for higher simplices the proof is the same. 

\medskip

Note that the map 
$$\on{pr}^I_\Ran: (X^I\times \Ran)^{\subset}\to X^I$$
admits a section, denoted $r^I$. Namely, for an affine test-scheme $Y$ and a $Y$-point of $X^I$, which is a map 
$I\to \Hom(Y,X)$, we assign its image, denoted $\CI$ in $\Hom(Y,X)$. 

\medskip

Pullback with respect to $r^I$ defines a functor $\Shv((X^I\times \Ran)^{\subset})\to \Shv(X^I)$. We claim that the restriction 
of $(r^I)^!$ to $\Shv((X^I\times \Ran)^{\subset})_{\on{unital}}$ is a functor inverse to $(\on{pr}^I_\Ran)^!$. 

\medskip

Indeed, the fact that $(r^I)^!\circ (\on{pr}^I_\Ran)^!\simeq \on{Id}$ is obvious. To construct an isomorphism 
$$(\on{pr}^I_\Ran)^!\circ (r^I)^!|_{\Shv((X^I\times \Ran)^{\subset})_{\on{unital}}}\simeq \on{Id},$$
we note that there exist canonically defined maps 
$$''\!r^I;{}'\!r^I:(X^I\times \Ran)^{\subset}\to (X^I\times \Ran\times \Ran)^{\subset}$$
such that 
$$\sfob_{\on{big}}\circ {}'\!r^I=\sfob_{\on{big}}\circ {}''\!r^I,$$
while 
$$\sfob_{\on{small}}\circ {}'\!r^I=\on{id} \text{ and } \sfob_{\on{small}}\circ {}''\!r^I=r^I\circ \on{pr}^I_\Ran.$$

The maps $'\!r^I;{}''\!r^I$ are given by sending a pair $(x,\CI')$ to 
$$(x,\CI',\CI\cup \CI') \text{ and } (x,\CI,\CI\cup \CI'),$$
respectively, 
where $x\in \Hom(Y,X^I)$, and $\CI$ denotes the image of the resulting map $I\to \Hom(Y,X)$.

\end{proof} 

\ssec{Local-to-global comparison, revisited}

Once we have defined the category $\SI^{\leq 0}_{\Ran,\on{unital}}$, we can sharpen the assertion of
\thmref{t:contr}, by directly comparing the global semi-infinite category and the unital Ran version of the
local one. 

\sssec{}

We claim:

\begin{thm} \label{t:local-to-global, unital}  
The pullback functor
$$(\pi_\Ran)^!:\SI^{\leq 0}_{\on{glob}}\to \SI^{\leq 0}_{\Ran}$$
defines an equivalence onto $\SI^{\leq 0}_{\Ran,\on{unital}}$.
\end{thm} 

The rest of this subsection is devote to the proof of this theorem.

\sssec{}

First off, it is clear that the essential image of the functor 
$$(\pi_\Ran)^!:\Shv(\BunNb)\to \Shv(\ol{S}^{\leq 0}_{\Ran})$$
belongs to the full subcategory 
$$\Shv(\ol{S}^{\leq 0}_{\Ran})_{\on{unital}}\subset \Shv(\ol{S}^{\leq 0}_{\Ran}).$$

Indeed, this follows from the fact that the following diagram commutes:
$$
\CD
\Gr_{G,(\Ran\times \Ran)^{\subset}}   @>{\sfob_{\on{small}}}>>  \Gr_{G,\Ran}  \\
@V{\sfob_{\on{big}}}VV   @VV{\pi_\Ran}V   \\
\Gr_{G,\Ran}   @>{\pi_\Ran}>> \BunNb.
\endCD
$$

\sssec{}

Second, the fact that the functor in question is fully faithful follows from \thmref{t:contr}.

\medskip

Thus, it remains to show that the functor
$$(\pi_\Ran)^!:\SI^{\leq 0}_{\on{glob}}\to \SI^{\leq 0}_{\Ran,\on{unital}}$$
is essentially surjective.

\medskip

Taking into account \corref{c:ext isom Ran}, it suffices to show that the functor
$$(\pi^\lambda_\Ran)^!:\SI^{=\lambda}_{\on{glob}}\to \SI^{=\lambda}_{\Ran}$$
defines an equivalence onto $\SI^{=\lambda}_{\Ran,\on{unital}}\subset \SI^{=\lambda}_{\Ran}$. 

\medskip

However, this follows from \corref{c:on stratum unital} and \propref{p:unital on config}
using the commutative diagram
$$
\CD
S^\lambda_{\Ran} @>{p^\lambda_\Ran}>>   (X^\lambda\times \Ran)^{\subset}   \\
@V{\pi_\Ran^\lambda}VV   @VV{\on{pr}^\lambda_\Ran}V   \\
\BunNb^{=\lambda}   @>>>   X^\lambda. 
\endCD
$$

\ssec{The t-structure on the unital category}  \label{ss:t on unital}

In this subsection we will show that the t-structure on $\SI^{\leq 0}_\Ran$ restricts to a t-structure on 
$\SI^{\leq 0}_{\Ran,\on{unital}}$.

\sssec{}

We define a t-structure on $\SI^{=\lambda}_{\Ran,\on{unital}}$
by transferring the (perverse) t-structure on $\Shv(X^\lambda)$ via the 
equivalences
$$\Shv(X^\lambda) \overset{(\on{pr}_\Ran^\lambda)^!}\longrightarrow 
\Shv((X^\lambda\times \Ran)^{\subset})_{\on{unital}} 
\overset{(p^\lambda_\Ran)^!}\longrightarrow \SI_{\Ran,\on{unital}}^{=\lambda},$$
and applying the shift $[\langle \lambda,2\check\rho\rangle]$. 

\sssec{}

We define a t-structure on $\SI^{\leq 0}_{\Ran,\on{unital}}$ by declaring that an object $\CF$ is \emph{coconnective} 
if $$(\bi^\lambda)^!(\CF)\in \SI^{=\lambda}_{\Ran,\on{unital}}$$ is \emph{coconnective} for each $\lambda$. 

\medskip

As in \lemref{l:char of conn} one show that an object $\CF\in \SI^{\leq 0}_{\Ran,\on{unital}}$ is \emph{connective} 
if and only if $$(\bi^\lambda)^*(\CF)\in \SI^{=\lambda}_{\Ran,\on{unital}}$$ is \emph{connective} for each $\lambda$. 

\medskip

From here, we obtain:

\begin{cor}  \label{c:unital exact}
The inclusion $\SI^{\leq 0}_{\Ran,\on{unital}}\hookrightarrow \SI^{\leq 0}_{\Ran}$
is compatible with t-structures (i.e., is t-exact).
\end{cor} 

\sssec{}

We now claim:

\begin{prop}  \label{p:IC unital}
The object $\ICs\in (\SI^{\leq 0}_{\Ran})^\heartsuit$ belongs to $(\SI^{\leq 0}_{\Ran,\on{unital}})^\heartsuit$.
\end{prop} 

\begin{proof}
The assertion follows from \corref{c:unital exact} and the fact that both
$$(\bi^0)_!(\omega_{S^0_\Ran}) \text{ and } (\bi^0)_*(\omega_{S^0_\Ran})$$
belong to $\SI^{\leq 0}_{\Ran,\on{unital}}$.
\end{proof} 

\ssec{Comparison with IC on Zastava spaces, continued}

Recall the isomorphism 
$$(\on{pr}_\Ran^\lambda \times \on{id}_{\CZ^\lambda})^!(\IC_{Z^\lambda})\simeq 
(\fq')^!(\ICs)[\langle \lambda,2\check\rho\rangle]$$
established in \propref{p:compare with IC Zast}. 

\medskip

In this subsection we will sharpen this assertion by showing that it is 
\emph{uniquely} characterized by the property that its restriction to the
open substack 
$$(X^\lambda\times \Ran)^{\subset}\underset{X^\lambda}\times \oCZ{}^\lambda\subset
(X^\lambda\times \Ran)^{\subset}\underset{X^\lambda}\times \CZ^\lambda$$
is the tautological identification of both sides with the dualizing sheaf. 

\sssec{}

First off, we note that the recipe in \secref{ss:strata unital} allows to introduce a full subcategory
$$\Shv((X^\lambda\times \Ran)^{\subset}\underset{X^\lambda}\times \CZ^\lambda)_{\on{unital}}
\subset \Shv((X^\lambda\times \Ran)^{\subset}\underset{X^\lambda}\times \CZ^\lambda),$$
and the functor $(\fq')^!$ (see \eqref{e:q'}) sends 
$$\SI^{\leq 0}_{\on{unital}}\to \Shv((X^\lambda\times \Ran)^{\subset}\underset{X^\lambda}\times \CZ^\lambda)_{\on{unital}}.$$

\medskip

Moreover, an analog of \propref{p:unital on config} applies, and the functor $(\on{pr}_\Ran^\lambda \times \on{id}_{\CZ^\lambda})^!$
defines an equivalence
\begin{equation} \label{e:Zastava base changed}
\Shv(\CZ^\lambda)\overset{\sim}\to \Shv((X^\lambda\times \Ran)^{\subset}\underset{X^\lambda}\times \CZ^\lambda)_{\on{unital}}.
\end{equation}

\sssec{}

We define a t-structure on $\Shv((X^\lambda\times \Ran)^{\subset}\underset{X^\lambda}\times \CZ^\lambda)_{\on{unital}}$
by transfering the t-structure on $\Shv(\CZ^\lambda)$ via the equivalence of \eqref{e:Zastava base changed}. 

\medskip 

In particular, we obtain that the object
$$(\on{pr}_\Ran^\lambda \times \on{id}_{\CZ^\lambda})^!(\IC_{Z^\lambda})\in 
\Shv((X^\lambda\times \Ran)^{\subset}\underset{X^\lambda}\times \CZ^\lambda)_{\on{unital}}$$
lies in the heart of the t-structure, and is the minimal extension of 
$$(\on{pr}_\Ran^\lambda \times \on{id}_{\CZ^\lambda})^!(\IC_{Z^\lambda})|_{(X^\lambda\times \Ran)^{\subset}\underset{X^\lambda}\times \oCZ{}^\lambda}\in
\Shv((X^\lambda\times \Ran)^{\subset}\underset{X^\lambda}\times \oCZ{}^\lambda)_{\on{unital}}.$$

\sssec{}

Hence, we obtain:

\begin{cor}  \label{c:compare with IC Zast}
The isomorphism 
$$(\on{pr}_\Ran^\lambda \times \on{id}_{\CZ^\lambda})^!(\IC_{Z^\lambda})\simeq 
(\fq')^!(\ICs)[\langle \lambda,2\check\rho\rangle]$$
of \propref{p:compare with IC Zast} is uniquely characterized by the property that it extends the tautological isomorphism over
$(X^\lambda\times \Ran)^{\subset}\underset{X^\lambda}\times \oCZ{}^\lambda$. 
\end{cor} 

\ssec{Factorization structure on $\ICs$}

We now arrive to the key point of this section. We will show that \emph{unitalilty} allows 
one to construct the factorization structure on the semi-infinite cohomology sheaf $\ICs$.

\sssec{}

Recall that identification
\begin{equation} \label{e:factor Gr again}
(\Gr_{G,\Ran}\times \Gr_{G,\Ran})\underset{\Ran\times \Ran}\times (\Ran\times \Ran)_{\on{disj}}
\simeq \Gr_{G,\Ran}\underset{\Ran}\times (\Ran\times \Ran)_{\on{disj}}
\end{equation}
of \eqref{e:factor Gr}. 

\medskip

Our current goal is to show that, with respect to this identification, we have a canonical isomorphism
\begin{equation} \label{e:factor IC}
(\ICs\boxtimes \ICs)_{(\Gr_{G,\Ran}\times \Gr_{G,\Ran})\underset{\Ran\times \Ran}\times (\Ran\times \Ran)_{\on{disj}}}
\simeq \ICs_{\Gr_{G,\Ran}\underset{\Ran}\times (\Ran\times \Ran)_{\on{disj}}}.
\end{equation}

Note that we already know that such an isomorphism takes place, due to the identification
$$({}'\!\ICs\boxtimes {}'\!\ICs)_{(\Gr_{G,\Ran}\times \Gr_{G,\Ran})\underset{\Ran\times \Ran}\times (\Ran\times \Ran)_{\on{disj}}}
\simeq {}'\!\ICs_{\Gr_{G,\Ran}\underset{\Ran}\times (\Ran\times \Ran)_{\on{disj}}}$$
of \eqref{e:factor 'IC} and the isomorphism
\begin{equation} \label{e:IC vs IC'}
\ICs\simeq {}'\!\ICs.
\end{equation}

However, we would like to present a different construction of the isomorphism \eqref{e:factor IC}. It will be based on ``abstract"
t-structure considerations rather the identification of $\ICs$ with the 
(explicitly constructed) object $'\!\ICs$. 

\sssec{}

Let $\Ran^{\subset,\bullet}$ be the simplicial prestack whose prestack of $n$-simplices $\Ran^{\subset,n}$ attaches to an affine
test-scheme $Y$ the set of
$$\CI_0\subseteq...\subseteq \CI_n\subset \Hom(Y,X).$$

\medskip

Let 
$$(\Ran^{\subset,\bullet}\times \Ran^{\subset,\bullet})_{\on{disj}}\subset \Ran^{\subset,\bullet}\times \Ran^{\subset,\bullet}$$
be an open simplicial sub-prestack equal to
$$(\Ran^{\subset,\bullet}\times \Ran^{\subset,\bullet})\underset{\Ran\times \Ran}\times (\Ran\times \Ran)_{\on{disj}},$$
where the map
$$\Ran^{\subset,\bullet}\times \Ran^{\subset,\bullet}\to \Ran\times \Ran$$ sends 
$$(\CI'_0\subseteq...\subseteq \CI'_n,\CI''_0\subseteq...\subseteq \CI''_n)\mapsto (\CI'_n,\CI''_n).$$

\medskip

Consider the simplicial prestack
$$(\Gr_{G,\Ran}\times \Gr_{G,\Ran})\underset{\Ran\times \Ran}\times (\Ran^{\subset,\bullet}\times \Ran^{\subset,\bullet})_{\on{disj}},$$
where the map
$$(\Ran^{\subset,\bullet}\times \Ran^{\subset,\bullet})_{\on{disj}}\to (\Ran\times \Ran)$$ sends 
\begin{equation} \label{e:small projection}
(\CI'_0\subseteq...\subseteq \CI'_n,\CI''_0\subseteq...\subseteq \CI''_n)\mapsto (\CI'_0,\CI''_0).
\end{equation}

Note also that the identification \eqref{e:factor Gr again} extends to an identification of simplicial prestacks
\begin{equation} \label{e:factor Gr simplicial}
(\Gr_{G,\Ran}\times \Gr_{G,\Ran})\underset{\Ran\times \Ran}\times (\Ran^{\subset,\bullet}\times \Ran^{\subset,\bullet})_{\on{disj}}\simeq
\Gr_{G,\Ran}\underset{\Ran}\times (\Ran^{\subset,\bullet}\times \Ran^{\subset,\bullet})_{\on{disj}},
\end{equation}
where the map
$$(\Ran^{\subset,\bullet}\times \Ran^{\subset,\bullet})_{\on{disj}}\to (\Ran\times \Ran)$$  
is again \eqref{e:small projection}, and the map
$$(\Ran^{\subset,\bullet}\times \Ran^{\subset,\bullet})_{\on{disj}}\to \Ran$$
is
$$(\CI'_0\subseteq...\subseteq \CI'_n,\CI''_0\subseteq...\subseteq \CI''_n)\mapsto (\CI'_0\cup\CI''_0).$$

\medskip

We define 
\begin{multline*}
\Shv((\Gr_{G,\Ran}\times \Gr_{G,\Ran})\underset{\Ran\times \Ran}\times (\Ran\times \Ran)_{\on{disj}})_{\on{unital}}:=\\
=\on{Tot}\left(\Shv((\Gr_{G,\Ran}\times \Gr_{G,\Ran})\underset{\Ran\times \Ran}\times (\Ran^{\subset,\bullet}\times \Ran^{\subset,\bullet})_{\on{disj}})\right).
\end{multline*} 

\noindent{\it Warning.} Unlike the case of the functor $\Shv(\Gr_{G,\Ran})_{\on{unital}}\to \Shv(\Gr_{G,\Ran})$, 
it is \emph{no longer} true that the functor of restriction to 0-simplices 
\begin{multline*}
\Shv((\Gr_{G,\Ran}\times \Gr_{G,\Ran})\underset{\Ran\times \Ran}\times (\Ran\times \Ran)_{\on{disj}})_{\on{unital}} \to \\
\to \Shv((\Gr_{G,\Ran}\times \Gr_{G,\Ran})\underset{\Ran\times \Ran}\times (\Ran\times \Ran)_{\on{disj}})
\end{multline*} 
is fully faithful. 

\sssec{}  

Proceeding as in \secref{ss:SI}, we define a full subcategory
$$\SI_{(\Ran\times \Ran)_{\on{disj}}}\subset \Shv((\Gr_{G,\Ran}\times \Gr_{G,\Ran})\underset{\Ran\times \Ran}\times (\Ran\times \Ran)_{\on{disj}})$$
and a full subcategory
$$\SI_{(\Ran\times \Ran)_{\on{disj}},\on{unital}}\subset 
\Shv((\Gr_{G,\Ran}\times \Gr_{G,\Ran})\underset{\Ran\times \Ran}\times (\Ran\times \Ran)_{\on{disj}})_{\on{unital}}.$$

\sssec{}  \label{sss:factor unital}

It is clear that if $\CF_1,\CF_2$ are objects in $\SI_{\Ran,\on{unital}}$, then
$$(\CF_1\boxtimes \CF_2)|_{(\Gr_{G,\Ran}\times \Gr_{G,\Ran})\underset{\Ran\times \Ran}\times (\Ran\times \Ran)_{\on{disj}}}\in \SI_{(\Ran\times \Ran)_{\on{disj}}}$$
naturally upgrades to an object of $\SI_{(\Ran\times \Ran)_{\on{disj}},\on{unital}}$. 

\medskip

Similarly, it is clear that if $\CF$ is an object of $\SI_{\Ran,\on{unital}}$, then
$$\CF|_{\Gr_{G,\Ran}\underset{\Ran}\times (\Ran^{\subset}\times \Ran^{\subset})_{\on{disj}}}\in \SI_{(\Ran\times \Ran)_{\on{disj}}}$$
naturally upgrades to an object of $\SI_{(\Ran\times \Ran)_{\on{disj}},\on{unital}}$. 

\medskip

In particular, we obtain that both sides in \eqref{e:factor IC} are naturally objects of $\SI_{(\Ran\times \Ran)_{\on{disj}},\on{unital}}$.

\sssec{}  

Similar definitions apply to 
$\Gr_{G,\Ran}\times \Gr_{G,\Ran}$ replaced by
$\ol{S}^0_\Ran\times \ol{S}^0_\Ran$
and also by  
$$S^\lambda_\Ran\times S^\mu_\Ran$$
for a pair of elements $\lambda,\mu\in \Lambda$. Denote the resulting categories by 
$$\SI^{\leq 0}_{(\Ran\times \Ran)_{\on{disj}},\on{unital}} \text{ and }\SI^{=\lambda,\mu}_{(\Ran\times \Ran)_{\on{disj}},\on{unital}}$$
respectively. 

\medskip

As in \corref{c:on stratum unital}, we have:

\begin{cor}  \label{c:disj strata upstairs unital}
The pullback functor along the map 
$p^{\lambda,\mu}_{(\Ran\times \Ran)_{\on{disj}}}$
$$(S^\lambda_\Ran\times S^\mu_\Ran)\underset{\Ran\times \Ran}\times (\Ran\times \Ran)_{\on{disj}}\to
((X^\lambda\times \Ran)^{\subset}\times (X^\mu\times \Ran)^{\subset}) \underset{\Ran\times \Ran}\times (\Ran\times \Ran)_{\on{disj}}$$
defines equivalences
$$\Shv(((X^\lambda\times \Ran)^{\subset}\times (X^\mu\times \Ran)^{\subset}) \underset{\Ran\times \Ran}\times (\Ran\times \Ran)_{\on{disj}})\to
\SI^{=\lambda,\mu}_{(\Ran\times \Ran)_{\on{disj}}}$$
and
$$\Shv(((X^\lambda\times \Ran)^{\subset}\times (X^\mu\times \Ran)^{\subset}) \underset{\Ran\times \Ran}\times (\Ran\times \Ran)_{\on{disj}})_{\on{unital}}\to 
\SI^{=\lambda,\mu}_{(\Ran\times \Ran)_{\on{disj}},\on{unital}}.$$
\end{cor} 

In addition, by repeating the argument of \propref{p:unital on config}, one shows: 

\begin{prop} \label{p:disj strata unital}
The pullback functor along the map $\on{pr}_{(\Ran\times \Ran)_{\on{disj}}}^{\lambda,\mu}$
$$((X^\lambda\times \Ran)^{\subset}\times (X^\mu\times \Ran)^{\subset}) \underset{\Ran\times \Ran}\times (\Ran\times \Ran)_{\on{disj}}\to
(X^\lambda\times X^\mu)_{\on{disj}}$$
defines an equivalence
$$\Shv((X^\lambda\times X^\mu)_{\on{disj}})\to 
\Shv(((X^\lambda\times \Ran)^{\subset}\times (X^\mu\times \Ran)^{\subset}) \underset{\Ran\times \Ran}\times (\Ran\times \Ran)_{\on{disj}})_{\on{unital}}.$$
\end{prop}

\sssec{}  \label{sss:t-structure fact}

Using \corref{c:disj strata upstairs unital} and \propref{p:disj strata unital}, 
proceeding as in \secref{ss:t on unital}, we define a t-structure on the categories $\SI^{=\lambda,\mu}_{(\Ran\times \Ran)_{\on{disj}},\on{unital}}$
and $\SI^{\leq 0}_{(\Ran\times \Ran)_{\on{disj}},\on{unital}}$. 

\medskip

It is clear that in the situation of \secref{sss:factor unital}, if $\CF_1,\CF_2\in \SI^{\leq 0}_{\Ran,\on{unital}}$ 
(resp., $\CF\in \SI^{\leq 0}_{\Ran,\on{unital}}$) are connective/coconnective, then so are
the corresponding objects 
$$(\CF_1\boxtimes \CF_2)|_{(\Gr_{G,\Ran}\times \Gr_{G,\Ran})\underset{\Ran\times \Ran}\times (\Ran\times \Ran)_{\on{disj}}}\in 
\SI^{\leq 0}_{(\Ran\times \Ran)_{\on{disj}},\on{unital}}$$
and 
$$\CF|_{\Gr_{G,\Ran}\underset{\Ran}\times (\Ran^{\subset}\times \Ran^{\subset})_{\on{disj}}}\in \SI^{\leq 0}_{(\Ran\times \Ran)_{\on{disj}},\on{unital}}.$$
are connective/coconnective.

\medskip

This implies that both sides in \eqref{e:factor IC} are minimal extensions of the object
$$\omega_{(S^0_\Ran\times S^0_\Ran)\underset{\Ran\times \Ran}\times (\Ran\times \Ran)_{\on{disj}}}\in
\SI^{=0,0}_{(\Ran\times \Ran)_{\on{disj}},\on{unital}}.$$

This implies the sought-for canonical isomorphism \eqref{e:factor IC}.

\ssec{Factorization and Zastava spaces}

In this subsection we will establish the compatibility of the factorization structure on $\ICs$ given by 
\eqref{e:factor IC} and the factorization property of the IC sheaf on Zastava spaces. 

\sssec{}

Recall again the Zastava spaces $\CZ^\lambda$.

\medskip

According to \cite[Prop. 2.4]{BFGM}, we have canonical isomorphisms
\begin{equation} \label{e:factor Zastava}
(\CZ^\lambda\times \CZ^\mu)\underset{X^\lambda\times X^\mu}\times 
(X^\lambda\times X^\mu)_{\on{disj}}\simeq \CZ^{\lambda+\mu}\underset{X^{\lambda+\mu}}\times (X^\lambda\times X^\mu)_{\on{disj}}.
\end{equation} 

Since the composite map 
$$(X^\lambda\times X^\mu)_{\on{disj}}\to X^\lambda\times X^\mu\to X^{\lambda+\mu}$$
is \'etale, we have a canonical isomorphism
\begin{equation} \label{e:factor IC Zastava}
(\IC_{\CZ^\lambda}\boxtimes \IC_{\CZ^\mu})|_{(\CZ^\lambda\times \CZ^\mu)\underset{X^\lambda\times X^\mu}\times 
(X^\lambda\times X^\mu)_{\on{disj}}}\simeq
\IC_{\CZ^{\lambda+\mu}}|_{\CZ^{\lambda+\mu}\underset{X^{\lambda+\mu}}\times (X^\lambda\times X^\mu)_{\on{disj}}}.
\end{equation} 

\sssec{}

Note that we have an identification

\begin{multline}   \label{e:factor base changed Zastava}
\left(((X^\lambda\times \Ran)^{\subset}\underset{X^\lambda}\times \CZ^\lambda) \times
((X^\mu\times \Ran)^{\subset}\underset{X^\mu}\times \CZ^\mu)\right) \underset{\Ran\times \Ran}\times (\Ran\times \Ran)_{\on{disj}} \simeq \\
\simeq \left((X^{\lambda+\mu}\times \Ran)^{\subset}\underset{X^{\lambda+\mu}}\times \CZ^{\lambda+\mu})\right)
\underset{(X^{\lambda+\mu}\times \Ran)^{\subset}}\times 
\left((X^\lambda\times \Ran)^{\subset}\times (X^\mu\times \Ran)^{\subset}\right)_{\on{disj}}.
\end{multline} 
where
\begin{multline*} 
\left((X^\lambda\times \Ran)^{\subset}\times (X^\mu\times \Ran)^{\subset})\right)_{\on{disj}}:= \\
=\left(((X^\lambda\times \Ran)^{\subset}\times (X^\mu\times \Ran)^{\subset})\underset{\Ran\times \Ran}\times (\Ran\times \Ran)_{\on{disj}}\right).
\end{multline*} 

Consider the maps
\begin{multline*} 
\left(((X^\lambda\times \Ran)^{\subset}\underset{X^\lambda}\times \CZ^\lambda) \times
((X^\mu\times \Ran)^{\subset}\underset{X^\mu}\times \CZ^\mu)\right) \underset{\Ran\times \Ran}\times (\Ran\times \Ran)_{\on{disj}} \to \\
\to (\ol{S}^0_{\Ran}\times  \ol{S}^0_{\Ran})\underset{\Ran\times \Ran}\times (\Ran\times \Ran)_{\on{disj}}.
\end{multline*} 
and
\begin{multline*}  
\left((X^{\lambda+\mu}\times \Ran)^{\subset}\underset{X^{\lambda+\mu}}\times \CZ^{\lambda+\mu})\right)
\underset{(X^{\lambda+\mu}\times \Ran)^{\subset}}\times 
\left((X^\lambda\times \Ran)^{\subset}\times (X^\mu\times \Ran)^{\subset}\right)_{\on{disj}} \to \\
\to \ol{S}^0_{\Ran} \underset{\Ran}\times (\Ran\times \Ran)_{\on{disj}}.
\end{multline*} 

They are compatible with respect to the identifications \eqref{e:factor base changed Zastava} and 
\begin{equation} \label{e:factor semiinf}
(\ol{S}^0_{\Ran}\times  \ol{S}^0_{\Ran})\underset{\Ran\times \Ran}\times (\Ran\times \Ran)_{\on{disj}}\simeq 
\ol{S}^0_{\Ran} \underset{\Ran}\times (\Ran\times \Ran)_{\on{disj}}.
\end{equation}

Hence, from \eqref{e:factor Zastava} we obtain an isomorphism:
\begin{multline} \label{e:factor IC Zast 1}
\ICs\boxtimes \ICs|_{\left(((X^\lambda\times \Ran)^{\subset}\underset{X^\lambda}\times \CZ^\lambda) \times
((X^\mu\times \Ran)^{\subset}\underset{X^\mu}\times \CZ^\mu)\right) \underset{\Ran\times \Ran}\times (\Ran\times \Ran)_{\on{disj}}}\simeq \\
\simeq \\
\simeq \ICs|_{\left((X^{\lambda+\mu}\times \Ran)^{\subset}\underset{X^{\lambda+\mu}}\times \CZ^{\lambda+\mu})\right)
\underset{(X^{\lambda+\mu}\times \Ran)^{\subset}}\times 
\left((X^\lambda\times \Ran)^{\subset}\times (X^\mu\times \Ran)^{\subset}\right)_{\on{disj}}}
\end{multline}

\sssec{}

Consider now the maps 

\begin{multline*} 
\left(((X^\lambda\times \Ran)^{\subset}\underset{X^\lambda}\times \CZ^\lambda) \times
((X^\mu\times \Ran)^{\subset}\underset{X^\mu}\times \CZ^\mu)\right) \underset{\Ran\times \Ran}\times (\Ran\times \Ran)_{\on{disj}} \to \\
\to (\CZ^\lambda\times \CZ^\mu) \underset{X^\lambda\times X^\mu}\times (X^\lambda\times X^\mu)_{\on{disj}}
\end{multline*} 
and
\begin{multline*} 
\left((X^{\lambda+\mu}\times \Ran)^{\subset}\underset{X^{\lambda+\mu}}\times \CZ^{\lambda+\mu})\right)
\underset{(X^{\lambda+\mu}\times \Ran)^{\subset}}\times 
\left((X^\lambda\times \Ran)^{\subset}\times (X^\mu\times \Ran)^{\subset}\right)_{\on{disj}} \to \\
\to \CZ^{\lambda+\mu}\underset{X^{\lambda+\mu}}\times (X^\lambda\times X^\mu)_{\on{disj}}.
\end{multline*} 

They are compatible with respect to the identifications \eqref{e:factor base changed Zastava} and \eqref{e:factor Zastava}. 
Hence, from \eqref{e:factor IC Zastava} we obtain the isomorphism
\begin{multline} \label{e:factor IC Zast 2}
\IC_{\CZ^\lambda}\boxtimes \IC_{\CZ^\mu}|_{\left(((X^\lambda\times \Ran)^{\subset}\underset{X^\lambda}\times \CZ^\lambda) \times
((X^\mu\times \Ran)^{\subset}\underset{X^\mu}\times \CZ^\mu)\right) \underset{\Ran\times \Ran}\times (\Ran\times \Ran)_{\on{disj}}}\simeq \\
\simeq 
\IC_{\CZ^{\lambda+\mu}}|_{\left((X^{\lambda+\mu}\times \Ran)^{\subset}\underset{X^{\lambda+\mu}}\times \CZ^{\lambda+\mu})\right)
\underset{(X^{\lambda+\mu}\times \Ran)^{\subset}}\times 
\left((X^\lambda\times \Ran)^{\subset}\times (X^\mu\times \Ran)^{\subset}\right)_{\on{disj}}}.
\end{multline}

\sssec{}

We claim:

\begin{prop}  \label{p:compat of factor}
The isomorphisms \eqref{e:factor IC Zast 1} and \eqref{e:factor IC Zast 2} are compatible with respect to the isomorphisms
$$(\on{pr}_\Ran^\lambda \times \on{id}_{\CZ^\lambda})^!(\IC_{Z^\lambda})\simeq 
(\fq')^!(\ICs)[\langle \lambda,2\check\rho\rangle],\,\, (\on{pr}_\Ran^\mu \times \on{id}_{\CZ^\mu})^!(\IC_{Z^\mu})\simeq 
(\fq')^!(\ICs)[\langle \mu,2\check\rho\rangle]$$
and
$$(\on{pr}_\Ran^{\lambda+\mu} \times \on{id}_{\CZ^{\lambda+\mu}})^!(\IC_{Z^{\lambda+\mu}})\simeq 
(\fq')^!(\ICs)[\langle \lambda+\mu,2\check\rho\rangle]$$
of \propref{p:compare with IC Zast}.
\end{prop} 

\begin{proof}

By mimicking the procedure in \secref{sss:factor unital}, we introduce the category
\begin{equation} \label{e:factor unital categ}
\Shv\left(\left(((X^\lambda\times \Ran)^{\subset}\underset{X^\lambda}\times \CZ^\lambda) \times
((X^\mu\times \Ran)^{\subset}\underset{X^\mu}\times \CZ^\mu)\right) \underset{\Ran\times \Ran}\times (\Ran\times \Ran)_{\on{disj}}\right)_{\on{unital}},
\end{equation} 
and we show that the object
\begin{multline*} 
\IC_{\CZ^\lambda}\boxtimes \IC_{\CZ^\mu}|_{\left(((X^\lambda\times \Ran)^{\subset}\underset{X^\lambda}\times \CZ^\lambda) \times
((X^\mu\times \Ran)^{\subset}\underset{X^\mu}\times \CZ^\mu)\right) \underset{\Ran\times \Ran}\times (\Ran\times \Ran)_{\on{disj}}}\in \\
\Shv\left(\left(((X^\lambda\times \Ran)^{\subset}\underset{X^\lambda}\times \CZ^\lambda) \times
((X^\mu\times \Ran)^{\subset}\underset{X^\mu}\times \CZ^\mu)\right) \underset{\Ran\times \Ran}\times (\Ran\times \Ran)_{\on{disj}}\right)
\end{multline*}
naturally upgrades to an object of \eqref{e:factor unital categ}.

\medskip

Furthermore, by mimicking the procedure in \secref{sss:t-structure fact}, we introduce a t-structure on \eqref{e:factor unital categ} and we show
that the above object 
\begin{multline*} 
\IC_{\CZ^\lambda}\boxtimes \IC_{\CZ^\mu}|_{\left(((X^\lambda\times \Ran)^{\subset}\underset{X^\lambda}\times \CZ^\lambda) \times
((X^\mu\times \Ran)^{\subset}\underset{X^\mu}\times \CZ^\mu)\right) \underset{\Ran\times \Ran}\times (\Ran\times \Ran)_{\on{disj}}}\in \\
\Shv\left(\left(((X^\lambda\times \Ran)^{\subset}\underset{X^\lambda}\times \CZ^\lambda) \times
((X^\mu\times \Ran)^{\subset}\underset{X^\mu}\times \CZ^\mu)\right) \underset{\Ran\times \Ran}\times (\Ran\times \Ran)_{\on{disj}}\right)_{\on{unital}}
\end{multline*}
is the minimal extension of its restriction to
\begin{equation} \label{e:factor Zastava open}
\left(((X^\lambda\times \Ran)^{\subset}\underset{X^\lambda}\times \oCZ{}^\lambda) \times
((X^\mu\times \Ran)^{\subset}\underset{X^\mu}\times \oCZ{}^\mu)\right) \underset{\Ran\times \Ran}\times (\Ran\times \Ran)_{\on{disj}}.
\end{equation} 

Now the compatibility stated in \secref{p:compat of factor} follows from the fact that it does so after restriction to \eqref{e:factor Zastava open}. 

\end{proof}

\section{The Hecke property of the semi-infinite IC sheaf} \label{s:H}

The goal of this section is to show that the object $\ICs_\Ran$ that we have constructed satisfies 
the (appropriately formulated) Hecke eigen-property. 

\ssec{Pointwise Hecke property}

\sssec{}

Consider the category $\Shv(\fL^+(T)_\Ran \backslash \Gr_{G,\Ran})$, i.e.,
we impose the structure of equivariance 
with respect to group-scheme of arcs into $T$ over the base prestack $\Ran$. 

\medskip

The action of $\fL(T)_\Ran$ on $\Gr_{G,\Ran}$ by left multiplication defines an action of 
$\Sph_{T,\Ran}$ on $\Shv(\fL^+(T)_\Ran\backslash \Gr_{G,\Ran})$.

%\medskip

%By imposing the \emph{condition} of $\fL(N)_{\Ran}$-equivariance, we obtain the full subcategory
%$$\Shv(\Gr_{G,\Ran})^{\fL^+(T)_\Ran\cdot \fL(N)_\Ran}\subset \Shv(\Gr_{G,\Ran})^{\fL^+(T)_\Ran}.$$

%We change the previous notational conventions and denote \emph{this} category by $\SI_\Ran$, i.e.,
%$$\SI_\Ran:=\Shv(\Gr_{G,\Ran})^{\fL^+(T)_\Ran\cdot \fL(N)_\Ran}.$$
%and similarly
%$$\SI_I:=\Shv(\Gr_{G,I})^{\fL^+(T)_I\cdot \fL(N)_I}.$$

\medskip

We consider $\Shv(\fL^+(T)_\Ran \backslash \Gr_{G,\Ran})$ 
as acted on by the monoidal category $\Sph_{G,\Ran}$ 
on the right by convolutions. 

\medskip

This action commutes with the left action of $\Sph_{T,\Ran}$.

\sssec{}

Since $\fL(T)_\Ran$ normalizes $\fL(N)_\Ran$, the category 
$$(\SI_\Ran)^{\fL^+(T)_\Ran}:=\Shv(\Gr_{G,\Ran})^{\fL^+(T)_\Ran\cdot \fL(N)_\Ran}$$
inherits an action of $\Sph_{T,\Ran}$ and a commuting $\Sph_{G,\Ran}$-action.

\medskip

Working with this version of the semi-infinite category, we can define a t-structure on it in the same way 
as for 
$$\SI_\Ran:=\Shv(\Gr_{G,\Ran})^{\fL(N)_\Ran},$$
so that the forgetful functor
$$(\SI_\Ran)^{\fL^+(T)_\Ran}\to \SI_\Ran$$
is t-exact.

\medskip

Thus, we obtain that the object $\IC^\infty_\Ran\in \SI_\Ran \subset \Shv(\Gr_{G,\Ran})$ 
naturally lifts to an object of 
$$(\SI_\Ran)^{\fL^+(T)_\Ran}:= \Shv(\Gr_{G,\Ran})^{\fL(N)_\Ran\cdot \fL^+(T)_\Ran}
\subset \Shv(\fL^+(T)_\Ran \backslash \Gr_{G,\Ran});$$
by a slight abuse of notation we denote it by the symbol $\IC^\infty_\Ran$. 

\sssec{}

Fix a point $x$. Let $\Ran_x$ be the version of the Ran space with $x$ as a marked point. By definition,
for an affine test-scheme $Y$, the set $\Hom(Y,\Ran_x)$ consists of finite subsets 
$$\CI\subset \Hom(Y,X)$$
equipped with distinguished element $*\in \CI$ such that the corresponding map $Y\to X$ is
$$X\to \on{pt}\overset{x}\to X.$$

\sssec{}

We have the natural forgetful map $\Ran_x\to \Ran$, and we can use it to base change all the objects
considered above. 

\medskip

In particular, we consider the prestack 
$$\Gr_{G,\Ran_x}:=\Gr_{G,\Ran}\underset{\Ran}\times \Ran_x,$$ the category
$$\Shv(\fL^+(T)_{\Ran_x}\backslash\Gr_{G,\Ran_x}),$$
acted on by
$$\Sph_{G,\Ran_x}:=\Shv(\fL^+(G)_{\Ran_x}\backslash \Gr_{G,\Ran_x}) \text{ and }
\Sph_{T,\Ran_x}:=\Shv(\fL^+(T)_{\Ran_x}\backslash \Gr_{T,\Ran_x}),$$
etc.

\medskip

We can consider the corresponding object
$$\IC^\infty_{\Ran_x}\in \Shv(\Gr_{G,\Ran_x})^{\fL(N)_{\Ran_x}\cdot \fL^+(T)_{\Ran_x}}\subset 
\Shv(\fL^+(T)_{\Ran_x}\backslash\Gr_{G,\Ran_x}),$$
equal to the !-pullback of $\IC^\infty_{\Ran}$ along the projection $\Gr_{G,\Ran_x}\to \Gr_{G,\Ran}$. 

\sssec{} 

Note that we have a tautologically defined map
\begin{equation} \label{e:insert x}
\Ran_x\times \Gr_{G,x}\to \Gr_{G,\Ran_x}.
\end{equation} 

\medskip

From \eqref{e:insert x} we obtain a canonically defined monoidal functor
$$\Sph_{G,x}\to  \Sph_{G,\Ran_x}.$$

Composing with the geometric Satake functor
$$\Sat_{G,x}:\Rep(\cG)\to \Sph_{G,x},$$ 
we obtain a monoidal functor
$$\Sph_{G,x}\to  \Sph_{G,\Ran_x}.$$

\medskip

We modify the geometric Satake functor for $T$ by applying the cohomological shift by
$[-\langle \lambda,2\check\rho\rangle]$ on $\sfe^\lambda\in \Rep(\cT)$. Denote the resulting functor
by
$$\Sat'_{T,x}:\Rep(\cT)\to \Sph_{T,x}.$$

Pre-composing with
$$\Sph_{T,x}\to  \Sph_{T,\Ran_x},$$
we obtain a monoidal functor
$$\Rep(\cT)\to  \Sph_{T,\Ran_x}.$$

\sssec{}

Thus, we obtain that $\Shv(\fL^+(T)_{\Ran_x}\backslash \Gr_{G,\Ran_x})$ is a bimodule category for $(\Rep(\cT),\Rep(\cG))$. In this case, we can talk
about the category of \emph{graded Hecke objects} in $\Shv(\fL^+(T)_{\Ran_x}\backslash \Gr_{G,\Ran_x})$, denoted
$$\bHecke(\Shv(\fL^+(T)_{\Ran_x}\backslash \Gr_{G,\Ran_x})),$$
see \cite[Sect. 4.3.5]{Ga1}, and also \secref{sss:graded Hecke} below.

\medskip

These are objects $\CF\in \Shv(\fL^+(T)_{\Ran_x}\backslash \Gr_{G,\Ran_x})$, equipped with a system of isomorphisms  
$$\CF\star \Sat_{G,x}(V)\overset{\phi(V,\CF)}\longrightarrow \Sat'_{T,x}(\Res^\cG_\cT(V))\star \CF,\quad V\in \Rep(\cG)$$
that are compatible with the monoidal structure on $\Rep(\cG)$ in the sense that the diagrams
$$
\CD
\CF\star \Sat_{G,x}(V_1) \star \Sat_{G,x}(V_2)  @>{\phi(V_1,\CF)}>> \Sat'_{T,x}(\Res^\cG_\cT(V_1))\star \CF \star \Sat_{G,x}(V_2)  \\
@V{\sim}VV   @VV{\phi(V_2,\CF)}V  \\
\CF \star  \Sat_{G,x}(V_1\otimes V_2)  @>>>  \Sat'_{T,x}(\Res^\cG_\cT(V_1))\star  \Sat'_{T,x}(\Res^\cG_\cT(V_2))\star \CF  \\
@V{\phi(V_1\otimes V_2,\CF)}VV    @VV{\sim}V   \\
\Sat'_{T,x}(\Res^\cG_\cT(V_1\otimes V_2)) \star \CF @>{\sim}>>  \Sat'_{T,x}(\Res^\cG_\cT(V_1)\otimes \Res^\cG_\cT(V_2))\star \CF,
\endCD
$$
along with a coherent system of higher compatibilities. 

\sssec{}

We will prove:

\begin{thmconstr} \label{t:Hecke}
The object $\IC^\infty_{\Ran_x}\in \Shv(\fL^+(T)_{\Ran_x}\backslash \Gr_{G,\Ran_x})$ naturally lifts to an 
object of $\bHecke(\Shv(\fL^+(T)_{\Ran_x}\backslash \Gr_{G,\Ran_x}))$.
\end{thmconstr}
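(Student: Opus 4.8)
The plan is to construct the Hecke-equivariant lift of $\IC^\semiinf_\Ran$ as the value of a left adjoint applied to the $\delta$-function at the unit section, via the Drinfeld--Pl\"ucker formalism; the Hecke structure is then part of the package.

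First I would introduce the category $\Hecke^{\on{DrPl}}\big(\Shv(\fL^+(T)_\Ran\backslash\Gr_{G,\Ran})\big)$ of \emph{Drinfeld--Pl\"ucker objects}. These are objects $\CF\in\Shv(\fL^+(T)_\Ran\backslash\Gr_{G,\Ran})$ equipped with a homotopy-coherent system of maps $\Sat'_{T,x}(\sfe^{\lambda})\star\CF\to\CF\star\Sat_{G,x}(V^{\lambda})$, one for each $\lambda\in\Lambda^+$, compatible with the Pl\"ucker maps $V^{\lambda_1}\otimes V^{\lambda_2}\to V^{\lambda_1+\lambda_2}$ in the precise sense of the data $\kappa$ that defines $\BunNb$. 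There is a tautological forgetful functor from $\bHecke\big(\Shv(\fL^+(T)_\Ran\backslash\Gr_{G,\Ran})\big)$ to $\Hecke^{\on{DrPl}}\big(\Shv(\fL^+(T)_\Ran\backslash\Gr_{G,\Ran})\big)$, obtained by composing the Hecke isomorphisms with the highest-weight inclusions $\sfe^{\lambda}\hookrightarrow\Res^\cG_\cT(V^\lambda)$. I would then show that this forgetful functor admits a left adjoint $\Ind^{\on{DrPl}}$, and that on an object $\CF$ it is given by the colimit $\underset{\lambda\in\Lambda^+}{\on{colim}}\,\Sat'_{T,x}(\sfe^{-\lambda})\star\CF\star\Sat_{G,x}(V^{\lambda})[\langle\lambda,2\check\rho\rangle]$, formed along the transition maps obtained by combining the given Pl\"ucker maps with the projections $V^{\lambda}\otimes V^{\nu}\to V^{\lambda+\nu}$; informally, $\Ind^{\on{DrPl}}$ saturates the Pl\"ucker semigroup $\Lambda^+$ into the group $\Lambda$, at which point the lax arrows become the invertible structure maps of a genuine Hecke eigen-object.

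Next I would equip the $\delta$-function $\delta_{1_{\Gr,\Ran}}=(1_{\Gr,\Ran})_!(\omega_\Ran)$ with a canonical Drinfeld--Pl\"ucker structure. Since the unit section $\Ran\to\Gr_{G,\Ran}$ is fixed under $\fL^+(T)_\Ran$, the object $\delta_{1_{\Gr,\Ran}}$ naturally lies in $\Shv(\fL^+(T)_\Ran\backslash\Gr_{G,\Ran})$; the maps $\Sat'_{T,x}(\sfe^{\lambda})\star\delta_{1_{\Gr,\Ran}}\to\delta_{1_{\Gr,\Ran}}\star\Sat_{G,x}(V^{\lambda})$ come from the highest-weight vectors $\sfe^\lambda\hookrightarrow V^\lambda$ together with the standard compatibility of $\Sat_{G}$ with convolution, and the Pl\"ucker compatibilities reduce to the compatibility of highest-weight vectors with $V^{\lambda_1}\otimes V^{\lambda_2}\to V^{\lambda_1+\lambda_2}$. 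One also checks that this structure is compatible with the factorization of $\delta_{1_{\Gr,\Ran}}$ over $\Ran$.

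It then remains to identify $\Ind^{\on{DrPl}}(\delta_{1_{\Gr,\Ran}})$, as a plain object of $\Shv(\Gr_{G,\Ran})$, with $\IC^\semiinf_\Ran$. Working over a fixed finite non-empty set $I$, the convolution $\Sat'_{T,x}(\sfe^{-\lambda})\star\delta_{1_{\Gr,I}}[\langle\lambda,2\check\rho\rangle]$ is the $\delta$-sheaf $\delta_{\ul\lambda}$ attached to the constant map $I\to\Lambda^+$ with value $\lambda$, and after invoking factorization at moving points the colimit over $\Lambda^+$ is upgraded to a colimit over $\Maps(I,\Lambda^+)$; matching transition maps term by term, this is exactly the colimit $\underset{\ul\lambda\in\Maps(I,\Lambda^+)}{\on{colim}}\,\delta_{\ul\lambda}\star\Sat_{G,I}(V^{\ul\lambda})[\langle\lambda,2\check\rho\rangle]$ defining $\IC^\semiinf_I$ in \secref{ss:pres as colimit}. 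Assembling over $I$ yields $'\!\IC^\semiinf_\Ran$, which equals $\IC^\semiinf_\Ran$ by \thmref{t:descr as colimit}. Hence $\IC^\semiinf_\Ran$ lies in the essential image of the forgetful functor, and the desired lift to $\bHecke$ is the canonical object $\Ind^{\on{DrPl}}(\delta_{1_{\Gr,\Ran}})$. The main obstacle is carrying all of this out homotopy-coherently: constructing $\Hecke^{\on{DrPl}}$ and the adjunction $(\Ind^{\on{DrPl}},\text{forget})$ as genuine $\infty$-categorical data, producing the coherent Pl\"ucker system on $\delta_{1_{\Gr,\Ran}}$ compatibly with the Ran/factorization structure (in the spirit of \secref{ss:spread over Ran}), and matching the two colimit presentations — the one indexed by $\Lambda^+$ versus the one indexed by $\Maps(I,\Lambda^+)$ — together with their transition maps by a cofinality argument. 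Stratum by stratum over a fixed point, this reduces to the pointwise computation already carried out in \cite[Sect.~4]{Ga1}.
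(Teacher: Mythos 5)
Your overall strategy is the same as the paper's: equip the unit $\delta$-sheaf with a Drinfeld--Pl\"ucker structure, apply the left adjoint $\Ind^{\bHecke}_{\on{DrPl}}$ of the forgetful functor, and identify the underlying object with ${}'\!\IC^\semiinf_\Ran\simeq \IC^\semiinf_\Ran$ via \thmref{t:descr as colimit}. But there is a genuine gap at the identification step, and it is exactly the point flagged in the remark following the theorem: you set up the DrPl/Hecke structures only for the \emph{pointwise} action of $(\Rep(\cT),\Rep(\cG))$ at the fixed point $x$ (through $\Sat'_{T,x}$ and $\Sat_{G,x}$), so your induction is a colimit over $\Lambda^+$ of terms $\Sat'_{T,x}(\sfe^{-\lambda})\star\delta\star\Sat_{G,x}(V^{\lambda})$ in which all Hecke modifications occur at the single point $x$. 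Your assertion that $\Sat'_{T,x}(\sfe^{-\lambda})\star\delta_{1_\Gr,I}[\langle\lambda,2\check\rho\rangle]$ is the $\delta$-sheaf $\delta_{\ul\lambda}$ of the constant map $\ul\lambda\equiv\lambda$ is false: $\delta_{\ul\lambda}$ lives on the section $s^{\ul\lambda}_I$, i.e.\ it translates by $\lambda$ at \emph{every} point of $I$, whereas convolution at $x$ translates only at $x$ (and lands over the locus of $\Ran(X)$ containing $x$). Consequently the pointwise induction of $\delta_{1_\Gr,\Ran}$ is \emph{not} $\IC^\semiinf_\Ran$ (roughly, it reproduces $\IC^\semiinf_x$ at $x$ and the unit section elsewhere), and no cofinality argument can bridge your colimit over $\Lambda^+$ with the colimit over $\Maps(I,\Lambda^+)$ defining $\IC^\semiinf_I$: the constant maps are indeed cofinal in $\Maps(I,\Lambda^+)$, but the term the paper attaches to a constant map is $\delta_{\ul\lambda}\star\Sat_{G,I}(V^{\ul\lambda})$ with $V^{\ul\lambda}$ a factorization object placing a copy of $V^{\lambda}$ at each point of $I$ --- a different diagram from yours, not the same diagram restricted to a cofinal subposet.

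The missing ingredient, which is the actual content of the paper's Sects.\ 5.2--5.6, is the factorization upgrade of the whole formalism: one makes $\Shv(\fL^+(T)_I\backslash \Gr_{G,I})$ a module over $\on{Fact}(\Rep(\cT)\otimes\Rep(\cG))_I$ compatibly over all finite sets $I$, defines $\on{DrPl}$ and $\bHecke$ relative to these actions (equivalently, as modules over the factorization algebras $\on{Fact}(R^F_\CA)_I$), lifts $\delta_{1_\Gr,\Ran}$ to a DrPl object in \emph{this} sense, and proves the factorization generalization of \cite[Proposition 6.2.4]{Ga1} (\propref{p:Sam's}), which computes the induction over $X^I$ as $\underset{\ul\lambda\in\Maps(I,\Lambda^+)}{\on{colim}}\,\sfe^{-\ul\lambda}\otimes c\otimes V^{\ul\lambda}$. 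Only this yields ${}'\!\IC^\semiinf_I$, hence \thmref{t:Hecke Ran}; the pointwise statement of \thmref{t:Hecke} is then deduced at the very end by restricting along the symmetric monoidal functor $\Rep(\cT)\otimes\Rep(\cG)\to \on{Fact}(\Rep(\cT)\otimes\Rep(\cG))_\Ran$ attached to $x$, not by running the adjunction at $x$ directly. Your closing paragraph gestures at ``compatibility with the Ran/factorization structure,'' but without replacing the pointwise action by the $\on{Fact}$-module structure as the ambient setting for the DrPl adjunction, the key identification of the induced object with $\IC^\semiinf_\Ran$ does not go through.
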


Several remarks are in order.

\begin{rem} 
In the proof of \thmref{t:Hecke}, the object $\IC^\infty_\Ran$ will come in its incarnation as $'\!\IC^\infty_\Ran$, 
constructed in \secref{ss:pres as colimit}.
\end{rem}

\begin{rem}
Consider the restriction
$$\IC^\infty_x\simeq \IC^\infty_{\Ran_x}|_{\Gr_{G,x}}.$$

The Hecke structure on $\IC^\infty_{\Ran_x}$ induces one on $\IC^\infty_x$. It will follow from the construction
and \cite[Sect. 6.2.5]{Ga1} that the resulting Hecke structure on $\IC^\infty_x$ coincides with one constructed in \cite[Sect. 5.1]{Ga1}.
\end{rem}

\begin{rem}
In order to prove \thmref{t:Hecke} we will need to consider the Hecke action of $\Rep(\cG)$ on $\Shv(\fL^+(T)_\Ran\backslash\Gr_{G,\Ran})$
over the entire Ran space. The next few subsections are devoted to setting up the corresponding formalism.
\end{rem} 

\ssec{Categories over the Ran space, continued} \label{ss:over Ran,ctd}

\sssec{}

Recall the construction 
\begin{equation} \label{e:spread constr}
\CA\rightsquigarrow \on{Fact}^{\on{alg}}(\CA)_I
\end{equation} 
of \secref{ss:spread over Ran}, viewed as a functor
%\begin{equation} \label{e:spread 1}
%$$\DGCat^{\on{SymMon}}\to \Shv(\Ran)\mmod$$
$\DGCat^{\on{SymMon}}\to \Shv(X^I)\mmod$.
%\end{equation} 
%(when we view $\on{Fact}(\CA)_\Ran$ as $\on{Fact}^{\on{alg}}(\CA)_\Ran$) and as a functor
%(when we view $\on{Fact}(\CA)_I$ as $\on{Fact}^{\on{alg}}(\CA)_I$), and as a functor
%\begin{equation} \label{e:spread 2}
%$$\DGCat^{\on{SymComon}}\to  \Shv(\Ran)\mmod$$
%\end{equation} 
%$$\DGCat^{\on{SymComon}}\to  \Shv(X^I)\mmod$$
%(when we view $\on{Fact}(\CA)_I$ as $\on{Fact}^{\on{coalg}}(\CA)_I$).

\medskip

Note that the functor \eqref{e:spread constr} has a natural right-lax symmetric monoidal structure, i.e.,
we have the natural transformation
$$\on{Fact}^{\on{alg}}(\CA')_I\underset{\Shv(X^I)}\otimes \on{Fact}^{\on{alg}}(\CA'')_I\to \on{Fact}^{\on{alg}}(\CA'\otimes \CA'')_I.$$

\medskip

In particular, since any $\CA\in \DGCat^{\on{SymMon}}$ can be viewed as an object in $\on{ComAlg}(\DGCat^{\on{SymMon}})$, we obtain that 
$\on{Fact}^{\on{alg}}(\CA)_I$ itself acquires a structure of symmetric monoidal category.
% equipped with a homomorphism
%$$\Shv(X^I)\to \on{Fact}(\CA)_I.$$

%\medskip

%Note that for $\CA=\Vect$, we have 
%the above functor $\Shv(X^I)\to \on{Fact}(\CA)_I$ is an equivalence. 

\sssec{}

For a surjection of finite sets $\phi:I_1\twoheadrightarrow I_2$, the corresponding functor
\begin{equation} \label{e:restr factor}
(\Delta_\phi)^!:\on{Fact}^{\on{alg}}(\CA)_{I_1}\to \on{Fact}^{\on{alg}}(\CA)_{I_2}
\end{equation}
(see \secref{sss:Ran restr}) is naturally symmetric monoidal. In particular, we obtain that 
$$\on{Fact}^{\on{alg}}(\CA)_\Ran\simeq \underset{I}{\on{lim}}\, \on{Fact}(\CA)_I$$
(see \eqref{e:I Ran lim}) acquires a natural symmetric monoidal structure. 
%and a homomorphism $$\Shv(\Ran)\to \on{Fact}(\CA)_\Ran.$$

\sssec{}

Let $\CA'\to \CA''$ be a right-lax symmetric monoidal functor. The functor \eqref{e:spread constr} 
gives rise to a right-lax symmetric monoidal functor
$$\on{Fact}^{\on{alg}}(\CA')_I\to \on{Fact}^{\on{alg}}(\CA'')_I,$$
compatible with the restriction 
functors \eqref{e:restr factor}. Varying $I$, we obtain a right-lax symmetric monoidal functor
$$\on{Fact}^{\on{alg}}(\CA')_\Ran\to \on{Fact}^{\on{alg}}(\CA'')_\Ran.$$

\medskip

In particular, a commutative algebra object $A$ in $\CA$, viewed as a right-lax symmetric monoidal functor
$\Vect\to \CA$, gives rise to a commutative algebra 
$$\on{Fact}^{\on{alg}}(A)_I\in \on{Fact}^{\on{alg}}(\CA)_I.$$

These algebra objects are compatible under the restriction functors \eqref{e:restr factor}. Varying $I$, we obtain 
a commutative algebra object 
$$\on{Fact}^{\on{alg}}(A)_\Ran\in \on{Fact}(\CA)_\Ran.$$

\sssec{Examples} Let us consider the two examples of $\CA$ from \secref{sss:ex fact categ}. 

\medskip

\noindent(i) Let $\CA=\Vect$. We obtain that to $A\in \on{ComAlg}(\Vect)$ we can canonically assign 
an object $\on{Fact}^{\on{alg}}(A)_\Ran\in \Shv(\Ran)$.

\medskip

\noindent(ii) Let $\CA$ be the category of $\Lambda^{\on{neg}}-0$ graded vector spaces. Note that a commutative
algebra $A$ in $\CA$ is the same as a commutative $\Lambda^{\on{neg}}$-algebra with $A(0)=k$. On the one hand,
the construction of \secref{ss:factor alg} assigns to such an $A$ a collection of objects
$$\on{Fact}^{\on{alg}}(A)_{X^\lambda}\in \Shv(X^\lambda), \quad \lambda\in \Lambda^{\on{neg}}-0.$$

On the other hand, we have the above object 
$$\on{Fact}^{\on{alg}}(A)_\Ran\in \on{Fact}^{\on{alg}}(\CA)_\Ran.$$

By unwinding the constructions we obtain that these two objects match up under the equivalence \eqref{e:fact graded}.

\ssec{Digression: right-lax central structures}

\sssec{}

Let $\CA$ and $\CA'$ be symmetric monoidal categories, and let $\CC$ be a $(\CA',\CA)$-bimodule category.
Let $F:\CA\to \CA'$ be a right-lax symmetric monoidal functor. 

\medskip

A right-lax central structure on an object $c\in \CC$ with respect to $F$ is a system of maps
$$F(a)\otimes c\overset{\phi(a,c)}\longrightarrow c\otimes a, \quad a\in \CA$$
that make the diagrams
$$
\CD
F(a_1)\otimes (F(a_2)\otimes c) @>{\phi(a_2,c)}>>  F(a_1)\otimes (c\otimes a_2) \\
@V{\sim}VV   @VV{\sim}V  \\
(F(a_1)\otimes F(a_2))\otimes c & & (F(a_1)\otimes c)\otimes a_2 \\
@VVV    @VV{\phi(a_1,c)}V   \\
F(a_1\otimes a_2) \otimes c & & (c\otimes a_1)\otimes a_2 \\
@V{\phi(a_1\otimes a_2,c)}VV   @VV{\sim}V  \\
c\otimes (a_1\otimes a_2)  @>{\on{id}}>> c\otimes (a_1\otimes a_2),
\endCD
$$
commute, along with a coherent system of higher compatibilities. 

\medskip

Denote the category of objects of $\CC$ equipped with a right-lax central structure on an object with respect to $F$ 
by $Z_F(\CC)$.

\sssec{}

From now on we will assume that $\CA$ is rigid (see \cite[Chapter 1, Sect. 9.1]{GR} for what this means). 

\medskip

If $\CA$ is compactly generated, this
condition is equivalent to requiring that the class of compact objects in $\CA$ coincides with the class of objects that are dualizable
with respect to the symmetric monoidal structure on $\CA$. 

\sssec{}

Assume for a moment that $F$ is strict (i.e., is a genuine symmetric monoidal functor). We have:

\begin{lem} \label{l:central genuine}
If $c\in Z_F(\CC)$, then the morphisms $\phi(a,c)$ are isomorphisms.
\end{lem}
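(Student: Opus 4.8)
The plan is to exploit rigidity of $\CA$ to produce a two-sided inverse to each $\phi(a,c)$. Since $\CA$ is rigid, every object $a\in\CA$ is dualizable; fix a dual $a^\vee$ together with unit and counit maps $\one_\CA\to a\otimes a^\vee$ and $a^\vee\otimes a\to\one_\CA$. Because $F$ is a genuine (strict) symmetric monoidal functor, it preserves dualizability: $F(a^\vee)$ is a dual of $F(a)$, with unit and counit obtained by applying $F$ to those of $a$ and composing with the structure isomorphisms $F(a)\otimes F(a^\vee)\simeq F(a\otimes a^\vee)$, $F(\one_\CA)\simeq\one_{\CA'}$. So both $F(-)$ and the identity functor send the duality data for $a$ to duality data for $F(a)$.

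Next I would assemble a candidate inverse to $\phi(a,c)\colon F(a)\otimes c\to c\otimes a$. Apply the central structure map $\phi(a^\vee,c)\colon F(a^\vee)\otimes c\to c\otimes a^\vee$, and tensor/compose with the appropriate unit and counit maps for the pair $(a,a^\vee)$ (on the $\CA$-side, acting on $\CC$) and for $(F(a),F(a^\vee))$ (on the $\CA'$-side, acting on $\CC$). Concretely, the inverse map $c\otimes a\to F(a)\otimes c$ is the composite
\[
c\otimes a \;\longrightarrow\; F(a)\otimes F(a^\vee)\otimes c\otimes a \;\xrightarrow{\ \phi(a^\vee,c)\ }\; F(a)\otimes c\otimes a^\vee\otimes a \;\longrightarrow\; F(a)\otimes c,
\]
where the first arrow inserts the $F$-image of the unit for $(a,a^\vee)$ and the last applies the counit for $(a,a^\vee)$. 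That $this$ composite is a two-sided inverse of $\phi(a,c)$ is then a zig-zag/triangle-identity computation: one uses the hexagon-type coherence diagram in the definition of the right-lax central structure (the displayed diagram for $\phi(a_1\otimes a_2,c)$, specialized to $a_1=a$, $a_2=a^\vee$ and to $a_1=a^\vee$, $a_2=a$) to slide $\phi$ past the unit and counit, and then the triangle identities for the dualities $(a,a^\vee)$ and $(F(a),F(a^\vee))$ collapse the resulting expression to the identity. The main obstacle is purely bookkeeping: carrying out this argument at the level of $\infty$-categories means the triangle-identity manipulations must be promoted to a coherent homotopy-level statement, so one should phrase it as: the data $\phi(-,c)$ defines a (right-lax) natural transformation between two functors $\CA\to\CC$ that both preserve the relevant dualizability, and a right-lax natural transformation whose components respect duals is automatically invertible on dualizable objects. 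Since in our situation every object of $\CA$ is dualizable, we conclude $\phi(a,c)$ is an isomorphism for all $a$.

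I expect the only genuinely delicate point to be this last coherence upgrade — making precise "a right-lax transformation compatible with duality data is invertible" in the $\infty$-categorical setting — whereas the strict $1$-categorical computation is the standard fact that a lax-monoidal natural transformation is strong on dualizable objects. Everything else (preservation of duals by the strict functor $F$, and by $\id_\CC$) is immediate from rigidity of $\CA$ and the definitions.
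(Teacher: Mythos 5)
The paper states \lemref{l:central genuine} without proof, so there is no argument of the author's to compare against; your duality (``mate'') construction is the standard one and is essentially correct. You invert $\phi(a,c)$ by inserting the coevaluation for $(a,a^\vee)$ transported through the strict functor $F$, applying $\phi(a^\vee,c)$, and contracting with the evaluation; the multiplicativity hexagon specialized to the pairs $(a,a^\vee)$ and $(a^\vee,a)$, naturality of $\phi(-,c)$ with respect to the (co)evaluation maps, unitality (the identification of $\phi(\one_\CA,c)$ with $\on{id}_c$, which is part of the coherent data, equivalently of the $R^F_\CA$-module structure of \lemref{l:central structure}), and the triangle identities then show your candidate map is a two-sided inverse. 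Strictness of $F$ enters exactly where you say: it guarantees that $F(a^\vee)$ is dual to $F(a)$, which is precisely what fails for a merely right-lax $F$ (e.g.\ in the Drinfeld--Pl\"ucker situation), so your proof uses the hypothesis in the essential way.

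Two corrections. First, ``since $\CA$ is rigid, every object is dualizable'' is not accurate: in the sense of rigidity used here (see the paper's own remark), the dualizable objects are the compact ones, and a general object of, say, $\Rep(\cG)$ is not dualizable. Your argument as written proves invertibility of $\phi(a,c)$ only for dualizable $a$; to conclude for all $a$, write $a$ as a colimit of compact objects and use that both $a\mapsto F(a)\otimes c$ and $a\mapsto c\otimes a$ preserve colimits (all functors in this paper are continuous), so that $\phi(a,c)$ is a colimit of isomorphisms and hence an isomorphism. Second, the coherence issue you flag at the end is not actually an obstacle: being an isomorphism in a DG category is a property that can be checked in the homotopy category, so exhibiting a two-sided inverse up to homotopy---which is what the $1$-categorical triangle-identity computation does, using only the coherences already recorded in the definition---suffices; no homotopy-coherent upgrade of the statement ``lax transformations are strong on dualizable objects'' is needed for the lemma as stated.
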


In other words, this lemma says that if $F$ is genuine, then any right-lax central structure is a genuine central structure
(under the assumption that $\CA$ is rigid).

\sssec{}

Let $R_\CA\in \CA\otimes \CA$ be the (commutative) algebra object,
obtained by applying the right adjoint 
$$\CA\to \CA\otimes \CA$$
of the monoidal operation $\CA\otimes \CA\to \CA$, to the unit object ${\bf 1}_\CA\in \CA$.

\medskip

Consider the (commutative) algebra object
$$R^F_\CA:=(F\otimes \on{id})(R_\CA)\in \CA'\otimes \CA.$$

We have:

\begin{lem} \label{l:central structure}
A datum of right-lax central structure on an object $c\in \CC$ is equivalent to upgrading $c$ to an 
object of $R^F_\CA\mod(\CC)$.
\end{lem}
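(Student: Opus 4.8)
\textbf{Proof plan for \lemref{l:central structure}.}

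The plan is to produce the equivalence by identifying both sides as a limit over the same simplicial-type diagram, using the fact that $R^F_\CA$ is, by construction, the image under $F\otimes\on{id}$ of the canonical algebra $R_\CA$, which is the ``universal central/Hochschild'' algebra governing central structures when the functor is genuine. First I would recall that for the genuine (strict) situation, by \lemref{l:central genuine} a right-lax central structure on $c\in\CC$ is automatically genuine, and by the standard description (the ``Drinfeld center'' / relative Hochschild cohomology picture, see the rigidity assumption on $\CA$) it is the same as a module structure over $R_\CA$ acting on $\CC$ via the $(\CA,\CA)$-bimodule structure — here one uses that $R_\CA=\on{oblv}\circ\on{ind}(\mathbf 1_\CA)$ for the forgetful/induction adjunction between $\CA\mod$ and $(\CA\otimes\CA)\mod$, together with rigidity of $\CA$, which guarantees that the right adjoint $\CA\to\CA\otimes\CA$ is $(\CA\otimes\CA)$-linear (this is exactly where rigidity enters, via \cite[Chapter 1, Sect. 9]{GR}).

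The key steps, in order, are: (1) Write the datum of a right-lax central structure on $c$ as the datum of a right-lax morphism of $\CA$-module categories $\CA\to\CC$, $a\mapsto c\otimes a$ — i.e. an object of the category of right-lax $\CA$-linear functors from $\CA$ (with its regular right action, and left action through $F$) to $\CC$; the compatibility hexagons in the statement are precisely the associativity constraints for such a functor. (2) Identify right-lax $\CA$-linear functors out of the rigid monoidal category $\CA$ with modules: because $\CA$ is rigid, internal-hom and tensor are interchangeable, so a right-lax $\CA$-linear functor $\CA\to\CC$ is the same as a plain object of $\CC$ together with an action of the algebra $\underline{\on{End}}_\CA(\mathbf 1_\CA)$ computed in the appropriate enriched sense — and that algebra, pushed into the bimodule setting, is exactly $R_\CA$. (3) Apply $F\otimes\on{id}$: twisting the left $\CA$-action on $\CA$ through the right-lax symmetric monoidal functor $F$ replaces $R_\CA$ by $R^F_\CA=(F\otimes\on{id})(R_\CA)\in\CA'\otimes\CA$, and the resulting module category over $\CA'\otimes\CA$ acting on $\CC$ via the $(\CA',\CA)$-bimodule structure is by definition $R^F_\CA\mod(\CC)$. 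Chaining (1)--(3) gives the asserted equivalence $Z_F(\CC)\simeq R^F_\CA\mod(\CC)$, and one checks it is the identity on underlying objects of $\CC$.

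I expect the main obstacle to be step (2): making precise, at the level of $\infty$-categories and with full homotopy coherence, the passage between a right-lax $\CA$-linear functor $\CA\to\CC$ and a module over $R_\CA$. The naive check on objects and low morphisms is immediate, but the coherent system of higher compatibilities mentioned in the statement must be matched with the bar-complex describing $R_\CA\mod$, and this requires the rigidity of $\CA$ in an essential way (so that the right adjoint to multiplication is itself a bimodule functor and the relevant (co)bar constructions converge / are well-behaved). A clean way to organize this is to present both $Z_F(\CC)$ and $R^F_\CA\mod(\CC)$ as totalizations of the cosimplicial object $[n]\mapsto\Maps(\mathbf 1,\ c\otimes\text{(}F\text{-twisted tensor powers)})$ and observe the two cosimplicial objects agree termwise and cosimplicially; the genuine case \lemref{l:central genuine} is then the degenerate instance where the cosimplicial object is already a limit diagram after one step. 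Everything else — the hexagon $\leftrightarrow$ associativity dictionary and the effect of $F\otimes\on{id}$ — is formal once this identification is in place.
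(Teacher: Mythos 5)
The paper itself offers no proof of \lemref{l:central structure}: it is stated as a known consequence of the rigidity of $\CA$ (the relevant mechanism being the same one as in \cite[Sect. 4--5]{Ga1}, where Hecke structures are identified with modules over the regular representation). So there is nothing to compare against line-by-line; judged on its own, your plan follows what is indeed the standard route -- rigidity/dualizability converts the maps $F(a)\otimes c\to c\otimes a$ into maps $(F(a)\boxtimes a^\vee)\cdot c\to c$, which assemble (using $R_\CA\simeq \underset{a\in \CA^c}{\on{colim}}\, a\boxtimes a^\vee$, hence $R^F_\CA\simeq \underset{a\in \CA^c}{\on{colim}}\, F(a)\boxtimes a^\vee$, since $F$ is continuous) into an action map $R^F_\CA\cdot c\to c$, and the hexagons plus higher coherences are matched against the bar resolution of the module structure by a termwise comparison of (co)simplicial objects. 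That final comparison is exactly where the content lies, and you identify it correctly.

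Two formulations in your steps (1)--(2) would need repair before this becomes a proof. First, the phrase ``right-lax $\CA$-linear functor $\CA\to\CC$'' is dangerous: for rigid $\CA$ and honest module categories, every continuous right-lax $\CA$-linear functor is automatically strict (this is the same phenomenon as \lemref{l:central genuine}), so if you encode the laxness as laxness over the genuine $\CA$-actions you will collapse it and end up proving the statement for a strict $F$ rather than a right-lax one. The laxness that must survive is the one along $F$: $\CC$ is only a \emph{lax} left $\CA$-module via $F$ (because $F(a_1)\otimes F(a_2)\to F(a_1\otimes a_2)$ goes the ``wrong'' way for an honest action), and your bookkeeping has to keep that laxness attached to $F$ while using rigidity only in the $\CA$-variable to dualize. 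Second, the algebra you invoke is not $\underline{\on{End}}_\CA(\mathbf 1_\CA)$ computed in $\CA$ (which is just $\mathbf 1_\CA$), but the internal endomorphism algebra of $\mathbf 1_\CA$ relative to the $\CA\otimes\CA$-module structure on $\CA$, i.e.\ $m^R(\mathbf 1_\CA)=R_\CA$, where $m$ is the monoidal operation; you gesture at this (``pushed into the bimodule setting''), but the precise statement is the one needed to make step (3), the transport along $F\otimes\on{id}$, legitimate. With these corrections, the cosimplicial/bar comparison you sketch in the last paragraph does give the equivalence, and it is the argument the paper implicitly has in mind.
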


\sssec{}

Let $F'$ be another right-lax symmetric monoidal functor, and let $F\to F'$ be a right-lax symmetric monoidal natural transformation. 
Restriction defines a functor
\begin{equation} \label{e:restr lax central}
Z_{F'}(\CC)\to Z_F(\CC).
\end{equation} 

In addition, we have a homomorphism of commutative algebra objects in $\CA'\otimes \CA$
$$R^F_\CA\to R^{F'}_\CA.$$

It easy to see that with respect to the equivalence of \lemref{l:central structure}, the diagram
$$
\CD
Z_{F'}(\CC)   @>>>   Z_F(\CC)    \\
@V{\sim}VV   @VV{\sim}V   \\
R^{F'}_\CA\mod(\CC)   @>>> R^F_\CA\mod(\CC),
\endCD
$$
commutes, where the bottom arrow is given by restriction.

\medskip

In particular, we obtain that the functor \eqref{e:restr lax central} admits a left adjoint, given by
$$R^{F'}_\CA\underset{R^F_\CA}\otimes -.$$

\sssec{}  \label{sss:lax central, factor}

We now modify our context, and we let $\CC$ be a module category for
$$\on{Fact}^{\on{alg}}(\CA'\otimes \CA)_I.$$

We have the corresponding category of right-lax central objects, denoted by the same symbol $Z_F(\CC)$, which can be identified with
$$\on{Fact}^{\on{alg}}(R^F_\CA)_I\mod(\CC).$$

\medskip

For a right-lax symmetric monoidal natural transformation $F\to F'$, the left adjoint to the restriction functor
$Z_{F'}(\CC)\to Z_F(\CC)$ is given by 
\begin{equation} \label{e:tensor up fact}
\on{Fact}^{\on{alg}}(R^{F'}_\CA)_I\underset{\on{Fact}^{\on{alg}}(R^F_\CA)_I}\otimes -.
\end{equation} 

\sssec{} \label{sss:lax central, Ran}

Let 
$$I\rightsquigarrow \CC_I, \quad I\in \on{Fin}^{\on{surj}}$$
be a compatible family of module categories over $\on{Fact}(\CA'\otimes \CA)_I$.

\medskip

Set
$$\CC_\Ran:=\underset{I\in \on{Fin}^{\on{surj}}}{\on{lim}}\, \CC_I.$$

\medskip

We can thus talk about an object $c\in \CC_\Ran$ being equipped with a right-lax central structure with respect to $F$.
Denote the corresponding category of right-lax central objects by $Z_F(\CC_\Ran)$. 

\medskip

The functors \eqref{e:tensor up fact} provide a left adjoint to the forgetful functor
$$Z_{F'}(\CC_\Ran)\to Z_F(\CC_\Ran).$$

This follows from the fact that for a surjective map of finite sets $\phi:I_1\twoheadrightarrow I_2$, the natural transformation
in the diagram
$$
\CD
Z_F(\CC_{I_1})  @>{\Delta_\phi^!}>> Z_F(\CC_{I_2}) \\
@V{\on{Fact}^{\on{alg}}(R^{F'}_\CA)_{I_1}\underset{\on{Fact}^{\on{alg}}(R^F_\CA)_{I_1}}\otimes-}VV   
@VV{\on{Fact}^{\on{alg}}(R^{F'}_\CA)_{I_2}\underset{\on{Fact}^{\on{alg}}(R^F_\CA)_{I_2}}\otimes-}V  \\
Z_{F'}(\CC_{I_1})  @>{\Delta_\phi^!}>> Z_{F'}(\CC_{I_2}) 
\endCD
$$
is an isomorphism. 

\ssec{Hecke and Drinfeld-Pl\"ucker structures}  \label{ss:DrPl}

We will be interested in the following particular cases of the 
above situation\footnote{The formalism described in this subsection (as well as the term) was suggested by S.~Raskin.}. 

\sssec{}  \label{sss:graded Hecke}

Take $\CA=\Rep(\cG)$ and $\CA'=\Rep(\cT)$
with $F'$ being given by restriction along $\cT\to \cG$. We denote the corresponding category
$Z_{F'}(\CC)$ by 
$$\bHecke(\CC).$$

\medskip

By \lemref{l:central genuine}, its objects are $c\in \CC$, equipped with a system of \emph{iso}morphisms
$$\Res^\cG_\cT(V)\otimes c\simeq c\otimes V, \quad V\in \Rep(\cG),$$
compatible with tensor products of the $V$'s.

\medskip

For this reason, we call a (right-lax) central structure on an object of $\CC$ in this case
a \emph{graded Hecke structure}.

\medskip

Equivalently, these are objects of $\CC$ equipped with an action of the algebra
$$R^{F'}_\CA:=(\on{Res}(\cG_\cT)\otimes \on{id})(R_\cG),$$
where $R_\cG\in \Rep(\cG)\otimes \Rep(\cG)$ is the regular representation. 

\sssec{}

Let us now take $\CA=\Rep(\cG)$ and $\CA'=\Rep(\cT)$, but the functor $F$
is given by the \emph{non-derived} functor of $\cN$-invariants
$$V^\lambda\mapsto V^\lambda(\lambda)=\sfe^\lambda.$$

\medskip

The corresponding algebra object
$$R^F_\CA\in \Rep(\cT)\otimes \Rep(\cG)$$
is $\CO(\ol{\cN\backslash \cG})$, where $\ol{\cN\backslash \cG}$ is the base affine space of $\cG$, viewed
as acted on on the left by $\cT$ and on the right by $\cG$.

\medskip

We denote the corresponding category
$Z_{F}(\CC)$ by 
$$\on{DrPl}(\CC).$$

\medskip

By definition, its objects are $c\in \CC$, equipped with a collection of maps
$$\sfe^\lambda\otimes c\overset{\phi(\lambda,c)}\longrightarrow c\otimes V^\lambda$$
that make the diagrams 
$$
\CD
\sfe^\lambda\otimes (\sfe^\mu\otimes c)  @>{\phi(\mu,c)}>>   \sfe^\lambda\otimes (c\otimes V^\mu) \\
@V{\sim}VV   @VV{\sim}V  \\
(\sfe^\lambda\otimes \sfe^\mu)\otimes c & & (\sfe^\lambda\otimes c)\otimes V^\mu \\
@V{\sim}VV  @VV{\phi(\lambda,c)}V  \\
\sfe^{\lambda+\mu}\otimes c  & & (c\otimes V^\lambda)\otimes V^\mu \\
@V{\phi(\lambda+\mu,c)}VV  @VV{\sim}V   \\
c\otimes V^{\lambda+\mu}  @>>>  c\otimes (V^\lambda\otimes V^\mu)
\endCD
$$
commute, along with a coherent system of higher compatibilities. 

\medskip

We will call a right-lax central structure on an object of $\CC$ in this case a \emph{Drinfeld-Pl\"ucker} structure. 

\sssec{}

We have a right-lax symmetric monoidal natural transformation $F\to F'$,
$$\sfe^\lambda \to \Res^\cG_\cT(V^\lambda).$$

The corresponding morphism of commutative algebra objects in $\Rep(\cT)\otimes \Rep(\cG)$ is given by pullback
along the projection map
$$\cG\to \ol{\cN\backslash \cG}.$$

\medskip

Consider the forgetful functor
$$\Res^{\bHecke}_{\on{DrPl}}:\bHecke(\CC)\to \on{DrPl}(\CC),$$
and its left adjoint
$$\Ind^{\bHecke}_{\on{DrPl}}:\on{DrPl}(\CC)\to \bHecke(\CC).$$

\sssec{}

Let us now recall the statement of \cite[Proposition 6.2.4]{Ga1} that describes the composition
\begin{equation} \label{e:induce forget}
\on{DrPl}(\CC) \overset{\Ind^{\bHecke}_{\on{DrPl}}}\longrightarrow \bHecke(\CC) \to \CC,
\end{equation}
where the second arrow is the forgetful functor. 

\medskip

Given an object $c\in \on{DrPl}(\CC)$, the construction of \cite[Sect. 2.7]{Ga1} defines a functor
$\Lambda^+\to \CC$, which at the level of objects sends $\lambda\in \Lambda^+$ to
$$\sfe^{-\lambda}\otimes c\otimes V^\lambda.$$ 

\medskip

The assertion \cite[Proposition 6.2.4]{Ga1} says that the value of \eqref{e:induce forget} on the above $c$
is canonically identified with
$$\underset{\lambda\in \Lambda^+}{\on{colim}}\, \sfe^{-\lambda}\otimes c\otimes V^\lambda.$$ 

\sssec{}

We now place ourselves in the context of \secref{sss:lax central, factor}. Let $\CC$ be a module category for
$$\on{Fact}^{\on{alg}}(\Rep(\cT)\otimes \Rep(\cG))_I.$$

We denote corresponding categories $Z_{F'}(\CC)$ and $Z_{F}(\CC)$ by $\bHecke(\CC)$
and $\on{DrPl}(\CC)$, respectively. 

\medskip

Let $c\in \CC$ be an object of $Z_{F}(\CC)$. We wish to describe the value on $c$ of the composite functor
\begin{equation} \label{e:induce forget factor}
\on{DrPl}(\CC) \overset{\Ind^{\bHecke}_{\on{DrPl}}}\longrightarrow \bHecke(\CC) \to \CC
\end{equation} 

\sssec{}  \label{sss:extend to functor}

For $\ul\lambda\in \Maps(I,\Lambda^+)$, recall the object $V^{\ul\lambda}\in \on{Fact}(\Rep(\cG))_I$, see \secref{sss:V lambda I}.
Similarly, we have the object
$$\sfe^{\ul\lambda}\in \on{Fact}(\Rep(\cT))_I.$$

\medskip

The construction of \cite[Sect.2.7]{Ga1} defines on the assignment 
$$\ul\lambda\mapsto \sfe^{-\ul\lambda}\otimes c\otimes V^{\ul\lambda}$$
a structure of a functor
$$\Maps(I,\Lambda^+)\to \CC.$$

Generalizing \cite[Proposition 6.2.4]{Ga1} one shows:

\begin{prop}  \label{p:Sam's}
The value of the composite functor \eqref{e:induce forget factor} on $c\in \on{DrPl}(\CC)$ identifies
canonically with
$$\underset{\ul\lambda\in \Maps(I,\Lambda^+)}{\on{colim}}\, \ul\lambda\mapsto \sfe^{-\ul\lambda}\otimes c\otimes V^{\ul\lambda}.$$
\end{prop}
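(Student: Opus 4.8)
The statement to prove is Proposition~\ref{p:Sam's}: that the value of the composite functor \eqref{e:induce forget factor} on an object $c \in \on{DrPl}(\CC)$ (where $\CC$ is a module category over $\on{Fact}(\Rep(\cT)\otimes\Rep(\cG))_I$) is canonically identified with $\underset{\ul\lambda\in \Maps(I,\Lambda^+)}{\on{colim}}\, \sfe^{-\ul\lambda}\otimes c\otimes V^{\ul\lambda}$. The plan is to reduce this to the already-stated \cite[Proposition 6.2.4]{Ga1} by the factorization/spreading formalism of Sects.~\ref{ss:spread over Ran} and \ref{sss:lax central, factor}--\ref{sss:lax central, Ran}. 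First I would spell out the explicit description of the two adjunctions in play: by \lemref{l:central structure} (in its factorized form, \secref{sss:lax central, factor}), $\bHecke(\CC) = \on{Fact}(R^{F'}_{\Rep(\cG)})_I\mod(\CC)$ and $\on{DrPl}(\CC) = \on{Fact}(R^{F}_{\Rep(\cG)})_I\mod(\CC)$, where $R^{F'}_{\Rep(\cG)} = \CO(\cG)$ (as a $(\cT,\cG)$-bimodule, i.e.\ the regular representation restricted on the left) and $R^{F}_{\Rep(\cG)} = \CO(\ol{\cN\backslash\cG})$. Thus the left adjoint $\Ind^{\bHecke}_{\on{DrPl}}$ is the relative tensor product $\on{Fact}(\CO(\cG))_I \underset{\on{Fact}(\CO(\ol{\cN\backslash\cG}))_I}{\otimes} -$, by \eqref{e:tensor up fact}.

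The second step is to compute this relative tensor product as a colimit. The key algebraic input is the bar-resolution / simplicial presentation of the relative tensor product, together with the geometry of the map $\cG \to \ol{\cN\backslash\cG}$: the algebra $\CO(\cG)$, viewed as an $\CO(\ol{\cN\backslash\cG})$-algebra in $\Rep(\cT)\otimes\Rep(\cG)$, decomposes (via the $\cT$-action coming from $\ol{\cN\backslash\cG} = \cN\backslash\cG$, whose $\cT$-weight decomposition is $\CO(\ol{\cN\backslash\cG}) = \bigoplus_{\lambda\in\Lambda^+} (V^\lambda)^\vee$ with the $\lambda$-graded piece) so that passing to the $(-\lambda)$-graded component and tensoring appropriately yields the poset $\Lambda^+$ (resp.\ $\Maps(I,\Lambda^+)$) indexing the colimit, with the transition maps given by the Plücker maps. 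On a single point this is exactly the content of \cite[Sect.~2.7]{Ga1} and \cite[Proposition 6.2.4]{Ga1}; over $X^I$ one does the same thing componentwise on $\on{Fact}(\Rep(\cG))_I$, using that $\on{Fact}(-)_I$ is a (right-lax) symmetric monoidal functor (\secref{ss:spread over Ran}, continued) so it carries the relevant algebra objects, module structures, and bar-complexes through. Concretely: $\on{Fact}(\CO(\cG))_I \underset{\on{Fact}(\CO(\ol{\cN\backslash\cG}))_I}{\otimes} c$ is computed by the geometric realization of $[n]\mapsto \on{Fact}(\CO(\cG))_I \otimes \on{Fact}(\CO(\ol{\cN\backslash\cG}))_I^{\otimes n}\otimes c$, and the $\cT$-weight decomposition on the $\on{Fact}$-ed base affine space spreads the Samuel-type argument (that the bar complex is resolved by the $\Lambda^+$-colimit of $\sfe^{-\lambda}\otimes c\otimes V^\lambda$) to a $\Maps(I,\Lambda^+)$-colimit of $\sfe^{-\ul\lambda}\otimes c\otimes V^{\ul\lambda}$.

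The third step is to identify the functor $\Maps(I,\Lambda^+)\to\CC$ being used here with the one described in \secref{sss:extend to functor} (via \cite[Sect.~2.7]{Ga1}), so that the colimit in the statement is literally the colimit produced by the previous step; this is a matter of unwinding definitions, exactly as in the pointwise case, noting that the functoriality in $\ul\lambda$ comes from the Plücker maps $\underset{i}{\otimes}V^{\lambda_i}\to V^\lambda$ on each $X^K$-factor. Finally, one checks compatibility of all of this with the restriction functors along surjections $I_1\twoheadrightarrow I_2$, which is precisely the square in \secref{sss:lax central, Ran} being an isomorphism; this lets the identification be stated at the level of the individual $I$ (as in the Proposition) but also assembles over $\Ran$ if needed.

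\textbf{Main obstacle.} The genuinely new point — everything else is "spread the pointwise argument of \cite[Sect.~2.7, Prop.~6.2.4]{Ga1} through $\on{Fact}(-)_I$" — is verifying that the relative tensor product / bar construction computing $\Ind^{\bHecke}_{\on{DrPl}}$ commutes appropriately with the colimit defining $\on{Fact}(\CA)_I = \underset{\on{TwArr}_{I/}}{\on{colim}}\on{TwArr}(\CA)$, and that the $\cT$-weight grading that produces the indexing poset $\Maps(I,\Lambda^+)$ interacts correctly with the $\on{TwArr}_{I/}$-colimit (no unexpected "interaction terms" appear when the points collide on the diagonals in $X^I$). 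This should follow from the right-lax symmetric monoidal structure on $\on{Fact}(-)_I$ being compatible with forming module categories and relative tensor products, plus the cofinality observations already used in \secref{ss:spread over Ran}, but it is the step that requires care rather than mere bookkeeping.
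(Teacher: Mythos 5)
Your overall reduction is the one the paper intends: the paper gives no written proof of this proposition, asserting only that it follows by ``generalizing \cite[Proposition 6.2.4]{Ga1}'', and your first step --- identifying $\bHecke(\CC)$ and $\on{DrPl}(\CC)$ with module categories over $\on{Fact}(\CO(\cG))_I$ and $\on{Fact}(\CO(\ol{\cN\backslash\cG}))_I$ via (the factorized form of) \lemref{l:central structure}, and computing $\Ind^{\bHecke}_{\on{DrPl}}$ as the relative tensor product \eqref{e:tensor up fact} --- is exactly the formalism the paper sets up for this purpose. Where you diverge is in how that relative tensor product is evaluated: you propose the bar resolution combined with the $\cT$-weight decomposition of $\CO(\ol{\cN\backslash\cG})$, and you yourself flag as the ``main obstacle'' the commutation of this analysis with the $\on{TwArr}_{I/}$-colimit defining $\on{Fact}(-)_I$; as written, that step is only asserted to ``should follow'', so the proposal is incomplete precisely at the point where the factorization structure enters.

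The direct generalization of the argument for \cite[Proposition 6.2.4]{Ga1} bypasses this issue. The key input there is a presentation of $\CO(\cG)$ as a colimit of induced $\CO(\ol{\cN\backslash\cG})$-modules, $\CO(\cG)\simeq \underset{\lambda\in\Lambda^+}{\on{colim}}\, \sfe^{-\lambda}\otimes V^\lambda\otimes\CO(\ol{\cN\backslash\cG})$, with transition maps the Pl\"ucker maps; its factorized analogue is the presentation $\on{Fact}(\CO(\cG))_I\simeq \underset{\ul\lambda\in\Maps(I,\Lambda^+)}{\on{colim}}\, \sfe^{-\ul\lambda}\otimes V^{\ul\lambda}\otimes \on{Fact}(\CO(\ol{\cN\backslash\cG}))_I$ as module objects, which is verified termwise on the $\on{TwArr}_{I/}$-diagram: on a term $I\twoheadrightarrow J\twoheadrightarrow K$ it is the pointwise statement applied factor-by-factor over $J$, with a cofinality argument matching the resulting $\Maps(J,\Lambda^+)$-colimits with the $\Maps(I,\Lambda^+)$-colimit, exactly as in the construction of $V^{\ul\lambda}$ in \secref{sss:V lambda I}. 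Since the relative tensor product commutes with colimits in the module variable, the value of \eqref{e:induce forget factor} on $c$ is then immediately the stated colimit, and the identification of the transition maps with the functor of \secref{sss:extend to functor} is built in. So your route is viable and in the same spirit, but you should either prove the commutation statement you need for the bar-complex manipulation, or (more cleanly) replace that step by the factorized colimit presentation above.
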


\sssec{}  \label{sss:V lambda Ran}

Let $I\rightsquigarrow \CC_I$ be as in \secref{sss:lax central, Ran}. Consider the corresponding categories
$\on{DrPl}(\CC_\Ran)$ and $\bHecke(\CC_\Ran)$. 

\medskip

The compatibility of the functors $\Ind^{\bHecke}_{\on{DrPl}}$ for surjections of finite sets gives rise to a well-defined functor
$$\Ind^{\bHecke}_{\on{DrPl}}:\on{DrPl}(\CC_\Ran)\to \bHecke(\CC_\Ran),$$
left adjoint to the restriction functor. 

\medskip

For $c\in \on{DrPl}(\CC_\Ran)$, the value of the composite functor
$$\on{DrPl}(\CC) \overset{\Ind^{\bHecke}_{\on{DrPl}}}\longrightarrow \bHecke(\CC) \to \CC\to \CC_I$$
is given by
$$\underset{\ul\lambda\in \Maps(I,\Lambda^+)}{\on{colim}}\, \sfe^{-\ul\lambda}\otimes c_I\otimes V^{\ul\lambda},$$
where $c_I$ is the value of $c$ in $\CC_I$. 

\ssec{The Hecke property--enhanced statement}  \label{ss:DrPl on IC}

\sssec{}

The key property of the geometric Satake functor
$$\Sat_{G,I}:\on{Fact}^{\on{alg}}(\Rep(\cG))_I\to \Sph_{G,I}$$
is that it is has a natural monoidal structure.

\medskip

The same applies to the modified geometric Satake functor $\Sat'_{T,I}$ for $T$. 

\medskip

Thus, we obtain that the category $\Shv(\fL^+(T)_I\backslash \Gr_{G,I})$ is as acted on by the monoidal category 
$\on{Fact}^{\on{alg}}(\Rep(\cT)\otimes \Rep(\cG))_I$.

\medskip

These actions are compatible under surjective maps of finite sets $I_1\twoheadrightarrow I_2$.  

\sssec{} \label{sss:DrPl on IC}

Consider the object
$$\delta_{1_\Gr,I}:=(s_I)_!(\omega_{X^I})\in \Shv(\fL^+(T)_I\backslash \Gr_{G,I}),$$
where $s_I:X^I\to \Gr_{G,I}$ is the unit section.

\medskip

It follows from the construction of the functor $\Sat_{G,I}$ that $\delta_{\ul{0},I}$ lifts canonically to an object of
$$\on{DrPl}(\Shv(\fL^+(T)_I\backslash \Gr_{G,I})).$$

\sssec{}

Consider the corresponding object
$$\Ind^{\bHecke}_{\on{DrPl}}(\delta_{1_\Gr,I})\in \bHecke(\Shv(\fL^+(T)_I\backslash \Gr_{G,I})).$$

It follows from \propref{p:Sam's} that its image under the forgetful functor
$$\bHecke(\Shv(\fL^+(T)_I\backslash \Gr_{G,I}))\to \Shv(\fL^+(T)_I\backslash \Gr_{G,I})\to  \Shv(\Gr_{G,I})$$
identifies canonically with the object $\IC^\semiinf_I$, constructed in \secref{sss:semiinf I}. 

\sssec{}

Consider now the object
$$\delta_{1_\Gr,\Ran}:=(s_\Ran)_!(\omega_{\Ran})\in \Shv(\fL^+(T)_\Ran\backslash \Gr_{G,\Ran}),$$
where $s_\Ran:\Ran\to \Gr_{G,\Ran}$ is the unit section.

\medskip

It naturally lifts to an object of
$$\on{DrPl}(\Shv(\fL^+(T)_\Ran\backslash \Gr_{G,\Ran})).$$

\medskip

Consider the corresponding object
$$\Ind^{\bHecke}_{\on{DrPl}}(\delta_{1_\Gr,\Ran})\in \bHecke(\Shv(\fL^+(T)_\Ran\backslash \Gr_{G,\Ran})).$$

By \secref{sss:V lambda Ran}, the image of $\Ind^{\bHecke}_{\on{DrPl}}(\delta_{1_\Gr,\Ran})$ under the forgetful functor
$$\bHecke(\Shv(\fL^+(T)_\Ran\backslash \Gr_{G,\Ran}))\to \Shv(\fL^+(T)_\Ran\backslash \Gr_{G,\Ran})\to \Shv(\Gr_{G,\Ran})$$
identifies canonically with the object $'\!\IC^\semiinf_\Ran$, constructed in \secref{sss:semiinf prime}. 

\begin{rem}
The latter could be used to define on the assignment
$$I\rightsquigarrow \IC^\semiinf_I$$
a homotopy-coherent system of compatibilities as $I$ varies over $\on{Fin}^{\on{surj}}$.
\end{rem} 

\sssec{}

Using the isomorphism 
$$'\!\IC^\semiinf_\Ran\simeq \IC^\semiinf_\Ran$$
of \thmref{t:descr as colimit}, we thus obtain a lift of $\IC^\semiinf_\Ran$ to an object of $\bHecke(\Shv(\fL^+(T)_\Ran\backslash \Gr_{G,\Ran}))$.

\medskip

Summarizing, we obtain:

\begin{thm} \label{t:Hecke Ran}
The object $\IC^\semiinf_\Ran\in \Shv(\fL^+(T)_\Ran\backslash \Gr_{G,\Ran}))$ naturally lifts to an object of 
$\bHecke(\Shv(\fL^+(T)_\Ran\backslash \Gr_{G,\Ran}))$.
\end{thm}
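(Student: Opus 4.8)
The plan is to realize $\IC^\semiinf_\Ran$, through the isomorphism $'\!\IC^\semiinf_\Ran\simeq\IC^\semiinf_\Ran$ of \thmref{t:descr as colimit}, as the image under $\Ind^{\bHecke}_{\on{DrPl}}$ of a Drinfeld--Pl\"ucker object built from the $\delta$-sheaf on the unit section, and then to transport the resulting $\bHecke$-structure back. Concretely, the theorem is the summary of the constructions of \secref{ss:DrPl on IC}, carried out over the whole Ran space.

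\emph{Step 1: the Drinfeld--Pl\"ucker structure on the $\delta$-sheaf.} First I would equip
$$\delta_{1_\Gr,\Ran}:=(s_\Ran)_!(\omega_{\Ran(X)})\in \Shv(\fL^+(T)_\Ran\backslash \Gr_{G,\Ran})$$
with a Drinfeld--Pl\"ucker structure, i.e. a lift to $\on{DrPl}(\Shv(\fL^+(T)_\Ran\backslash \Gr_{G,\Ran}))$. For a dominant $\lambda$, the highest-weight line of $V^\lambda$ gives, after applying $\Sat_{G,\Ran}$ (via the non-unital functor $\Sph_{G,x}\to\Sph_{G,\Ran}$), a morphism whose restriction along the unit section is precisely the modified $T$-Satake sheaf $\sfe^\lambda$; this yields the structure maps $\sfe^\lambda\otimes\delta_{1_\Gr,\Ran}\to\delta_{1_\Gr,\Ran}\star\Sat_{G,\Ran}(V^\lambda)$. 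The Pl\"ucker maps $V^\lambda\otimes V^\mu\to V^{\lambda+\mu}$ supply the associativity constraints, and the homotopy-coherent refinement comes from the monoidal structure of $\Sat_{G,\Ran}$ (and of its $T$-analogue) over $\Ran(X)$; equivalently, by \lemref{l:central structure} one packages the datum as a module structure over the factorization algebra $\on{Fact}(\CO(\ol{\cN\backslash \cG}))_\Ran$. All of this is done $I$ by $I$ compatibly with surjections $I_1\twoheadrightarrow I_2$, as in \secref{sss:DrPl on IC}.

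\emph{Step 2: induction and identification with the colimit.} Next I would apply $\Ind^{\bHecke}_{\on{DrPl}}$ --- the left adjoint of the tautological forgetful functor $\bHecke(-)\to\on{DrPl}(-)$, whose existence over the Ran space follows from the discussion in \secref{sss:lax central, Ran} --- to obtain an object of $\bHecke(\Shv(\fL^+(T)_\Ran\backslash \Gr_{G,\Ran}))$. By \propref{p:Sam's} applied Ran-space-wise (see \secref{sss:V lambda Ran}), the image of this object under the forgetful functor to $\Shv(\Gr_{G,\Ran})$ is computed at each $I$ by the colimit $\underset{\ul\lambda\in \Maps(I,\Lambda^+)}{\on{colim}}\,\sfe^{-\ul\lambda}\otimes\delta_{1_\Gr,I}\otimes V^{\ul\lambda}$, compatibly in $I$; unwinding \secref{sss:semiinf prime}, this is exactly $'\!\IC^\semiinf_\Ran$. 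Then \thmref{t:descr as colimit} furnishes the canonical isomorphism $'\!\IC^\semiinf_\Ran\simeq\IC^\semiinf_\Ran$ in $\SI^{\leq 0}_\Ran$, and transporting along it the $\bHecke$-structure gives the desired lift; since $F'$ is a genuine symmetric monoidal functor, \lemref{l:central genuine} shows the resulting Hecke structure consists of isomorphisms, and since the forgetful functor $(\SI_\Ran)^{\fL^+(T)_\Ran}\to\SI_\Ran$ is t-exact the lift still lies in $(\SI^{\leq 0}_\Ran)^\heartsuit$.

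The main obstacle is Step 1: producing the Drinfeld--Pl\"ucker datum on $\delta_{1_\Gr,\Ran}$ as a genuinely homotopy-coherent structure, rather than one defined merely on homotopy groups, and doing so uniformly over $\Ran(X)$ (and compatibly in $I$). Concretely, one must check that convolution of the unit $\delta$-sheaf with $\Sat_{G,\Ran}(V^\lambda)$ interacts with the Pl\"ucker picture $\Gr_G\hookrightarrow\prod_{\check\lambda}\BP(V^{\check\lambda})$ in the expected way, and organize the outcome into a module structure over $\on{Fact}(\CO(\ol{\cN\backslash \cG}))_\Ran$; once this is in place, Steps 2 and 3 are formal consequences of \propref{p:Sam's} and \thmref{t:descr as colimit}.
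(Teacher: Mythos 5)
Your proposal is correct and follows essentially the same route as the paper: one endows $\delta_{1_\Gr,\Ran}$ with its canonical Drinfeld--Pl\"ucker structure coming from the construction of $\Sat_{G,\Ran}$, applies $\Ind^{\bHecke}_{\on{DrPl}}$, identifies the underlying object with $'\!\IC^\semiinf_\Ran$ via \propref{p:Sam's} (in its Ran-space form of \secref{sss:V lambda Ran}), and transports the $\bHecke$-structure through the isomorphism of \thmref{t:descr as colimit}. The point you flag as the main obstacle (the homotopy-coherent Drinfeld--Pl\"ucker datum on the $\delta$-sheaf) is exactly what the paper also treats as built into the construction of geometric Satake, so there is no divergence in method.
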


\ssec{Recovering the pointwise Hecke structure}

In this subsection we will finally complete the proof of \thmref{t:Hecke}. 

\sssec{}

The constructions in Sects. \ref{ss:over Ran,ctd}-\ref{ss:DrPl} carry over to the situation when $\Ran$ is replaced by $\Ran_x$. 
From \thmref{t:Hecke Ran} we obtain that the object 
$$\IC^\semiinf_{\Ran_x}\in \Shv(\fL^+(T)_{\Ran_x}\backslash \Gr_{G,\Ran_x}))$$
naturally lifts to an object of 
$\bHecke(\Shv(\fL^+(T)_{\Ran_x}\backslash \Gr_{G,\Ran_x}))$.

\sssec{}

Now, we have a symmetric monoidal functor
$$\Rep(\cT)\otimes \Rep(\cG)\to \on{Fact}(\Rep(\cT)\otimes \Rep(\cG))_{\Ran_x}.$$

\medskip

Restricting, we obtain that $\IC^\semiinf_{\Ran_x}$ lifts to an object of $\bHecke(\Shv(\fL^+(T)_{\Ran_x}\backslash \Gr_{G,\Ran_x}))$,
as stated in \thmref{t:Hecke}. 

\section{Local vs global compatibility of the Hecke structure} \label{s:H g-l}

In this section we will establish a compatibility between the Hecke structure on $\ICs_\Ran$ constructed in the
previous section and the corresponding structure on $\ICs_{\on{glob}}$ established in \cite{BG1}. 

\ssec{The relative version of the Ran Grassmannian}

\sssec{}

We introduce a relative version of the prestack $\Gr_{G,\Ran}$ over $\Bun_T$, denoted $\Gr_{G,\Ran}\wt\times\Bun_T$,
as follows.

\medskip

Let $(\Ran\times \Bun_T)^{\on{level}}$ be the prestack that classifies the data of $(\CP_T,\CI,\beta)$, where:

\medskip

\noindent(i) $\CI$ is a finite non-empty collection of points on $X$;

\medskip

\noindent(ii) $\CP_T$ is a $T$-bundle on $X$;

\medskip

\noindent(iii) $\beta$ is a trivialization of $\CP_T$ on the formal neighborhood of $\Gamma_\CI$.

\medskip

The prestack $(\Ran\times \Bun_T)^{\on{level}}$ is acted on by $\fL(T)_\Ran$, and the map
$$(\Ran\times \Bun_T)^{\on{level}}\to \Bun_T\times \Ran$$
is a $\fL^+(T)_\Ran$-torsor, locally trivial in the \'etale (in fact, even Zariski, since $T$ is a torus) topology.

\medskip

We set
$$\Gr_{G,\Ran}\wt\times\Bun_T:=\fL^+(T)_\Ran\backslash \left(\Gr_{G,\Ran} \underset{\Ran}\times (\Ran\times \Bun_T)^{\on{level}}\right).$$

\medskip

We have a tautological projection
$$r:\Gr_{G,\Ran}\wt\times\Bun_T\to \fL^+(T)_\Ran\backslash \Gr_{G,\Ran}.$$

\sssec{}

The right action of the groupoid
\begin{equation} \label{e:Hecke groupoid G}
\fL^+(G)_\Ran\backslash \fL(G)_\Ran/\fL^+(G)_\Ran
\end{equation} 
on $\Gr_{G,\Ran}$ naturally lifts to an action on
$\Gr_{G,\Ran}\wt\times\Bun_T$, in a way compatible with the projection $r$.

\medskip

In addition, by construction, we have an action of the groupoid 
\begin{equation} \label{e:Hecke groupoid T}
\fL^+(T)_\Ran\backslash \fL(T)_\Ran/\fL^+(T)_\Ran
\end{equation}  
on $\Gr_{G,\Ran}\wt\times\Bun_T$, also compatible with the projection $r$.

\medskip

In particular, we obtain that $\Shv(\Gr_{G,\Ran}\wt\times\Bun_T)$ is a bimodule category for $(\Sph_{T,\Ran},\Sph_{G,\Ran})$,
and hence for $(\on{Fact}(\Rep(\cT)_\Ran,\on{Fact}(\Rep(\cG))_\Ran)$, via the Geometric Satake functor,
where we use the functor $\Sat'_{T,\Ran}$ to map
$$\on{Fact}^{\on{alg}}(\Rep(\cT))_\Ran\to \Sph_{T,\Ran}.$$

\medskip

Base-changing along $X^I\to \Ran$ we obtain a compatible family of module categories for 
$(\on{Fact}^{\on{alg}}(\Rep(\cT)_I,\on{Fact}^{\on{alg}}(\Rep(\cG))_I)$, for $I\in\on{Fin}^{\on{surj}}$. 

\sssec{}

Denote:
$$\IC^\semiinf_{\Ran,\Bun_T}:=r^!(\IC^\semiinf_\Ran).$$

From \thmref{t:Hecke Ran}, we obtain that $\IC^\semiinf_{\Ran,\Bun_T}$ naturally lifts to an object of
$$\bHecke(\Shv(\Gr_{G,\Ran}\wt\times\Bun_T));$$
moreover we have:
\begin{equation} \label{e:rel Hecke}
\IC^\semiinf_{\Ran,\Bun_T}\simeq \Ind^{\bHecke}_{\on{DrPl}}(\delta_{1_\Gr,\Ran,\Bun_T}),
\end{equation}
where 
$$\delta_{1_\Gr,\Ran,\Bun_T}=(s_{\Ran,\Bun_T})_!(\omega_{\Ran\times \Bun_T}),$$
and where $s_{\Ran,\Bun_T}$ is the unit section
$$\Ran\times \Bun_T\to \Gr_{G,\Ran}\wt\times \Bun_T.$$

\ssec{Hecke property in the global setting}

\sssec{}

Consider the stack $\BunBb$, and consider its version
$$(\BunBb\times \Ran)_{\on{poles}}$$
defined as follows:

\medskip

A point of $(\BunBb\times \Ran)_{\on{poles}}$ is a quadruple $(\CP_G,\CP_T,\kappa,\CI)$, where

\medskip

\noindent(i) $\CP_G$ is a $G$-bundle on $X$;

\medskip

\noindent(ii) $\CP_T$ is a $T$-bundle on $X$;

\medskip

\noindent(iii) $\CI$ is a finite non-empty collection of points on $X$;

\medskip

\noindent(iv) $\kappa$ is a datum of maps
$$\kappa^{\check\lambda}:\check\lambda(\CP_T)\to \CV^{\check\lambda}_{\CP_G}$$
that are allowed to have poles on $\Gamma_\CI$, and that satisfy the Pl\"ucker relations.

\medskip

Note that we have a closed embedding
$$\BunBb\times \Ran\hookrightarrow (\BunBb\times \Ran)_{\on{poles}},$$
corresponding to the condition that the maps $\kappa^{\check\lambda}$ have no poles. 

\sssec{}

Hecke modifications of the $G$-bundle (resp., $T$-bundle) define a right (resp., left) action of the groupoid \eqref{e:Hecke groupoid G}
(resp., \eqref{e:Hecke groupoid T}) 
on $(\BunBb\times \Ran)_{\on{poles}}$. 

\medskip

In particular, the category $\Shv((\BunBb\times \Ran)_{\on{poles}})$ acquires a natural structure of bimodule category for 
$(\Sph_{T,\Ran},\Sph_{G,\Ran})$, and hence for $(\on{Fact}^{\on{alg}}(\Rep(\cT))_\Ran,\on{Fact}^{\on{alg}}(\Rep(\cG))_\Ran)$. 

\medskip

Base-changing along $X^I\to \Ran$ we obtain a compatible family of module categories for 
$(\on{Fact}^{\on{alg}}(\Rep(\cT))_I,\on{Fact}^{\on{alg}}(\Rep(\cG))_I)$, for $I\in\on{Fin}^{\on{surj}}$. 

\sssec{}

Denote
$$\IC^\semiinf_{\on{glob},\Bun_T}:=\IC_{\BunBb}\boxtimes \,\omega_{\Ran}\subset \Shv((\BunBb\times \Ran)_{\on{poles}}).$$

The following assertion is (essentially) established in \cite[Theorem 3.1.4]{BG1}:

\begin{thm} \label{t:Hecke global}
The object $\IC^\semiinf_{\on{glob},\Bun_T}$ naturally lifts to an object of the category 
$$\bHecke(\Shv((\BunBb\times \Ran)_{\on{poles}})).$$
\end{thm}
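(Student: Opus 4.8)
The plan is to deduce \thmref{t:Hecke global} from the known Hecke eigen-property of $\IC_{\BunBb}$ established in \cite{BG1}, by carefully tracking the relation between the two geometric incarnations. The starting point is that \cite[Theorem 3.1.4]{BG1} provides, for a fixed point $x\in X$ (or more generally for a fixed finite set $I$ and the corresponding Beilinson--Drinfeld deformation), a system of isomorphisms expressing the effect of Hecke modifications at $x$ (resp., at the points of $\CI$) on $\IC_{\BunBb}$ in terms of the $T$-Hecke action twisted by restriction along $\cT\hookrightarrow \cG$. Concretely, one has the Hecke correspondence stack modifying $(\CP_G,\CP_T,\kappa)$ along $\Gamma_\CI$, and \cite{BG1} identifies the pullback-pushforward of $\IC_{\BunBb}$ along this correspondence, applied against $\Sat_{G,I}(V)$, with $\Sat'_{T,I}(\Res^\cG_\cT(V))$ convolved with $\IC_{\BunBb}$. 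The content of \thmref{t:Hecke global} is merely that these isomorphisms, as $I$ varies over $\on{Fin}^{\on{surj}}$ and the points are allowed to move and collide, assemble into a single graded Hecke structure over the whole Ran space, i.e., an object of $\bHecke(\Shv((\BunBb\times\Ran(X))_{\on{poles}}))$.

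First I would set up the geometry precisely: describe the Hecke groupoids \eqref{e:Hecke groupoid G} and \eqref{e:Hecke groupoid T} acting on $(\BunBb\times\Ran(X))_{\on{poles}}$, check that the two actions commute (they modify the bundle at $\Gamma_\CI$ in the $G$- and $T$-directions independently), and verify that $\IC^\semiinf_{\on{glob},\Bun_T}=\IC_{\BunBb}\boxtimes\omega_{\Ran(X)}$ is supported on the closed substack $\BunBb\times\Ran(X)\hookrightarrow (\BunBb\times\Ran(X))_{\on{poles}}$ so that convolving with kernels supported on Hecke modifications with poles along $\Gamma_\CI$ lands us back in a controlled place. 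Then, over each $X^I$, one invokes \cite[Theorem 3.1.4]{BG1} to produce the isomorphism $\phi(V,-)$; the compatibility with tensor products of $V$'s is exactly the cocycle condition checked in \cite{BG1}. The remaining task is the factorization/Ran-assembly: for a surjection $\phi:I_1\twoheadrightarrow I_2$ one must check the diagram relating the Hecke structures over $X^{I_1}$ and $X^{I_2}$ (via $\Delta_\phi^!$) commutes, and that the structures are compatible with the factorization isomorphisms for disjoint points. This is the analogue of \secref{sss:lax central, Ran}, and it follows because the Satake functors $\Sat_{G,I}$ and $\Sat'_{T,I}$ are themselves factorizable and compatible under $\Delta_\phi$, and because the construction of $\IC_{\BunBb}$ is local in nature (its restriction to strata is described by factorization algebras, cf. \thmref{t:descr of restr glob}).

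An alternative, and perhaps cleaner, route is to mimic the local argument: realize $\IC^\semiinf_{\on{glob},\Bun_T}$ as $\Ind^{\bHecke}_{\on{DrPl}}$ applied to an appropriate $\delta$-type object equipped with its tautological Drinfeld--Pl\"ucker structure, exactly parallel to \eqref{e:rel Hecke}. Namely, the Pl\"ucker data $\kappa$ built into $\BunBb$ furnishes, for each dominant $\check\lambda$, a canonical map, and dualizing/using geometric Satake these give the maps $\phi(\lambda,-)$ of a Drinfeld--Pl\"ucker structure on a suitable generator; then one applies $\Ind^{\bHecke}_{\on{DrPl}}$ and identifies the result with $\IC_{\BunBb}\boxtimes\omega_{\Ran(X)}$ using \propref{p:Sam's} together with the presentation of $\IC_{\BunBb}$ as a colimit of the objects $\ol\bj{}^\lambda_{!}(\dots)$ over $\Lambda^+$ (the global analogue of \thmref{t:descr as colimit}). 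The identification step uses that $\sfe^{-\lambda}\otimes(\text{generator})\otimes V^\lambda$ computes, via Satake, precisely the Hecke-modified $\IC$-type sheaf that appears in the colimit presentation of $\IC_{\BunBb}$.

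The main obstacle I expect is the colimit/identification step in whichever route is taken: in the first route, verifying the full homotopy-coherent compatibility of the $I$-indexed Hecke structures (not merely the commutativity of individual squares), which requires knowing that \cite[Theorem 3.1.4]{BG1} is proved with enough naturality — this may force one to re-examine or re-prove that result in the factorizable language; in the second route, the obstacle is establishing the global colimit presentation $\IC_{\BunBb}\simeq \on{colim}_{\lambda\in\Lambda^+}(\text{Hecke-modified }\IC)$ and checking it is compatible with the Drinfeld--Pl\"ucker maps coming from $\kappa$, which is essentially the global counterpart of the work done in \secref{ss:pres as colimit} and \secref{s:glob}. Either way, the technical heart is bookkeeping of coherences rather than any new geometric input, since the geometric content — the description of $\IC_{\BunBb}$ on strata and the smoothness of the relevant Hecke correspondences in the sufficiently-dominant range — is already available from \cite{BG1}, \cite{BFGM}, and the results proved earlier in this paper.
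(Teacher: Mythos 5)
Your first route is essentially the paper's own treatment: the paper gives no independent proof of \thmref{t:Hecke global}, but simply states that the assertion is ``(essentially) established'' in \cite[Theorem 3.1.4]{BG1}, the word \emph{essentially} covering exactly the point you isolate, namely that the fixed-$I$ (or fixed-point) Hecke isomorphisms of \cite{BG1} must be assembled, with homotopy-coherent compatibilities under $\Delta_\phi^!$ and factorization, into a single object of $\bHecke(\Shv((\BunBb\times \Ran(X))_{\on{poles}}))$. So on that route you are in agreement with the paper, and your honest flagging of the coherence bookkeeping as the real remaining work is consistent with how the paper later uses the structure (in \secref{ss:proof of Hecke compat} it ``unwinds the construction of the Hecke structure \dots in \cite[Theorem 3.1.4]{BG1}'' rather than re-deriving it).

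Your alternative route is genuinely different from what the paper does, and it is worth noting where it would cost extra work. The paper uses the Drinfeld--Pl\"ucker mechanism only on the \emph{local} side: the DrPl structure lives on $\delta_{1_\Gr,\Ran,\Bun_T}$, and $\Ind^{\bHecke}_{\on{DrPl}}$ of it is identified with $\IC^\semiinf_{\Ran,\Bun_T}$ via the colimit presentation of \thmref{t:descr as colimit}; the comparison with the global object then goes through the t-exactness statement \corref{c:log glob exact} and \thmref{t:IC loc glob}, not through a DrPl presentation of $\IC_{\BunBb}$ itself. To run your second route you would need a global analogue of \thmref{t:descr as colimit}, i.e.\ a presentation of $\IC_{\BunBb}\boxtimes\omega_{\Ran(X)}$ as a colimit over $\Lambda^+$ (or $\Maps(I,\Lambda^+)$) of Hecke-modified sheaves generated from the Pl\"ucker data, compatible with the DrPl maps; this is not established in the paper or in \cite{BG1}, and proving it would amount to redoing a substantial part of \secref{ss:pres as colimit} and \secref{s:glob} in the global setting. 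So route two is plausible but not a shortcut; route one, i.e.\ leaning on \cite[Theorem 3.1.4]{BG1} plus Ran-assembly, is both what the paper intends and the less expensive option.
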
 

\ssec{Local vs global compatibility}

\sssec{}

Note now that the map
$$\pi_\Ran:\ol{S}{}^0_\Ran\to \BunNb$$
naturally extends to a map
$$\pi_{\Ran,\Bun_T}:\Gr_{G,\Ran}\wt\times\Bun_T\to (\BunBb\times \Ran)_{\on{poles}}.$$

\medskip

We consider the functor
$$(\pi_{\Ran,\Bun_T})'{}^!:\Shv((\BunBb\times \Ran)_{\on{poles}})\to \Shv(\Gr_{G,\Ran}\wt\times\Bun_T)$$
obtained from $(\pi_{\Ran,\Bun_T})^!$ by applying the shift by $[d-\langle \lambda,2\check\rho\rangle]$
over the connected component $\Bun_T^\lambda$ of $\Bun_T$. 

\medskip

A relative version of the calculation performed in the proof of \thmref{t:IC loc glob} shows: 

\begin{thm}  \label{t:IC loc glob rel}
There exists a canonical isomorphism in $\Shv(\Gr_{G,\Ran}\wt\times\Bun_T)$
$$(\pi_{\Ran,\Bun_T})'{}^!(\IC^\semiinf_{\on{glob},\Bun_T})\simeq \IC^\semiinf_{\Ran,\Bun_T}.$$
\end{thm}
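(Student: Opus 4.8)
The plan is to deduce \thmref{t:IC loc glob rel} from the non-relative statement \thmref{t:IC loc glob} by ``spreading out'' over $\Bun_T$, exactly as the excerpt suggests. First I would set up the geometry carefully: the map $\pi_{\Ran,\Bun_T}\colon \Gr_{G,\Ran}\wt\times\Bun_T\to (\BunBb\times\Ran(X))_{\on{poles}}$ fits into a commutative diagram with $\pi_\Ran\colon\ol S{}^0_\Ran\to\BunNb$ on one side, obtained by base-changing along $\on{pt}\to\Bun_T$ (the trivial bundle). More precisely, I would observe that $\Gr_{G,\Ran}\wt\times\Bun_T$ is, Zariski-locally on $\Bun_T$, a product of $\ol S{}^0_\Ran$-type geometry with $\Bun_T$, because $(\Ran(X)\times\Bun_T)^{\on{level}}\to\Ran(X)\times\Bun_T$ is an $\fL^+(T)_\Ran$-torsor; the quotient by $\fL^+(T)_\Ran$ then produces the ``twisted product.'' The key point is that the stratification of $(\BunBb\times\Ran(X))_{\on{poles}}$ by $\lambda\in\Lambda^{\on{neg}}$ and the parallel stratification of $\Gr_{G,\Ran}\wt\times\Bun_T$ by strata $S^\lambda_\Ran\wt\times\Bun_T$ are compatible, with the stratum-wise maps being base changes of the $p^\lambda$'s over $X^\lambda$ (resp.\ the $p^\lambda_{\on{glob}}$'s); so the whole argument of \secref{s:glob} carries over verbatim in the relative setting.

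Next I would run the relative analogue of the chain of implications culminating in \corref{c:log glob exact}: define $\SI^{\leq 0}_{\on{glob},\Bun_T}\subset\Shv((\BunBb\times\Ran(X))_{\on{poles}})$ and $\SI^{\leq 0}_{\Ran,\Bun_T}\subset\Shv(\Gr_{G,\Ran}\wt\times\Bun_T)$ by the same equivariance/stratum-wise conditions, equip them with t-structures, and prove that $(\pi_{\Ran,\Bun_T})'{}^!$ (with the component-wise shift by $[d-\langle\lambda,2\check\rho\rangle]$ over $\Bun_T^\lambda$) is t-exact from $\SI^{\leq 0}_{\on{glob},\Bun_T}$ to $\SI^{\leq 0}_{\Ran,\Bun_T}$. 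The heart of this is the relative version of \propref{p:restr isom}, i.e.\ that the natural transformation $(\bi^\lambda)^*\circ(\pi_{\Ran,\Bun_T})'{}^!\to(\pi^\lambda_{\Ran,\Bun_T})'{}^!\circ(\bi^\lambda_{\on{glob}})^*$ is an isomorphism on $\SI^{\leq 0}_{\on{glob},\Bun_T}$; this is proved by the same Braden-theorem plus Zastava-space argument of \secref{ss:proof of key}, now carried out over $\Bun_T$ rather than over $\on{pt}$ — one uses the relative Zastava space $\CZ^\lambda\times\Bun_T$ (or its twisted version) and the contraction principle for the $\BG_m$-action along the fibers of $\fp$, which is insensitive to the extra $\Bun_T$-direction because that action is trivial on it.

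Having the t-exactness, I would identify $\IC^\semiinf_{\on{glob},\Bun_T}=\IC_{\BunBb}\boxtimes\omega_{\Ran(X)}$ as the minimal extension of $\IC_{\Bun_B}\boxtimes\omega_{\Ran(X)}$ (appropriately shifted) across the strata: since $(\pi_{\Ran,\Bun_T})'{}^!$ is t-exact up to the stated shift and commutes with the $!$- and $*$-extension functors $(\bi^\lambda_{\on{glob}})_!$, $(\bi^\lambda_{\on{glob}})_*$ by the relative \corref{c:ext isom}, it sends the minimal extension to the minimal extension. On the open stratum $S^0_\Ran\wt\times\Bun_T$ one checks directly that $(\pi_{\Ran,\Bun_T})'{}^!$ applied to $\IC_{\Bun_B}\boxtimes\omega_{\Ran(X)}[d]$ yields $\omega_{S^0_\Ran\wt\times\Bun_T}$, which matches the restriction of $\IC^\semiinf_{\Ran,\Bun_T}=r^!(\IC^\semiinf_\Ran)$ there (using that $r^!(\IC^\semiinf_\Ran)|_{S^0}=\omega$, from \lemref{l:elem ppties IC}(c) and $r$ being a torsor projection); hence the two minimal extensions agree. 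Finally I would note the compatibility with the forgetful functor to $\Shv(\fL^+(T)_\Ran\backslash\Gr_{G,\Ran})$ along $r$, so that the relative isomorphism restricts to \thmref{t:IC loc glob} along the trivial $T$-bundle, pinning down the normalization and the uniqueness.

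I expect the main obstacle to be the careful bookkeeping of shifts and the verification that all the structural results of \secref{s:semiinf} and \secref{s:glob} — the Braden-theorem input (\propref{p:i* well defined}), the existence of the partially-defined adjoints (\corref{c:well defined Ran}), pseudo-properness of the relevant projections, and \propref{p:restr isom} — genuinely survive the passage to the $\Bun_T$-relative setting without new phenomena. The geometry over $\Bun_T$ is ``the same'' fiberwise, but one must be sure that $\Bun_T$ being a stack (not of finite type, with automorphisms) does not interfere: here the key saving grace is that everything is Zariski-locally trivial over $\Bun_T$ via the torsor $(\Ran(X)\times\Bun_T)^{\on{level}}$, so all the $!$-pullback and adjunction arguments are checked after such a trivialization and then descend. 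Making that descent rigorous — i.e.\ that the t-structures, the equivariant subcategories, and the minimal-extension characterization are all local on $\Bun_T$ in the appropriate sense — is the one place where genuine (if routine) care is required.
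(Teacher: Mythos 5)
Your proposal matches the paper's own treatment: the paper proves \thmref{t:IC loc glob rel} simply by asserting that a relative (over $\Bun_T$) version of the calculation proving \thmref{t:IC loc glob} goes through, and your plan — relative semi-infinite categories, the relative Braden/Zastava argument for the stratum-wise isomorphism, t-exactness of the shifted pullback, and matching of the middle extensions checked on the open stratum — is exactly that relative calculation spelled out. The caveats you flag (locality over $\Bun_T$ via the Zariski-locally trivial $\fL^+(T)_\Ran$-torsor $(\Ran(X)\times\Bun_T)^{\on{level}}$, and the shift bookkeeping $[d-\langle\lambda,2\check\rho\rangle]$ over $\Bun_T^\lambda$) are precisely the routine verifications the paper leaves implicit, so there is no gap.
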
 

\sssec{}

The map $r$ is compatible with the actions of the groupoids \eqref{e:Hecke groupoid G} and \eqref{e:Hecke groupoid T}.
In particular, the pullback functor
$$(\pi_{\Ran,\Bun_T})^!:\Shv((\BunBb\times \Ran)_{\on{poles}})\to \Shv(\Gr_{G,\Ran}\wt\times\Bun_T)$$
is a map of bimodule categories for $(\Sph_{T,\Ran},\Sph_{G,\Ran})$.

\medskip

Hence, we obtain that the functor $(\pi_{\Ran,\Bun_T})'{}^!$ can be thought of as a map of bimodule categories for 
$(\on{Fact}^{\on{alg}}(\Rep(\cT))_\Ran,\on{Fact}^{\on{alg}}((\Rep(\cG))_\Ran)$.

\sssec{}

We are now ready to state the main result of this section:

\begin{thm} \label{t:Hecke compat}
The isomorphism $(\pi_{\Ran,\Bun_T})'{}^!(\IC^\semiinf_{\on{glob},\Bun_T})\simeq \IC^\semiinf_{\Ran,\Bun_T}$ of
\thmref{t:IC loc glob rel} canonically lifts to an isomorphism of objects of $\bHecke(\Shv(\Gr_{G,\Ran}\wt\times\Bun_T))$.
\end{thm}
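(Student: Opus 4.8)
The plan is to leverage the fact, built into \eqref{e:rel Hecke}, that $\IC^\semiinf_{\Ran,\Bun_T}$ is obtained by Drinfeld--Pl\"ucker induction from a $\delta$-type object, together with the analogous description on the global side, and then to observe that $(\pi_{\Ran,\Bun_T})'{}^!$, being a morphism of bimodule categories, intertwines the two inductions. Conservativity of the forgetful functor to $\Shv(\Gr_{G,\Ran}\wt\times\Bun_T)$ will then identify the resulting $\bHecke$-isomorphism with the one of \thmref{t:IC loc glob rel}.

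First, recall from \eqref{e:rel Hecke} that, as an object of $\bHecke(\Shv(\Gr_{G,\Ran}\wt\times\Bun_T))$, one has $\IC^\semiinf_{\Ran,\Bun_T}\simeq \Ind^{\bHecke}_{\on{DrPl}}(\delta_{1_\Gr,\Ran,\Bun_T})$, where $\delta_{1_\Gr,\Ran,\Bun_T}$ carries its tautological Drinfeld--Pl\"ucker structure. On the global side, let $\fs_{\on{glob}}:\Ran(X)\times \Bun_T\to (\BunBb\times \Ran(X))_{\on{poles}}$ denote the canonical section, sending $(\CI,\CP_T)$ to the $G$-bundle induced from $\CP_T$ together with its tautological (pole-free, saturated) Pl\"ucker data, and set $\delta_{\on{glob}}:=(\fs_{\on{glob}})_!(\omega_{\Ran(X)\times \Bun_T})$. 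Exactly as in \secref{sss:DrPl on IC}, the defining compatibility of $\Sat_{G,\Ran}$ with the Pl\"ucker maps $\underset{i}\otimes\, V^{\lambda_i}\to V^{\lambda}$ equips $\delta_{\on{glob}}$ with a canonical Drinfeld--Pl\"ucker structure, and \propref{p:Sam's} identifies the underlying object of $\Ind^{\bHecke}_{\on{DrPl}}(\delta_{\on{glob}})$ with the Drinfeld-type colimit $\underset{\ul\lambda}{\on{colim}}\,(\sfe^{-\ul\lambda}\star \delta_{\on{glob}}\star \Sat_{G,\Ran}(V^{\ul\lambda}))$. By the Drinfeld presentation of $\IC_{\BunBb}$ underlying \cite[Theorem 3.1.4]{BG1} this colimit is $\IC_{\BunBb}\boxtimes \omega_{\Ran(X)}=\IC^\semiinf_{\on{glob},\Bun_T}$, and the $\bHecke$-structure it produces is the one of \thmref{t:Hecke global}; that is, $\IC^\semiinf_{\on{glob},\Bun_T}\simeq \Ind^{\bHecke}_{\on{DrPl}}(\delta_{\on{glob}})$ in $\bHecke(\Shv((\BunBb\times \Ran(X))_{\on{poles}}))$.

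Next, by hypothesis $(\pi_{\Ran,\Bun_T})'{}^!$ is a colimit-preserving morphism of $(\on{Fact}(\Rep(\cT))_\Ran,\on{Fact}(\Rep(\cG))_\Ran)$-bimodule categories. Since $\Ind^{\bHecke}_{\on{DrPl}}$ is computed by the relative tensor product over the factorization algebras of \secref{sss:lax central, Ran}, any such morphism induces functors on the $\on{DrPl}$- and $\bHecke$-categories that strictly commute with $\Res^{\bHecke}_{\on{DrPl}}$ and with $\Ind^{\bHecke}_{\on{DrPl}}$. Hence $(\pi_{\Ran,\Bun_T})'{}^!(\Ind^{\bHecke}_{\on{DrPl}}(\delta_{\on{glob}}))\simeq \Ind^{\bHecke}_{\on{DrPl}}((\pi_{\Ran,\Bun_T})'{}^!(\delta_{\on{glob}}))$ in $\bHecke(\Shv(\Gr_{G,\Ran}\wt\times\Bun_T))$, with the induced Drinfeld--Pl\"ucker structure on the right. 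It remains to identify $(\pi_{\Ran,\Bun_T})'{}^!(\delta_{\on{glob}})$, as an object of $\on{DrPl}(\Shv(\Gr_{G,\Ran}\wt\times\Bun_T))$, with $\delta_{1_\Gr,\Ran,\Bun_T}$. This is a base-change computation: the square with horizontal maps $s_{\Ran,\Bun_T}$ and $\fs_{\on{glob}}$ and vertical maps $\pi_{\Ran,\Bun_T}$ and $\on{id}$ is Cartesian (a modification whose associated Pl\"ucker data is pole-free and saturated and whose induced $G$-bundle is the induced one is the trivial modification, so the unit section of $\Gr_{G,\Ran}\wt\times\Bun_T$ is precisely the preimage of the canonical section); then $!$-pushforward base change along the locally closed embedding $\fs_{\on{glob}}$ gives $(\pi_{\Ran,\Bun_T})^!(\delta_{\on{glob}})\simeq \delta_{1_\Gr,\Ran,\Bun_T}$ up to the cohomological shift built into $'{}^!$, and the Drinfeld--Pl\"ucker structures match because on both sides they arise by the same mechanism (the Pl\"ucker maps of Weyl modules through geometric Satake), which is manifestly compatible with $!$-pullback along the bimodule-category morphism $(\pi_{\Ran,\Bun_T})'{}^!$.

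Combining the three steps produces a canonical isomorphism $\IC^\semiinf_{\Ran,\Bun_T}\simeq (\pi_{\Ran,\Bun_T})'{}^!(\IC^\semiinf_{\on{glob},\Bun_T})$ in $\bHecke(\Shv(\Gr_{G,\Ran}\wt\times\Bun_T))$; applying the conservative forgetful functor to $\Shv(\Gr_{G,\Ran}\wt\times\Bun_T)$ recovers the isomorphism of \thmref{t:IC loc glob rel} (both being induced, after forgetting, from the same base-change identification of $\delta$-sheaves), so the $\bHecke$-isomorphism is the desired lift. I expect the main obstacle to be the global input of the second paragraph: extracting from \cite{BG1} that the Hecke structure on $\IC_{\BunBb}$ constructed there coincides with the one obtained by Drinfeld--Pl\"ucker induction from $\delta_{\on{glob}}$. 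If one prefers not to unwind \cite{BG1}, the alternative is to show directly that the space of $\bHecke$-enhancements of $\IC^\semiinf_{\Ran,\Bun_T}$ lifting the identity is contractible --- paralleling \cite[Sect. 6.2]{Ga1} --- whence the two a priori distinct Hecke structures on $\IC^\semiinf_{\Ran,\Bun_T}$, namely the one of \eqref{e:rel Hecke} and the one pulled back via \thmref{t:Hecke global}, necessarily agree.
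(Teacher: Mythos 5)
Your argument stands or falls on the claim in your second paragraph that, globally, $\IC^\semiinf_{\on{glob},\Bun_T}\simeq \Ind^{\bHecke}_{\on{DrPl}}(\delta_{\on{glob}})$, where $\delta_{\on{glob}}$ is the !-extension of $\omega_{\Ran(X)\times \Bun_T}$ along the canonical section into $(\BunBb\times \Ran(X))_{\on{poles}}$. This is false, and it is not what \cite[Theorem 3.1.4]{BG1} provides (that reference constructs a Hecke eigen-structure on $\IC_{\BunBb}$, not a presentation of it as a Drinfeld--Pl\"ucker induction from a $\delta$-sheaf). Indeed, by \propref{p:Sam's} the underlying object of $\Ind^{\bHecke}_{\on{DrPl}}(\delta_{\on{glob}})$ is a colimit of terms $\sfe^{-\ul\lambda}\star \delta_{\on{glob}}\star \Sat_{G}(V^{\ul\lambda})$, and every such term --- hence the colimit --- is supported on the locus of quadruples $(\CP_G,\CP_T,\kappa,\CI)$ whose data is identified with the one induced from $\CP_T$ away from $\Gamma_\CI$ (all Hecke modifications happen along $\Gamma_\CI$); in particular the zeroes of $\kappa$ must lie on $\Gamma_\CI$. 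But $\IC^\semiinf_{\on{glob},\Bun_T}=\IC_{\BunBb}\boxtimes \omega_{\Ran(X)}$ is nonzero at points of $\BunBb^{=\lambda}\times \Ran(X)$ whose divisor $D\in X^\lambda$ is not supported on $\Gamma_\CI$ (its *-restriction to each stratum is the nowhere-vanishing factorization sheaf of \thmref{t:descr of restr glob}), so the two objects differ. Your base-change step has the same defect: the square with horizontal arrows $s_{\Ran,\Bun_T}$ and your $\fs_{\on{glob}}$ is not Cartesian --- the preimage under $\pi_{\Ran,\Bun_T}$ of the canonical section is not the unit section but a twisted form of the semi-infinite orbit, with fibers given by spaces of maps from $X-\Gamma_\CI$ to the $\CP_T$-twist of $N$ --- so $(\pi_{\Ran,\Bun_T})'{}^!(\delta_{\on{glob}})$ is not $\delta_{1_\Gr,\Ran,\Bun_T}$.

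The paper's proof is organized precisely so that no global Drinfeld--Pl\"ucker presentation is needed: only the \emph{local} object is written as $\Ind^{\bHecke}_{\on{DrPl}}(\delta_{1_\Gr,\Ran,\Bun_T})$, and then, by the $(\Ind^{\bHecke}_{\on{DrPl}},\Res^{\bHecke}_{\on{DrPl}})$ adjunction, a morphism of Hecke objects $\IC^\semiinf_{\Ran,\Bun_T}\to (\pi_{\Ran,\Bun_T})'{}^!(\IC^\semiinf_{\on{glob},\Bun_T})$ is the same datum as a morphism in $\on{DrPl}(\Shv(\Gr_{G,\Ran}\wt\times\Bun_T))$ out of $\delta_{1_\Gr,\Ran,\Bun_T}$, namely the composite \eqref{e:unit map 2}, which by adjunction is encoded by the isomorphism \eqref{e:unit map 3} over the unit section. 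The only substantive point is that this single map lifts to $\on{DrPl}$, which is checked by unwinding the construction of the Hecke structure in \cite[Theorem 3.1.4]{BG1}; the resulting Hecke morphism is then an isomorphism because its underlying sheaf map is the isomorphism of \thmref{t:IC loc glob rel} and the forgetful functor is conservative. Your fallback suggestion (contractibility of the space of Hecke enhancements lifting the identity) is likewise not established anywhere in the paper and would be a separate piece of work, so as written the proposal has a genuine gap at its central step.
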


\ssec{Proof of \thmref{t:Hecke compat}}  \label{ss:proof of Hecke compat}

\sssec{}

Consider the tautological map
\begin{equation} \label{e:unit map 1}
\delta_{1_\Gr,\Ran,\Bun_T}\to  \Ind^{\bHecke}_{\on{DrPl}}(\delta_{1_\Gr,\Ran,\Bun_T}).
\end{equation}

Under the isomorphism 
$$\Ind^{\bHecke}_{\on{DrPl}}(\delta_{1_\Gr,\Ran,\Bun_T})\simeq \IC^\semiinf_{\Ran,\Bun_T}$$
of \eqref{e:rel Hecke}, this map corresponds to the map
\begin{equation} \label{e:delta to IC}
\delta_{1_\Gr,\Ran,\Bun_T}\to \IC^\semiinf_{\Ran,\Bun_T},
\end{equation} 
arising, by the $((s_{\Ran,\Bun_T})_!,(s_{\Ran,\Bun_T})^!)$ adjunction, from the isomorphism
$$\omega_{\Ran\times \Bun_T}\to (s_{\Ran,\Bun_T})^!(\IC^\semiinf_{\Ran,\Bun_T}).$$

\sssec{}

Consider the composite
\begin{equation} \label{e:unit map 2}
\delta_{1_\Gr,\Ran,\Bun_T}\to \Ind^{\bHecke}_{\on{DrPl}}(\delta_{1_\Gr,\Ran,\Bun_T})\simeq 
\IC^\semiinf_{\Ran,\Bun_T}\to (\pi_{\Ran,\Bun_T})'{}^!(\IC^\semiinf_{\on{glob},\Bun_T}).
\end{equation}

We obtain that the data on the morphism 
$$\IC^\semiinf_{\Ran,\Bun_T}\to (\pi_{\Ran,\Bun_T})'{}^!(\IC^\semiinf_{\on{glob},\Bun_T})$$
of a map of objects of $\bHecke(\Shv(\Gr_{G,\Ran}\wt\times\Bun_T))$ is equivalent to the data 
on \eqref{e:unit map 2} of a map of objects of $\on{DrPl}(\Shv(\Gr_{G,\Ran}\wt\times\Bun_T))$.

\sssec{}

The map \eqref{e:unit map 2} can be explicitly described as follows. By the $((s_{\Ran,\Bun_T})_!,(s_{\Ran,\Bun_T})^!)$
adjunction, it corresponds to the (iso)mophism 
\begin{equation} \label{e:unit map 3}
\omega_{\Ran\times \Bun_T}\to (s_{\Ran,\Bun_T})^!\circ (\pi_{\Ran,\Bun_T})'{}^!(\IC^\semiinf_{\on{glob},\Bun_T})
\end{equation}
constructed as follows: 

\medskip

We note that the map
$$\pi_{\Ran,\Bun_T}\circ s_{\Ran,\Bun_T}:\Ran\times \Bun_T \to (\BunBb\times \Ran)_{\on{poles}}$$
factors as 
$$\Ran\times \Bun_T\to \Ran\times \Bun_B\to \Ran\times \BunBb\to (\BunBb\times \Ran)_{\on{poles}}.$$

Now, the map \eqref{e:unit map 3} is the natural isomorphism coming from the identification
$$\IC^\semiinf_{\on{glob},\Bun_T}|_{\Ran\times \Bun^\lambda_B}[d-\langle \lambda,2\check\rho\rangle]
\simeq \omega_{\Ran\times \Bun^\lambda_B}.$$

\sssec{}

Now, by  unwinding the construction of the Hecke structure on $\IC^\semiinf_{\on{glob},\Bun_T}$ in \cite[Theorem 3.1.4]{BG1}, one shows that
the map \eqref{e:unit map 2} indeed canonically lifts to a map in $\on{DrPl}(\Shv(\Gr_{G,\Ran}\wt\times\Bun_T))$.

\qed

\appendix

\section{Proof of \thmref{t:contr}}  \label{s:app}

With future applications in mind, we will prove a generalization of \thmref{t:contr}. The proof is a paraphrase of the theory developed
in \cite{Bar}. 

\medskip

Throughout this appendix, the curve $X$ will be assumed proper. 

\ssec{The space of $G$-bundles with a generic reduction}

\sssec{}

Let $Y$ be a test affine scheme. We shall say that an open subset of $Y\times X$ is a \emph{domain} if it is dense
in every fiber of the projection $Y\times X\to X$. Note that the intersection of two domains is again a domain. 

\medskip

Observe that for $\CI\in \Maps(Y,\Ran)$, the subscheme $Y\times X-\Gamma_\CI$ is a domain. 

\sssec{}

Let $\Bun_{G\on{-gen}}$ be the prestack that assigns to an affine test-scheme $Y$ the groupoid, whose objects are pairs:

\medskip 

\noindent{(i)} A domain $U\subset Y\times X$;

\medskip 

\medskip 

\noindent{(ii)} A $G$-bundle $\CP_G$ defined on $U$.

\medskip

An (iso)morphism between two such points is by definition an isomorphism of $G$-bundles
defined over a \emph{subdomain} of the intersection of their respective domains of definition.

\begin{rem}  \label{r:domain}
In particular, given $(\CP_G,U)$, if $U'\subset U$ is a sub-domain, then 
the points $(\CP_G,U)$ and $(\CP_G|_{U'},U')$ are canonically isomorphic. Hence, in the definition
of $\Bun_{G\on{-gen}}$ we can combine points (i) and (ii) into:

\medskip 

\noindent{(i')} A $G$-bundle $\CP_G$ defined over \emph{some} domain in $Y\times X$.

\end{rem}

\sssec{}

Let $H\to G$ be a homomorphism of algebraic groups. Consider the prestack
$$\Bun_{H\on{-gen}}\underset{\Bun_{G\on{-gen}}}\times \Bun_G.$$

\medskip

By definition, for a test affine scheme $Y$, its groupoid of $Y$-points has as objects triples:

\medskip 

\noindent{(i)} A $G$-bundle $\CP_G$ on $Y\times X$;

\medskip 

\noindent{(ii)} A domain $U\subset Y\times X$;

\medskip 

\noindent{(iii)} A reduction $\beta$ of $\CP_G$ to $H$ defined over $U\subset Y\times X$;

\medskip

An (iso)morphism between two such points is by definition an isomorphism of $G$-bundles, 
compatible with the reductions \emph{over the intersection of the corresponding domains}. 

\begin{rem}
As in Remark \ref{r:domain} above, we can combine (ii) and (iii) into:

\medskip 

\noindent{(ii')} A reduction $\beta$ of $\CP_G$ to $H$ defined over \emph{some} domain in $Y\times X$.

\end{rem}

\sssec{}

For $H=\{1\}$, we will use the notation
$$\Gr_{G,\on{gen}}:=\on{pt}\underset{\Bun_{G\on{-gen}}}\times \Bun_G.$$ 

\medskip

By definition, for an affine test-scheme $Y$, the set $\Maps(Y,\Gr_{G,\on{gen}})$ consists of pairs
$(\CP_G,\alpha)$, where $\CP_G$ is a $G$-bundle on $Y\times X$, and $\alpha$ is a trivialization
of $\CP_G$ defined on \emph{some} domain in $Y\times X$.

\sssec{}

We have a canonically defined map
$$\Gr_{G,\on{gen}}\to \Bun_{H\on{-gen}}\underset{\Bun_{G\on{-gen}}}\times \Bun_G,$$
obtained by base change along $\Bun_G\to \Bun_{G\on{-gen}}$ from the map 
$$\on{pt}\to  \Bun_{H\on{-gen}}.$$

In addition, we have a canonical map
$$\Gr_{G,\Ran}\to \Gr_{G,\on{gen}}.$$

Composing, we obtain a map 
\begin{equation} \label{e:from Gr Ran to H gen}
\Gr_{G,\Ran}\to \Bun_{H\on{-gen}}\underset{\Bun_{G\on{-gen}}}\times \Bun_G.
\end{equation} 

\sssec{}  \label{sss:UHC}

We recall the following definition from \cite[Sect. 2.5.1]{Ga2}:

\medskip

A map between prestacks $\CX_1\to \CX_2$ is said to be \emph{universally homologically contractible} if for
any affine test-scheme $Y$ and a map $Y\to \CX_2$, 
the !-pullback functor
$$\Shv(Y)\to \Shv(Y\underset{\CX_2}\times \CX_1)$$
is fully faithful. 

\medskip

If this happens, a formal argument shows that for any prestack $\CY$ and a map $\CY\to \CX_2$, 
the !-pullback functor
$$\Shv(\CY)\to \Shv(\CY\underset{\CX_2}\times \CX_1)$$
is also fully faithful. In particular, the pullback functor
$$f^!:\Shv(\CX_2)\to \Shv(\CX_1)$$
is fully faithful. 

\medskip

We shall call a prestack $\CX$ \emph{homologically contractible} if the map $\CX\to \on{pt}$ induces
a fully faithful embedding
$$\Vect\to \Shv(\CY);$$
this is equivalent to the trace map
$$\on{C}_\bullet(\CY):=\on{C}_c^\bullet(\CY,\omega_\CY)\to \sfe$$
being an isomorphism.  It is not difficult to see that this condition implies a stronger one, namely, that 
$\CX\to \on{pt}$ is universally homologically contractible. 

\sssec{}

The goal of this section is to prove:

\begin{thm} \label{t:gen contr}
Assume that $H$ is connected. Then the map \eqref{e:from Gr Ran to H gen} is universally homologically
contractible.
\end{thm}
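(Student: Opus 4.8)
Following \cite{Bar}, the plan is to reduce the assertion to two contractibility statements and then run the Barlev/Beilinson-Drinfeld machinery in each. First I would use the factorization of \eqref{e:from Gr Ran to H gen} through $\Gr_{G,\on{gen}}$: by construction the map $\Gr_{G,\on{gen}}\to \Bun_{H\on{-gen}}\underset{\Bun_{G\on{-gen}}}\times\Bun_G$ is the base change, along $\Bun_G\to\Bun_{G\on{-gen}}$, of the map $\on{pt}\to\Bun_{H\on{-gen}}$; equivalently, it is the base change of $\on{pt}\to\Bun_{H\on{-gen}}$ along the projection $\Bun_{H\on{-gen}}\underset{\Bun_{G\on{-gen}}}\times\Bun_G\to\Bun_{H\on{-gen}}$. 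Since the class of universally homologically contractible maps is stable under base change and under composition, it suffices to establish: \textbf{(a)} $\on{pt}\to\Bun_{H\on{-gen}}$ is universally homologically contractible (this is where the connectedness of $H$ enters); and \textbf{(b)} $\Gr_{G,\Ran}\to\Gr_{G,\on{gen}}$ is universally homologically contractible.

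For \textbf{(a)}, I would apply the criterion of \cite[Sect. 2.5]{Ga2}, reducing to homological contractibility of the fibers of $\on{pt}\to\Bun_{H\on{-gen}}$ over field-valued points of affine test schemes. Such a fiber, attached to an $H$-bundle $\CP_H$ defined over a domain $U\subset Y\times X$, is the prestack of trivializations of $\CP_H$ over subdomains of $U$. Since $H$ is connected, $\CP_H$ is generically trivial (Drinfeld-Simpson), so after shrinking $U$ this prestack becomes a form of $\Maps^{\on{rat}}(X,H)$. I would then present it as a colimit indexed by a category of the type $\on{Fin}^{\on{surj}}$ and, using the Beilinson-Drinfeld contractibility of $\Ran(X)$, reduce to the contractibility of $\Maps^{\on{rat}}(X,\BA^1)\simeq\underset{n}{\on{colim}}\,\BA^n$; the $\BG_m$- and reductive cases are themselves reduced to this one by the d\'evissage of \cite{Bar}, using that $H$ is an iterated extension of copies of $\BG_a$, $\BG_m$ and a reductive group.

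For \textbf{(b)}, I would again use \cite[Sect. 2.5]{Ga2} to reduce to the fibers over field-valued points. The fiber over a pair $(\CP_G,\alpha)$, with $\alpha$ a trivialization of $\CP_G$ over a domain $U\subset Y\times X$, parametrizes data $(\CI,\alpha')$ where $\alpha'$ is a trivialization of $\CP_G$ over $Y\times X-\Gamma_\CI$ agreeing with $\alpha$ on the overlap; since $\alpha$ extends uniquely to its maximal domain, $\alpha'$ is determined by $\CI$, and the only constraint is that $\Gamma_\CI$ contain the closed subscheme $Z\subset Y\times X$ over which $\alpha$ fails to extend. This $Z$ is quasi-finite over $Y$, so after a base change and a further shrinking (organized via the colimit formalism) one may take $Z$ to be a finite union of graphs of maps $Y\to X$, whereupon the fiber acquires the shape of the prestack of finite subsets of $X$ containing a fixed one, i.e. a (relative) copy of $\Ran(X)$, which is homologically contractible by Beilinson-Drinfeld. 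Assembling these identifications compatibly with the structure maps yields \textbf{(b)}, and hence the theorem.

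The hard part is the domain bookkeeping: systematically tracking the open subsets of $Y\times X$ on which the various bundles, reductions and trivializations live (together with their intersections and subdomains), and turning the resulting pro/ind-descriptions of the fibers into honest colimit presentations over $\on{Fin}^{\on{surj}}$-type index categories to which the Beilinson-Drinfeld contractibility of $\Ran(X)$ can be applied --- all while verifying the ind-schematicity and pseudo-properness hypotheses needed to invoke \cite[Sect. 2.5]{Ga2}. This is precisely the body of technique developed in \cite{Bar}, which the proof paraphrases.
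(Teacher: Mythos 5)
Your overall factorization agrees with the paper: the map \eqref{e:from Gr Ran to H gen} is split into $\Gr_{G,\Ran}\to \Gr_{G,\on{gen}}$ and a base change of $\on{pt}\to \Bun_{H\on{-gen}}$, and your treatment of part (b) (pseudo-properness of the base-changed map, identification of the field-valued fibers with a relative Ran space $\Ran(X)^{\supset \CI_0}$, contractibility as for $\Ran(X)$) is essentially the paper's Proposition \ref{p:gen vs Ran}. The genuine gap is in part (a). The map $\on{pt}\to \Bun_{H\on{-gen}}$ is \emph{not} pseudo-proper: its fiber over a $Y$-point $(\CP_H,U)$ is the prestack of generic trivializations of $\CP_H$, a huge non-ind-proper object, so the $f_!$/base-change mechanism that justifies checking contractibility on fibers over field-valued points is unavailable, and "the fibers over field points are forms of $\Maps^{\on{rat}}(X,H)$" does not yield \emph{universal} homological contractibility. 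Concretely, generic triviality of $\CP_H$ holds at a field-valued point (Steinberg), but over an arbitrary affine test scheme $Y$ an $H$-bundle defined only on a domain of $Y\times X$ need not be generically trivializable without first passing to a cover of $Y$. This is exactly the content the paper isolates: it inserts the intermediate prestack $\Bun_{H\on{-gen,triv}}$, proves $\on{pt}\to\Bun_{H\on{-gen,triv}}$ is universally homologically contractible by identifying $\Bun_{H\on{-gen,triv}}\simeq B(\BMaps(X,H)_{\on{gen}})$ and quoting the contractibility of the space of rational maps into a connected group (\cite[Theorem 1.8.2]{Ga2}), and then proves $\Bun_{H\on{-gen,triv}}\to\Bun_{H\on{-gen}}$ is universally homologically contractible by showing it is a value-wise monomorphism that becomes an isomorphism after h-sheafification --- using the d\'evissage $1\to H_u\to H\to H_r\to 1$, the Drinfeld--Simpson theorem (fppf surjectivity of generic trivializations of \emph{globally defined} bundles), and the fact that $\Bun_H\to\Bun_{H\on{-gen}}$ is an ind-proper h-surjection (any generically defined bundle extends). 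Your proposal skips this entire h-descent step, which is where the connectedness hypothesis does its real work beyond the field-valued fibers.

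A secondary point: your plan to establish contractibility of $\Maps^{\on{rat}}(X,H)$ by d\'evissage down to $\BA^1$ through iterated extensions by $\BG_a$, $\BG_m$ and a reductive quotient does not work as stated; torsors under $\BG_m$ or a reductive group over a varying base are not generically split in families, and the contractibility of the space of rational maps into a connected (in particular reductive) group is precisely the hard theorem of \cite{Ga2}, \cite{Ga4} that must be quoted rather than rederived. The d\'evissage into unipotent and reductive parts is used in the paper only for the h-surjectivity statement about $\Bun_{H\on{-gen,triv}}\to\Bun_{H\on{-gen}}$ (where for unipotent $H$ the map is literally an isomorphism), not for the contractibility of the rational maps space itself.
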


\sssec{}  \label{sss:proof of contr}

Let us show how \thmref{t:gen contr} implies \thmref{t:contr}. We take $H=N$. Note that there is a canonically defined map
(in fact, a closed embedding)
$$\BunNb\to \Bun_{N\on{-gen}}\underset{\Bun_{G\on{-gen}}}\times \Bun_G.$$

\medskip

Indeed, a $Y$-point of $\Bun_{N\on{-gen}}\underset{\Bun_{G\on{-gen}}}\times \Bun_G$ can be thought of as a data of $(\CP_G,\kappa)$, where $\CP_G$ is
a $G$-bundle on $Y\times X$, and $\kappa$ is a system of bundle maps 
$$\kappa^{\check\lambda}:\CO_X\to \CV^{\check\lambda}_{\CP_G},\quad \check\lambda\in \check\Lambda^+$$
\emph{defined over some domain} $U\subset T\times X$, and satisfying the Pl\"ucker relations. 

\medskip

Such a point belongs to $\BunNb$ if and only if the maps $\kappa^{\check\lambda}$ extend to regular
maps on all of $Y\times X$. 

\medskip

Finally, we note that we have a Cartesian square:
$$
\CD
\ol{S}{}^0_\Ran  @>>>  \Gr_{G,\Ran}  \\
@VVV   @VVV  \\
\BunNb @>>> \Bun_{N\on{-gen}}\underset{\Bun_{G\on{-gen}}}\times \Bun_G. 
\endCD
$$

\qed

\ssec{Towards the proof of \thmref{t:gen contr}}

\sssec{}

The assertion of \thmref{t:gen contr} is obtained as a combination of the following two statements:

\begin{prop} \label{p:gen vs Ran}
The map $\Gr_{G,\Ran}\to \Gr_{G,\on{gen}}$ is universally homologically contractible.
\end{prop}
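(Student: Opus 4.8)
The plan is to follow the strategy of \cite{Bar} and reduce the assertion to a statement about geometric fibers. By the definition of universal homological contractibility, and since $\Gr_{G,\on{gen}}$ is a colimit of affine schemes (so that $\Shv(\Gr_{G,\on{gen}})$ is the corresponding limit), it suffices to fix an affine scheme $Y$ with a map $Y\to \Gr_{G,\on{gen}}$ and show that the projection
$$Y\underset{\Gr_{G,\on{gen}}}\times \Gr_{G,\Ran}\to Y$$
is universally homologically contractible; by the criteria of \cite[Sect. 2.5]{Ga2} this may in turn be checked étale-locally on $Y$ and on its field-valued points.

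First I would make the fiber product explicit. A map $Y\to \Gr_{G,\on{gen}}$ is a pair $(\CP_G,\alpha)$ with $\CP_G$ a $G$-bundle on $Y\times X$ and $\alpha$ a trivialization of $\CP_G$ over a domain $U\subseteq Y\times X$. Passing to the Pl\"ucker data $\kappa^{\check\lambda}$ and extending them meromorphically, one sees that $\alpha$ extends (uniquely) to a trivialization over the complement of a canonically defined closed subscheme $Z\subseteq Y\times X$, namely the union of the polar loci of the $\kappa^{\check\lambda}$; the hypothesis that $U$ is a domain guarantees that the fibers of $Z$ over $Y$ are finite subschemes of $X$. Unwinding the definition of morphisms in $\Gr_{G,\on{gen}}$ (isomorphisms of bundles agreeing over a sub-domain), one identifies $Y\underset{\Gr_{G,\on{gen}}}\times \Gr_{G,\Ran}$ with the prestack over $Y$ whose $Y'$-points are the finite non-empty subsets $\CI\in \Ran(X)(Y')$ for which $\Gamma_\CI$ set-theoretically contains $Z_{Y'}$; denote it $\Ran(X)_{\supseteq Z}$.

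It then remains to prove that $\Ran(X)_{\supseteq Z}\to Y$ is universally homologically contractible. On a field-valued point $y$ of $Y$ the subscheme $Z$ specializes to a fixed finite subset $S=\{x_1,\dots,x_m\}\subset X$, and the fiber becomes the prestack $\Ran(X)_{\supseteq S}$ of finite non-empty subsets of $X$ containing $S$. This prestack is homologically contractible: the assignment $T\mapsto T\cup S$ together with the inclusion $\Ran(X)_{\supseteq S}\hookrightarrow \Ran(X)$ exhibits it as a retract of $\Ran(X)$, so contractibility follows from the contractibility of the Ran space of the connected curve $X$ and the idempotency of the union operation (cf. \cite{Bar}, \cite[Sect. 2.5]{Ga2}).

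The step I expect to be the main obstacle is the \emph{relative} version of the last paragraph, i.e. treating $\Ran(X)_{\supseteq Z}$ uniformly over $Y$ when $Z$ is merely finite over $Y$ rather than a disjoint union of graphs of sections $Y\to X$, so that $\Ran(X)_{\supseteq Z}$ is not literally a base change of $\Ran(X)$. The remedy is to observe, as in \cite{Bar}, that $\Ran(X)_{\supseteq Z}$ still carries the commutative–idempotent "union with $Z$" structure over $Y$ needed to run the contractibility argument, so that — after passing to an étale cover of $Y$ over which $Z$ splits into sections — everything reduces to the fiberwise statement above; alternatively one reindexes both $\Gr_{G,\Ran}$ and $\Gr_{G,\on{gen}}$ by relative effective horizontal divisors on $X\times Y$ and checks directly that the resulting transition maps, and the comparison map, are universally homologically contractible.
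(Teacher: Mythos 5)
Your overall route is the same as the paper's: base change to an affine $Y\to \Gr_{G,\on{gen}}$, reduce to geometric fibers, and identify each fiber with a relative Ran space of finite subsets constrained by a fixed finite set. But the reduction step is not justified as written: checking universal homological contractibility on field-valued points is not a valid criterion for an arbitrary map, and the bare citation of \cite[Sect. 2.5]{Ga2} does not supply it. What makes it legitimate here -- and what the paper's proof actually uses -- is that the projection $f:Y\underset{\Gr_{G,\on{gen}}}\times \Gr_{G,\Ran}\to Y$ is pseudo-proper, being a closed sub-prestack of $Y\times \Gr_{G,\Ran}$ followed by the projection; hence $f_!$ is defined and satisfies base change and the projection formula, so full faithfulness of pullback (after any base change) reduces to the trace map from $f_!$ of the dualizing sheaf to $\omega_Y$ being an isomorphism, which can be tested on !-fibers at field-valued points. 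Once this is in place, your entire final paragraph (the ``main obstacle'' of treating $\Ran(X)_{\supseteq Z}$ relatively over $Y$, \'etale-local splitting of $Z$, reindexing by divisors) is unnecessary: no statement about the family is needed, only about its geometric fibers, which is exactly what pseudo-properness buys.

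The second, more substantive gap is in the contractibility of the fiber itself. Over a field-valued point the fiber is $\Ran(X)^{\supset \CI_0}$, whose $Y'$-points are those $\CI$ for which $Y'\times \CI_0$ is \emph{set-theoretically} contained in $\Gamma_\CI$; this is strictly larger than the subfunctor of those $\CI$ containing the constant maps $\CI_0$ as elements (the two agree on field-valued points, but not on general test schemes: over $Y'=U_1\sqcup U_2$ one element of $\CI$ may coincide with a constant map only over $U_1$ and another only over $U_2$). Consequently $T\mapsto T\cup \CI_0$ composed with the inclusion is \emph{not} the identity on $\Ran(X)^{\supset \CI_0}$ -- and since these are set-valued functors there is no homotopy to appeal to -- so your retract argument only proves contractibility of the smaller subfunctor, which is not the fiber. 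The statement you actually need is \propref{p:rel Ran contr}, and it is obtained not by a formal retraction from the contractibility of $\Ran(X)$ but by repeating, for the relative Ran space, the argument of \cite{Ga4} (idempotent union structure plus connectedness of $X$); this is how the paper completes the proof.
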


\begin{thm} \label{t:gen gen contr}
Let $H$ be connected. Then the map $\on{pt} \to \Bun_{H\on{-gen}}$
is universally homologically contractible.
\end{thm}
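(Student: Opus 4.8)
The plan is to unwind the definition of universal homological contractibility (in the sense of \cite[Sect.~2.5]{Ga2}) and to reduce the statement to the contractibility, in families, of the space of rational maps from $X$ into the connected group $H$, following the argument of \cite{Bar}. So fix an affine test scheme $Y$ together with a map $Y\to \Bun_{H\on{-gen}}$; by \remref{r:domain} this is the datum of an $H$-bundle $\CP_H$ defined over some domain $U\subset Y\times X$. Unwinding the fiber product, the prestack
$$\CT_{\CP_H}:=Y\underset{\Bun_{H\on{-gen}}}\times \on{pt}$$
assigns to $Y'\to Y$ the set of trivializations of $\CP_H$ defined over some subdomain of $U\underset{Y}\times Y'$; that is, $\CT_{\CP_H}$ is the prestack over $Y$ of \emph{generic trivializations} of $\CP_H$. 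We must show that $\CT_{\CP_H}\to Y$ is homologically contractible, and remains so after arbitrary further base change on $Y$.

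The next step is to exhibit $\CT_{\CP_H}$ as a torsor. The group prestack $\bMaps^{\on{gen}}_Y(X,H)$ over $Y$, sending $Y'\to Y$ to the group of maps to $H$ defined over some domain in $Y'\times X$, acts simply transitively (at the level of $Y'$-points, after passing to subdomains) on $\CT_{\CP_H}$, since an automorphism of the trivial $H$-bundle over a domain is precisely such a generic map to $H$. Moreover $\bMaps^{\on{gen}}_Y(X,H)$ is the base change of $\bMaps^{\on{gen}}(X,H)\to \on{pt}$ along $Y\to\on{pt}$, and the torsor $\CT_{\CP_H}$ is trivial étale-locally on $Y$: over a geometric point, $\CP_H$ is trivial at the generic point of $X$ because $H$ is connected (a theorem of Steinberg; see also \cite{Bar}), and such a trivialization spreads out over an étale neighborhood.

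It therefore suffices to establish two things. \textbf{(i)} The map $\bMaps^{\on{gen}}(X,H)\to \on{pt}$ is universally homologically contractible. \textbf{(ii)} If $\bG\to Y$ is the base change of a universally homologically contractible group prestack and $\CT\to Y$ is a $\bG$-torsor that is trivial étale-locally on $Y$, then $\CT\to Y$ is universally homologically contractible. Assertion \textbf{(ii)} is a formal descent argument: universal homological contractibility of a morphism is insensitive to étale base change on the target, and over an étale cover of $Y$ trivializing the torsor, $\CT$ becomes $\bG$, so the pushforward of the dualizing sheaf along $\CT\to Y$ is the dualizing sheaf, and this persists under all base changes.

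The substance lies in \textbf{(i)}: the homological contractibility, universally in the base, of the prestack of rational maps from the curve $X$ into the connected group $H$. Here one invokes the machinery of \cite{Bar} (resting on the contractibility theorem for spaces of rational maps): stratify $\bMaps^{\on{gen}}(X,H)$ by the divisor of indeterminacy, present the total space as a suitable colimit over this stratification, and then use the Bruhat cell decomposition of $H$ together with the triviality of $H$-torsors over a disc for unipotent $H$ to reduce to the contractibility of ind-affine spaces such as $\bMaps^{\on{gen}}(X,\BA^1)\simeq\BA^\infty$. Carrying out this reduction — and in particular checking that the colimit presentations are compatible with arbitrary base change, so that the contractibility is genuinely \emph{universal} — is the main obstacle; the torsor bookkeeping in the earlier steps is routine.
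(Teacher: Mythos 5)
Your overall strategy — split the problem into (a) contractibility of the group prestack of generic maps $X\dashrightarrow H$ and (b) local triviality of the torsor of generic trivializations of a given point of $\Bun_{H\on{-gen}}$ — matches the skeleton of the paper's proof (which factors $\on{pt}\to \Bun_{H\on{-gen,triv}}\to \Bun_{H\on{-gen}}$). Part (a) is fine, and in the paper it is essentially a citation: one compares $\BMaps(X,H)_{\on{gen}}$ with its Ran-space version and invokes \cite[Theorem 1.8.2]{Ga2}. But you have misplaced where the difficulty lies: the ``routine torsor bookkeeping'' is exactly the hard part, and your treatment of it has a genuine gap.

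The gap is the claim that the torsor of generic trivializations is trivial \emph{\'etale-locally} on $Y$, ``because over a geometric point $\CP_H$ is generically trivial (Steinberg) and such a trivialization spreads out over an \'etale neighborhood.'' A trivialization over the geometric fiber is a section of a smooth affine scheme over an open curve $U_{\bar y}$, and $U$ is neither proper nor finite over $Y$, so there is no henselian lifting/spreading-out principle that produces a section over an \'etale neighborhood of $y$; asserting this is asserting (a strengthening of) the Drinfeld--Simpson theorem without proof. What is actually available is weaker in two respects: Drinfeld--Simpson gives generic trivializations only \emph{fppf}-locally on the base (not \'etale-locally, in general), and only for bundles defined on all of $Y\times X$, whereas your $\CP_H$ is defined only on a domain $U\subset Y\times X$. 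Bridging the second point requires first extending the generically defined bundle to a globally defined one, and this can only be done after a base change by the ind-proper, surjective (but not \'etale or flat) space of extensions. This is precisely why the paper works with the h-topology: it observes that $\Bun_{H\on{-gen,triv}}\to\Bun_{H\on{-gen}}$ is a value-wise monomorphism, so it suffices to prove h-surjectivity, which it gets from the fppf surjectivity of $\Bun_{H\on{-gen,triv}}\underset{\Bun_{H\on{-gen}}}\times\Bun_H\to\Bun_H$ (Drinfeld--Simpson, after a d\'evissage splitting off the unipotent radical, for which the statement is an isomorphism) together with the h-surjectivity of $\Bun_H\to\Bun_{H\on{-gen}}$ (extension of bundles across the complement of a domain, an ind-proper surjection). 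Your argument as written would go through only if you replace ``\'etale-locally trivial'' by ``trivial after an h-cover'' and supply these two inputs; with the \'etale claim it does not stand.
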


\sssec{}  \label{sss:pseudo-proper}

Let us recall the notion of what it means for a map of prestacks $\CX_1\to \CX_2$ to be
\emph{pseudo-proper} (cf. \cite[Sect. 1.5]{Ga2}):

\medskip

We shall say that a prestack $\CX$ over an affine scheme $Y$ is \emph{pseudo-proper} if it can be represented 
as a colimit of schemes proper over $Y$.

\medskip

We shall say that a map of prestacks $f:\CY_1\to \CY_2$ is \emph{pseudo-proper} if for any affine test-scheme $Y$ and a map $Y\to \CX_2$,
the map 
$$Y\underset{\CX_2}\times \CX_1\to Y$$
is pseudo-proper.

\medskip

In {\it loc.cit.} it is shown that if $f$ is pseudo-proper, the functor $f_!$, left adjoint to $f^!$, is defined, and satisfies
base change against !-pullbacks and the projection formula with the $\sotimes$ tensor product. 

\medskip

From here we obtain:

\begin{lem} \label{l:UHC pseudo-proper}
Let $\CX_1\to \CX_2$ be pseudo-proper. Then it is universally homologically contractible if and only
if its fibers over field-valued points (potentially, after extending the ground field) are homologically contractible.
\end{lem}

\sssec{Interlude: the relative Ran space}

Let $\CI_0$ be a finite subset of $k$-points of $X$. We define the relative Ran space  $\Ran^{\supset \CI_0}$
as follows:

\medskip

For an affine test-scheme $Y$, the set of $Y$-points of $\Ran^{\supset \CI_0}$ consists of finite non-empty subsets
$$\CI\subset \Hom(Y,X),$$
such that $Y\times \CI_0$ is set-theoretically contained in $\Gamma_{\CI}$. 

\medskip

We claim:

\begin{prop} \label{p:rel Ran contr}
The prestack $\Ran^{\supset \CI_0}$ is homologically contractible.
\end{prop}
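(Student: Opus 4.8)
The plan is to exhibit $\Ran(X)^{\supset \CI_0}$ as a prestack that admits a contracting homotopy at the level of sheaves, using the standard fact that the Ran space (and its variants) are contractible because of the "add a point" operation. First I would recall the general principle from \cite[Sect. 2.5, 2.8]{Ga2}: to show that a prestack $\CY$ is homologically contractible it suffices to produce an action on $\CY$ of the monoid of nonempty finite subsets that "absorbs" $\CY$ into a point; concretely, one uses the map $\on{union}:\Ran(X)\times \CY\to \CY$ together with a section and checks that $C_*(\CY)\to C_*(\on{pt})$ is an isomorphism by a simplicial/colimit argument. In our situation the relevant operation is $\CI\mapsto \CI\cup \CI_0$, which is well-defined as a map $\Ran(X)\times \Ran(X)^{\supset\CI_0}\to \Ran(X)^{\supset\CI_0}$ (note $\CI_0$ is a fixed finite set of $k$-points, so $\CI\cup\CI_0$ still contains $\CI_0$), and also $\CI\mapsto \CI\cup\CI_0$ as a map $\Ran(X)\to \Ran(X)^{\supset\CI_0}$.

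The key steps, in order, would be: (1) Observe that $\Ran(X)^{\supset\CI_0}$ receives a map from $\Ran(X)$, namely $\CI\mapsto \CI\cup\CI_0$, and that the composite $\Ran(X)\to \Ran(X)^{\supset\CI_0}\hookrightarrow \Ran(X)$ is the "union with $\CI_0$" self-map of $\Ran(X)$. (2) Recall that $\Ran(X)$ itself is homologically contractible (this is the Beilinson–Drinfeld lemma; see \cite[Sect. 2.8]{Ga2} or \cite[Lemma 7.4.11]{Ga2}), and more precisely that for any point $\CI_0\in\Ran(X)(k)$ the self-map $-\cup\CI_0$ of $\Ran(X)$ induces the identity on homology (it is homotopic to the identity through the "add $\CI_0$" homotopy). (3) Exploit the factorization/semigroup structure: $\Ran(X)^{\supset\CI_0}$ is a module prestack over the semigroup prestack $\Ran(X)$, and the action map has a section given by the inclusion $\Ran(X)\hookrightarrow \Ran(X)^{\supset\CI_0}$, $\CI\mapsto\CI\cup\CI_0$; this forces $C_*(\Ran(X)^{\supset\CI_0})$ to be a retract of $C_*(\Ran(X))$ in a way compatible with the contracting homotopy. (4) Conclude: since the contracting homotopy of $\Ran(X)$ restricts to a contracting homotopy of $\Ran(X)^{\supset\CI_0}$ (the homotopy only ever enlarges the finite subset, hence preserves the condition of containing $\CI_0$), we get that $C_*(\Ran(X)^{\supset\CI_0})\to C_*(\on{pt})$ is an isomorphism.

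Concretely, the cleanest implementation is probably to write $\Ran(X)^{\supset\CI_0}$ as a colimit over $\on{Fin}^{\on{surj}}$ of the schemes $X^I$ indexed so that a fixed injection $\CI_0\hookrightarrow I$ is part of the data, i.e. over the category of finite sets $I$ equipped with a map from the index set of $\CI_0$ whose composite to $X$ is the chosen embedding; then one shows this indexing category is filtered (or at least that the relevant colimit of constant sheaves computes $\omega_{\on{pt}}$) by the same cofinality argument as in \cite[proof of Lemma 7.4.11]{Ga2}. The point is that this category has an initial-like object after passing to homology because one can always adjoin $\CI_0$. I would then invoke \cite[Sect. 2.5]{Ga2} to upgrade "homologically contractible" to "universally homologically contractible" if needed downstream, though the statement as phrased only asks for contractibility.

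The main obstacle I anticipate is purely bookkeeping: making precise the sense in which the contracting homotopy for $\Ran(X)$ — which a priori is a statement about the colimit $\underset{I}{\on{colim}}\,\omega_{X^I}$ — restricts compatibly to the sub-colimit defining $\Ran(X)^{\supset\CI_0}$, and checking that no issue arises from the set-theoretic (rather than scheme-theoretic) nature of the containment condition $Y\times\CI_0\subset\Gamma_\CI$. This is the same subtlety that appears throughout \secref{s:app} in dealing with domains and incidence conditions, and I expect it to be handled exactly as there: the condition is closed and stable under the operations used in the homotopy, so it passes to the limit without trouble.
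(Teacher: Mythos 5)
Your reduction has a genuine gap at its central step. For the retract argument you need that the composite $\Ran(X)^{\supset\CI_0}\to\Ran(X)\to\Ran(X)^{\supset\CI_0}$ induces the identity on homology. This composite is the self-map $\CI\mapsto\CI\cup\CI_0$ of $\Ran(X)^{\supset\CI_0}$, and it is \emph{not} the identity: the defining condition of $\Ran(X)^{\supset\CI_0}$ is the set-theoretic containment $Y\times\CI_0\subset\Gamma_\CI$, not the requirement that the constant maps with values in $\CI_0$ be elements of $\CI$. Homological contractibility of $\Ran(X)$ does give you the analogous statement for the other composite (any self-map of a nonempty homologically contractible prestack, compatible with the augmentation, acts as the identity on its homology, which is just $\sfe$), but it says nothing about self-maps of the sub-prestack; and the statement you need is essentially equivalent to the proposition you are proving. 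Your justification --- that ``the contracting homotopy of $\Ran(X)$ restricts'' because it ``only ever enlarges the finite subset'' --- is not an accurate description of the proof you are invoking: there is no pointwise contracting homotopy, and the homological argument of \cite[Appendix]{Ga4} compares the union map with \emph{both} projections $\Ran(X)\times\Ran(X)\to\Ran(X)$ (the comparison with the second projection is precisely the step that ``forgets'' the original points, so it cannot be phrased as an enlargement) and uses connectedness of $X$ to move points around. So the content has not been reduced to the absolute statement; it has been hidden in an unproved claim. A secondary problem: your ``cleanest implementation'' --- the colimit of copies of $X^I$ over finite sets equipped with a marked copy of the index set of $\CI_0$ --- presents the smaller prestack in which the constant maps at $\CI_0$ are required to be \emph{members} of $\CI$; over non-integral test schemes the covering condition $Y\times\CI_0\subset\Gamma_\CI$ can hold with no constant map in $\CI$, so this route proves the statement for a different prestack unless you add a comparison argument.

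For contrast, the paper's proof is simply to rerun the argument for the homological contractibility of $\Ran(X)$ from \cite[Appendix]{Ga4} with $\Ran(X)^{\supset\CI_0}$ in place of $\Ran(X)$: the relative space has all the structure that proof uses --- it is stable under union, it contains the point given by $\CI_0$ itself, it is a colimit of the incidence subschemes of the $X^I$, and the points not pinned by $\CI_0$ still move through all of the connected curve $X$ --- so the moving-points step goes through unchanged. If you want to keep your retract formulation, the correct fix is to prove directly that $\CI\mapsto\CI\cup\CI_0$ acts as the identity on the homology of $\Ran(X)^{\supset\CI_0}$; but that is exactly repeating the argument, i.e., the paper's route, and then the detour through $\Ran(X)$ is unnecessary.
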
 

The proof repeats the proof of the homological contractibility of $\Ran$, see \cite[Appendix]{Ga4}.

\sssec{Proof of \lemref{l:pr contr} for $X$ proper}  \label{sss:proof of pr contr}

If $X$ is proper, $\Ran$ is is pseudo-proper. Hence, in this case, the 
map $p^\lambda_\Ran$ is pseudo-proper. Therefore, by \lemref{l:UHC pseudo-proper}, it suffices to show
that the fibers of $p^\lambda_\Ran$ (over field-valued points) are homolgically contractible. 

\medskip

For a given field-valued point $D\in X^\lambda$, let $\CI_0\subset X$ be its support. The fiber of
$p^\lambda_\Ran$ identifies with $\Ran^{\supset \CI_0}$.

\medskip

Now the assertion follows from \propref{p:rel Ran contr}.

\qed 

\sssec{Proof of \propref{p:gen vs Ran}}

It is easy to see that the map $\Gr_{G,\Ran}\to \Gr_{G,\on{gen}}$ is pseudo-proper. Hence, by \lemref{l:UHC pseudo-proper},
it suffices to see that its fibers over field-valued points are homologically contractible. 

\medskip

For a given (field-valued) point of $\Gr_{G,\on{gen}}$, let $U\subset X$ be the maximal open subset over which $\alpha$ is defined. 
Let $\CI_0$ be its set-theoretic 
complement. Then 
$$\on{pt}\underset{\Gr_{G,\on{gen}}}\times \Gr_{G,\Ran}$$
identifies with $\Ran^{\supset \CI_0}$.

\medskip

Now the required assertion follows from \propref{p:rel Ran contr}.

\qed

\ssec{Proof of \thmref{t:gen gen contr}}

\sssec{}

Let $\Bun_{H\on{-gen,triv}}$ be the prestack, whose value on an affine test-scheme $Y$ is the full subgroupoid
of $\Maps(Y,\Bun_{H\on{-gen}})$ consisting of objects isomorphic to the trivial one. In other words, this is the essential
image of the functor
$$*=\Maps(Y,\on{pt})\to \Maps(Y,\Bun_{H\on{-gen}}).$$

\medskip

The assertion of \thmref{t:gen gen contr} is obtained as a combination of the following two statements:

\begin{thm} \label{t:contr 1}
For $H$ connected, 
the map $\on{pt}\to \Bun_{H\on{-gen,triv}}$ is universally homologically contractible.
\end{thm}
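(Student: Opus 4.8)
**Proof sketch for Theorem \ref{t:contr 1} (that $\on{pt}\to \Bun_{H\on{-gen,triv}}$ is universally homologically contractible for connected $H$).**

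The plan is to unwind the definition of $\Bun_{H\on{-gen,triv}}$ and show that it is, in a suitable sense, a classifying prestack for a group of the form $\bMaps_Y(U, H)$ over a varying domain $U$, and then exploit that such mapping groups have homologically contractible classifying spaces. More precisely, I would first make the base change reduction: by the definition of universal homological contractibility (see \cite[Sect. 2.5]{Ga2}), it suffices to check that for every affine test scheme $Y$ and every $Y$-point of $\Bun_{H\on{-gen,triv}}$ — which by definition of ``triv'' is represented by the trivial $H$-bundle over \emph{some} domain $U_0\subset Y\times X$ — the fiber product
$$Y\underset{\Bun_{H\on{-gen,triv}}}\times \on{pt}$$
is homologically contractible. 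Arguing as in Step 1 of the proof of \propref{p:gen vs Ran}, the relevant projection to $Y$ is pseudo-proper, so we are reduced (after passing to !-fibers and possibly extending the ground field) to the case $Y=\on{pt}$, with a fixed trivialization over a fixed domain, i.e. a dense open $U_0\subset X$.

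Next I would identify this fiber. An object over $Y'$ together with a trivialization of its pullback to $\on{pt}$ (compatibly over the intersection of domains) is the same as the data of a domain $U'\subset Y'\times X$ together with an automorphism of the trivial $H$-bundle on $U'\cap (Y'\times U_0)$, modulo passing to subdomains. By Remark-type bookkeeping (exactly as in Remarks \ref{r:domain} and the one following it), this groupoid of data is equivalent to the set of pairs (a domain $V\subset Y'\times X$, a map $V\to H$), again modulo restriction to subdomains. In other words the fiber is the prestack
$$\underset{V}{\on{colim}}\ \bMaps_{Y'}(V, H),$$
the colimit running over domains $V$ inside $Y'\times X$ contained in $Y'\times U_0$, ordered by reverse inclusion. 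Since $H$ is connected and unipotent-by-reductive considerations do not apply here, the key input is that for connected $H$ this colimit of mapping prestacks is homologically contractible; this is the analogue of the statement, used already in \secref{ss:proof of contr} for $H=N\cong \BA^m$, that $\bMaps_Y(X-\CI, \BA^1)\cong \BA^\infty\times Y$ is universally homologically contractible, now in the more delicate situation of a varying domain and a possibly non-abelian connected $H$. One reduces to the affine-space case by choosing a filtration of $H$ (e.g. via a faithful representation and the Bruhat-type stratification, or simply by induction on $\dim H$ using that a connected group is built from $\BA^1$'s and $\BG_m$'s up to the contractibility properties recorded in \cite{Bar}), and then invoking \propref{p:rel Ran contr}-style contractibility of the indexing colimit of domains (which is precisely where the relative Ran space $\Ran(X)^{\supset\CI_0}$ reappears: the poset of domains containing a fixed complement is cofinal in the Ran space of that complement).

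I would carry out the steps in this order: (1) base-change reduction to $Y=\on{pt}$ and a fixed domain; (2) explicit identification of the fiber as a colimit of mapping prestacks $\bMaps(V,H)$ over domains $V$; (3) reduction of the target $H$ to a tower of $\BA^1$'s via connectedness; (4) contractibility of $\bMaps(V,\BA^1)$ for fixed $V$ (the $\BA^\infty$ argument) combined with contractibility of the domain-indexing colimit via \propref{p:rel Ran contr}. The main obstacle I expect is step (3)–(4) interaction: one must ensure that the reduction of $H$ to affine spaces is compatible with the colimit over domains, i.e. that taking $\bMaps(-,H)$ of a filtration of $H$ yields a filtration of the mapping prestack whose subquotients are (relative) affine spaces, \emph{uniformly in the domain}. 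This is essentially a Čech/cohomological vanishing statement — that $H^{>0}$ of the relevant sheaves on $V$ vanishes after passing to a subdomain — and is exactly the technical heart paralleling \cite{Bar}; once it is in place, the homological contractibility propagates through the colimit by the contractibility of fibers and pseudo-properness, finishing the proof.
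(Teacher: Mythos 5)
Your step (2) — identifying the situation with the classifying prestack of the group prestack of $H$-valued maps defined on a variable domain, so that the theorem reduces to homological contractibility of that group prestack — is indeed the paper's first move ($\Bun_{H\on{-gen,triv}}\simeq B(\BMaps(X,H)_{\on{gen}})$). But your steps (3)–(4) contain the real gap: you propose to prove contractibility of the space of generically defined maps into a connected $H$ by filtering $H$ into $\BA^1$'s (``via a faithful representation and a Bruhat-type stratification, or induction on $\dim H$'') and then running the $\BA^\infty$ argument used for $H=N$. This does not work. A connected group admits a filtration by normal subgroups with affine-space quotients only when it is unipotent; Bruhat cells are not subgroups, and $\bMaps(V,-)$ does not convert a stratification of the target into a filtration of the mapping prestack with affine-space (or otherwise manifestly contractible) subquotients — for a quotient $H\to H/H'$ with $H'$ non-unipotent there is moreover a nontrivial torsor obstruction to lifting rational maps. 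Even the abelian case $H=\BG_m$ is not an $\BA^\infty$ statement: its contractibility already requires the symmetrized-power/Abel--Jacobi analysis over the Ran space. What you are trying to re-derive in these two steps is exactly the contractibility of the space of rational maps into a connected group, which is the main theorem of \cite{Ga4} (quoted in the present paper in the form \cite[Theorem 1.8.2]{Ga2}); the paper's proof of \thmref{t:contr 1} treats it as a black box, after comparing $\BMaps(X,H)_{\on{gen}}$ with its Ran version $\BMaps(X,H)_\Ran$ by a map shown to be universally homologically contractible as in \propref{p:gen vs Ran}. Your sketch omits this input and replaces it with an argument that fails for any non-unipotent connected $H$, which is precisely the case the theorem is needed for (the unipotent case is handled separately in the proof of \thmref{t:contr 2}).

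A secondary problem is your step (1): you reduce to $Y=\on{pt}$ by claiming the projection $Y\underset{\Bun_{H\on{-gen,triv}}}\times \on{pt}\to Y$ is pseudo-proper ``as in \propref{p:gen vs Ran}.'' It is not: that fiber product is a torsor under $\BMaps(X,H)_{\on{gen}}$, an ind-affine object of infinite type, not a colimit of schemes proper over $Y$. The pseudo-properness argument in \propref{p:gen vs Ran} is special to fibers that are relative Ran spaces (closed in $Y\times \Ran(X)$), and only for pseudo-proper maps can universal homological contractibility be checked on !-fibers at field-valued points. Where pseudo-properness (and \propref{p:rel Ran contr}) legitimately enters is in the comparison $\BMaps(X,H)_\Ran\to \BMaps(X,H)_{\on{gen}}$, not in a reduction of the classifying-prestack statement to pointwise fibers.
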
 

\begin{thm} \label{t:contr 2}
The map $\Bun_{H\on{-gen,triv}}\to \Bun_{H\on{-gen}}$
is universally homologically contractible.
\end{thm}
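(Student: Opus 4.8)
The plan is to transcribe, for an arbitrary affine algebraic group $H$, the homotopy-theoretic analysis of spaces of generic data from \cite{Bar}. Recall (\cite[Sect.~2.5]{Ga2}) that a morphism is universally homologically contractible precisely when each of its base changes is homologically contractible; since base changes of the morphism in question remain of the same shape, it suffices to prove that for every affine scheme $Y$ of finite type and every morphism $Y\to\Bun_{H\on{-gen}}$ — i.e.\ every domain $U\subset Y\times X$ equipped with an $H$-bundle $\CP_H$ on $U$ — the projection
\[
\CZ\;:=\;Y\underset{\Bun_{H\on{-gen}}}\times\Bun_{H\on{-gen,triv}}\;\longrightarrow\;Y
\]
is homologically contractible, i.e.\ the trace map $(\CZ\to Y)_!\,\omega_{\CZ}\to\omega_Y$ is an isomorphism. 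The first step is to identify $\CZ$: since $\Bun_{H\on{-gen,triv}}\hookrightarrow\Bun_{H\on{-gen}}$ is, by construction, the inclusion of the full subgroupoid spanned by the trivial generic bundle, unwinding the fibre product shows that $\CZ$ is the prestack $\underline{\on{Triv}}(\CP_H)$ of \emph{generic trivialisations} of $\CP_H$ — a $Y'$-point, for $Y'\to Y$, being an isomorphism $\CP_H|_{U'}\simeq\CP^0_H|_{U'}$ over a subdomain $U'\subset U\times_Y Y'$ — presented as the filtered colimit over shrinking $U'$ of the ($Y$-affine, of infinite type) schemes $\underline{\on{Isom}}_{U'}(\CP_H,\CP^0_H)$. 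This construction commutes with base change in $Y$, so the whole problem becomes the relative homological contractibility of $\underline{\on{Triv}}(\CP_H)$ over $Y$.

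The clean case — which is also the only one that enters the applications of this appendix, namely $H=N$, cf.\ \thmref{t:contr} — is that of a \emph{unipotent} $H$. Then any $H$-bundle is trivial on an affine domain (one may take $U'=Y'\times(X-x_0)$, over which $H^1(-,\O)$ vanishes), so $\CP_H$ is automatically generically trivial, $\Bun_{H\on{-gen,triv}}=\Bun_{H\on{-gen}}$, the map is an isomorphism, and there is nothing to prove; more precisely $\underline{\on{Triv}}(\CP_H)$ is then a torsor over the ``generic gauge group'' $\underline{\Maps}_{\on{gen}}(X,H)$, which — via the d\'evissage $1\to\BG_a\to H\to H/\BG_a\to 1$ — is a filtered colimit of (relative, infinite-dimensional) affine spaces, so its relative homological contractibility is immediate. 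For a general $H$ one must reckon with $H$-bundles that are not generically trivial. Here I would follow \cite{Bar}: after Zariski-localising on $Y$ one stratifies $\Bun_{H\on{-gen}}$ according to the ``type'' of the generic bundle so that over each stratum $\CP_H$ acquires a generic reduction to a proper subgroup, iterates the analysis on that reduction, and takes as the base of the induction the generically-trivial locus, where the torsor description of the previous sentence applies; the relative contractibility of $\underline{\on{Triv}}(\CP_H)$ is then assembled from the contractibility statements over the strata.

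I expect this last, general-$H$, step to be the main obstacle: it is exactly where the soft formalism — colimits over domains, base change, the projection formula for pseudo-proper maps — is no longer enough, and one has to import the structural content of \cite{Bar}, amounting to the assertion that replacing $\Bun_{H\on{-gen}}$ by the (non-surjective) sublocus $\Bun_{H\on{-gen,triv}}$ does not change the sheaf theory. Everything else is routine: the identification of $\CZ$ with $\underline{\on{Triv}}(\CP_H)$, the compatibility of all the constructions with base change, the d\'evissage along a normal series of $H$, and the elementary fact that $\underline{\Maps}_{\on{gen}}(X,\BG_a)$ is homologically contractible.
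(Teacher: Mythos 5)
Your identification of the fiber is where the argument goes wrong. Since $\Bun_{H\on{-gen,triv}}\hookrightarrow \Bun_{H\on{-gen}}$ is by construction a value-wise fully faithful embedding of a full subgroupoid, it is a \emph{monomorphism} of prestacks; hence for a $Y$-point $(\CP_H,U)$ of $\Bun_{H\on{-gen}}$ the fiber product $\CZ=Y\underset{\Bun_{H\on{-gen}}}\times \Bun_{H\on{-gen,triv}}$ is a \emph{subfunctor of $Y$} -- the locus of those $Y'\to Y$ over which $\CP_H$ becomes generically trivializable -- and not the space $\underline{\on{Triv}}(\CP_H)$ of generic trivializations. The space of trivializations is the fiber of the composite $\on{pt}\to \Bun_{H\on{-gen}}$, i.e.\ it already mixes in \thmref{t:contr 1}; the whole point of splitting \thmref{t:gen gen contr} into \thmref{t:contr 1} and \thmref{t:contr 2} is to separate the gauge-group/torsor contractibility (which is where $\BMaps(X,H)_{\on{gen}}$ and its $\BG_a$-d\'evissage live) from the present statement. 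Your observation in the unipotent case survives this confusion -- there the map is an isomorphism because a unipotent bundle trivializes over an affine domain, which is exactly the paper's argument -- but the torsor description you propose as the base of your induction concerns the wrong map.

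For reductive $H$ (to which the paper also reduces, by the same d\'evissage through the unipotent radical) your proposal contains no argument: the stratification ``by type of the generic bundle'' following \cite{Bar} is precisely the content that would have to be supplied, and you yourself flag it as the main obstacle. The paper's route is different and much shorter: because the map is a value-wise monomorphism, universal homological contractibility follows once it becomes an isomorphism after localization in the h-topology, i.e.\ once it is an h-surjection. This is proved by factoring through $\Bun_H$: the map $\Bun_{H\on{-gen,triv}}\underset{\Bun_{H\on{-gen}}}\times \Bun_H\to \Bun_H$ is an fppf surjection by the Drinfeld--Simpson theorem \cite{DS} (every $H$-bundle on $Y\times X$ is generically trivial after an fppf cover of $Y$), and $\Bun_H\to \Bun_{H\on{-gen}}$ is an h-surjection because the space of extensions of a generically defined bundle to all of $Y\times X$ is ind-proper over $Y$ and surjective on field-valued points (an $H$-bundle on an open curve extends). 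Neither the reduction to h-surjectivity via the monomorphism property nor the Drinfeld--Simpson input appears in your write-up, and without these (or a genuine substitute extracted from \cite{Bar}) the general case remains open.
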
 

\sssec{Proof of \thmref{t:contr 1}}

Let $\BMaps(X,H)_{\on{gen}}$ be the group prestack that attaches to an affine test-scheme $Y$ the group of
maps from a domain in $Y\times X$ to $H$. By definition
$$\Bun_{H\on{-gen,triv}}\simeq B(\BMaps(X,H)_{\on{gen}}).$$

\medskip

Hence, in order to prove \thmref{t:contr 1}, it suffices to show that the prestack $\BMaps(X,H)_{\on{gen}}$ is homologically 
contractible. However, this is essentially what is proved in \cite[Theorem 1.8.2]{Ga4}:

\medskip

In order to formally deduce the homological contractibility of $\BMaps(X,H)_{\on{gen}}$ from \emph{loc. cit.}, we argue as follows:

\medskip

Let $\BMaps(X,H)_\Ran$ be the prestack that assigns to an affine test-scheme $Y$ the set of pairs $(\CI,h)$, where
$\CI$ is a finite non-empty subset in $\Hom(Y,X)$ and $h$ is a map
$$(Y\times X-\Gamma_\CI)\to H.$$

\medskip

We have a tautologically defined map
$$\BMaps(X,H)_\Ran\to \BMaps(X,H)_{\on{gen}},$$
and as in \propref{p:gen vs Ran} we show that this map is universally homologically 
contractible.

\medskip

Now, the assertion of \cite[Theorem 1.8.2]{Ga2} is precisely that for $H$ connected, the prestack 
$\BMaps(X,H)_\Ran$ is homologically contractible. 

\qed

\sssec{}

The remainder of this section is devoted to the proof of \thmref{t:contr 2}. Write
$$1\to H_u\to H\to H_r\to 1,$$
where $H_u$ is the unipotent radical of $H$ and $H_r$ is the reductive quotient. 

\medskip

We factor the map $\Bun_{H\on{-gen,triv}}\to \Bun_{H\on{-gen}}$ as
$$\Bun_{H\on{-gen,triv}} \to \Bun_{H_r\on{-gen,triv}} \underset{\Bun_{H_r\on{-gen}}}\times \Bun_{H\on{-gen}}\to 
\Bun_{H\on{-gen}}.$$ 

We will prove that the maps
\begin{equation} \label{e:unip part}
\Bun_{H\on{-gen,triv}} \to \Bun_{H_r\on{-gen,triv}} \underset{\Bun_{H_r\on{-gen}}}\times \Bun_{H\on{-gen}}
\end{equation}
and
\begin{equation} \label{e:red part}
\Bun_{H_r\on{-gen,triv}}\to \Bun_{H_r\on{-gen}}
\end{equation}
are universally homologically contractible, which would imply the assertion of \thmref{t:contr 2}.

\begin{rem}
Note that in the applications for the present paper, we have $H=N$, so we do not actually need to consider \eqref{e:red part}. 
\end{rem}

\sssec{}

In order to prove the universal homological contractibility property of \eqref{e:unip part}, we can base change with
respect to the (value-wise surjective) map $\on{pt}\to \Bun_{H_r\on{-gen,triv}}$. We obtain a map
$$\Bun_{H_u\on{-gen,triv}} \to \Bun_{H_u\on{-gen}},$$
and the statement that \eqref{e:unip part} is universally homologically contractible amounts to the statement
of \thmref{t:contr 2} for $H$ unipotent. 

\medskip

However, we claim that for $H$ unipotent, the map
$\Bun_{H\on{-gen,triv}}\to \Bun_{H\on{-gen}}$ is actually an isomorphism. Indeed, every $H$-bundle is (non-canonically) trivial
over a domain that is affine. 

\sssec{}

Let us observe that the statement that \eqref{e:red part} is universally homologically contractible is equivalent to the statement
of \thmref{t:contr 2} for $H$ reductive. Hence, for the rest of the argument $H$ will be assumed reductive. 

\ssec{Proof of \thmref{t:contr 2} for $H$ reductive}

\sssec{}

In order to prove that
$$\Bun_{H\on{-gen,triv}}\to \Bun_{H\on{-gen}}$$
is universally homologically contractible, it suffices to show that it becomes an isomorphism
after localization in the h-topology. (We recall that h-covers include fppf covers as well as maps that are proper and surjective
at the level of $k$-points.) 

\medskip

Since \eqref{e:red part} is a value-wise monomorphism, it suffices to show that it is a surjection in the h-topology.  

\sssec{}

Consider the Cartesian square
$$
\CD
\Bun_{H\on{-gen,triv}}\underset{\Bun_{H\on{-gen}}}\times \Bun_H @>>>  \Bun_H  \\
@VVV   @VVV \\
\Bun_{H\on{-gen,triv}} @>>> \Bun_{H\on{-gen}}.
\endCD
$$

It suffices to show that both maps
\begin{equation} \label{e:DS map}
\Bun_{H\on{-gen,triv}}\underset{\Bun_{H\on{-gen}}}\times \Bun_H \to  \Bun_H 
\end{equation}
and 
\begin{equation} \label{e:extend bundle}
\Bun_H \to \Bun_{H\on{-gen}}
\end{equation}
are h-surjections. 

\sssec{}

The fact that map \eqref{e:DS map} is an h-surjection follows from \cite{DS}; in fact the main theorem of {\it loc.cit.}
asserts that this map is an fppf surjection.

\sssec{}

Let us show that \eqref{e:extend bundle} is an h-surjection. 

\medskip

Fix a $Y$-point $(\CP_G,U)$ of $\Bun_{H\on{-gen}}$ for an affine test-scheme $Y$. The fiber product
$$Y\underset{\Bun_{H\on{-gen}}}\times \Bun_H$$
is a prestack that assigns to $Y'\to Y$ the set of extensions of the $G$-bundle $\CP_G|_{Y'\underset{Y}\times U}$ to
all of $Y'\times X$. 

\medskip

It is easy to see that this prestack is (ind)representable by an ind-scheme, ind-proper over $Y$.  Hence, it is
enough to show that the map
$$Y\underset{\Bun_{H\on{-gen}}}\times \Bun_H\to Y$$
is surjective at the level of $k$-points. 

\medskip

However, the latter means that any $H$-bundle on open subset of $X$ can be extended to all of  $X$,
which is well-known.

\end{document}